% !TEX spellcheck = en-US
% !TEX TS-program = latex

\documentclass[titlepage,12pt]{article} 
\usepackage{hyperref}
\usepackage[usenames,dvipsnames]{pstricks} 
\usepackage{pst-plot} 
\usepackage{amssymb,amsthm,amsmath} 
\usepackage[a4paper]{geometry}
\usepackage{datetime2}
\usepackage[utf8]{inputenc}
\usepackage[italian,english]{babel}
\usepackage{mathrsfs}
\usepackage{bbm}

\selectlanguage{english}

%%%%%%%%%%%%%%%%%%%%%%%%%%%%%%%%%%
%                                %
%   Dimensioni pagina standard   %
%                                %
%%%%%%%%%%%%%%%%%%%%%%%%%%%%%%%%%%

\geometry{text={15.7 cm, 23 cm},centering,includefoot}

%%%%%%%%%%%%%%%%%%%%%%%%%%%%%%%%%%%%%%%%%%%%%%%
%                                             %
%   Versione Finale: alla fine resta questo   %
%                                             %
%%%%%%%%%%%%%%%%%%%%%%%%%%%%%%%%%%%%%%%%%%%%%%%

\date{}

%%%%%%%%%%%%%%%%%%%%%%%%%%%%%%%%%%%%%%%%%%%%%%%%%%%%%%%%%%
%                                                        %
%   Versione di lavoro: commentare per versione finale   %
%                                                        %
%%%%%%%%%%%%%%%%%%%%%%%%%%%%%%%%%%%%%%%%%%%%%%%%%%%%%%%%%%

%\makeatletter
%\renewcommand{\@oddfoot}{\hfill\fbox{\jobname\ -- Release \NumeroVersione\ (\DTMnow) -- Page  \arabic{page} of \pageref{NumeroPagine}}\hfill}
%\makeatother
%
%\makeatletter
%\renewcommand{\@oddhead}{\hfill 
%%\fcolorbox{red}{yellow}{Work in progress -- Not ready to be read}
%%\fcolorbox{olive}{green}{Working  meta-stable version.}
%\hfill}
%\makeatother
%
%\def\NumeroVersione{6}
%
%\date{Release \NumeroVersione\ (\DTMnow)}

%%%%%%%%%%%%%%%%%%%%%%%%%%%%%%%%%%%%%%%%%%%%%%%%%%%%%%%%%%%%%%%%%
%                                                               %
%   Versione di lavoro: commentare quando si avvicina la fine   %
%                                                               %
%%%%%%%%%%%%%%%%%%%%%%%%%%%%%%%%%%%%%%%%%%%%%%%%%%%%%%%%%%%%%%%%%

%\usepackage{showlabels}
%%\usepackage{refcheck}
%
%\oddsidemargin -1cm

%%%%%%%%%%%%%%%%%%%%%%%%%%%
%                         %
%   Definizioni e macro   %
%                         %
%%%%%%%%%%%%%%%%%%%%%%%%%%%

\newcommand{\re}{\mathbb{R}}
\newcommand{\z}{\mathbb{Z}}
\newcommand{\n}{\mathbb{N}}
\newcommand{\ep}{\varepsilon}

\newcommand{\AM}{\operatorname{AM}}
\newcommand{\sPM}{\operatorname{PM}}
\newcommand{\sPMF}{\operatorname{PMF}}
\newcommand{\sRPMF}{\operatorname{RPMF}}
\newcommand{\sRPM}{\operatorname{RPM}}
\newcommand{\PMF}{\operatorname{\mathbb{PMF}}}
\newcommand{\RPM}{\operatorname{\mathbb{RPM}}}
\newcommand{\RPMF}{\operatorname{\mathbb{RPMF}}}
\newcommand{\JF}{\operatorname{\mathbb{JF}}}
\newcommand{\J}{\operatorname{\mathbb{J}}}
\newcommand{\PJ}{P\!J}

\newcommand{\argmin}{\operatorname{argmin}}
\newcommand{\sign}{\operatorname{sign}}
\newcommand{\uep}{u_{\ep}}
\newcommand{\glim}{\Gamma\mbox{--}\lim}
\newcommand{\auto}{\mathrel{\substack{\vspace{-0.1ex}\\\displaystyle\leadsto\\[-0.9em] \displaystyle\leadsto}}}
\newcommand{\loc}{_{\mathrm{loc}}}
\newcommand{\Rloc}{_{\mathrm{R-loc}}}

\newcommand{\obl}{\mathrm{Obl}}
\newcommand{\vrt}{\mathrm{Vert}}
\newcommand{\orz}{\mathrm{Hor}}
\newcommand{\omep}{\omega(\ep)}
\newcommand{\omepn}{\omega(\ep_{n})}
\newcommand{\X}{\mathbb{X}}

%%%%%%%%%%%%%%%%%%%%%%%
%                     %
%   Tipi di teorema   %
%                     %
%%%%%%%%%%%%%%%%%%%%%%%

\newtheorem{thm}{Theorem}[section]

\newtheorem{rmk}[thm]{Remark}
\newtheorem{prop}[thm]{Proposition}
\newtheorem{defn}[thm]{Definition}
\newtheorem{cor}[thm]{Corollary}

\newtheorem{lemma}[thm]{Lemma}
\newtheorem{open}{Open problem}

%%%%%%%%%%%%%%%%%%%%%%%%
%                      %
%   Titolo ed autori   %
%                      %
%%%%%%%%%%%%%%%%%%%%%%%%
 
\title{A quantitative variational analysis of the staircasing phenomenon for a second order regularization of the Perona-Malik functional}

\author{
Massimo Gobbino\vspace{1ex}\\ 
{\normalsize Università degli Studi di Pisa} \\
{\normalsize Dipartimento di Matematica}\\ 
{\normalsize PISA (Italy)}\\  
{\normalsize e-mail: \texttt{massimo.gobbino@unipi.it}}
\and
Nicola Picenni\vspace{1ex}\\ 
{\normalsize Scuola Normale Superiore} \\
{\normalsize PISA (Italy)}\\
{\normalsize e-mail: \texttt{nicola.picenni@sns.it}}
}

%%%%%%%%%%%%%%%%%%%%%%%%%%%%%%%%%%%%%%%%%%%%
%                                          %
%   Abstract, classificazione, key words   %
%                                          %
%%%%%%%%%%%%%%%%%%%%%%%%%%%%%%%%%%%%%%%%%%%%

\begin{document}
\maketitle

\begin{abstract}

We consider the Perona-Malik functional in dimension one, namely an integral functional whose Lagrangian is convex-concave with respect to the derivative, with a convexification that is identically zero. We approximate and regularize the functional by adding a term that depends on second order derivatives multiplied by a small coefficient.

We investigate the asymptotic behavior of minima and minimizers as this small parameter vanishes. In particular, we show that minimizers exhibit the so-called staircasing phenomenon, namely they develop a sort of microstructure that looks like a piecewise constant function at a suitable scale. 

Our analysis relies on Gamma-convergence results for a rescaled functional, blow-up techniques, and a characterization of local minimizers for the limit problem. This approach can be extended to more general models.

\vspace{6ex}

\noindent{\bf Mathematics Subject Classification 2020 (MSC2020):} 
49J45, 35B25, 49Q20.

\vspace{6ex}

%49J45 Methods involving semicontinuity and convergence; relaxation
%35B27 Homogenization in context of PDEs; PDEs in media with periodic structure
%35A15 Variational methods applied to PDEs
%35B25 Singular perturbations in context of PDEs
%35J20 Variational methods for second-order elliptic equations
%35J35 Variational methods for higher-order elliptic equations
%47J30 Variational methods involving nonlinear operators
%49Q20 Variational problems in a geometric measure-theoretic setting		
%34E10 Perturbations, asymptotics of solutions to ordinary differential equations

\noindent{\bf Key words:} 
Perona-Malik functional, singular perturbation, higher order regularization, Gamma-convergence, blow-up, piecewise constant functions, local minimizers, Young measures, varifolds.

\end{abstract}

%%%%%%%%%%%%%%%%%%%%%
%                   %
%   Inizio lavoro   %
%                   %
%%%%%%%%%%%%%%%%%%%%%
 
\section{Introduction}

Let us consider the minimum problem for the one-dimensional functional
\begin{equation}
\sPMF(u):=\int_{0}^{1}\log\left(1+u'(x)^{2}\right)\,dx+
\beta\int_{0}^{1}(u(x)-f(x))^{2}\,dx,
\label{defn:PMF}
\end{equation}
where $\beta>0$ is a real number, and $f\in L^{2}((0,1))$ is a given function that we call  \emph{forcing term}. The second integral is a sort of \emph{fidelity term}, tuned by the parameter $\beta$, that penalizes the distance between $u$ and the forcing term $f$. The principal part of (\ref{defn:PMF}) is the functional
\begin{equation}
\sPM(u):=\int_{0}^{1}\log\left(1+u'(x)^{2}\right)\,dx,
\label{defn:PM}
\end{equation}
whose Lagrangian $\phi(p):=\log(1+p^{2})$ is not convex.  To make matters worse, the convex envelope of $\phi(p)$ is identically~0, and this implies that the relaxation of (\ref{defn:PM}) is identically~0 in every reasonable functional space. As a consequence, it is well-known that
\begin{equation}
\inf\left\{\sPMF(u):u\in C^{1}([0,1])\right\}=0
\qquad
\forall f\in L^{2}((0,1)).
\nonumber
\end{equation}

We refer to (\ref{defn:PM}) as the \emph{Perona-Malik functional}, because its formal gradient-flow is (up to a factor~2) the celebrated forward-backward parabolic equation
\begin{equation}
u_{t}=\left(\frac{u_{x}}{1+u_{x}^{2}}\right)_{x}=\frac{1-u_{x}^{2}}{(1+u_{x}^{2})^{2}}\,u_{xx},
\label{defn:PM-eqn}
\end{equation}
introduced by P.~Perona and J.~Malik~\cite{PeronaMalik}. Numerical experiments seem to suggest that this diffusion process has good stability properties, but at the present there is no rigorous theory that explains why a model that is ill-posed from the analytical point of view exhibits this unexpected stability, known in literature as the \emph{Perona-Malik paradox}~\cite{Kichenassami}. 

The same qualitative analysis applies when the principal part is of the form
\begin{equation}
\sPM(\phi,u):=\int_{0}^{1}\phi(u'(x))\,dx,
\nonumber
\end{equation}
provided that $\phi(p)$ is convex in a neighborhood of the origin, concave when $|p|$ is large, and with convex envelope identically equal to~0. Some notable examples are 
\begin{equation}
\phi(p)=\arctan(p^{2})
\qquad\mbox{or}\qquad
\phi(p)=(1+p^{2})^{\alpha}
\quad\text{with }\alpha\in(0,1/2),
\label{defn:phi-1}
\end{equation}
or even more generally
\begin{equation}
\phi(p)=(1+|p|^{\gamma})^{\alpha}
\quad\text{with }\gamma>1\text{ and }\alpha\in(0,1/\gamma).
\label{defn:phi-2}
\end{equation}

\paragraph{\textmd{\textit{Singular perturbation of the Perona-Malik functional}}}

Several approximating models have been introduced in order to mitigate the ill-posed nature of (\ref{defn:PM-eqn}). These approximating models are obtained via convolution~\cite{1992-SIAM-Lions}, space discretization~\cite{2008-JDE-BNPT,2001-CPAM-Esedoglu,GG:grad-est}, time delay~\cite{2007-Amann}, fractional derivatives~\cite{2009-JDE-Guidotti}, fourth order regularization~\cite{1996-Duke-DeGiorgi,2008-TAMS-BF,2019-SIAM-BerGiaTes}, addition of a dissipative term (see~\cite{2020-JDE-BerSmaTes} and the references quoted therein). For a more complete list of references on the evolution problem (\ref{defn:PM-eqn}) we refer to the recent papers~\cite{2018-Poincare-KimYan,2019-SIAM-BerGiaTes,2020-JDE-BerSmaTes} and to the references quoted therein. In this paper we limit ourselves to the variational background, and we consider the functional whose formal gradient flow is the fourth order regularization of (\ref{defn:PM-eqn}), namely the functional (see~\cite{1996-Duke-DeGiorgi,ABG,2006-DCDS-BelFusGug,2008-TAMS-BF,2014-M3AS-BelChaGol})
\begin{equation}
\sPMF_{\ep}(u):=
\int_{0}^{1}\left\{\ep^{10}|\log\ep|^{2}u''(x)^{2}+
\log\left(1+u'(x)^{2}\right)+
\beta(u(x)-f(x))^{2}\right\}dx,
\label{defn:SPM-intro}
\end{equation}
where the bizarre form of the $\ep$-dependent coefficient is just aimed at preventing the appearance of decay rates defined in an implicit way in the sequel of the paper. For every choice of $\ep\in(0,1)$ and $\beta>0$ the model is well-posed, in the sense that the minimum problem for  (\ref{defn:SPM-intro}) admits at least one minimizer of class $C^{2}$ for every choice of the forcing term $f\in L^{2}((0,1))$.  Here we investigate the asymptotic behavior of minima and minimizers as $\ep\to 0^{+}$.  Before describing our results, it is useful to open a parenthesis on a related problem that has already been studied in the literature.

\paragraph{\textmd{\textit{The Alberti-M\"uller model}}}

Let us consider the functional 
\begin{equation}
\AM_{\ep}(u):=\int_{0}^{1}\left\{\ep^{2}u''(x)^{2}+(u'(x)^{2}-1)^{2}+\beta(x) u(x)^{2}\right\}dx,
\label{defn:AM}
\end{equation}
where $\beta\in L^{\infty}((0,1))$ is positive for almost every $x\in(0,1)$. The minimizers of (\ref{defn:AM}) with periodic boundary conditions were studied by G.~Alberti and S.~M\"uller in~\cite{2001-CPAM-AlbertiMuller} (see also~\cite{1993-CalcVar-Muller}). In this model the forcing term $f(x)$ is identically~0, and the dependence on first order derivatives is described by the double-well potential $\phi(p):=(p^{2}-1)^{2}$. As in (\ref{defn:SPM-intro}) the function $\phi(p)$ is non-convex, but in this case its convex envelope vanishes just for $|p|\leq 1$, while it coincides with $\phi(p)$ elsewhere, and in particular it is coercive at infinity. 

From the heuristic point of view, minimizers to (\ref{defn:AM}) would like to be identically~0, but with constant derivative equal to $\pm 1$. Of course this is not possible if we think of $u(x)$ and $u'(x)$ as functions, but it becomes possible if we consider $u(x)$ as a function whose ``derivative'' $u'(x)$ is a Young measure. More formally, given a family $\{u_{\ep}(x)\}$ of minimizers to (\ref{defn:AM}), one can show that $u_{\ep}(x)\to 0$ uniformly, $u_{\ep}'(x)\rightharpoonup 0$ weakly in $L^{4}((0,1))$, and more precisely $u_{\ep}'(x)$ converges to the Young measure that in every point $x\in(0,1)$ assumes the two values $\pm 1$ with probability $1/2$. 

The next step consists in analyzing the asymptotic profile of minimizers. The intuitive idea is that minimizers develop a \emph{microstructure} at some scale $\omep$, and this microstructure resembles a triangular wave (sawtooth function). In other words, one expects minimizers to be of the form 
\begin{equation}
u_{\ep}(x)\sim\omep\varphi\left(\frac{x}{\omep}+b(\ep)\right),
\label{th:AM-expansion}
\end{equation}
where
\begin{itemize}

\item  the function $\varphi$ that describes the asymptotic profile of minimizers is a triangular wave with slopes $\pm 1$, for example the function defined by $\varphi(x):=|x|-1$ for every $x\in[-2,2]$, and then extended by periodicity to the whole real line,

\item  $\omep$ is a suitable scaling factor that vanishes as $\ep\to 0^{+}$ and is proportional to the asymptotic ``period'' of minimizers (which however are not necessarily periodic functions ``in large''),

\item  $b(\ep)$ is a sort of phase parameter, that can be assumed to be less than the period of $\varphi$.

\end{itemize}

We point out that the limit of $u_{\ep}'(x)$ as a Young measure carries no information concerning the asymptotic behavior of $\omep$, and actually it does not even imply the existence of any form of asymptotic period or asymptotic profile. 

The first big issue is giving a rigorous formal meaning to an asymptotic expansion of the form (\ref{th:AM-expansion}). In~\cite{2001-CPAM-AlbertiMuller} the formalization relies on the notion of Young measure with values in compact metric spaces. In a nutshell, starting form every minimizer $u_{\ep}(x)$, the authors consider the function that associates to every $x\in(0,1)$ the rescaled function 
\begin{equation}
y\mapsto\frac{u_{\ep}(x+\omep y)}{\omep},
\nonumber
\end{equation}
where $\omep=\ep^{1/3}$. This new function is interpreted as a Young measure on the interval $(0,1)$ with values in $L^{\infty}(\re)$, which is a \emph{compact} metric space with respect to the distance according to which $g_{n}$ converges to $g_{\infty}$ if and only if $\arctan(g_{n})$ converges to $\arctan(g_{\infty})$ with respect to the weak* convergence in $L^{\infty}(\re)$. The result is that this family of Young measures converges (in the sense of Young measures with values in a compact metric space) to a limit Young measure that in almost every point is concentrated in the translations of the triangular wave. This statement is a rigorous, although rather abstract and technical, formulation of expansion (\ref{th:AM-expansion}). 

\paragraph{\textmd{\textit{From Young measures to varifolds}}}

There are some notable differences between our model and (\ref{defn:AM}). The first one is that in our case the trivial forcing term $f(x)\equiv 0$ would lead to the trivial solution $u_{\ep}(x)\equiv 0$ for every $\ep\in(0,1)$. Therefore, here a nontrivial forcing term is required if we want nontrivial solutions.

The second difference lies in the growth of the convex envelope of $\phi(p)$. In the case of (\ref{defn:AM}) the convex envelope grows at infinity as $p^{4}$, and this guarantees a uniform bound in $L^{4}((0,1))$ for the derivatives of all sequences with bounded energy. In our case the convex envelope vanishes identically, and therefore there is no hope to obtain bounds on derivatives in terms of bounds on the energies.

The third, and more relevant, difference lies in the construction of the convex envelope. In the case of (\ref{defn:AM}) the convex envelope of $\phi$ vanishes in the interval $[-1,1]$ because every $p$ in this interval can be written as a convex combination of $\pm 1$, and $\phi(1)=\phi(-1)=0$. This is the ultimate reason why the derivatives of minimizers tend to stay close to the two values $\pm 1$ when $\ep$ is small enough.

In our case the convex envelope of $\phi$ vanishes identically on the whole real line, but no real number $p$ can be written as the convex combination of two distinct points where $\phi$ vanishes. Roughly speaking, the vanishing of the convex envelope is achieved only in the limit, in some sense by writing every real number $p$ as a convex combination of 0 and $\pm\infty$, depending on the sign of $p$. This implies that minimizers $\uep(x)$ tend to assume a staircase-like shape, with regions where they are ``almost horizontal'' and regions where they are ``almost vertical'' (as described in the left and central section of Figure~\ref{figure:multi-scale}). From the technical point of view, this means  that there is no hope that the family $\{u_{\ep}'(x)\}$ admits a limit in the sense of Young measures. 

This is the point in which varifolds come into play, because varifolds allow ``functions'' whose graph has in every point a mix of horizontal and vertical ``tangent'' lines.

\paragraph{\textmd{\textit{Our results}}}

In our analysis of the asymptotic behavior of minima and minimizers, we restrict ourselves to forcing terms $f(x)$ of class $C^{1}$, and we prove three main results. 
\begin{itemize}

\item  The first result (Theorem~\ref{thm:asympt-min}) concerns the asymptotic behavior of minima. We prove that the minimum $m_{\ep}$ of (\ref{defn:SPM-intro}) over $H^{2}((0,1))$ satisfies $m_{\ep}\sim c_{0}\ep^{2}|\log\ep|$, where $c_{0}$ is proportional to the integral of $|f'(x)|^{4/5}$.

\item  The second result (Theorem~\ref{thm:BU}) concerns the asymptotic behavior of minimizers $u_{\ep}(x)$. To this end, for every family $x_{\ep}\to x_{0}\in(0,1)$ we consider the families of functions
\begin{equation}
y\mapsto\frac{u_{\ep}(x_{\ep}+\omep y)-f(x_{\ep})}{\omep}
\qquad\text{and}\qquad
y\mapsto\frac{u_{\ep}(x_{\ep}+\omep y)-u_{\ep}(x_{\ep})}{\omep},
\label{defn:BU-uep}
\end{equation}
which correspond to the intuitive idea of zooming the graph of a minimizer $u_{\ep}(x)$ in a neighborhood of $(x_{\ep},f(x_{\ep}))$ and $(x_{\ep},u_{\ep}(x_{\ep}))$ at scale $\omep$. We show that, when $\omep=\ep|\log\ep|^{1/2}$, these functions converge (up to subsequences) in a rather strong sense (strict convergence of bounded variation functions, see Definition~\ref{defn:BV-sc}) to a piecewise constant function, a sort of staircase with steps whose height and length depend on $f'(x_{0})$. This result provides a quantitative description of the staircase-like microstructure of minimizers, with a notion of convergence that is much stronger than weak* convergence in $L^{\infty}(\re)$, and without the technical machinery of Young measures with values in metric spaces (see Remark~\ref{rmk:AM}). 

\item  The third result (Theorem~\ref{thm:varifold}) shows that $u_{\ep}(x)\to f(x)$ first in the sense of uniform convergence, then in the sense of strict convergence of bounded variation functions, and finally in the sense of varifolds, provided that we consider the graph of $f(x)$ as a varifold with a suitable density and a suitable combination of horizontal and vertical tangent lines in every point. 

\end{itemize}

The three results described above are only the first order analysis of what is actually a \emph{multi-scale problem}. In a companion paper (in preparation) we plan to investigate higher-resolution zooms of minimizers (from the center to the right of Figure~\ref{figure:multi-scale}), in order to reveal the exact structure of the horizontal and vertical parts of each step of the staircase. 

\begin{figure}[ht]
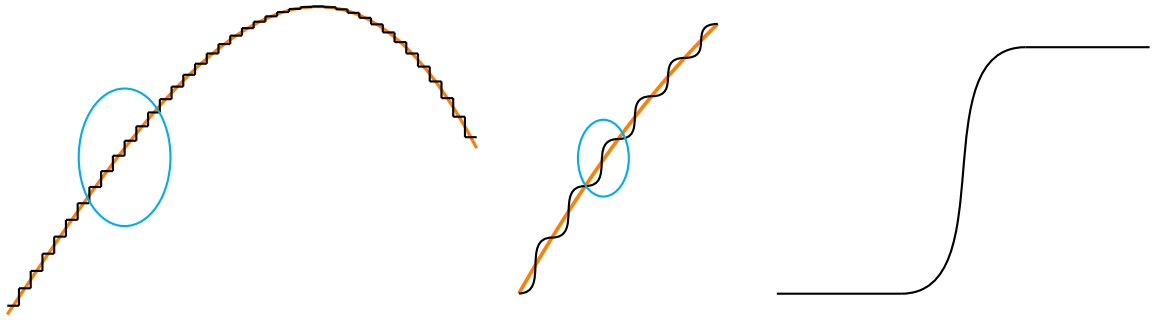

\begin{center}

\hfill
\psset{unit=8.5ex}
\pspicture(0,0)(4,3)
%\psframe(0,0)(4,3)

\psplot[linecolor=orange,linewidth=1.5\pslinewidth]{0}{4}{x 1.5 mul x 3 exp 14 div sub}

\multido{\n=0.05+0.10}{40}{
\psline(!\n \space 0.05 sub \n \space 1.5 mul \n \space 3 exp 14 div sub)
(!\n \space 0.05 add \n \space 1.5 mul \n \space 3 exp 14 div sub)
}

\multido{\n=0.05+0.10}{39}{
\psline
(!\n \space 0.05 add \n \space 1.5 mul \n \space 3 exp 14 div sub)
(!\n \space 0.05 add \n \space 0.1 add 1.5 mul \n \space 0.1 add 3 exp 14 div sub)
}

\psellipse[linecolor=cyan](1,1.35)(0.4,0.6)

\endpspicture
\hfill
\psset{unit=12ex}
\pspicture(-0.2,-0.3)(1.3,1.7)
%\psframe(-0.2,-0.3)(1.3,1.7)

\psplot[linecolor=orange,linewidth=1.7\pslinewidth]{-0.1}{1.1}{x 1.7 mul x 2 exp 3 div sub}

\multido{\n=-0.1+0.2}{6}{\psbezier
(!\n \space \n \space 1.7 mul \n \space 2 exp 3 div sub)
(!\n \space 0.19 add \n \space 1.7 mul \n \space 2 exp 3 div sub)
(!\n \space 0.01 add \n \space 0.2 add 1.7 mul \n \space 0.2 add 2 exp 3 div sub)
(!\n \space 0.2 add \n \space 0.2 add 1.7 mul \n \space 0.2 add 2 exp 3 div sub)
}

\psellipse[linecolor=cyan](0.41,0.65)(0.16,0.24)

\endpspicture
\hfill
\psset{unit=3ex}
\pspicture(-3,-0.5)(6,6)
%\psframe(-3,-0.5)(6,6)

\psline(-3,0)(0,0)
\psbezier(0,0)(2.5,0)(0.5,6)(3,6)
\psline(3,6)(6,6)

\endpspicture
\hfill
\mbox{}

\caption{description of the multi-scale problem at three levels of resolution. Left: staircasing effect in large around the forcing term. Center: zoom of the staircase in a region. Right: cubic transition between two consecutive steps.}
\label{figure:multi-scale}
\end{center}
\end{figure}

\paragraph{\textmd{\textit{Overview of the technique}}}

Our analysis relies on Gamma-convergence techniques. The easy remark is that minimum values of (\ref{defn:SPM-intro}) tend to~0, and minimizers tend to the forcing term in $L^{2}((0,1))$. This is because the unstable character of (\ref{defn:PM}) comes back again when $\ep\to 0^{+}$, and forces the Gamma-limit of the family of functionals (\ref{defn:SPM-intro}) to be identically~0. 

More delicate is finding the vanishing order of minimum values, and the fine structure of minimizers as $\ep\to 0^{+}$. The starting observation is that, if $v_{\ep}(y)$ denotes the blow-up defined in (\ref{defn:BU-uep}) on the left, with $\omep=\ep|\log\ep|^{1/2}$, then $v_{\ep}(y)$ minimizes a rescaled version of (\ref{defn:SPM-intro}), namely the functional
\begin{equation}
\sRPMF_{\ep}(v):=
\int_{I_{\ep}}\left\{\ep^{6}(v'')^{2}+\frac{1}{\ep^{2}|\log\ep|}\log\left(1+(v')^{2}\right)
+\beta\left(v-g_{\ep}\right)^{2}\right\}\,dy,
\label{defn:RPMF-intro}
\end{equation}
where the new forcing term $g_{\ep}(y)$ is a suitable blow-up of $f(x)$, and the new integration interval $I_{\ep}$ depends on the blow-up center $x_{\ep}$, but in any case its length is equal to $\omep^{-1}$, and therefore it diverges.

If $f(x)$ is of class $C^{1}$, then $g_{\ep}(y)\to f'(x_{0})y$ when $x_{\ep}\to x_{0}$. Moreover, the results of~\cite{ABG,2008-TAMS-BF} suggest that, if we consider the functional (\ref{defn:RPMF-intro}) restricted to a \emph{finite fixed interval} $(a,b)$, its Gamma-limit has the form
\begin{equation}
\alpha_{0}J_{1/2}(v)+\beta\int_{a}^{b}\left(v(y)-f'(x_{0})y\right)^{2}\,dy,
\label{defn:J-intro}
\end{equation}
where $\alpha_{0}$ is a suitable positive constant, and the functional $J_{1/2}(v)$ is finite only if $v$ is a ``pure jump function'' (see Definition~\ref{defn:PJF}), and in this class it coincides with the sum of the square roots of the jump heights of $v$.

At the end of the day, this means that the minimum problem for (\ref{defn:SPM-intro}) can be approximated, at a suitable small scale, by a family of minimum problems for functionals such as (\ref{defn:J-intro}), and these minimum problems, due to the simpler form and to the linear forcing term, can be solved almost explicitly.

However, things are not so simple. A first issue is that the integration intervals $I_{\ep}$ in (\ref{defn:RPMF-intro}) invade the whole real line. This forces us to work with local minimizers (namely minimizers up to perturbations with compact support) instead of global minimizers. So we have to adapt the classical Gamma-convergence results in order to deal with local minimizers, and we need also to classify all local minimizers to (\ref{defn:J-intro}). These local minimizers are characterized in Proposition~\ref{prop:loc-min-class}, and they turn out to be staircases whose steps have length and height that depend on $f'(x_{0})$.

The second issue is compactness. We observed before that a bound on $\sPM_{\ep}(\uep)$ does not provide compactness of the family $\{\uep\}$ in any reasonable space. After rescaling and introducing (\ref{defn:RPMF-intro}), on the one hand the good news is that a classical coerciveness result implies that a uniform bound on $\sRPMF_{\ep}(v_{\ep})$ is enough to deduce that the family $\{v_{\ep}\}$ is relatively compact, for example in $L^{2}$. On the other hand, the bad news is that an asymptotic estimate of the form $\sPM_{\ep}(\uep)\sim c_{0}\omep^{2}$ yields only a uniform bound on $\omep\sRPMF_{\ep}(v_{\ep})$, which does not exclude that $\sRPMF_{\ep}(v_{\ep})$ might diverge as $\ep\to 0^{+}$.

We overcome this difficulty by showing that a bound of this type in some interval yields a true uniform bound for $\sRPMF_{\ep}(v_{\ep})$ in a \emph{smaller interval}, and this is enough to guarantee the compactness of local minimizers. This improvement of the bound (see Proposition~\ref{prop:iteration}) requires a delicate iteration argument in a sequence of nested intervals, which probably represents the technical core of this paper.

\paragraph{\textmd{\textit{Possible extensions}}}

In order to contain this paper in a reasonable length, we decided to focus our presentation only on the singular perturbation (\ref{defn:SPM-intro}) of the original functional with the logarithm. Nevertheless, many parts of the theory can be extended to more general models. We discuss some possible generalizations in section~\ref{sec:extension}.

\paragraph{\textmd{\textit{Dynamic consequences}}}

We hope that our variational analysis could be useful in the investigation of solutions to the evolution equation (\ref{defn:PM-eqn}). Numerical experiments with different approximating schemes seem to suggest that solutions develop \emph{instantaneously} a staircase-like pattern consistent with the results of this paper. Due to its instantaneous character, this phase of the dynamic is usually referred to as ``fast time'' (see~\cite{2006-DCDS-BelFusGug}). 

The connection between the dynamic and the variational behavior is hardly surprising if we think of gradient-flows as limits of discrete-time evolutions, as in De Giorgi's theory of minimizing movements. In this context the minimum problem for (\ref{defn:PMF}) with forcing term $f(x)$ equal to the initial datum $u_{0}(x)$ is just the first step in the construction of the minimizing movement. Transforming this intuition into a rigorous statement concerning the fast-time behavior of solutions to (\ref{defn:PM-eqn}) is a challenging problem.

Another issue is that actually the staircasing effect seems to appear only in the so-called supercritical regions of $u_{0}(x)$, namely where $u_{0}'(x)$ falls in the concavity region of $\phi(p)$ (see the simulations in~\cite{2006-SIAM-Esedoglu,2006-DCDS-BelFusGug,2009-JMImVis-GuiLam,2009-JDE-Guidotti,2012-JDE-Guidotti}). The variational analysis can not produce this effect, in some sense because the convexification involves a ``global procedure'', and therefore it is very likely that an explanation should rely also on dynamical arguments. 

\paragraph{\textmd{\textit{Structure of the paper}}}

This paper is organized as follows. In section~\ref{sec:statements} we introduce the notations and we state our main results concerning the asymptotic behavior of minima and minimizers for (\ref{defn:SPM-intro}). In section~\ref{sec:gconv} we state the results that we need concerning the rescaled functionals (\ref{defn:RPMF-intro}) and their Gamma-limit.  In section~\ref{sec:loc-min} we recall the notion of local minimizers, both for (\ref{defn:SPM-intro}) and for the Gamma-limit, and we state their main properties.  In section~\ref{sec:strategy} we show that our main results follow from the properties of local minimizers, that we prove later in section~\ref{sec:loc-min-proof}.  Finally, in section~\ref{sec:extension} we mention some different models to which our theory can be extended, and in section~\ref{sec:open} we present some open problems. We also add an appendix with a proof of the results stated in section~\ref{sec:gconv}, some of which are apparently missing, or present with flawed proofs, in the literature. 

%\clearpage

\setcounter{equation}{0}
\section{Statements}\label{sec:statements}

For every $\ep\in(0,1)$ let us set
\begin{equation}
\omep:=\ep|\log\ep|^{1/2}.
\label{defn:omep}
\end{equation}

Let $\beta>0$ be a real number, let $\Omega\subseteq\re$ be an open set, and let $f\in L^{2}(\Omega)$ be a function that we call forcing term. In order to emphasize the dependence on all the parameters, we write (\ref{defn:SPM-intro}) in the form
\begin{equation}
\PMF_{\ep}(\beta,f,\Omega,u):=
\int_{\Omega}\left\{\ep^{6}\omep^{4}u''(x)^{2}+
\log\left(1+u'(x)^{2}\right)+
\beta(u(x)-f(x))^{2}\right\}dx.
\label{defn:SPM}
\end{equation}

The first result that we state concerns existence and regularity of minimizers, and their convergence to the fidelity term in $L^{2}((0,1))$. We omit the proof because it is a standard application of the direct method in the calculus of variations, and of the fact that the convex envelope of the function $p\mapsto\log(1+p^{2})$ is identically~0.

\begin{prop}[Existence and regularity of minimizers]\label{prop:basic}

Let $\omep$ be defined by (\ref{defn:omep}), and let $\PMF_{\ep}(\beta,f,(0,1),u)$ be defined by (\ref{defn:SPM}), where $\ep\in(0,1)$ and $\beta>0$ are two real numbers, and $f\in L^{2}((0,1))$ is a given function. 

Then the following facts hold true.

\begin{enumerate}
\renewcommand{\labelenumi}{(\arabic{enumi})}

\item  \emph{(Existence)} There exists
\begin{equation}
m(\ep,\beta,f):=\min\left\{\PMF_{\ep}(\beta,f,(0,1),u):u\in H^{2}((0,1))\strut\right\}.
\label{defn:DMnf}
\end{equation} 

\item  \emph{(Regularity)} Every minimizer belongs to $H^{4}((0,1))$, and in particular to $C^{2}([0,1])$.

\item  \emph{(Minimum values vanish in the limit)} It turns out that $m(\ep,\beta,f)\to 0$ as $\ep\to 0^{+}$.

\item  \emph{(Convergence of minimizers to the fidelity term)} If $\{u_{\ep}\}$ is any family of minimizers for (\ref{defn:DMnf}), then $u_{\ep}(x)\to f(x)$ in $L^{2}((0,1))$ as $\ep\to 0^{+}$.

\end{enumerate}

\end{prop}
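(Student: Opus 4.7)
The four claims are standard applications of the direct method together with a competitor construction that exploits the vanishing of the convex envelope of $p\mapsto\log(1+p^2)$. I would organize the proof as follows.

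\textbf{Existence and regularity.} On $H^{2}((0,1))$, any minimizing sequence $\{u_{n}\}$ has $\|u_{n}\|_{L^{2}}$ bounded via the fidelity term and $\|u_{n}''\|_{L^{2}}$ bounded via the bending term; one-dimensional interpolation then gives boundedness in $H^{2}((0,1))$, so, up to a subsequence, $u_{n}\rightharpoonup u_{\infty}$ in $H^{2}$ and $u_{n}'\to u_{\infty}'$ uniformly by Rellich. Weak lower semicontinuity is then routine: the bending term is convex in $u''$, the fidelity term is $L^{2}$-continuous, and $\int\log(1+(u_{n}')^{2})\,dx\to\int\log(1+(u_{\infty}')^{2})\,dx$ by dominated convergence (a constant dominating function, thanks to the uniform convergence of $u_{n}'$). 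For the regularity statement, the Euler-Lagrange equation
\[
2\ep^{6}\omep^{4}\,u^{(4)}=\Bigl(\tfrac{2u'}{1+(u')^{2}}\Bigr)'-2\beta(u-f),
\]
together with the natural boundary conditions $u''(0)=u''(1)=u'''(0)=u'''(1)=0$ and the embedding $H^{2}\hookrightarrow C^{1}$, shows that the right-hand side lies in $L^{2}((0,1))$, so $u^{(4)}\in L^{2}$ and hence $u\in H^{4}((0,1))\hookrightarrow C^{2}([0,1])$.

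\textbf{Vanishing of the minimum via a staircase competitor.} After reducing to $f\in C^{\infty}([0,1])$ by density of smooth functions in $L^{2}((0,1))$ and continuity of the fidelity term in $f$, I would partition $(0,1)$ into $N$ equal intervals, let $\tilde f_{N}$ be the piecewise-constant function equal to the mean of $f$ on each piece, and regularize its $N-1$ jumps on a single scale $\delta=\ep^{2}$. This produces a competitor $u_{\ep,N}\in H^{2}((0,1))$ with $|u_{\ep,N}'|\lesssim N^{-1}\delta^{-1}$ and $|u_{\ep,N}''|\lesssim N^{-1}\delta^{-2}$ on the transitions, and zero elsewhere. Direct estimates bound the logarithmic term by $\lesssim N\delta\,|\log(N\delta)|\lesssim N\ep^{2}|\log\ep|$, the bending term by $\lesssim \ep^{6}\omep^{4}\,(N-1)\,N^{-2}\delta^{-3}\lesssim \omep^{4}/N$, and the fidelity term by $O(N^{-2})$. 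Sending $\ep\to 0^{+}$ at $N$ fixed and then $N\to\infty$ by a diagonal extraction yields $m(\ep,\beta,f)\to 0$, and the convergence of minimizers follows at once from
\[
\beta\int_{0}^{1}(u_{\ep}(x)-f(x))^{2}\,dx\;\leq\;\PMF_{\ep}(\beta,f,(0,1),u_{\ep})=m(\ep,\beta,f)\longrightarrow 0.
\]

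The only delicate point is the choice of $N$ and $\delta$ in the competitor construction: the slow logarithmic growth of $\log(1+p^{2})$ at the steep transitions must dominate both the fidelity error and the fourth-order bending cost. The generous coefficient $\ep^{6}\omep^{4}=\ep^{10}|\log\ep|^{2}$ associated with the scaling (\ref{defn:omep}) leaves ample room for this balancing, so no further subtlety is needed at this qualitative level.
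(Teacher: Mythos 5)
Your argument is correct and follows exactly the route the paper invokes when it omits this proof: the direct method (with one-dimensional interpolation and the compact embedding $H^{2}\hookrightarrow C^{1}$ handling the non-convex gradient term) for statements (1)--(2), and a staircase competitor with regularized jumps, i.e.\ the vanishing of the convex envelope of $p\mapsto\log(1+p^{2})$, for statements (3)--(4). One inessential slip: the natural boundary conditions are $u''=0$ and $2\ep^{6}\omep^{4}u'''=2u'/(1+(u')^{2})$ at $x=0,1$, not $u'''=0$, but your $H^{4}$ regularity conclusion rests only on the interior Euler--Lagrange equation and is unaffected.
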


In the sequel we assume that the forcing term $f(x)$ belongs to $C^{1}([0,1])$. Under this regularity assumption, our first result concerns the asymptotic behavior of minima.

\begin{thm}[Asymptotic behavior of minima]\label{thm:asympt-min}

Let $\omep$ be defined by (\ref{defn:omep}), and let $\PMF_{\ep}(\beta,f,(0,1),u)$ be defined by (\ref{defn:SPM}), where $\ep\in(0,1)$ and $\beta>0$ are two real numbers, and $f\in C^{1}([0,1])$ is a given function. 

Then the minimum value $m(\ep,\beta,f)$ defined in (\ref{defn:DMnf}) satisfies
\begin{equation}
\lim_{\ep\to 0^{+}}\frac{m(\ep,\beta,f)}{\omep^{2}}=
10\left(\frac{2\beta}{27}\right)^{1/5}\int_{0}^{1}|f'(x)|^{4/5}\,dx.
\label{th:asympt-min}
\end{equation}

\end{thm}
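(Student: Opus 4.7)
The plan is to prove Theorem~\ref{thm:asympt-min} by matching upper and lower bounds for $m(\ep,\beta,f)/\omep^{2}$, using the rescaled functional (\ref{defn:RPMF-intro}) and its Gamma-limit (\ref{defn:J-intro}) as the main bridge. The starting observation is the exact scaling identity obtained by the change of variable $x=x_{0}+\omep y$: if $v(y):=(u(x_{0}+\omep y)-f(x_{0}))/\omep$ and $g_{\ep}(y):=(f(x_{0}+\omep y)-f(x_{0}))/\omep$, then $\PMF_{\ep}(\beta,f,(x_{0}-a\omep,x_{0}+b\omep),u)=\omep^{3}\sRPMF_{\ep}(v)$ on $(-a,b)$. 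Since the original interval $(0,1)$ rescales to a $y$-interval of length $\omep^{-1}$, an energy density of order one per unit $y$-length in the rescaled problem produces precisely the claimed $\omep^{2}$-scaling in the original problem. Moreover, since $f\in C^{1}$, one has $g_{\ep}(y)\to f'(x_{0})y$ locally uniformly whenever $x_{\ep}\to x_{0}$, so the limit problem in a neighborhood of $x_{0}$ acquires the linear forcing term appearing in (\ref{defn:J-intro}).

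The upper bound is obtained by an explicit construction. I would partition $(0,1)$ into sub-intervals $I_{j}=(x_{j-1},x_{j})$ of length $\delta$ with $\omep\ll\delta\ll 1$, on each of which $f'$ is approximately constant equal to $f'(x_{j})$. On $I_{j}$ the test function is built by pasting, in the rescaled variable, the periodic staircase that is optimal for (\ref{defn:J-intro}) with linear forcing $f'(x_{j})y$, regularized by the recovery profiles from the Gamma-convergence of $\sRPMF_{\ep}$ to (\ref{defn:J-intro}) to be established in section~\ref{sec:gconv}. The optimal period $L_{j}$ in the $y$-variable is determined by minimizing the density
\begin{equation*}
e(L)=\alpha_{0}\,\frac{|f'(x_{j})|^{1/2}}{L^{1/2}}+\frac{\beta\,(f'(x_{j}))^{2}\,L^{2}}{12},
\end{equation*}
which yields $L_{j}=(3\alpha_{0}/(\beta|f'(x_{j})|^{3/2}))^{2/5}$ and a minimum density $E(x_{j})=C(\alpha_{0})\,\beta^{1/5}|f'(x_{j})|^{4/5}$ for an explicit constant $C(\alpha_{0})$. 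Boundary-layer corrections needed to glue adjacent pieces contribute a lower-order term. Sending first $\ep\to 0^{+}$ and then $\delta\to 0^{+}$ gives $\limsup_{\ep\to 0^{+}}m(\ep,\beta,f)/\omep^{2}\leq\int_{0}^{1}E(x)\,dx$. The value of $\alpha_{0}$, fixed in section~\ref{sec:gconv}, must match so that $C(\alpha_{0})=10(2/27)^{1/5}$ and the right-hand side equals the constant in (\ref{th:asympt-min}).

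For the lower bound I would use the blow-up method. Given a family of minimizers $\{u_{\ep}\}$, I would form the non-negative measures $\mu_{\ep}$ on $[0,1]$ obtained by localizing $\omep^{-2}\PMF_{\ep}$, so that $\mu_{\ep}((0,1))=\omep^{-2}m(\ep,\beta,f)$. The upper bound already gives uniform mass, hence a subsequential weak-$\ast$ limit $\mu$ exists. The goal is to prove that at Lebesgue a.e.\ $x_{0}\in(0,1)$ the Radon--Nikodym density satisfies $d\mu/dx(x_{0})\geq E(x_{0})$. This reduces, via blow-up around $x_{0}$ as in the left formula of (\ref{defn:BU-uep}), to the assertion that every subsequential limit of the blow-ups $v_{\ep}$ is a local minimizer on $\R$ of $\alpha_{0}J_{1/2}(\cdot)+\beta\int(\cdot-f'(x_{0})y)^{2}\,dy$; the classification in Proposition~\ref{prop:loc-min-class} then forces such a local minimizer to be a staircase of period $L_{*}$ and energy density exactly $E(x_{0})$. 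A Fatou/lower-semicontinuity argument concludes $\liminf_{\ep\to 0^{+}}m(\ep,\beta,f)/\omep^{2}\geq\int_{0}^{1}E(x)\,dx$.

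The main technical obstacle, as anticipated in the introduction, is the compactness of the blow-ups: from $m(\ep,\beta,f)=O(\omep^{2})$ one only obtains an \emph{averaged} rescaled bound $\omep\,\sRPMF_{\ep}(v_{\ep})=O(1)$ on the long interval $I_{\ep}$ of length $\omep^{-1}$, which a priori permits $\sRPMF_{\ep}(v_{\ep})$ itself to diverge and thus prevents direct use of the Gamma-liminf on finite intervals. Promoting this averaged estimate into a uniform bound for $\sRPMF_{\ep}(v_{\ep})$ on every compact subinterval is exactly what Proposition~\ref{prop:iteration} delivers through the nested-interval iteration that the authors describe as the technical core of the paper; without this improvement the blow-up limit could fail to be a local minimizer because of energy lost at infinity.
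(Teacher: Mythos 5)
Your upper bound is essentially the paper's argument: a mesoscale partition at scale proportional to $\omep$, locally linear forcing, and an optimal-period computation whose constant matches, since your density $E(x)=\frac{5}{4}\left(\frac{\alpha_{0}^{4}\beta}{3}\right)^{1/5}|f'(x)|^{4/5}$ is exactly $c_{1}|f'(x)|^{4/5}=10\left(\frac{2\beta}{27}\right)^{1/5}|f'(x)|^{4/5}$ from (\ref{defn:c123})--(\ref{defn:c1}). The paper implements the construction by gluing minimizers of the boundary-value problems $\mu_{\ep}^{*}(\beta,L_{\ep},M_{\ep,L,k})$ on intervals of length $L_{\ep}\omep$ (so that the pieces match in a $C^{1}$ way) rather than explicit staircases regularized by recovery profiles, but this is the same idea, and your boundary-layer remark corresponds to the $c_{3}|M|^{1/5}$ correction in (\ref{th:est-mu*-abLM}).

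The lower bound is where you genuinely depart from the paper, and where your sketch has a gap. The paper does not use blow-ups, the classification of local minimizers, or Proposition~\ref{prop:iteration} for this theorem at all: it partitions $(0,1)$ into intervals of length $\approx L\omep$, replaces $f$ by its piecewise affine interpolant (with a Young-type splitting of the fidelity term), bounds the energy on each piece from below by the \emph{unconstrained} minimum $\mu_{\ep}(\beta,L,M_{\ep,L,k})$ of the rescaled problem with linear forcing, and then invokes the uniform-in-$M$ convergence $\mu_{\ep}\to\mu_{0}$ (Proposition~\ref{prop:mu}) together with $\mu_{0}\geq c_{1}|M|^{4/5}L-c_{2}|M|^{1/5}$ (Proposition~\ref{prop:loc-min-discr}); this needs only Gamma-convergence on a fixed interval $(0,L)$. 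Your localized-measure argument, by contrast, hits a scale mismatch at the crucial reduction: the density $d\mu/dx(x_{0})$ is computed on intervals $(x_{0}-r,x_{0}+r)$ with $r$ fixed as $\ep\to 0^{+}$, i.e.\ of length $2r/\omep\to+\infty$ in the blow-up variable, whereas a subsequential blow-up limit at $x_{0}$ controls only a window of \emph{fixed} rescaled length. Knowing that this single limit is an entire local minimizer of energy density $E(x_{0})$ does not, by itself, bound from below the average energy over the whole of $(x_{0}-r,x_{0}+r)$: energy could a priori be much smaller on the part of the interval at distance $\gg\omep$ from $x_{0}$. To close the gap you must either cover $(x_{0}-r,x_{0}+r)$ by $\approx r/(L\omep)$ windows of fixed rescaled length $L$, run the blow-up/classification argument at $\ep$-dependent centers of the low-energy windows (Proposition~\ref{prop:bu2minloc} does allow arbitrary centers converging to an interior point) and use the Gamma-liminf on each window plus the fact that a staircase's energy on a window of length $L$ is at least $E(x_{0})L-C$, or simply bound each window from below by $\mu_{\ep}(\beta,L,\cdot)$ as the paper does---which makes the blow-up machinery and Proposition~\ref{prop:iteration} unnecessary here. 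On the positive side, your ordering is not circular: the upper bound supplies the bound of type (\ref{hp:bound-G}) needed for compactness of blow-ups, consistently with the fact that the paper uses Theorem~\ref{thm:asympt-min} only afterwards, in the proof of Theorem~\ref{thm:BU}.
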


The asymptotic behavior of $m(\ep,\beta,f)$ under weaker regularity assumptions on $f(x)$ is a largely open problem. We refer to section~\ref{sec:open} for further details.

%\clearpage

Now we investigate the asymptotic behavior of minimizers. The intuitive idea is that they tend to develop a staircase structure. In order to formalize this idea, we need several definitions. To begin with, we define some classes of ``staircase-like functions''.

\begin{defn}[Canonical staircases]\label{defn:staircase}
\begin{em}

Let $S:\re\to\re$ be the function defined by
\begin{equation*}
S(x):=2\left\lfloor\frac{x+1}{2}\right\rfloor
\qquad
\forall x\in\re,
\end{equation*}
where, for every real number $\alpha$, the symbol $\lfloor\alpha\rfloor$ denotes the greatest integer less than or equal to $\alpha$. For every pair $(H,V)$ of real numbers, with $H>0$, we call \emph{canonical $(H,V)$-staircase} the function $S_{H,V}:\re\to\re$ defined by
\begin{equation}
S_{H,V}(x):=V\cdot S(x/H)
\qquad
\forall x\in\re.
\label{defn:SC0}
\end{equation}

\end{em}
\end{defn}

Roughly speaking, the graph of $S_{H,V}(x)$ is a staircase with steps of horizontal length $2H$ and vertical height $2V$. The origin is the midpoint of the horizontal part of one of the steps. The staircase degenerates to the null function when $V=0$, independently of the value of~$H$.

\begin{defn}[Translations of the canonical staircase]\label{defn:translations}
\begin{em}

Let $(H,V)$ be a pair of real numbers, with $H>0$, and let $S_{H,V}(x)$ be the function defined in (\ref{defn:SC0}). Let $v:\re\to\re$ be a function.
\begin{itemize}

\item We say that $v$ is an \emph{oblique translation} of $S_{H,V}(x)$, and we write $v\in\obl(H,V)$, if there exists a real number $\tau_{0}\in[-1,1]$ such that
\begin{equation}
v(x)=S_{H,V}(x-H\tau_{0})+V\tau_{0}
\qquad
\forall x\in\re.
\nonumber
\end{equation}

\item We say that $v$ is a \emph{graph translation of horizontal type} of $S_{H,V}(x)$, and we write $v\in\orz(H,V)$, if there exists a real number $\tau_{0}\in[-1,1]$ such that
\begin{equation}
v(x)=S_{H,V}(x-H\tau_{0})
\qquad
\forall x\in\re.
\nonumber
\end{equation}

\item We say that $v$ is a \emph{graph translation of vertical type} of $S_{H,V}(x)$, and we write $v\in\vrt(H,V)$, if there exists a real number $\tau_{0}\in[-1,1]$ such that
\begin{equation}
v(x)=S_{H,V}(x-H)+V(1-\tau_{0})
\qquad
\forall x\in\re.
\nonumber
\end{equation}

\end{itemize}

\end{em}
\end{defn}

\begin{rmk}
\begin{em}

Let us interpret translations of the canonical staircase in terms of graph (see Figure~\ref{figure:staircase}).
\begin{itemize}

\item  Oblique translations correspond to taking the graph of the canonical staircase $S_{H,V}(x)$ and moving the origin along the line $Hy=Vx$, namely the line that connects the midpoints of the steps.

\item  Graph translations of horizontal type correspond to moving the origin to some point in the horizontal part of some step.

\item  Graph translations of vertical type correspond to moving the origin to some point in the vertical part of some step.

\end{itemize}

We observe that graph translations of horizontal type with $\tau_{0}=\pm 1$ coincide with graph translations of vertical type with the same value of $\tau_{0}$. In those cases the origin is moved to the ``corners'' of the graph.

\end{em}
\end{rmk}

\begin{figure}[h]
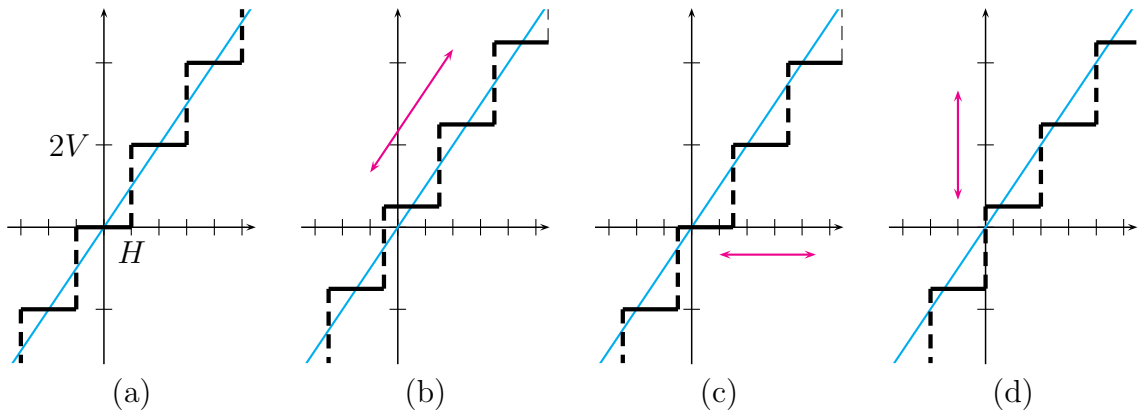


\hfill
\psset{unit=2ex}
\pspicture(-4,-6.5)(6,8.5)

\psline[linewidth=0.7\pslinewidth]{->}(-3.5,0)(5.5,0)
\psline[linewidth=0.7\pslinewidth]{->}(0,-5)(0,8)

\multiput(0,-3)(0,3){4}{\psline[linewidth=0.5\pslinewidth](-0.3,0)(0.3,0)}
\multiput(-3,0)(1,0){9}{\psline[linewidth=0.5\pslinewidth](0,-0.3)(0,0.3)}

\psclip{\psframe[linestyle=none](-3.5,-5)(5.5,8)}
\psline[linecolor=cyan](-4,-6)(6,9)
\multiput(-3,-3)(2,3){4}{\psline[linewidth=2\pslinewidth](0,0)(2,0)}
\multiput(-3,-3)(2,3){5}{\psline[linewidth=2\pslinewidth,linestyle=dashed](0,0)(0,-3)}
\endpsclip

\rput[r](-0.5,3){$2V$}
\rput[t](1,-0.5){$H$}
\rput[t](1,-5.5){(a)}

\endpspicture
\hfill
\pspicture(-4,-6.5)(6,8.5)

\psline[linewidth=0.7\pslinewidth]{->}(-3.5,0)(5.5,0)
\psline[linewidth=0.7\pslinewidth]{->}(0,-5)(0,8)

\multiput(0,-3)(0,3){4}{\psline[linewidth=0.5\pslinewidth](-0.3,0)(0.3,0)}
\multiput(-3,0)(1,0){9}{\psline[linewidth=0.5\pslinewidth](0,-0.3)(0,0.3)}

\psline[linecolor=magenta]{<->}(-1,2)(2,6.5)

\psclip{\psframe[linestyle=none](-3.55,-5)(5.5,8)}
\psline[linecolor=cyan](-4,-6)(6,9)
\multiput(-2.5,-2.25)(2,3){4}{\psline[linewidth=2\pslinewidth](0,0)(2,0)}
\multiput(-2.5,-2.25)(2,3){5}{\psline[linewidth=2\pslinewidth,linestyle=dashed](0,0)(0,-3)}
\endpsclip

\rput[t](1,-5.5){(b)}

\endpspicture
\hfill
\pspicture(-4,-6.5)(6,8.5)

\psline[linewidth=0.7\pslinewidth]{->}(-3.5,0)(5.5,0)
\psline[linewidth=0.7\pslinewidth]{->}(0,-5)(0,8)

\multiput(0,-3)(0,3){4}{\psline[linewidth=0.5\pslinewidth](-0.3,0)(0.3,0)}
\multiput(-3,0)(1,0){9}{\psline[linewidth=0.5\pslinewidth](0,-0.3)(0,0.3)}

\psline[linecolor=magenta]{<->}(1,-1)(4.5,-1)

\psclip{\psframe[linestyle=none](-3.5,-5)(5.5,8)}
\psline[linecolor=cyan](-4,-6)(6,9)
\multiput(-2.5,-3)(2,3){4}{\psline[linewidth=2\pslinewidth](0,0)(2,0)}
\multiput(-2.5,-3)(2,3){5}{\psline[linewidth=2\pslinewidth,linestyle=dashed](0,0)(0,-3)}
\endpsclip

\rput[t](1,-5.5){(c)}

\endpspicture
\hfill
\pspicture(-4,-6.5)(6,8.5)

\psline[linewidth=0.7\pslinewidth]{->}(-3.5,0)(5.5,0)
\psline[linewidth=0.7\pslinewidth]{->}(0,-5)(0,8)

\multiput(0,-3)(0,3){4}{\psline[linewidth=0.5\pslinewidth](-0.3,0)(0.3,0)}
\multiput(-3,0)(1,0){9}{\psline[linewidth=0.5\pslinewidth](0,-0.3)(0,0.3)}

\psline[linecolor=magenta]{<->}(-1,1)(-1,5)

\psclip{\psframe[linestyle=none](-3.5,-5)(5.5,8)}
\psline[linecolor=cyan](-4,-6)(6,9)
\multiput(-2,-2.25)(2,3){4}{\psline[linewidth=2\pslinewidth](0,0)(2,0)}
\multiput(-2,-2.25)(2,3){6}{\psline[linewidth=2\pslinewidth,linestyle=dashed](0,0)(0,-3)}
\endpsclip

\rput[t](1,-5.5){(d)}

\endpspicture
\hfill\mbox{}

\caption{(a)~Canonical staircase. (b)~Oblique translation. (c)~Graph translation of horizontal type. (d)~Graph translation of vertical type. In all translations the parameter is $\tau_{0}=1/2$.}
\label{figure:staircase}
\end{figure}

%\clearpage

In the sequel $BV((a,b))$ denotes the space of functions with bounded variation in the interval $(a,b)\subseteq\re$. For every function $u$ in this space, $Du$ denotes its distributional derivative, which is  a signed measure, and $|Du|((a,b))$ denotes the total variation of $u$ in $(a,b)$. We call jump points of $u$ the points $x\in(a,b)$ where $u$ is not continuous. As usual, $BV\loc(\re)$ denotes the set of all functions $u:\re\to\re$ whose restriction to every interval $(a,b)$ belongs to $BV((a,b))$. The staircase-like functions we have introduced above are typical examples of elements of the space $BV\loc(\re)$. 

Our result for the asymptotic behavior of minimizers involves smooth functions converging to staircases. The strongest sense in which this convergence is possible is the so-called strict converge. We recall here the definitions (see~\cite[Definition~3.14]{AFP}).

\begin{defn}[Strict convergence in an interval]\label{defn:BV-sc}
\begin{em}

Let $(a,b)\subseteq\re$ be an interval. A sequence of functions $\{u_{n}\}\subseteq BV((a,b))$ converges \emph{strictly} to some $u_{\infty}\in BV((a,b))$, and we write
\begin{equation}
u_{n}\auto u_{\infty}
\quad\text{in }BV((a,b)),
\nonumber
\end{equation} 
if
\begin{equation}
u_{n}\to u_{\infty} \text{ in } L^{1}((a,b))
\qquad\text{and}\qquad
|Du_{n}|((a,b))\to|Du_{\infty}|((a,b)).
\nonumber
\end{equation} 

\end{em}
\end{defn}

\begin{defn}[Locally strict convergence on the whole real line]\label{defn:BV-lsc}
\begin{em}

A sequence of functions $\{u_{n}\}\subseteq BV\loc(\re)$ converges \emph{locally strictly} to some $u_{\infty}\in BV\loc(\re)$, and we write
\begin{equation}
u_{n}\auto u_{\infty}
\quad\text{in }BV\loc(\re),
\nonumber
\end{equation} 
if $u_{n}\auto u_{\infty}$ in $BV((a,b))$ for every interval $(a,b)\subseteq\re$ whose endpoints are not jump points of the limit $u_{\infty}$.

\end{em}
\end{defn}

Both definitions can be extended in the usual way to families depending on real parameters. For example, $u_{\ep}\auto u_{0}$ in $BV((a,b))$ as $\ep\to 0^{+}$ if and only if $u_{\ep_{n}}\auto u_{0}$ in $BV((a,b))$ for every sequence $\ep_{n}\to 0^{+}$.

In the following remark we recall some consequences of strict convergence.

\begin{rmk}[Consequences of strict convergence]\label{rmk:strict}
\begin{em}

Let us assume that $u_{n}\auto u_{\infty}$ in $BV((a,b))$. Then the following facts hold true.
\begin{enumerate}
\renewcommand{\labelenumi}{(\arabic{enumi})}

\item  It turns out that $\{u_{n}\}$ is bounded in $L^{\infty}((a,b))$, and $u_{n}\to u_{\infty}$ in $L^{p}((a,b))$ for every $p\geq 1$ (but not necessarily for $p=+\infty$). 

\item \label{strict:cont} For every $x\in(a,b)$, and every sequence $x_{n}\to x$, it turns out that
\begin{equation}
\liminf_{y\to x}u_{\infty}(y)\leq\liminf_{n\to +\infty}u_{n}(x_{n})\leq
\limsup_{n\to +\infty}u_{n}(x_{n})\leq\limsup_{y\to x}u_{\infty}(y),
\nonumber
\end{equation}
and in particular $u_{n}(x_{n})\to u_{\infty}(x)$ whenever $u_{\infty}$ is continuous in $x$, and the convergence is uniform in $(a,b)$ if the limit $u_{\infty}$ is continuous in $(a,b)$.

\item \label{strict:sub-int}  It turns out that $u_{n}\auto u_{\infty}$ in $BV((c,d))$ for every interval $(c,d)\subseteq(a,b)$ whose endpoints are not jump points of the limit $u_{\infty}.$

\item\label{th:conv*+-}  The positive and negative part of the distributional derivatives converge separately in the \emph{closed} interval (see~\cite[Proposition~3.15]{AFP}). More precisely, if $D^{+}u_{n}$ and $D^{-}u_{n}$ denote, respectively, the positive and negative part of the signed measure $Du_{n}$, and similarly for $u_{\infty}$, then for every continuous test function $\phi:[a,b]\to\re$ it turns out that
\begin{equation*}
\lim_{n\to +\infty}\int_{[a,b]}\phi(x)\,dD^{+}u_{n}(x)=
\int_{[a,b]}\phi(x)\,dD^{+}u_{\infty}(x),
\end{equation*}
and similarly with $D^{-}u_{n}$ and $D^{-}u_{\infty}$.
\end{enumerate}

\end{em}
\end{rmk}

%\clearpage

In our second main result we consider any family $\{\uep(x)\}$ of minimizers to (\ref{defn:SPM}) and any family of points $x_{\ep}\to x_{0}\in(0,1)$, and we investigate the asymptotic behavior of the family of fake blow-ups (we call them ``fake'' because in the numerator we subtract $f(x_{\ep})$ instead of $\uep(x_{\ep})$) defined by
\begin{equation}
w_{\ep}(y):=\frac{\uep(x_{\ep}+\omep y)-f(x_{\ep})}{\omep}
\qquad
\forall y\in\left(-\frac{x_{\ep}}{\omep},\frac{1-x_{\ep}}{\omep}\right),
\label{defn:wep}
\end{equation}
and the asymptotic behavior of the family of true blow-ups defined by
\begin{equation}
v_{\ep}(y):=\frac{\uep(x_{\ep}+\omep y)-\uep(x_{\ep})}{\omep}\qquad
\forall y\in\left(-\frac{x_{\ep}}{\omep},\frac{1-x_{\ep}}{\omep}\right).
\label{defn:vep}
\end{equation}

We prove that both families are relatively compact in the sense of locally strict convergence, and all their limit points are suitable staircases.

\begin{thm}[Blow-up of minimizers at standard resolution]\label{thm:BU}

Let $\omep$ be defined by (\ref{defn:omep}), and let $\PMF_{\ep}(\beta,f,(0,1),u)$ be defined by (\ref{defn:SPM}), where $\ep\in(0,1)$ and $\beta>0$ are two real numbers, and $f\in C^{1}([0,1])$ is a given function. 

Let $\{u_{\ep}\}\subseteq H^{2}((0,1))$ be a family of functions with
\begin{equation}
u_{\ep}\in\operatorname{argmin}\left\{\PMF_{\ep}(\beta,f,(0,1),u):u\in H^{2}((0,1))\strut\right\}
\qquad
\forall\ep\in(0,1),
\nonumber
\end{equation}
and let $x_{\ep}\to x_{0}\in(0,1)$ be a family of points. Let us consider the canonical $(H,V)$-staircase with parameters
\begin{equation}
H:=\left(\frac{24}{\beta^{2}|f'(x_{0})|^{3}}\right)^{1/5},
\qquad\qquad
V:=f'(x_{0})H,
\label{defn:HV}
\end{equation}
with the agreement that this staircase is identically equal to~0 when $f'(x_{0})=0$. 

Then the following statements hold true.

\begin{enumerate}
\renewcommand{\labelenumi}{(\arabic{enumi})}

\item \emph{(Compactness of fake blow-ups).} The family $\{w_{\ep}(y)\}$ defined by (\ref{defn:wep}) is relatively compact with respect to locally strict convergence, and every limit point is an oblique translation of the canonical $(H,V)$-staircase.

More precisely, for every sequence $\{\ep_{n}\}\subseteq(0,1)$ with $\ep_{n}\to 0^{+}$ there exist an increasing sequence $\{n_{k}\}$ of positive integers and a function $w_{\infty}\in\obl(H,V)$ such that
\begin{equation}
w_{\ep_{n_{k}}}(y)\auto w_{\infty}(y)
\quad\text{in }BV\loc(\re).
\nonumber
\end{equation}

\item \emph{(Compactness of true blow-ups).} The family $\{v_{\ep}(y)\}$ defined by (\ref{defn:vep}) is relatively compact with respect to locally strict convergence, and every limit point is a graph translation of the canonical $(H,V)$-staircase.

More precisely, for every sequence $\{\ep_{n}\}\subseteq(0,1)$ with $\ep_{n}\to 0^{+}$ there exist an increasing sequence $\{n_{k}\}$ of positive integers and a function $v_{\infty}\in\orz(H,V)\cup\vrt(H,V)$ such that
\begin{equation}
v_{\ep_{n_{k}}}(y)\auto v_{\infty}(y)
\quad\text{in }BV\loc(\re).
\nonumber
\end{equation}

\item \emph{(Realization of all possible oblique translations)}. Let $w_{0}\in\obl(H,V)$ be any oblique translation of the canonical $(H,V)$-staircase. 

Then there exists a family $\{x'_{\ep}\}\subseteq(0,1)$ such that
\begin{equation}
\limsup_{\ep\to 0^{+}}\frac{|x'_{\ep}-x_{\ep}|}{\omep}\leq H,
\label{th:liminf-limsup}
\end{equation}
and
\begin{equation}
\frac{u_{\ep}(x'_{\ep}+\omep y)-f(x'_{\ep})}{\omep}\auto w_{0}(y)
\quad\text{in }BV\loc(\re).
\label{th:BU-conv-fake}
\end{equation} 

\item \emph{(Realization of all possible graph translations)}. Let $v_{0}\in\orz(H,V)\cup\vrt(H,V)$ be any graph translation of the canonical $(H,V)$-staircase. 

Then there exists a family $\{x'_{\ep}\}\subseteq(0,1)$ satisfying (\ref{th:liminf-limsup})
and
\begin{equation}
\frac{u_{\ep}(x'_{\ep}+\omep y)-u_{\ep}(x'_{\ep})}{\omep}\auto v_{0}(y)
\quad\text{in }BV\loc(\re).
\nonumber
\end{equation} 

\end{enumerate}

\end{thm}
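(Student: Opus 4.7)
\textbf{Proof plan for Theorem~\ref{thm:BU}.}

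The strategy is to transport the analysis from the original scale to the intrinsic scale $\omep$, where minimizers of $\PMF_{\ep}$ correspond to local minimizers of the rescaled functional $\sRPMF_{\ep}$ in (\ref{defn:RPMF-intro}), and then to invoke Gamma-convergence to (\ref{defn:J-intro}) together with the classification of local minimizers (Proposition~\ref{prop:loc-min-class}). Concretely, setting
\begin{equation*}
g_{\ep}(y):=\frac{f(x_{\ep}+\omep y)-f(x_{\ep})}{\omep},
\end{equation*}
the regularity $f\in C^{1}([0,1])$ implies $g_{\ep}(y)\to f'(x_{0})y$ uniformly on every compact subset of $\re$. A change of variables shows that $w_{\ep}$ defined by (\ref{defn:wep}) is a local minimizer of $\sRPMF_{\ep}(\cdot,g_{\ep})$ on the interval $I_{\ep}:=(-x_{\ep}/\omep,(1-x_{\ep}/\omep))$, whose length diverges. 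For the true blow-ups one has $v_{\ep}(y)=w_{\ep}(y)-w_{\ep}(0)$, so their analysis reduces to that of $\{w_{\ep}\}$ up to a scalar shift.

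The first key step is compactness. The asymptotic estimate in Theorem~\ref{thm:asympt-min} gives only $\omep\cdot\sRPMF_{\ep}(w_{\ep},I_{\ep})\leq C$, which a priori does not prevent $\sRPMF_{\ep}(w_{\ep})$ from diverging on bounded windows. Here one applies the iteration mechanism of Proposition~\ref{prop:iteration}, which upgrades this weak bound to a genuine uniform bound $\sRPMF_{\ep}(w_{\ep},(a,b))\leq C(a,b)$ on every bounded interval. Standard coercivity for the rescaled functional then yields relative compactness of $\{w_{\ep}\}$ in $L^{2}\loc(\re)$ and, together with the convergence of derivatives' total variations encoded in the Gamma-limit, in the sense of locally strict convergence.

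The second key step is identification of the limit. By the standard passage of local minimizers through Gamma-convergence (applied here to the variants for locally defined functionals on diverging intervals), any limit $w_{\infty}$ must be a local minimizer of the functional $\alpha_{0}J_{1/2}(v)+\beta\int(v-f'(x_{0})y)^{2}\,dy$. By Proposition~\ref{prop:loc-min-class}, every such local minimizer is an oblique translation of the canonical $(H,V)$-staircase, with $(H,V)$ determined by minimization of the linear forcing $f'(x_{0})y$, yielding precisely (\ref{defn:HV}). For the true blow-ups, up to a further subsequence $w_{\ep}(0)$ converges to some constant $c$ lying in the closed interval $[\liminf_{y\to 0}w_{\infty}(y),\limsup_{y\to 0}w_{\infty}(y)]$ by Remark~\ref{rmk:strict}(\ref{strict:cont}); then $v_{\infty}(y)=w_{\infty}(y)-c$, and the two possibilities (continuity versus jump of $w_{\infty}$ at $0$) produce respectively a horizontal translation in $\orz(H,V)$ or a vertical translation in $\vrt(H,V)$.

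For the realization statements (3) and (4), the observation is that replacing $x_{\ep}$ by $x_{\ep}+\omep h_{\ep}$ with $|h_{\ep}|\leq H$ translates the blow-up, in the limit, by $h_{\ep}$ in the $y$-variable, so the phase parameter $\tau_{0}$ of the limiting oblique translation ranges over the whole interval $[-1,1]$ as $h_{\ep}$ varies in $[-H,H]$. Choosing $h_{\ep}$ to compensate the subsequential phase produced by $w_{\ep}$ and then adjust it to any prescribed value yields the desired translation, with the bound (\ref{th:liminf-limsup}). For graph translations one additionally selects $h_{\ep}$ so that the point $x'_{\ep}$ falls into a horizontal or vertical portion of the staircase produced by the limit; the continuity/jump alternative above then delivers the two cases. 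The main obstacle in this program is Step~2, namely the iteration argument underlying Proposition~\ref{prop:iteration}, which is singled out in the overview as the technical core of the paper; a secondary delicate point is the correct handling of endpoints and of the ``corner'' cases $\tau_{0}=\pm 1$ in the realization argument.
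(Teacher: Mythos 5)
Your treatment of statements (1) and (2) follows the paper's own route: rescaling to $\RPMF_{\ep}$ with forcing $g_{\ep}(y)\to f'(x_{0})y$, compactness via the iteration bound of Proposition~\ref{prop:iteration} combined with the Gamma-convergence package, identification of limits as entire local minimizers and hence, by Proposition~\ref{prop:loc-min-class}, as oblique translations of $S_{H,V}$, and the continuity-versus-jump dichotomy at $y=0$ (Remark~\ref{rmk:strict}) to pass from fake to true blow-ups. That part is sound and essentially identical to the paper (it is exactly what Propositions~\ref{prop:bu2minloc} and~\ref{prop:bu2Rminloc} encapsulate).

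The gap is in the realization statements (3) and (4). Statement (3) asks for a family $\{x'_{\ep}\}$ whose fake blow-ups converge to the prescribed $w_{0}$ along the \emph{whole} family $\ep\to 0^{+}$, not merely along subsequences. Your plan — ``choosing $h_{\ep}$ to compensate the subsequential phase produced by $w_{\ep}$'' — is not a construction: for a fixed $\ep$ the blow-up $w_{\ep}$ is a smooth function, not a staircase, so it has no well-defined phase, and compensating phases subsequence by subsequence does not produce one family with a single limit (different subsequences of $\{x_{\ep}\}$-blow-ups may select different $\tau$'s). What is missing is a selection mechanism: the paper chooses $x'_{\ep}$ as a minimizer over the window $[x_{\ep}-H\omep,x_{\ep}+H\omep]$ of the continuous functional $\psi_{\ep}(x):=\int_{-H}^{H}\bigl|\bigl(u_{\ep}(x+\omep y)-f(x)\bigr)/\omep\bigr|\,dy$, and then rules out any limit other than $S_{H,V}$ by a contradiction argument in which the center is shifted by $(2k+\tau_{1})H\omep$, using $f(x''_{\ep})-f(x'_{\ep})\approx f'(x_{0})\cdot(\text{shift})$ to convert the graph shift into an oblique one and strictly decrease $\psi_{\ep}$; general oblique translations then come from deterministic shifts of this $x'_{\ep}$ (with the mod-$2H$ correction $k_{\ep}\in\{-1,0,1\}$ to keep (\ref{th:liminf-limsup})). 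Without some such device your argument only yields subsequential realizations. For statement (4) in the vertical case there is a second missing idea: ``selecting $h_{\ep}$ so that $x'_{\ep}$ falls into a vertical portion'' cannot prescribe the offset $\tau_{0}$, because the vertical part of the limit staircase is a single jump point and the value subtracted by the true blow-up there is a priori an arbitrary element of $[-V,V]$. The paper instead uses the continuity of $w_{\ep}$ and the intermediate value theorem to find points $a_{\ep},b_{\ep}$ with $w_{\ep}(a_{\ep})=(-1+\tau_{0})V$ and $w_{\ep}(b_{\ep})=(1+\tau_{0})V$, shows $a_{\ep}\to -H$, $b_{\ep}\to H$, and recenters there; the corner cases $\tau_{0}=\pm 1$ are then recovered by closedness of the set of attainable limits under locally strict convergence. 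These two devices (variational selection of the center, and the intermediate-value recentering) are the actual content of parts (3)–(4) and are absent from your proposal.
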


Let us make some comments about Theorem~\ref{thm:BU} above. To begin with, we consider the special case of stationary points, and the special case of blow-ups in boundary points.

\begin{rmk}[Stationary points of the forcing term]
\begin{em}

In the special case where $f'(x_{0})=0$, the canonical $(H,V)$-staircase is identically equal to~0, and it coincides with all its oblique or graph translations. In this case the whole family of fake blow-ups and the whole family of true blow-ups converge to~0, without any need of subsequences.

\end{em}
\end{rmk}

\begin{rmk}[Internal vs boundary blow-ups]\label{rmk:BU-semi-int}
\begin{em}

For the sake of shortness, we stated the result in the case where $x_{0}\in(0,1)$. The very same conclusions hold true, with exactly the same proof, even if $x_{0}\in\{0,1\}$, provided that
\begin{equation}
\lim_{\ep\to 0^{+}}\frac{\min\{x_{\ep},1-x_{\ep}\}}{\omep}=+\infty.
\label{hp:BU-internal}
\end{equation}

When $x_{0}\in\{0,1\}$ and (\ref{hp:BU-internal}) fails, we can again characterize the limits of fake and true blow-ups, more or less with the same techniques. This requires a one-sided variant of the canonical staircases that we discuss later in section~\ref{sec:loc-min}. We refer to Remark~\ref{rmk:boundary-proof} for further details.

\end{em}
\end{rmk}

In the following remark we present the result from two different points of view.

\begin{rmk}[Further interpretations of Theorem~\ref{thm:BU}]\label{rmk:AM}
\begin{em}

Let us consider any distance in the space $\X:=BV\loc(\re)$ that induces the locally strict convergence. Given any minimizer $\uep$ to (\ref{defn:SPM}), we extend it to a continuous function $\widehat{u}_{\ep}:\re\to\re$ by setting 
\begin{equation}
\widehat{u}_{\ep}(x):=
\begin{cases}
\uep(0)      & \text{if }x\leq 0, \\
\uep(x)      & \text{if }x\in[0,1], \\
\uep(1)      & \text{if }x\geq 1.
\end{cases}
\nonumber
\end{equation}

Then we consider the function $U_{\ep}:(0,1)\to \X$ defined by
\begin{equation}
[U_{\ep}(x)](y):=\frac{\widehat{u}_{\ep}(x+\omep y)-f(x)}{\omep}
\qquad
\forall y\in\re,
\nonumber
\end{equation}
namely the function that associates to every $x\in(0,1)$ the fake blow-up of $\widehat{u}_{\ep}$ with center in $x$ at scale $\omep$.

Finally, for every $x\in(0,1)$ we consider the set $T(x)\subseteq \X$ consisting of all oblique translations of the canonical $(H,V)$-staircase with parameters given by (\ref{defn:HV}). We observe that $T(x)$ is homeomorphic to the circle $S^{1}$ if $f'(x)\neq 0$, and $T(x)$ is a singleton if $f'(x)=0$.

Then ``$U_{\ep}(x)$ converges to $T(x)$'' in the following senses.
\begin{enumerate}
\renewcommand{\labelenumi}{(\arabic{enumi})}

\item  (Hausdorff convergence). For every interval $[a,b]\subseteq(0,1)$ we consider the graph of $U_{\ep}$ over $[a,b]$, namely
\begin{equation}
G_{\ep}(a,b):=\{(x,w):x\in[a,b],\ w=U_{\ep}(x)\}\subseteq [a,b]\times \X,
\nonumber
\end{equation}
and the graph of the multi-function $T(x)$, namely the set
\begin{equation}
G_{0}(a,b):=\{(x,w):x\in[a,b],\ w\in T(x)\}\subseteq [a,b]\times \X.
\nonumber
\end{equation}

Then it turns out that $G_{\ep}\to G_{0}$ as $\ep\to 0^{+}$ with respect to the Hausdorff distance between compact subsets of $(0,1)\times \X$.

This convergence result is a direct consequence of statements~(1) and~(3) of Theorem~\ref{thm:BU}. It can also be extended to true blow-ups, just by defining $T(x)$ as the set of graph translations instead of oblique translations.

\item  (Young measure convergence). Let us consider $U_{\ep}$ as a Young measure $\nu_{\ep}$ in $(0,1)$ with values in $\X$. Let $\nu_{0}$ denote the Young measure that associates to every $x\in(0,1)$ the probability measure in $T(x)$ that is invariant by oblique translations. Then it turns out that 
\begin{equation}
\nu_{\ep}\rightharpoonup\nu_{0}
\qquad\text{as }\ep\to 0^{+},
\nonumber
\end{equation}
where the convergence is in the sense of $\X$-valued Young measures in $(0,1)$. We point out that the strict convergence induced by the distance in our space $\X$ is much stronger than the convergence in~\cite{2001-CPAM-AlbertiMuller}, where the distance just induces the weak* topology in a ball of $L^{\infty}$. For this reason, our space $\X$ is not compact, but we could easily recover the compactness by restricting ourselves to the subset consisting of all blow-ups of all minimizers for $\ep$ is some interval $(0,\ep_{0}]\subseteq(0,1)$, together with all their possible limits as $\ep\to 0^{+}$.

This convergence in the sense of Young measures follows from the Hausdorff convergence and the invariance of $\nu_{0}$ by oblique translations, which in turn follows from a remake of~\cite[Proposition~3.1 and Lemma~2.7]{2001-CPAM-AlbertiMuller}. The argument is however analogous to the proof of statement~(3) of Theorem~\ref{thm:BU}. The idea is that any translation of the blow-up point of order $\omep$ delivers a proportional oblique translation of the limit. 

In the case of true blow-ups we expect the limit Young measure $\nu_{0}$ to be uniformly concentrated only on graph translations of horizontal type, while graph translations of vertical type should have zero measure because they correspond to a very special choice of the blow-up points.

\end{enumerate}

\end{em}
\end{rmk}

%\clearpage

Theorem~\ref{thm:BU} shows that minimizers develop a microstructure at scale~$\omep$. As a consequence, this microstructure does not appear if we consider blow-ups at a coarser scale, as in the following statement.

\begin{cor}[Low-resolution blow-ups of minimizers]\label{thm:spm-bu-low}

Let $\ep$, $\omep$, $\beta$, $f$, $u_{\ep}$, $x_{0}$ be as in Theorem~\ref{thm:BU}. Let $\{x_{\ep}\}\subseteq(0,1)$ be a family of real numbers such that $x_{\ep}\to x_{0}$, and let $\{\alpha_{\ep}\}$ be a family of positive real numbers such that $\alpha_{\ep}\to 0$ but $\omep/\alpha_{\ep}\to 0$.

Then it turns out that
\begin{equation}
\frac{u_{\ep}(x_{\ep}+\alpha_{\ep}y)-u_{\ep}(x_{\ep})}{\alpha_{\ep}}\auto f'(x_{0})y
\qquad
\text{in }BV\loc(\re),
\nonumber
\end{equation}
and therefore also uniformly on bounded subsets of $\re$.

\end{cor}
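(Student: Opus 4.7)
The plan is to establish the two ingredients of locally strict $BV$ convergence on every bounded interval $(a,b)\subset\re$ separately; the uniform convergence on bounded subsets of $\re$ then follows automatically from Remark~\ref{rmk:strict}(\ref{strict:cont}), since the limit $y\mapsto f'(x_0)y$ is continuous.

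\emph{$L^1$ convergence.} Setting $z_\ep(y):=(u_\ep(x_\ep+\alpha_\ep y)-u_\ep(x_\ep))/\alpha_\ep$, the plan is to split
\begin{equation*}
z_\ep(y)-f'(x_0)y=\frac{u_\ep(x_\ep+\alpha_\ep y)-f(x_\ep+\alpha_\ep y)}{\alpha_\ep}-\frac{u_\ep(x_\ep)-f(x_\ep)}{\alpha_\ep}+\frac{f(x_\ep+\alpha_\ep y)-f(x_\ep)-f'(x_0)\alpha_\ep y}{\alpha_\ep}.
\end{equation*}
The last term vanishes uniformly for $y$ in bounded sets by the $C^1$ regularity of $f$ at $x_0$. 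To deal with the first two, the crucial auxiliary estimate is $\|u_\ep-f\|_{L^\infty(K)}=O(\omep)$ on every compact $K\subset(0,1)$. I would derive this from Theorem~\ref{thm:BU}(1) by contradiction: if the bound failed there would be a family $x'_\ep\in K$ with $x'_\ep\to x'_0\in K$ and $(u_\ep(x'_\ep)-f(x'_\ep))/\omep\to\infty$; but the fake blow-ups $w_\ep^{x'_\ep}$ at $x'_\ep$ are locally strictly relatively compact in $BV_\loc(\re)$ with limits in $\obl(H(x'_0),V(x'_0))$, hence uniformly bounded in $L^\infty((-1,1))$ by Remark~\ref{rmk:strict}(1), so $w_\ep^{x'_\ep}(0)=(u_\ep(x'_\ep)-f(x'_\ep))/\omep$ is bounded --- a contradiction. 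Since $\omep/\alpha_\ep\to 0$, the first two terms of the decomposition then go to zero uniformly on bounded intervals, yielding uniform (and hence $L^1$) convergence of $z_\ep$ to $f'(x_0)y$ on $(a,b)$.

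\emph{Total variation convergence.} We must show $|Dz_\ep|((a,b))\to|f'(x_0)|(b-a)$. The lower bound is immediate from the $L^1$ convergence above and the lower semicontinuity of the total variation. For the upper bound, a change of variables gives
\begin{equation*}
|Dz_\ep|((a,b))=\frac{1}{\lambda_\ep}|Dw_\ep|\bigl((a\lambda_\ep,b\lambda_\ep)\bigr),\qquad\lambda_\ep:=\alpha_\ep/\omep\to+\infty,
\end{equation*}
where $w_\ep$ is the fake blow-up from~(\ref{defn:wep}) centred at $x_\ep$. Fix a large $R>0$ and cover $(a\lambda_\ep,b\lambda_\ep)$ by $N_\ep\sim(b-a)\lambda_\ep/(2R)$ sub-intervals of length $2R$ around centres $\xi_k^\ep$; the corresponding base points $x_\ep^{(k)}:=x_\ep+\omep\xi_k^\ep$ all tend to $x_0$, since $\omep\xi_k^\ep=O(\alpha_\ep)$. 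A direct computation shows that every oblique translation of the canonical $(H,V)$-staircase has at most $R/H+1$ jumps of size $2|V|$ inside $(-R,R)$, so its total variation there is bounded by $2R|V|/H+2|V|=2R|f'(x_0)|+2|V|$. Invoking Theorem~\ref{thm:BU}(1) at each base point $x_\ep^{(k)}$, together with a diagonal uniformity argument (see below), produces
\begin{equation*}
|Dw_\ep^{(k)}|((-R,R))\leq 2R|f'(x_0)|+2|V|+o(1)\qquad\text{as }\ep\to 0^+,
\end{equation*}
uniformly in $k$. Summing over the $N_\ep$ pieces, dividing by $\lambda_\ep$, and then letting $R\to+\infty$ gives $\limsup_{\ep\to 0^+}|Dz_\ep|((a,b))\leq(b-a)|f'(x_0)|$, which concludes the proof.

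\emph{Main obstacle.} The delicate point is precisely the uniformity in $k$ of the total variation estimate: Theorem~\ref{thm:BU} supplies subsequential limits for each fixed family of base points converging to $x_0$, whereas the covering involves $N_\ep\to+\infty$ base points simultaneously varying with $\ep$. I would overcome this by a diagonal contradiction: if the uniform bound failed for some $\delta>0$, one could extract $\ep_n\to 0^+$ and indices $k_n$ with $|Dw_{\ep_n}^{(k_n)}|((-R,R))>2R|f'(x_0)|+2|V|+\delta$, and then apply Theorem~\ref{thm:BU}(1) to the family $x'_{\ep_n}:=x_{\ep_n}+\omega_{\ep_n}\xi_{k_n}^{\ep_n}$ --- which still tends to $x_0$ --- to obtain along a further subsequence a locally strict limit $w_\infty\in\obl(H,V)$. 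Choosing $R$ initially so that $\pm R$ are not jump points of the limit (and appealing to Remark~\ref{rmk:strict}(\ref{strict:sub-int})) would then give $|Dw_\infty|((-R,R))\geq 2R|f'(x_0)|+2|V|+\delta/2$, contradicting the explicit computation above.
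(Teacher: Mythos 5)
Your proposal is correct, but it follows a route that differs from the paper's in two respects, so a comparison is worthwhile. For the total variation bound, the paper reduces the whole corollary to the single inequality (\ref{th:limsup-TV-alpha}) and proves it by contradiction with a pigeonhole argument: the interval $(x_\ep+a\alpha_\ep,x_\ep+b\alpha_\ep)$ is partitioned into windows of length $\sim L\omep$, one window is selected where the \emph{excess} of the total variation over that of the line is at least its average value, the fake blow-up centred at that single ($\ep$-dependent) point is passed to the limit via Theorem~\ref{thm:BU}(1), and the contradiction comes from the fact that a limiting staircase exceeds the line's total variation by at most one jump height $2|V|$, uniformly in $L$, so one concludes by letting $L\to+\infty$. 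You instead cover by windows of fixed blow-up length $2R$, bound the \emph{absolute} total variation in each window by the maximal staircase value $2R|f'(x_0)|+2|V|$, and therefore need this bound uniformly over the $N_\ep\to+\infty$ windows, which you obtain by the diagonal contradiction you describe; summing and letting $R\to+\infty$ gives the same conclusion. The paper's pigeonhole buys exactly what your diagonal argument has to supply by hand (only one window per $\ep$ is ever examined, so no uniformity issue arises), while your version is perhaps more transparent about where the geometry of the staircase enters; both rest on the same two ingredients, namely the compactness of fake blow-ups at $\ep$-dependent centres converging to $x_0$ and the relation $|V|=|f'(x_0)|H$. The second difference is that you prove the $L^1$ (indeed locally uniform) convergence explicitly, via the estimate $\|u_\ep-f\|_{L^\infty(K)}=O(\omep)$ deduced from Theorem~\ref{thm:BU}(1); the paper treats this step as immediate (``pointwise convergence for $y=0$ is trivial''), so your argument actually spells out a point the paper leaves implicit, and it is correct as written. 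One small repair: in the diagonal step you cannot choose $R$ ``initially'' so that $\pm R$ avoid the jump points of $w_\infty$, since $w_\infty$ is only produced after extracting the subsequence and its jump locations depend on the unknown translation parameter; instead, after extraction pick $R'\in(R,R+\eta)$, with $\eta$ small compared to $\delta/|f'(x_0)|$, whose endpoints are not jump points of $w_\infty$, bound $|Dw_{\ep_n}^{(k_n)}|((-R,R))$ by the total variation on $(-R',R')$, and pass to the limit there; the staircase bound $2R'|f'(x_0)|+2|V|$ then still contradicts your assumed lower bound.
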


The second consequence of Theorem~\ref{thm:BU} is an improvement of statement~(4) in Proposition~\ref{prop:basic}, at least in the case where the forcing term $f(x)$ is of class $C^{1}$. In this case indeed we obtain that minimizers converge to $f$ also in the sense of strict convergence. Moreover, as $u_{\ep}(x)$ converges to $f(x)$, its derivative $u_{\ep}'(x)$ converges to a mix of $0$ and $\pm\infty$, and this mix is ``in the average'' equal to $f'(x)$. We state the result using an elementary language, and then we interpret it in the formalism of varifolds.

\begin{thm}[Convergence of minimizers to the forcing term]\label{thm:varifold}

Let $\ep$, $\beta$, $f$, $u_{\ep}$ be as in Theorem~\ref{thm:BU}. 

Then the family $\{u_{\ep}(x)\}$ of minimizers converges to $f(x)$ in the following senses.

\begin{enumerate}
\renewcommand{\labelenumi}{(\arabic{enumi})}

\item \emph{(Strict convergence).} It turns out that $u_{\ep}(x)\auto f(x)$ in $BV((0,1))$, and therefore also uniformly in $[0,1]$.

\item \emph{(Convergence as varifolds).}  Let us set
\begin{equation}
V_{0}^{+}:=\left\{x\in[0,1]:f'(x)>0\right\},
\qquad
V_{0}^{-}:=\left\{x\in[0,1]:f'(x)<0\right\}.
\label{defn:V0+-}
\end{equation}

Then for every continuous test function
\begin{equation}
\phi:[0,1]\times\re\times\left[-\frac{\pi}{2},\frac{\pi}{2}\right]\to\re
\nonumber
\end{equation}
it turns out that
\begin{multline}
\lim_{\ep\to 0^{+}}\int_{0}^{1}
\phi\left(x,u_{\ep}(x),\arctan(u_{\ep}'(x))\strut\right)\sqrt{1+u_{\ep}'(x)^{2}}\,dx=
\int_{0}^{1}\phi(x,f(x),0)\,dx
\\[1ex]
+\int_{V_{0}^{-}}\phi\left(x,f(x),-\frac{\pi}{2}\strut\right)|f'(x)|\,dx
+\int_{V_{0}^{+}}\phi\left(x,f(x),\frac{\pi}{2}\strut\right)|f'(x)|\,dx.
\label{th:varifold}
\end{multline}

\end{enumerate}

\end{thm}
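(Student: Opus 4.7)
The plan is to establish Part~(1) (strict convergence $u_\ep \auto f$) first, and then derive Part~(2) by decomposing the integrand according to the size of $u_\ep'$. For Part~(1), the uniform convergence $u_\ep \to f$ on $[0,1]$ follows from Theorem~\ref{thm:BU}(1): for any sequence $\ep_n\to 0$ and $x_n\to x_0\in[0,1]$, the fake blow-up $w_{\ep_n}$ converges subsequentially in $BV\loc$ to an oblique translation of the canonical $(H,V)$-staircase, hence is locally bounded in $L^\infty$. Reading off the value at $y=0$ gives $u_{\ep_n}(x_n)-f(x_n)=\omepn\, w_{\ep_n}(0)=O(\omepn)\to 0$; a standard contradiction argument upgrades this pointwise statement to uniform convergence on $[0,1]$.

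For the total variation part of strict convergence, lower semicontinuity under $L^1$ convergence gives $\liminf|Du_\ep|((0,1))\geq |Df|((0,1))$ for free. For the matching upper bound I would fix $R>0$ large and, for each small $\ep$, cover $(0,1)$ by $N_\ep\sim 1/(2R\omep)$ consecutive intervals $I_j^\ep$ of length $2R\omep$ with centers $x_j^\ep$. Applying Theorem~\ref{thm:BU}(2) at each $x_j^\ep$ (through a diagonal extraction over the $N_\ep$ centers) gives
\begin{equation*}
|Du_\ep|(I_j^\ep)=\omep|Dv_\ep^{(j)}|((-R,R))\to\omep|Dv_\infty^{(j)}|((-R,R))\leq 2R\omep\bigl(|f'(x_j^\ep)|+C/R\bigr),
\end{equation*}
where the additive $C/R$ accounts for at most one partial step near the boundary of $(-R,R)$. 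Summing the $N_\ep$ contributions produces a Riemann sum for $\int_0^1|f'(x)|\,dx$ with cumulative boundary error $N_\ep\cdot 2R\omep\cdot C/R=C/R$; letting $\ep\to 0$ and then $R\to\infty$ delivers the upper bound. Combined with uniform convergence, this completes Part~(1).

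For Part~(2), I would split the integral using a threshold $K>0$ into contributions from $\{|u_\ep'|\leq K\}$, $\{u_\ep'>K\}$, and $\{u_\ep'<-K\}$. Strict $BV\loc$ convergence of true blow-ups implies that each ``vertical'' transition of $u_\ep$ has $x$-width $o(\omep)$ and occurs with density $\sim 1/\omep$ per unit length, so the Lebesgue measure of $\{|u_\ep'|>K\}\cap(0,1)$ tends to $0$ for every fixed $K$. On the first region the integrand is bounded by $\|\phi\|_\infty\sqrt{1+K^2}$, $u_\ep\to f$ uniformly, and $u_\ep'\to 0$ in measure, so dominated convergence yields convergence to $\int_0^1\phi(x,f(x),0)\,dx$. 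On $\{u_\ep'>K\}$, the expansions $\arctan u_\ep'=\pi/2+O(1/K)$ and $\sqrt{1+(u_\ep')^2}=(u_\ep')^++O(1/K)$, together with uniform continuity of $\phi$, reduce the integrand to $\phi(x,u_\ep(x),\pi/2)(u_\ep')^+$ plus error $O(1/K)|Du_\ep|((0,1))$. By part~(4) of Remark~\ref{rmk:strict} applied to the strict convergence from Part~(1), the measures $(u_\ep')^+\,dx=D^+u_\ep$ converge weakly-$*$ on $[0,1]$ to $D^+f=f'(x)\mathbbm{1}_{V_0^+}\,dx$; combined with uniform convergence of $u_\ep$ this gives $\int_{V_0^+}\phi(x,f(x),\pi/2)|f'(x)|\,dx$. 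The analogous argument on $\{u_\ep'<-K\}$ produces the remaining term, and sending $K\to\infty$ eliminates the errors.

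The technical core is the total variation upper bound in Part~(1): patching the scale-$\omep$ blow-up information across $\sim 1/\omep$ sub-intervals requires a diagonalization compatible with the subsequential nature of Theorem~\ref{thm:BU}, together with careful bookkeeping so that each per-interval boundary error (a fixed $O(\omep)$ additive quantity in the rescaled picture) aggregates to only $O(1/R)$ globally. An alternative route is to argue by contradiction, localizing any putative excess mass at some $x_0\in(0,1)$ and invoking Theorem~\ref{thm:BU} there to force the staircase density $|f'(x_0)|$ and thereby rule out the excess.
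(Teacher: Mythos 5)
The decomposition in your Part~(2) (threshold on the slope, replacement of $\arctan u_\ep'$ by $\pm\pi/2$ and of $\sqrt{1+(u_\ep')^{2}}$ by $|u_\ep'|$ up to $O(1/K)$, then statement~(4) of Remark~\ref{rmk:strict} to pass to the limit in the measures $D^{\pm}u_\ep$) is essentially the paper's own argument, up to bookkeeping. Two small points: after freezing $\phi(x,f(x),\pm\pi/2)$ you integrate $(u_\ep')^{+}$ only over $\{u_\ep'>K\}$, while the weak-$*$ convergence of $D^{+}u_\ep$ tests the whole interval, so you still have to control $\int_{\{0<u_\ep'\le K\}}u_\ep'\,dx$; this needs a second, small threshold $a$ (the paper's sets $H_{a,\ep}$ and $M_{a,\ep}$). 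Moreover, the smallness of $|\{|u_\ep'|>a\}|$ is obtained most cleanly from the energy, namely $|\{|u_\ep'|>a\}|\le \PMF_{\ep}(\beta,f,(0,1),u_\ep)/\log(1+a^{2})\to 0$, rather than from the heuristic ``width $o(\omep)$, density $1/\omep$'' picture, which as stated is not a proof.

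The genuine gap is the total variation upper bound in Part~(1). Your main route covers $(0,1)$ by $N_\ep\sim(2R\omep)^{-1}$ intervals and applies Theorem~\ref{thm:BU}(2) at every center ``through a diagonal extraction over the $N_\ep$ centers''. This does not work as stated: Theorem~\ref{thm:BU} gives convergence only along subsequences, for a single family of centers $x_\ep\to x_0$, with no uniformity in the center; here the centers change with $\ep$ and their number diverges, so there is no fixed countable family of sequences to diagonalize over, and the per-interval convergence $|Dv^{(j)}_\ep|((-R,R))\to|Dv^{(j)}_\infty|((-R,R))$ cannot be summed over $j=1,\dots,N_\ep$ without a rate uniform in $j$, which the compactness statement does not provide (there is also the smaller issue that strict convergence on $(-R,R)$ requires the endpoints not to be jump points of the limit). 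The repair is exactly your one-sentence ``alternative route'', and it is what the paper does: assume $\limsup_{\ep\to0^+}\int_0^1|u_\ep'|\ge\int_0^1|f'|+\eta_0$, pigeonhole over the $N_n$ intervals of length $L_n\omepn$ to find one interval carrying at least the average excess $\eta_0/N_n$, blow up at its left endpoint, and pass to the limit to obtain $|Dw_\infty|([0,L])-|f'(x_\infty)|L\ge\eta_0L$; since the limit is a staircase whose vertical parts have midpoints on the line of slope $f'(x_\infty)$, the left-hand side is bounded by the height of a single step, which depends only on $\beta$ and $\|f'\|_\infty$, so taking $L$ large gives the contradiction. One must also handle centers drifting to $\{0,1\}$, where the limit is the semi-entire local minimizer of Remark~\ref{rmk:boundary-proof} (a longer ``initial step'', still of height bounded independently of $L$). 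Your derivation of uniform convergence from the boundedness of the blow-ups at $y=0$ is acceptable (the paper instead gets it from strict convergence together with Proposition~\ref{prop:basic}), but it too needs the boundary-case blow-ups when $x_n\to\{0,1\}$.
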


The conclusions of Theorem~\ref{thm:varifold} is weaker than Theorem~\ref{thm:BU}, because it does not carry so much information about the asymptotic profile of minimizers. Just for comparison, the counterpart of this result in the Alberti-M\"uller model is the convergence of $u_{\ep}'(x)$ to a Young measure that in every point assumes the two values $\pm 1$ with equal probability. Therefore, we suspect that the same conclusion might be true under weaker assumptions on the forcing term $f(x)$, and we refer to section~\ref{sec:open} for further discussion. 

\begin{rmk}[Varifold interpretation]
\begin{em}

Let us limit ourselves for a while to test functions such that $\phi(x,s,\pi/2)=\phi(x,s,-\pi/2)$ for all admissible values of $x$ and $s$. Let us observe that the function $p\mapsto\arctan(p)$ is a homeomorphism between the projective line and the interval $[-\pi/2,\pi/2]$ with the endpoint identified. Under these assumptions we can interpret the two sides of (\ref{th:varifold}) as the action of two suitable varifolds on the test function $\phi$. 

In the left-hand side we have the varifold associated to the graph of $u_{\ep}(x)$ in the canonical way, namely with ``weight'' (projection into $\re^{2}$) equal to the restriction of the one-dimensional Hausdorff measure to the graph of $\uep(x)$, and ``tangent component'' in the direction of the derivative $\uep'(x)$. In the right-hand side we have a varifold such that
\begin{itemize}

\item  the ``weight'' is the one-dimensional Hausdorff measure restricted to the graph of $f(x)$, multiplied by the density
\begin{equation}
\frac{1+|f'(x)|}{\sqrt{1+f'(x)^{2}}},
\nonumber
\end{equation}
which in turn coincides with the push-forward of the Lebesgue measure through the map $x\mapsto(x,f(x))$ multiplied by $1+|f'(x)|$,

\item  the ``tangent component'' in the point $(x,f(x))$ is equal to
\begin{equation}
\frac{1}{1+|f'(x)|}\,\delta_{(1,0)}+\frac{|f'(x)|}{1+|f'(x)|}\,\delta_{(0,1)},
\nonumber
\end{equation}
where $\delta_{(1,0)}$ and $\delta_{(0,1)}$ are the Dirac measures concentrated in the horizontal direction $(1,0)$ and in the vertical direction $(0,1)$, respectively.

\end{itemize}
 
It follows that statement~(2) of Theorem~\ref{thm:varifold} above is a reinforced version of varifold convergence. The reinforcement consists in considering the vertical tangent line in the direction $(0,1)$ as different from the vertical tangent line in the direction $(0,-1)$.

\end{em}
\end{rmk}

\begin{rmk}[Minimality is essential]
\begin{em}

In Theorem~\ref{thm:BU} and Theorem~\ref{thm:varifold} we can not replace the requirement that $\{\uep\}$ is a family of minimizers by weaker ``almost minimality'' conditions such as
\begin{equation}
\lim_{\ep\to 0^{+}}\frac{\PMF_{\ep}(\beta,f,(0,1),\uep)}{m(\ep,\beta,f)}=1.
\label{hp:almost-min}
\end{equation}

Indeed, one can check that the cost of adding an isolated bump that simulates two opposite jumps in a neighborhood of some point is proportional to $\omep^{5/2}$ (see also (\ref{th:lim-PJ}) below). Since the denominator in (\ref{hp:almost-min}) is proportional to $\omep^{2}$, this condition does not even imply a uniform bound on the total variation of $\uep$.

\end{em}
\end{rmk}

%\clearpage

\setcounter{equation}{0}
\section{Functional setting and Gamma-convergence}\label{sec:gconv}

This section deals with the rescaled version of the Perona-Malik functional (\ref{defn:SPM}) and its Gamma-limit. The results are somewhat classical, and rather close to similar results in the literature. On the other hand, in some cases they are not stated in the literature in the form we need, and in some other cases the proofs that we found in the literature do not work. Therefore, for the convenience of the reader we decided to include at least a sketch of the proofs in an appendix at the end of the paper.

\paragraph{\textmd{\textit{Functional setting}}}

Let us consider the functional
\begin{equation}
\RPM_{\ep}(\Omega,u):=
\int_{\Omega}\left\{\ep^{6}u''(x)^{2}+\frac{1}{\omep^{2}}\log\left(1+u'(x)^{2}\right)\right\}\,dx
\label{defn:ABG}
\end{equation}
defined for every real number $\ep\in(0,1)$, every open set $\Omega\subseteq\re$, and every function $u\in H^{2}(\Omega)$. This functional is a rescaled version of the principal part of (\ref{defn:SPM}). When we add the usual ``fidelity term'', depending on a real parameter $\beta>0$ and on a forcing term $f\in L^{2}(\Omega)$, we obtain the rescaled Perona-Malik functional with fidelity term
\begin{equation}
\RPMF_{\ep}(\beta,f,\Omega,u):=
\RPM_{\ep}(\Omega,u)+\beta\int_{\Omega}\left(u(x)-f(x)\right)^{2}\,dx.
\label{defn:ABGF}
\end{equation}

The Gamma-limit of (\ref{defn:ABG}) as $\ep\to 0^{+}$ turns out to be finite only in the space of ``pure jump functions'', defined as finite or countable linear combination of Heaviside functions. More formally, the notion is the following.

\begin{defn}[Pure jump functions]\label{defn:PJF}
\begin{em}

Let $(a,b)\subseteq\re$ be an interval. A function $u:(a,b)\to\re$ is called a \emph{pure jump function}, and we write $u\in\PJ((a,b))$, if there exist a real number $c$, a finite or countable set $S_{u}\subseteq(a,b)$, and a function $J:S_{u}\to\re\setminus\{0\}$ such that
\begin{equation}
\sum_{s\in S_{u}}|J(s)|<+\infty
\label{defn:sum-conv}
\end{equation}
and
\begin{equation}
u(x)=c+\sum_{s\in S_{u}}J(s)\mathbbm{1}_{(s,b)}(x)
\qquad
\forall x\in (a,b),
\label{defn:PJ}
\end{equation}
where $\mathbbm{1}_{(s,b)}:\re\to\{0,1\}$ is the indicator function of the interval $(s,b)$, defined as
\begin{equation}
\mathbbm{1}_{(s,b)}(x):=\begin{cases}
1     & \text{if }x\in(s,b), \\
0     & \text{otherwise}.
\end{cases}
\nonumber
\end{equation} 

The set $S_{u}$ is called the \emph{jump set} of $u$, every element $s\in S_{u}$ is called a \emph{jump point} of $u$, and $|J(s)|$ is called the \emph{height of the jump} of $u$ in $s$.

We call \emph{boundary values} of $u$ the numbers
\begin{equation}
u(a):=\lim_{x\to a^{+}}u(x)=c
\qquad\quad\text{and}\qquad\quad
u(b):=\lim_{x\to b^{-}}u(x)=c+\sum_{s\in S_{u}}J(s).
\label{defn:PJ-BC}
\end{equation}

\end{em}
\end{defn}

Pure jump functions can be defined in an alternative way as those functions in $BV((a,b))$ whose distributional derivative is a finite or countable linear combination of atomic measures. In particular, it can be verified that the representation (\ref{defn:PJ}) is unique, and defines a function $u\in BV((a,b))$ whose total variation is the sum of the series in (\ref{defn:sum-conv}), and whose distributional derivative $Du$ is the sum of Dirac measures concentrated in the points of the set $S_{u}$ with weight $J(s)$. Moreover, $S_{u}$ coincides with the set of discontinuity points of $u$, and 
\begin{equation}
J(s)=\lim_{x\to s^{+}}u(x)-\lim_{x\to s^{-}}u(x)
\qquad
\forall s\in S_{u}.
\nonumber
\end{equation}

We can now introduce the functional 
\begin{equation}
\operatorname{\mathbb{J}}_{1/2}(\Omega,u):=\sum_{s\in S_{u}\cap\Omega}|J(s)|^{1/2},
\label{defn:J}
\end{equation}
defined for every $u\in\PJ((a,b))$ and every open subset $\Omega\subseteq(a,b)$. Of course the convergence of the series in (\ref{defn:sum-conv}) does not imply the convergence of the series in (\ref{defn:J}), and therefore at this level of generality it may happen that $\operatorname{\mathbb{J}}_{1/2}(\Omega,u)=+\infty$ for some choices of $u$ and $\Omega$.

%\clearpage

\paragraph{\textmd{\textit{Gamma-convergence}}}

The following result concerns the convergence of the family $\RPM_{\ep}$ to a multiple of $\J_{1/2}$. The compactness statement is similar to~\cite[Theorem~4.1]{2008-TAMS-BF}, while the Gamma-convergence statement coincides with \cite[Theorem~4.4]{2008-TAMS-BF} in the special case $\phi(p)=\log(1+p^{2})$. Nevertheless, unfortunately the proof in~\cite{2008-TAMS-BF} relies on~\cite[Lemma~3.1]{2008-TAMS-BF}, which is clearly false for this choice of $\phi(p)$. In the appendix at the end of the paper we present a specific proof for this case.

\begin{thm}[Gamma-convergence, compactness, properties of recovery sequences]\label{thm:ABG}

Let $(a,b)\subseteq\re$ be an interval, let us consider the functionals defined in (\ref{defn:ABG}) and (\ref{defn:J}), and let us set 
\begin{equation}
\alpha_{0}:=
\frac{16}{\sqrt{3}}.
\label{defn:alpha-0}
\end{equation}

Then the following statements hold true. 

\begin{enumerate}
\renewcommand{\labelenumi}{(\arabic{enumi})}

\item \label{stat:ABG-gconv} \emph{(Gamma convergence)} Let us extend the functionals (\ref{defn:ABG}) and (\ref{defn:J}) to the space $L^{2}((a,b))$ by setting them equal to $+\infty$ outside their original domains. 

Then with respect to the metric of $L^{2}((a,b))$ it turns out that
\begin{equation}
\glim_{\ep\to 0^{+}}\RPM_{\ep}((a,b),u)=\alpha_{0}\J_{1/2}((a,b),u)
\qquad
\forall u\in L^{2}((a,b)).
\nonumber
\end{equation}

\item \label{stat:ABG-cpt} \emph{(Compactness)} Let $\{\ep_{n}\}\subseteq(0,1)$ be any sequence such that $\ep_{n}\to 0^{+}$, and let $\{u_{n}\}\subseteq H^{2}((a,b))$ be any sequence such that
\begin{equation}
\sup_{n\in\n}\left\{\RPM_{\ep_{n}}((a,b),u_{n})+\int_{a}^{b}u_{n}(x)^{2}\,dx\right\}<+\infty.
\label{hp:Gconv-coercive}
\end{equation}

Then there exist an increasing sequence $\{n_{k}\}$ of positive integers, and a function $u_{\infty}\in \PJ((a,b))$ such that $u_{n_{k}}\to u_{\infty}$ in $L^{2}((a,b))$ as $k\to +\infty$.

\item \label{stat:ABG-uc} \emph{(Strict convergence of recovery sequences)}  Let $u\in\PJ((a,b))$ be a pure jump function with $\J_{1/2}((a,b),u)<+\infty$. Let $\{\ep_{n}\}\subseteq(0,1)$ be any sequence such that $\ep_{n}\to 0^{+}$, and let $\{u_{n}\}\subseteq H^{2}((a,b))$ be any sequence such that $u_{n}\to u$ in $L^{2}((a,b))$, and
\begin{equation}
\lim_{n\to +\infty}\RPM_{\ep_{n}}((a,b),u_{n})=\alpha_{0}\J_{1/2}((a,b),u).
\label{hp:recovery}
\end{equation}

Then actually $u_{n}\auto u$ in $BV((a,b))$, according to Definition~\ref{defn:BV-sc}.

\item \label{stat:ABG-recovery} \emph{(Recovery sequences with given boundary data)}  Let $\{\ep_{n}\}$ and $u$ be as in the previous statement, and let $\{A_{0,n}\}$, $\{A_{1,n}\}$, $\{B_{0,n}\}$, $\{B_{1,n}\}$ be four sequences of real numbers such that
\begin{equation}
\lim_{n\to +\infty}\left(A_{0,n},A_{1,n},B_{0,n},B_{1,n}\right)=(u(a),0,u(b),0),
\nonumber
\end{equation}
where the boundary values of $u$ are intended as usual in the sense of (\ref{defn:PJ-BC}).

Then there exists a sequence $\{u_{n}\}\subseteq H^{2}((a,b))$ with boundary data 
\begin{equation}
\left(u_{n}(a),u_{n}'(a),u_{n}(b),u_{n}'(b)\strut\right)=
(A_{0,n}, A_{1,n}, B_{0,n}, B_{1,n})
\qquad
\forall n\in\n
\label{hp:recovery-BC}
\end{equation}
such that $u_{n}\to u$ in $L^{2}((a,b))$ and (\ref{hp:recovery}) holds true.

\end{enumerate}

\end{thm}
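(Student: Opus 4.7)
The plan is to work at the correct rescaling. Under the change of variables $x=x_{0}+\ep^{2}\xi$ and $u(x)=w(\xi)$, the functional $\RPM_\ep$ becomes
\begin{equation*}
\int w''(\xi)^{2}\,d\xi+\frac{1}{|\log\ep|}\int \log\bigl(1+w'(\xi)^{2}/\ep^{4}\bigr)\,d\xi,
\end{equation*}
whose second integrand converges pointwise to $4\cdot\mathbbm{1}_{\{w'\neq 0\}}$ as $\ep\to 0^{+}$. Thus the rescaled $\Gamma$-limit should be $\int (w'')^{2}+4|\{w'\neq 0\}|$. A direct computation shows that on the class of transitions between two constant levels differing by $J$ this limit functional is minimized by the cubic $w(\xi)=J\bigl(3(\xi/L)^{2}-2(\xi/L)^{3}\bigr)$ on an interval of length $L$: one finds $\int (w'')^{2}\,d\xi=12J^{2}/L^{3}$ and $|\{w'\neq 0\}|=L$, so the total is $12J^{2}/L^{3}+4L$, minimized at $L=\sqrt{3J}$ with minimum $(16/\sqrt{3})\sqrt{J}=\alpha_{0}\sqrt{J}$. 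This identifies the sharp per-jump cost and the optimal profile.

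For the Gamma-limsup in statement~(1) and for statement~(3), the recovery sequence is obtained by planting this optimal cubic at each jump point $s_{j}$ of the pure jump limit $u$, on an interval of length $\ep_{n}^{2}\sqrt{3|J_{j}|}$ centered at $s_{j}$, extended by the appropriate constants so that $u_{n}\to u$ in $L^{2}((a,b))$. Each bump is monotone with variation exactly $|J_{j}|$, so $|Du_{n}|((a,b))\to\sum|J_{j}|=|Du_{\infty}|((a,b))$, upgrading $L^{2}$ to strict convergence. For statement~(4), I add boundary layers near $a$ and $b$ whose widths $\delta_{n}$ are chosen adaptively in the window $\ep_{n}^{2}|A_{0,n}-u(a)|^{2/3}\ll\delta_{n}\ll\omepn^{2}$: the lower bound kills the $\ep^{6}(u'')^{2}$ cost, the upper bound kills the $\omepn^{-2}\log(1+u'^{2})$ cost, and the window is nonempty precisely because the prescribed data tend to $(u(a),0,u(b),0)$.

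For the Gamma-liminf and the compactness statement~(2), I would pass to the limit in the rescaled formulation: $\int (w_{n}'')^{2}$ is lower semicontinuous under weak $H^{2}$-convergence, and the log term is controlled via Fatou. For any $\delta>0$ and any open set $U_{\delta}$ where the limit satisfies $|w'|>\delta$, the integrand of the log term converges pointwise to $4$ on $U_{\delta}$, so Fatou yields $\liminf|\log\ep|^{-1}\int_{U_{\delta}}\log(1+w_{n}'^{2}/\ep^{4})\geq 4|U_{\delta}|$; letting $\delta\to 0$ gives $4|\{w'\neq 0\}|$ in the limit. Localization around each jump point of $u_{\infty}$ reduces the global lower bound to the scalar optimization $\int (w'')^{2}+4|\{w'\neq 0\}|\geq \alpha_{0}\sqrt{J}$, solved by the cubic above. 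Compactness is extracted in parallel: the energy bound together with the $L^{2}$-bound controls the number and size of transitions of $u_{n}$ (only finitely many large jumps are compatible with the $L^{2}$-constraint, since each large plateau of height $c$ forces a corresponding small length via $c^{2}\cdot\text{length}\leq\|u_n\|_{L^2}^2$), giving uniform $L^{\infty}$- and $BV$-bounds and hence $L^{2}$-precompactness by Helly; the liminf forces the limit to lie in $\PJ((a,b))$.

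The hardest part is the Gamma-liminf, because the pointwise AM-GM bound $\ep^{6}u''^{2}+\omep^{-2}\log(1+u'^{2})\geq (2\ep^{3}/\omep)|u''|\sqrt{\log(1+u'^{2})}$ is \emph{not} sharp: integrating it along the optimal cubic yields only $4\sqrt{3}\sqrt{J}$, strictly less than $\alpha_{0}\sqrt{J}=(16/\sqrt{3})\sqrt{J}$. One must therefore genuinely pass to the rescaled problem, identify the rescaled $\Gamma$-limit $\int (w'')^{2}+4|\{w'\neq 0\}|$, and then solve the variational problem for the per-jump cost. This is precisely where \cite[Lemma~3.1]{2008-TAMS-BF} fails for $\phi(p)=\log(1+p^{2})$: the slow logarithmic growth combined with the $\omep^{-2}$ prefactor invalidates the equi-coercivity estimate, and the correct argument must track the subtle interaction between the $(u'')^{2}$ penalty and the slow growth of the Lagrangian across the transition scale $\ep^{2}$.
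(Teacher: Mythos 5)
Your identification of the sharp constant is correct and matches the paper: the per-jump optimization $12J^{2}/L^{3}+4L$ with optimal width $L=\sqrt{3J}$ gives $\alpha_{0}=16/\sqrt{3}$, your cubic is the paper's recovery profile, and your remark that the pointwise AM--GM (Modica--Mortola) bound is not sharp (it yields only $4\sqrt{3}\sqrt{J}$ along the optimal profile) is accurate. Note, however, that the paper does not need a ``rescaled $\Gamma$-limit'': Lemma~\ref{lemma:basic-below} first estimates the two integrals separately on a maximal interval where $|u'|\geq D$ (cubic interpolation for $\int u''^{2}$, the threshold $D$ for the logarithmic term) and only then applies $A+B\geq\frac{4}{3^{3/4}}(AB^{3})^{1/4}$, which \emph{is} sharp here because at the optimal transition the logarithmic term carries exactly three quarters of the energy; this integrated route replaces the blow-up analysis you propose.

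On the lower-bound side there are genuine gaps, precisely at what you yourself call the hardest part. First, the liminf/compactness argument is only a plan: the Fatou step needs a.e.\ convergence of $w_{n}'$ in the blown-up variables, which does not follow from the energy bound; nothing shows that the transitions of an arbitrary low-energy sequence occur at scale $\ep_{n}^{2}$, that the intermediate-slope regions (where $|u_{n}'|$ is between $O(1)$ and $\ep_{n}^{-2}$) contribute negligible total variation, that the per-jump bound can be summed over countably many jumps, or that the limit has no absolutely continuous or Cantor part (which is what $u_{\infty}\in\PJ((a,b))$ requires); and your compactness mechanism (``a large plateau forces a small length via the $L^{2}$ bound'') does not produce the needed uniform $BV$ bound. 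These points are exactly what the substitution Lemma~\ref{lemma:split} (with the threshold $\ep_{n}^{-2}|\log\ep_{n}|^{-4}$), together with Lemma~\ref{lemma:basic-below} and Lemma~\ref{lemma:J-strict}, is designed to handle. Second, you misread statement~(3): it asserts that \emph{every} sequence with $u_{n}\to u$ in $L^{2}$ and $\RPM_{\ep_{n}}((a,b),u_{n})\to\alpha_{0}\J_{1/2}((a,b),u)$ converges strictly (this is what is later applied to local minimizers, which are not of your special form); constructing one strictly convergent recovery sequence does not prove it, whereas the paper deduces it from Lemma~\ref{lemma:split} and the strict-subadditivity argument in Lemma~\ref{lemma:J-strict}. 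Third, a smaller slip in statement~(4): the upper constraint $\delta_{n}\ll\omep^{2}$ on the boundary-layer width does not kill the logarithmic term (for $\delta_{n}$ of order $\ep_{n}^{2}|\log\ep_{n}|$ its cost is of order $|\log\ep_{n}|$); one needs $\delta_{n}\ll\ep_{n}^{2}$, for instance the choice of Lemma~\ref{lemma:bound-D-H}, whose layer has width $\ep_{n}^{2}(\sqrt{H}+\ep_{n}^{2}D)$ and cost $O(\sqrt{H}+\ep_{n}^{2}D)\to 0$ as the boundary discrepancies vanish.
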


\begin{rmk}
\begin{em}

The choice of the ambient space $L^{2}((a,b))$ is not essential in Theorem~\ref{thm:ABG}, and actually it can be replaced with $L^{p}((a,b))$ for any real exponent $p\geq 1$ (but not for $p=+\infty$, at least in statements~(2) and~(4)).

\end{em}
\end{rmk}

%\clearpage

\paragraph{\textmd{\textit{Convergence of minima and minimizers}}}

Since the fidelity term in (\ref{defn:ABGF}) is continuous with respect to the metric of $L^{2}(\Omega)$, and Gamma-convergence is stable with respect to continuous perturbations, we deduce that the Gamma-limit of (\ref{defn:ABGF}) is the functional
\begin{equation}
\JF_{1/2}(\alpha,\beta,f,\Omega,u):=
\alpha\J_{1/2}(\Omega,u)+\beta\int_{\Omega}(u(x)-f(x))^{2}\,dx,
\label{defn:JF}
\end{equation}
with $\alpha$ equal to the constant $\alpha_{0}$ defined in (\ref{defn:alpha-0}).  Now we concentrate on the special case where $\Omega=(0,L)$ and the forcing term is the linear function $f(x)=Mx$, for suitable real numbers $L>0$ and $M$, and we consider the following minimum values without boundary conditions
\begin{gather}
\mu_{\ep}(\beta,L,M):=\min_{u\in H^{2}((0,L))}
\RPMF_{\ep}(\beta,Mx,(0,L),u),
\label{defn:muep}
\\[1ex]
\mu_{0}(\alpha,\beta,L,M):=
\min_{u\in\PJ((0,L))}\JF_{1/2}(\alpha,\beta,Mx,(0,L),u).
\label{defn:mu0}
\end{gather}

Then we introduce boundary conditions. In the case of (\ref{defn:ABGF}) we call $H^{2}((0,L),M)$ the set of all functions $v\in H^{2}((0,L))$ such that $v(0)=0$, $v(L)=ML$, and $v'(0)=v'(L)=0$. In the case of (\ref{defn:JF}) we call $\PJ((0,L),M)$ the set of all functions $v\in\PJ((0,L))$ such that $v(0)=0$ and $v(L)=ML$, where these boundary values are intended in the sense of (\ref{defn:PJ-BC}). At this point we consider the following minimum values with boundary conditions
\begin{gather}
\mu_{\ep}^{*}(\beta,L,M):=\min_{u\in H^{2}((0,L),M)}
\RPMF_{\ep}(\beta,Mx,(0,L),u),
\label{defn:muep*}
\\[1ex]
\mu_{0}^{*}(\alpha,\beta,L,M):=
\min_{u\in\PJ((0,L),M)}\JF_{1/2}(\alpha,\beta,Mx,(0,L),u).
\label{defn:mu0*}
\end{gather}

The following result contains the properties of these minimum values that we exploit in the sequel (a sketch of the proof is in the appendix).

\begin{prop}[Asymptotic analysis of minima with linear forcing term]\label{prop:mu}

The minimum values defined in (\ref{defn:muep}) through (\ref{defn:mu0*}) have the following properties.

\begin{enumerate}
\renewcommand{\labelenumi}{(\arabic{enumi})}

\item \label{prop:existence} \emph{(Existence).} The minimum problems (\ref{defn:muep}) through (\ref{defn:mu0*}) admit a solution for every $(\ep,\alpha,\beta,L,M)\in(0,1)\times(0,+\infty)^{3}\times\re$. 

\item \label{prop:M} \emph{(Symmetry, continuity and monotonicity with respect to $M$).} For every admissible value of $\ep$, $\alpha$, $\beta$, $L$ the four functions
\begin{gather*}
M\mapsto\mu_{\ep}(\beta,L,M),
\qquad\qquad
M\mapsto\mu_{\ep}^{*}(\beta,L,M),
\\[0.5ex]
M\mapsto\mu_{0}(\alpha,\beta,L,M),
\qquad\qquad
M\mapsto\mu_{0}^{*}(\alpha,\beta,L,M),
\end{gather*}
are even, continuous in $\re$, and nondecreasing in $[0,+\infty)$.

\item \label{prop:L} \emph{(Monotonicity with respect to $L$).} For every admissible value of $\ep$, $\alpha$, $\beta$, $M$, the three functions
\begin{equation}
L\mapsto\mu_{\ep}(\beta,L,M),
\qquad\quad
L\mapsto\mu_{0}(\alpha,\beta,L,M),
\qquad\quad
L\mapsto\mu_{0}^{*}(\alpha,\beta,L,M)
\nonumber
\end{equation}
are nondecreasing with respect to $L$ in $(0,+\infty)$. As for $\mu_{\ep}^{*}$, it turns out that
\begin{equation}
\mu_{\ep}^{*}(\beta,L_{2},M)\leq\left(\frac{L_{2}}{L_{1}}\right)^{3}\mu_{\ep}^{*}(\beta,L_{1},M)
\label{th:monot-muep*}
\end{equation}
for every $0<L_{1}\leq L_{2}$.

\item \label{prop:pointwise} \emph{(Pointwise convergence).} For every admissible value of $\beta$, $M$ and $L$ it turns out that
\begin{equation}
\lim_{\ep\to 0^{+}}\mu_{\ep}(\beta,L,M)=
\mu_{0}(\alpha_{0},\beta,L,M),
\label{th:lim-mu}
\end{equation}
and
\begin{equation}
\lim_{\ep\to 0^{+}}\mu_{\ep}^{*}(\beta,L,M)=
\mu_{0}^{*}(\alpha_{0},\beta,L,M),
\label{th:lim-mu*}
\end{equation}
where $\alpha_{0}$ is the constant defined in (\ref{defn:alpha-0}). 

\item \label{prop:uniform} \emph{(Uniform convergence).} The limits (\ref{th:lim-mu}) and (\ref{th:lim-mu*}) are uniform for bounded values of $M$, in the sense that for every positive value of $\beta$ and $L$ it turns out that
\begin{equation}
\lim_{\ep\to 0^{+}}\sup_{|M|\leq M_{0}}
\left|\mu_{\ep}(\beta,L,M)-\mu_{0}(\alpha_{0},\beta,L,M)\right|=0
\qquad
\forall M_{0}>0,
\label{th:lim-mu-unif}
\end{equation}
and 
\begin{equation}
\lim_{\ep\to 0^{+}}\sup_{|M|\leq M_{0}}
\left|\mu^{*}_{\ep}(\beta,L,M)-\mu_{0}^{*}(\alpha_{0},\beta,L,M)\right|=0
\qquad
\forall M_{0}>0.
\label{th:lim-mu*-unif}
\end{equation}

\end{enumerate}

\end{prop}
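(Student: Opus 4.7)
The algebraic parts of items~\ref{prop:M} and~\ref{prop:L} follow from two scaling tricks. Evenness in $M$ is immediate from $u\leftrightarrow-u$. For monotonicity in $M$ on $[0,+\infty)$, fix $0\leq M_{1}\leq M_{2}$ with $M_{2}>0$ and, given an admissible $u_{2}$ at slope $M_{2}$, consider $u_{1}:=(M_{1}/M_{2})u_{2}$; the starred boundary conditions are preserved, and each of the four energy contributions is non-increasing under multiplication by $\lambda:=M_{1}/M_{2}\in[0,1]$, because $(\lambda u_{2}'')^{2}\leq(u_{2}'')^{2}$, $\log(1+\lambda^{2}p^{2})\leq\log(1+p^{2})$, $|\lambda J|^{1/2}\leq|J|^{1/2}$, and $\int(\lambda u_{2}-\lambda M_{2}x)^{2}\leq\int(u_{2}-M_{2}x)^{2}$. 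For monotonicity in $L$ of $\mu_{\ep}$ and $\mu_{0}$, restriction of an admissible function from $(0,L_{2})$ to $(0,L_{1})$ decreases every nonnegative contribution. For $\mu_{0}^{*}$, the rescaling $u_{1}(x):=(L_{1}/L_{2})u_{2}^{*}(xL_{2}/L_{1})$ preserves the starred boundary conditions and scales $\alpha\J_{1/2}$ by $(L_{1}/L_{2})^{1/2}\leq 1$ and the fidelity by $(L_{1}/L_{2})^{3}\leq 1$. Applied in the opposite direction to $\mu_{\ep}^{*}$, the same rescaling produces (\ref{th:monot-muep*}), the cubic factor being dictated by the fidelity (the second-order term contributes $L_{1}/L_{2}\leq 1$ and the first-order term $L_{2}/L_{1}\leq(L_{2}/L_{1})^{3}$).

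\textbf{Existence (item~\ref{prop:existence}) and pointwise convergence (item~\ref{prop:pointwise}) are handled together.} For the $\ep$-problems, the direct method in $H^{2}((0,L))$ gives existence: the term $\ep^{6}\int(u'')^{2}$ together with the prescribed boundary data (starred case) or the fidelity term (unstarred case) yields weak $H^{2}$-coerciveness on sublevel sets, and each summand is weakly lower semicontinuous (for the $\log$-term this uses the Rellich compactness $H^{2}\hookrightarrow H^{1}$). For the limit problems, fix $\ep_{n}\to 0^{+}$ and a minimizer $u_{n}$ of $\mu_{\ep_{n}}$ (resp.\ $\mu_{\ep_{n}}^{*}$). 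A uniform bound $\mu_{\ep_{n}}\leq C$ is obtained by testing against the recovery sequence supplied by Theorem~\ref{thm:ABG}(\ref{stat:ABG-recovery}) for some explicit pure jump approximation of $Mx$ (with matching boundary data in the starred case). This bound and the fidelity term give $\RPM_{\ep_{n}}((0,L),u_{n})+\int u_{n}^{2}\leq C$, so Theorem~\ref{thm:ABG}(\ref{stat:ABG-cpt}) provides, up to subsequence, $u_{n}\to u_{\infty}$ in $L^{2}$ for some $u_{\infty}\in\PJ((0,L))$. The $\Gamma$-liminf inequality applied to $\{u_{n}\}$ and the $\Gamma$-limsup provided by the recovery sequence for any other competitor show that $u_{\infty}$ realizes the minimum of $\JF_{1/2}$ and that $\mu_{\ep_{n}}\to\mu_{0}$ (resp.\ $\mu_{\ep_{n}}^{*}\to\mu_{0}^{*}$); existence for the limit problems is obtained as a by-product.

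\textbf{Continuity (residual part of item~\ref{prop:M}) and uniform convergence (item~\ref{prop:uniform}).} For each fixed admissible $u$, one has $E(u,M)=E_{0}(u)-2\beta M\int_{0}^{L}xu(x)\,dx+\beta M^{2}L^{3}/3$, where $E_{0}(u)$ collects the $M$-independent contributions. Hence each of the four minima, minus the common quadratic $\beta M^{2}L^{3}/3$, is an infimum of affine functions of $M$, hence finite and concave on $\re$, and thus continuous everywhere. Item~\ref{prop:uniform} then follows from the classical fact that a sequence of finite concave functions on $\re$ converging pointwise converges uniformly on every compact set, applied to $M\mapsto\mu_{\ep}(\beta,L,M)-\beta M^{2}L^{3}/3$ and to $M\mapsto\mu_{\ep}^{*}(\beta,L,M)-\beta M^{2}L^{3}/3$. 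The main obstacle I foresee is in the second block: producing, especially in the starred case, a competitor with uniformly bounded $\RPM_{\ep_{n}}$ that matches the prescribed boundary data is precisely the task of Theorem~\ref{thm:ABG}(\ref{stat:ABG-recovery}), where the technical content of the argument is concentrated.
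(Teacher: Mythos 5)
Your algebraic arguments (evenness, the scaling $u\mapsto (M_{1}/M_{2})u$ for monotonicity in $M$, restriction and the rescaling $u_{1}(x)=(L_{1}/L_{2})u_{2}(xL_{2}/L_{1})$ for monotonicity in $L$ and for (\ref{th:monot-muep*})) coincide with the paper's proof, and your treatment of the unstarred quantities is sound: direct method for the $\ep$-problems, compactness plus Gamma-convergence for $\mu_{0}$ and (\ref{th:lim-mu}), and the observation that $\mu_{\ep}(\beta,L,\cdot)-\beta M^{2}L^{3}/3$ is an infimum of affine functions of $M$, hence concave, which gives continuity and local uniform convergence — a nice alternative to the paper's route, which instead deduces (\ref{th:lim-mu-unif}) from evenness, monotonicity and continuity in $M$ (a Dini/P\'olya-type argument). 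However, both delicate points of the proposition concern the starred problems, and there your argument has two genuine gaps. First, the concavity trick fails for $\mu_{\ep}^{*}$ and $\mu_{0}^{*}$: the decomposition $E(u,M)=E_{0}(u)-2\beta M\int_{0}^{L}xu\,dx+\beta M^{2}L^{3}/3$ is affine-plus-quadratic in $M$ only for a \emph{fixed} competitor $u$, but in the starred problems the admissible class itself depends on $M$ through the constraint $u(L)=ML$, so the starred minima are not infima of a fixed family of affine functions of $M$; consequently neither their continuity nor (\ref{th:lim-mu*-unif}) is established by your argument. The paper proves continuity by rescaling competitors (the map $v\mapsto(M_{1}/M_{2})v$ sends the $M_{2}$-class onto the $M_{1}$-class) and then gets uniform convergence from monotonicity plus continuity.

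Second, and more importantly, your proof of (\ref{th:lim-mu*}) and of existence for $\mu_{0}^{*}$ ``as a by-product'' does not close. An $L^{2}((0,L))$-limit $u_{\infty}$ of the constrained minimizers $u_{n}$ of (\ref{defn:muep*}) need not satisfy $u_{\infty}(0)=0$ and $u_{\infty}(L)=ML$ in the sense of (\ref{defn:PJ-BC}): boundary layers of $u_{n}$ can become boundary jumps of $u_{\infty}$, which the interior Gamma-liminf of Theorem~\ref{thm:ABG} does not charge. Hence you cannot conclude that $u_{\infty}$ is admissible for (\ref{defn:mu0*}), nor that $\liminf_{n}\mu_{\ep_{n}}^{*}\geq\mu_{0}^{*}$; the obstacle you flagged (matching boundary data for recovery sequences via statement~(\ref{stat:ABG-recovery}) of Theorem~\ref{thm:ABG}) only takes care of the opposite, limsup, inequality. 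This is precisely the point the paper isolates: existence for $\mu_{0}^{*}$ is proved directly by relaxing the boundary conditions, adding the penalty $\alpha\left(|v(0)|^{1/2}+|v(L)-ML|^{1/2}\right)$, and showing that minimizers of the relaxed problem do satisfy the constraints, so that the relaxed minimum coincides with $\mu_{0}^{*}$; the missing liminf can then be obtained, for instance, by extending $u_{n}$ by constants outside $(0,L)$ (admissible in $H^{2}$ because $u_{n}'(0)=u_{n}'(L)=0$) and applying the Gamma-liminf on a slightly larger interval, which does charge the boundary jumps. Without some step of this kind your argument for the starred limit problem is incomplete.
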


%\clearpage

\setcounter{equation}{0}
\section{Local minimizers}\label{sec:loc-min}

In this section we state the key tools for the proof of our main results. The key idea is that also local minimizers for functionals of the form (\ref{defn:ABGF}) converge to local minimizers for functionals of the form (\ref{defn:JF}). This extends the Gamma convergence results of the previous section.

The notion of local minimizers can be introduced in a very general framework by asking minimality with respect to compactly supported perturbations. In many concrete examples this is equivalent to saying that a given function is a minimizer with respect to its own boundary conditions. Of course the number and the form of these boundary conditions depend on the nature of the functional, as we explain below.

\begin{defn}[Local minimizers in intervals]\label{defn:loc-min}
\begin{em}

Let $(a,b)\subseteq\re$ be an interval, and let $\mathcal{F}(u)$ be a functional defined in some functional space $\mathcal{S}((a,b))$.
\begin{itemize}

\item  Let us assume that $\mathcal{S}((a,b))=H^{2}((a,b))$. A \emph{local minimizer }is any function $u\in H^{2}((a,b))$ such that $\mathcal{F}(u)\leq\mathcal{F}(v)$ for every function $v\in H^{2}((a,b))$ such that
\begin{equation}
\left(v(a),v'(a),v(b),v'(b)\strut\right)=\left(u(a),u'(a),u(b),u'(b)\strut\right).
\nonumber
\end{equation}

\item   Let us assume that $\mathcal{S}((a,b))=\PJ((a,b))$. A \emph{local minimizer} is any function $u\in\PJ((a,b))$ such that $\mathcal{F}(u)\leq\mathcal{F}(v)$ for every function $v\in\PJ((a,b))$ such that $(v(a),v(b))=(u(a),u(b))$, where boundary values of pure jump functions are intended in the sense of (\ref{defn:PJ-BC}). 

\end{itemize}

In both cases we write
\begin{equation}
u\in\argmin\loc\left\{\mathcal{F}(u):u\in\mathcal{S}((a,b))\right\}.
\nonumber
\end{equation}

\end{em}
\end{defn}

We observe that in Definition~\ref{defn:loc-min} the two endpoints of the interval play the same role. In the sequel we need also the following notion of one-sided local minimizer, where we focus just on one of the endpoints.

\begin{defn}[One-sided local minimizers in an interval]\label{defn:R-loc-min}
\begin{em}

Let $(a,b)\subseteq\re$ be an interval, and let $\mathcal{F}(u)$ be a functional defined in some functional space $\mathcal{S}((a,b))$.
\begin{itemize}

\item  Let us assume that $\mathcal{S}((a,b))=H^{2}((a,b))$. A \emph{right-hand local minimizer} is any function $u\in H^{2}((a,b))$ such that $\mathcal{F}(u)\leq\mathcal{F}(v)$ for every function $v\in H^{2}((a,b))$ such that $(v(b),v'(b))=(u(b),u'(b))$.

\item   Let us assume that $\mathcal{S}((a,b))=\PJ((a,b))$. A \emph{right-hand local minimizer} is any function $u\in\PJ((a,b))$ such that $\mathcal{F}(u)\leq\mathcal{F}(v)$ for every function $v\in\PJ((a,b))$ such that $v(b)=u(b)$.

\end{itemize}

In both cases we write
\begin{equation}
u\in\argmin\Rloc\left\{\mathcal{F}(u):u\in\mathcal{S}((a,b))\right\}.
\nonumber
\end{equation}

Left-hand local minimizers are defined in a symmetric way, just focusing on the endpoint~$a$.

\end{em}
\end{defn}

\begin{defn}[Entire and semi-entire local minimizers]
\begin{em}

Let us consider functionals $\mathcal{F}(I,u)$ defined for every interval $I$ and every $u$ in some function space $\mathcal{S}(I)$.
\begin{itemize}

\item  An \emph{entire local minimizer} is a function $u:\re\to\re$ such that, for every interval $(a,b)\subseteq\re$, the restriction of $u$ to $(a,b)$ is a local minimizer in $(a,b)$.

\item  A \emph{right-hand semi-entire local minimizer} is a function $u:(0,+\infty)\to\re$ such that, for every real number $L>0$, the restriction of $u$ to $(0,L)$ is a right-hand local minimizer in $(0,L)$.

\item  A \emph{left-hand semi-entire local minimizer} is a function $u:(-\infty,0)\to\re$ such that, for every real number $L>0$, the restriction of $u$ to $(-L,0)$ is a left-hand local minimizer in $(-L,0)$.

\end{itemize}

\end{em}
\end{defn}

%\clearpage

The following result is crucial both in the proof of Theorem~\ref{thm:asympt-min}, and as a preliminary step toward the characterization of entire and semi-entire local minimizers to the limit functional (\ref{defn:JF}).

\begin{prop}[Estimates for minima of the limit problem]\label{prop:loc-min-discr}

For every $(\alpha,\beta,L,M)\in(0,+\infty)^{3}\times\re$ the minimum values defined in (\ref{defn:mu0}) and (\ref{defn:mu0*}) satisfy
\begin{align}
\hspace{3em}
c_{1}|M|^{4/5}L-c_{2}|M|^{1/5}&\leq
\mu_{0}(\alpha,\beta,L,M)
\label{th:est-mu-abLM}
\\[0.5ex]
&\leq
\mu_{0}^{*}(\alpha,\beta,L,M)\leq
c_{1}|M|^{4/5}L+c_{3}|M|^{1/5},
\hspace{3em}
\label{th:est-mu*-abLM}
\end{align}
where
\begin{equation}
c_{1}:=\frac{5}{4}\left(\frac{\alpha^{4}\beta}{3}\right)^{1/5},
\qquad
c_{2}:=20\left(\frac{2\alpha^{6}}{3\beta}\right)^{1/5},
\qquad
c_{3}:=\frac{5}{4}\left(\frac{3\alpha^{6}}{\beta}\right)^{1/5}.
\label{defn:c123}
\end{equation}

\end{prop}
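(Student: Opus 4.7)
The middle inequality $\mu_0 \leq \mu_0^{*}$ is immediate from the inclusion of admissible classes $\PJ((0,L),M) \subseteq \PJ((0,L))$. By the symmetry in $M$ of Proposition~\ref{prop:mu}(\ref{prop:M}), I restrict to $M \geq 0$; the case $M = 0$ is trivial via $u \equiv 0$.

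For the upper bound on $\mu_0^{*}$, I plan to use a one-parameter family of symmetric staircase competitors. For each integer $N \geq 1$ with $\ell := L/N$, let $u_N \in \PJ((0,L),M)$ have $N$ equal-height jumps of size $ML/N$ placed at $s_k := (k-1/2)\ell$, taking the value $kML/N$ on $(s_k, s_{k+1})$. Then $u_N(0) = 0$ and $u_N(L) = ML$, and since on each of the $N-1$ interior sub-intervals the value $v_k$ coincides with the midpoint $m_k := M(s_k + s_{k+1})/2$, while each of the two half-length boundary sub-intervals contributes $M^2\ell^3/24$ to the fidelity, a direct computation yields
\[
\JF_{1/2}\bigl(\alpha,\beta,Mx,(0,L),u_N\bigr) \;=\; \alpha\sqrt{NML} \,+\, \frac{\beta M^2 L^3}{12 N^2}.
\]
The continuous minimum of this expression is attained at $N_{*} := (\beta/(3\alpha))^{2/5} M^{3/5} L$ with value exactly $c_1 M^{4/5} L$. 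Choosing $N = \max\{1,\lceil N_{*}\rceil\}$, I bound the rounding error by a second-order Taylor expansion around $N_{*}$ in the regime $N_{*} \geq 1$, and by direct evaluation at $N = 1$ in the opposite regime (where the single-jump energy $\alpha\sqrt{ML} + \beta M^2 L^3/12$ is monotone increasing in $L$ and hits exactly $c_3 M^{1/5}$ at $L = (3\alpha/\beta)^{2/5} M^{-3/5}$).

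For the lower bound on $\mu_0$, the backbone is the one-variable calculus inequality
\[
\alpha\sqrt{M\ell} + \frac{\beta M^2 \ell^3}{12} \;\geq\; c_1 M^{4/5} \ell \qquad \forall\,\ell > 0,
\]
whose proof is again AM-GM: the function $\ell \mapsto \alpha\sqrt{M/\ell} + \beta M^2\ell^2/12$ achieves its minimum $c_1 M^{4/5}$ at the natural scale $\ell_{*} := (3\alpha/\beta)^{2/5} M^{-3/5}$. Given an arbitrary $u \in \PJ((0,L))$ with jumps $h_k$ at $s_1 < \cdots < s_N$, sub-intervals of lengths $\ell_k$, and constant values $v_k$, I write $w_k := v_k - m_k$, which produces the orthogonal decomposition $\int_0^L (u - Mx)^2\,dx = \sum_k \ell_k w_k^2 + (M^2/12) \sum_k \ell_k^3$ together with $h_k = M(\ell_{k-1}+\ell_k)/2 + (w_k - w_{k-1})$. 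Subadditivity of $\sqrt{\cdot}$ yields $|h_k|^{1/2} \geq |M(\ell_{k-1}+\ell_k)/2|^{1/2} - |w_{k-1}|^{1/2} - |w_k|^{1/2}$, and the pointwise Young inequality $2\alpha|w_k|^{1/2} \leq \beta\ell_k w_k^2 + 3\alpha^{4/3}/(2^{4/3}(\beta\ell_k)^{1/3})$ absorbs the $|w_k|^{1/2}$-penalty into the quadratic fidelity. Summing and converting $\sqrt{M(\ell_{k-1}+\ell_k)/2}$ into $(\sqrt{M\ell_{k-1}} + \sqrt{M\ell_k})/2$ by concavity, I reduce the total energy to a sum of per-interval calculus inequalities, obtaining $\mu_0 \geq c_1 M^{4/5} L - c_2 M^{1/5}$.

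The main obstacle is that the Young correction $\sum(\beta\ell_k)^{-1/3}$ is unbounded when some sub-intervals shrink. I plan to handle this by a dichotomy based on the natural scale $\ell_{*}$: on sub-intervals with $\ell_k \gtrsim \ell_{*}$ the Young absorption is effective and contributes a correction of order $M^{1/5}$; on short sub-intervals with $\ell_k \ll \ell_{*}$ the per-interval inequality is simply discarded, at the cost of deleting $c_1 M^{4/5} \ell_k$, which is itself of order $M^{1/5}$ and folds into $c_2$. A similar splitting takes care of the boundary contributions $\alpha(\sqrt{M\ell_0} + \sqrt{M\ell_N})/2$. The precise constant $c_2 = 20(2\alpha^6/(3\beta))^{1/5}$ then emerges from careful bookkeeping of the Young constants, the boundary corrections, and the combinatorial factor from the number of intervals across both regimes.
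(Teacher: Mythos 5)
Your handling of the middle inequality and of the upper bound for $\mu_{0}^{*}$ is correct: the $N$-step competitor has energy exactly $\alpha\sqrt{NML}+\beta M^{2}L^{3}/(12N^{2})$, whose continuous minimum is $c_{1}M^{4/5}L$, and both the rounding error for $N_{*}\geq1$ and the single-jump case for $N_{*}<1$ stay below $c_{3}M^{1/5}$. This is essentially the paper's construction; the paper fixes the step length at the optimal value $2H$ from (\ref{defn:lambda-0}) and enlarges the interval to $2nH$ using the monotonicity of $\mu_{0}^{*}$ in $L$, instead of rounding $N$, but the two variants are interchangeable.

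The lower bound, however, has a genuine gap, and it is not a matter of bookkeeping. Since the bound must hold for \emph{every} competitor, your scheme can only guarantee $c_{1}M^{4/5}L$ minus the worst case over the $w_{k}$ of the penalties you introduced, i.e.\ minus $\sum_{k}\max_{w}\bigl(2\alpha|w|^{1/2}-\beta\ell_{k}w^{2}\bigr)=\sum_{k}3\cdot 2^{-4/3}\alpha^{4/3}(\beta\ell_{k})^{-1/3}$. At the critical length $\ell_{k}=\ell_{*}:=(3\alpha/\beta)^{2/5}M^{-3/5}=2H$ your backbone inequality is an \emph{equality}, so there is no slack there, while the per-interval deficit $3\cdot 2^{-4/3}\alpha^{4/3}(\beta\ell_{*})^{-1/3}$ is about two thirds of $c_{1}M^{4/5}\ell_{*}$. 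A competitor can consist of roughly $L/\ell_{*}$ consecutive intervals of length exactly $\ell_{*}$ (this is precisely what near-minimizers look like), so the total deficit is a fixed positive fraction of the leading term, and the method delivers at best $(c_{1}-\delta)M^{4/5}L-CM^{1/5}$ with $\delta>0$; the sharp constant $c_{1}$, which is what Theorem~\ref{thm:asympt-min} needs, is lost. The short-interval half of your dichotomy fails for a related reason: the number of sub-intervals with $\ell_{k}\ll\ell_{*}$ is not bounded, so discarding $c_{1}M^{4/5}\ell_{k}$ for each of them can delete $c_{1}M^{4/5}$ times a positive fraction of $L$, not an $O(M^{1/5})$ amount (and, more mildly, elements of $\PJ((0,L))$ may have countably many jumps, so your finite decomposition needs justification). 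Replacing subadditivity by the linearization $\sqrt{a+b}\geq\sqrt{a}-|b|/\sqrt{a}$ and absorbing a linear penalty does not cure it either: the worst-case loss per interval at $\ell_{*}$ is still a fixed fraction of the per-interval budget. The paper avoids the problem by not estimating arbitrary competitors at all: it first proves that a minimizer of (\ref{defn:mu0}) exists and is rigid (finitely many jumps, each jump symmetric about the line $Mx$, intersections equally spaced with one jump at each midpoint, which in your notation forces $w_{k}=0$), and only then applies the same AM--GM inequality you use on the central portion between the first and last intersection; the only loss is two boundary layers of length at most $4L_{0}$ with $L_{0}=(64\alpha^{2}/(\beta^{2}|M|^{3}))^{1/5}$, and $c_{2}|M|^{1/5}=8c_{1}|M|^{4/5}L_{0}$ is exactly the price of those layers. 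Some structural input of this kind (or a genuinely sharper argument than pointwise Young absorption) is needed before your per-interval inequality can be applied with its sharp constant.
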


We are now ready to state the first main result of this section, namely the characterization of all entire and semi-entire local minimizers for the functional (\ref{defn:JF}).

\begin{prop}[Classification of entire and semi-entire local minimizers]\label{prop:loc-min-class}

For every choice of the real numbers $(\alpha,\beta,M)\in(0,+\infty)^{2}\times\re$ let us consider the functional $\JF_{1/2}(\alpha,\beta,M,\re,v)$ defined in (\ref{defn:JF}). Let us consider the canonical $(H,V)$-staircase $S_{H,V}(x)$ with parameters 
\begin{equation}
H:=\frac{1}{2}\left(\frac{9\alpha^{2}}{\beta^{2}|M|^{3}}\right)^{1/5},
\qquad\qquad
V:=MH,
\label{defn:lambda-0}
\end{equation}
and the usual agreement that $S_{H,V}(x)\equiv 0$ when $M=0$.

Then the following statements hold true.
\begin{enumerate}
\renewcommand{\labelenumi}{(\arabic{enumi})}

\item \emph{(Entire local minimizers).} The set of entire local minimizers coincides with the set of the oblique translations of the canonical $(H,V)$-staircase $S_{H,V}(x)$, as introduced in Definition~\ref{defn:staircase} and Definition~\ref{defn:translations}.

\item \emph{(Semi-entire local minimizers).} The unique right-hand semi-entire local minimizer is the function $w:(0,+\infty)\to\re$ defined by
\begin{equation}
w(x):=\begin{cases}
Mz_{0} & \text{if }x\in(0,z_{0}), \\
S_{H,V}(x-z_{0})+Mz_{0}\quad & \text{if }x\geq z_{0},
\end{cases}
\label{defn:semi-entire}
\end{equation} 
where $z_{0}:=(5/3)^{1/2}H$ (if $M=0$ the value of $z_{0}$ is not relevant).

The unique left-hand semi-entire local minimizer is the function $w(-x)$.

\end{enumerate}

\end{prop}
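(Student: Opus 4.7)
My plan is to derive Euler--Lagrange conditions for an arbitrary local minimizer of $\JF_{1/2}(\alpha,\beta,Mx,\cdot,\cdot)$, show that they admit only canonical-staircase configurations, and then exploit an additional free-boundary condition for the semi-entire case. The case $M=0$ is immediate (the unique minimizer is $v\equiv 0$), so assume $M\neq 0$. For a local minimizer $v$ on a bounded interval with boundary data $(A,B)$, I would first observe that $v$ has only finitely many jumps in any compact set: merging two same-sign jumps of heights $j_1,j_2$ at mutual distance~$\delta$ into a single jump of height $j_1+j_2$ strictly decreases the jump cost by the fixed gap $\sqrt{j_1}+\sqrt{j_2}-\sqrt{j_1+j_2}>0$, while the fidelity change is $O(\delta)$. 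Writing $v\equiv c_i$ on $(s_i,s_{i+1})$, partial differentiation of the cost with respect to each free parameter then yields the Euler--Lagrange conditions
\begin{equation}
s_j=\frac{c_{j-1}+c_j}{2M}, \qquad |c_j-c_{j-1}|=|c_{j+1}-c_j|\text{ when adjacent jumps have the same sign},
\nonumber
\end{equation}
and, once equal heights are established, $c_j=M(s_j+s_{j+1})/2$ (i.e.~$c_j$ equals the average of $Mx$ on its piece).

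Two consecutive opposite-sign jumps can be replaced by a single jump of algebraic sum, strictly decreasing both the jump cost (sub-additivity) and, after reoptimizing the intermediate constant, the fidelity; hence all consecutive jumps share the same sign. For an \emph{entire} local minimizer that common sign must coincide with $\sign(M)$: otherwise the constants drift opposite to $Mx$ and the fidelity density on long intervals exceeds the asymptotic upper bound $c_1|M|^{4/5}$ provided by Proposition~\ref{prop:loc-min-discr}. Setting $|J|=2V$ with $\sign(V)=\sign(M)$ and $H:=V/M$, the previous conditions force equal step length $2H$ and constants equal to the local averages of $Mx$. The resulting energy density per unit length equals
\begin{equation}
\Phi(H):=\frac{\beta M^2H^2}{3}+\alpha\sqrt{\frac{|M|}{2H}},
\nonumber
\end{equation}
and $\min_{H>0}\Phi$ is attained precisely at the value given in (\ref{defn:lambda-0}); moreover that minimum equals $c_1|M|^{4/5}$, the sharp constant of Proposition~\ref{prop:loc-min-discr}. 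Any other period would produce density strictly above $c_1|M|^{4/5}$, contradicting (\ref{th:est-mu*-abLM}) on sufficiently long sub-intervals. The only residual freedom is a translation along the line $Hy=Vx$, giving exactly the family $\obl(H,V)$.

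For a right-hand semi-entire local minimizer $w$ on $(0,+\infty)$, the previous analysis applied on every $(L_1,L_2)\subset(0,+\infty)$ forces $w$ to coincide with the canonical staircase for $x>s_1$, where $s_1>0$ is the first jump. Thus $w\equiv c_0$ on $(0,s_1)$, $c_1=c_0+2V$, and the jump condition at $s_1$ gives $c_0=M(s_1-H)$. Since no boundary condition is imposed at $x=0$, the constant $c_0$ is a genuinely free parameter, whose first-order condition reads
\begin{equation}
2\beta s_1\left(c_0-\frac{Ms_1}{2}\right)=\frac{\alpha}{2\sqrt{2|M|H}}.
\nonumber
\end{equation}
Using the explicit relation $\alpha=(4\sqrt{2}/3)\beta|M|^{3/2}H^{5/2}$ extracted from (\ref{defn:lambda-0}), the right-hand side equals $(2/3)\beta|M|H^2$; substituting $c_0=M(s_1-H)$ collapses the equation to
\begin{equation}
s_1(s_1-2H)=\tfrac{2}{3}H^2,
\nonumber
\end{equation}
whose unique positive root is $s_1=H(1+\sqrt{5/3})$. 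Hence $z_0=s_1-H=\sqrt{5/3}\,H$, and the left-hand case follows from the reflection $x\mapsto-x$.

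I expect the main obstacle to be the rigorous classification of entire local minimizers: the first-order conditions alone admit, in principle, critical configurations with unequal jump heights, and ruling them out requires using Proposition~\ref{prop:loc-min-discr} to show that any local deviation from the uniform density $c_1|M|^{4/5}$ accumulates linearly and hence contradicts the sharp upper bound on sufficiently long sub-intervals. A careful boundary-to-interior reduction is also needed in the semi-entire step to exclude extra jumps hidden in $(0,s_1)$.
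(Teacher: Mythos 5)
Your overall strategy (finiteness of the jump set, symmetry of each jump about the line $Mx$, equal steps, identification of the step length by comparing the energy density with the sharp constant of Proposition~\ref{prop:loc-min-discr}, and a free first-order condition near the origin for the semi-entire case) is essentially the paper's, and your computation giving $z_{0}=\sqrt{5/3}\,H$ agrees with the paper's. The genuine gap is exactly where you suspect it: the assertion $|c_{j}-c_{j-1}|=|c_{j+1}-c_{j}|$ is \emph{not} an Euler--Lagrange condition. Differentiating the energy in $c_{j}$ gives the single scalar relation $\frac{\alpha}{2}\bigl(h_{j}^{-1/2}-h_{j+1}^{-1/2}\bigr)+2\beta(s_{j+1}-s_{j})\bigl(c_{j}-M\frac{s_{j}+s_{j+1}}{2}\bigr)=0$, and after inserting the jump-symmetry relations the two terms have opposite signs whenever $h_{j}\neq h_{j+1}$, so the first-order system does admit critical configurations with unequal steps (they correspond to the spurious critical points $\pi/4\pm\theta_{1}$ appearing in the paper's Lemma~\ref{lemma:equipartition}). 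Your proposed repair does not close this: a configuration in which one step (or finitely many, or infinitely many with summable excess) deviates from $2H$ exceeds the density $c_{1}|M|^{4/5}L$ only by a bounded amount, which is perfectly compatible with the $O(|M|^{1/5})$ slack between the lower bound (\ref{th:est-mu-abLM}) and the upper bound (\ref{th:est-mu*-abLM}); the global comparison therefore only yields $\ell_{k}\to 2H$ along the staircase, not $\ell_{k}\equiv 2H$. The paper closes this with a local, genuinely non-first-order argument: for three consecutive intersection points it slides the middle one and shows (Lemma~\ref{lemma:equipartition}) that the only possible interior minimum point of $t\mapsto C_{0}(\sqrt{t}+\sqrt{1-t})+C_{1}(t^{3}+(1-t)^{3})$ is $t=1/2$, which forces equipartition of the intersections; only afterwards does the density comparison pin the common gap to $2H$.

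Two further points. First, your merging/``drift'' argument for the sign of the jumps and for excluding opposite-sign jumps is heuristic (in particular it is not clear that reoptimizing the intermediate constant never increases the fidelity term); the paper obtains the sign for free from the second-order condition $\varphi''(s)\geq 0$ when translating a single jump, which is local and rigorous. Second, the proposition asserts an equality of sets, so you also owe the converse inclusion: that every oblique translation of $S_{H,V}$ (and the function $w$ in (\ref{defn:semi-entire})) actually is an entire (respectively semi-entire) local minimizer; the paper proves this via a direct-method existence argument in each finite interval combined with the structure result, and your proposal never addresses it.
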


In words, the right-hand semi-entire local minimizer is an oblique translation of the canonical $(H,V)$-staircase, but with a first step that is longer. Intuitively, this is due to the fact that the ``jump at the origin'' has no cost in terms of energy.

%\clearpage

The second main result of this section is the convergence of local minimizers for (\ref{defn:ABGF}) to local minimizers for (\ref{defn:JF}). Let us start with the symmetric case.

\begin{prop}[Convergence to entire local minimizers]\label{prop:bu2minloc}

Let $M$ and $\beta$ be real numbers, with $\beta>0$. For every positive integer $n$, let $\ep_{n}\in(0,1)$ and $A_{n}<B_{n}$ be real numbers, let $g_{n}:(A_{n},B_{n})\to\re$ be a continuous function, and let $w_{n}\in H^{2}((A_{n},B_{n}))$.

Let us assume that
\begin{enumerate}
\renewcommand{\labelenumi}{(\roman{enumi})}

\item  as $n\to +\infty$ it turns out that $\ep_{n}\to 0^{+}$, $A_{n}\to -\infty$, and $B_{n}\to +\infty$,

\item  $g_{n}(x)\to Mx$ uniformly on bounded subsets of $\re$,

\item   for every positive integer $n$ it turns out that
\begin{equation}
w_{n}\in\argmin\loc\left\{\RPMF_{\ep_{n}}(\beta,g_{n},(A_{n},B_{n}),w):w\in H^{2}((A_{n},B_{n}))\right\},
\nonumber
\end{equation}

\item  there exists a positive real number $C_{0}$ such that
\begin{equation}
\RPMF_{\ep_{n}}(\beta,g_{n},(A_{n},B_{n}),w_{n})\leq\frac{C_{0}}{\ep_{n}}
\qquad
\forall n\geq 1.
\label{hp:bound-G}
\end{equation}

\end{enumerate}

Then there exists an increasing sequence $\{n_{k}\}$ of positive integers such that
\begin{equation}
w_{n_{k}}(x)\auto w_{\infty}(x)
\quad\text{in }BV\loc(\re),
\nonumber
\end{equation}
where $w_{\infty}$ is an entire local minimizer for the functional (\ref{defn:JF}) with $\alpha$ given by~(\ref{defn:alpha-0}).

\end{prop}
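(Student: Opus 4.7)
The plan has three main ingredients: a compactness step that turns the very weak bound (\ref{hp:bound-G}) into a genuine local uniform bound, a passage-to-the-limit argument establishing local minimality of $w_\infty$, and an upgrade from $L^2$-convergence to locally strict convergence via recovery sequence theory.

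\emph{Step 1 (Compactness).} The hypothesis (\ref{hp:bound-G}) only gives $\RPMF_{\ep_n}(\beta,g_n,(A_n,B_n),w_n)\leq C_0/\ep_n$, which diverges and by itself is not enough to extract limits. The plan is to invoke the iteration argument (Proposition~\ref{prop:iteration}): on any fixed bounded interval $I\subset\re$, and for $n$ large enough so $I\subset(A_n,B_n)$, it upgrades the weak bound to a truly \emph{uniform} estimate for $\RPMF_{\ep_n}(\beta,g_n,I,w_n)$. Combined with statement~(\ref{stat:ABG-cpt}) of Theorem~\ref{thm:ABG} and the $L^2(I)$-control on $w_n$ that follows from the fidelity term (using uniform convergence $g_n\to Mx$ on $I$), this yields on every bounded $I$ a subsequence converging in $L^2(I)$ to an element of $\PJ(I)$. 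A standard diagonal extraction over an exhausting family of intervals then produces a single subsequence $\{w_{n_k}\}$ and a limit $w_\infty\in\PJ\loc(\re)$ with $w_{n_k}\to w_\infty$ in $L^2\loc(\re)$.

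\emph{Step 2 (Local minimality of $w_\infty$).} Given a compactly supported perturbation, namely $\tilde w\in\PJ\loc(\re)$ with $\tilde w=w_\infty$ outside some bounded interval $(a,b)$, I enlarge to $(a',b')\supset(a,b)$ whose endpoints are non-jump points of $w_\infty$ and apply statement~(\ref{stat:ABG-recovery}) of Theorem~\ref{thm:ABG} to build a recovery sequence $\tilde w_{n_k}$ for $\tilde w|_{(a',b')}$ in $L^2$, with prescribed boundary data $(w_{n_k}(a'),w_{n_k}'(a'),w_{n_k}(b'),w_{n_k}'(b'))$. The delicate point is checking that these boundary data converge to $(\tilde w(a'),0,\tilde w(b'),0)$: the pointwise values converge since $a',b'$ are continuity points of $w_\infty$ (by statement~(\ref{strict:cont}) of Remark~\ref{rmk:strict}, which will be available a posteriori but for now one argues via the weaker $L^2$-convergence plus the local $H^2$-bound from Step~1), while the derivatives can be forced small by picking $a',b'$ in the large set of \emph{good points} where the $H^2$-control $\int \ep_n^6(w_n'')^2\leq C$ combined with the logarithmic energy forces $w_{n_k}'$ to be small -- a Fubini-type averaging argument. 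The glued competitor that equals $w_{n_k}$ outside $(a',b')$ and $\tilde w_{n_k}$ inside is then admissible for local minimality of $w_{n_k}$, so
\begin{equation*}
\RPMF_{\ep_{n_k}}(\beta,g_{n_k},(a',b'),w_{n_k})\leq\RPMF_{\ep_{n_k}}(\beta,g_{n_k},(a',b'),\tilde w_{n_k}).
\end{equation*}
Applying the Gamma-liminf from statement~(\ref{stat:ABG-gconv}) of Theorem~\ref{thm:ABG} on the left, the recovery property on the right, and the uniform convergence of the fidelity terms on $(a',b')$, one obtains
\begin{equation*}
\JF_{1/2}(\alpha_0,\beta,Mx,(a',b'),w_\infty)\leq\JF_{1/2}(\alpha_0,\beta,Mx,(a',b'),\tilde w),
\end{equation*}
which, since $w_\infty=\tilde w$ on $(a',b')\setminus(a,b)$, reduces to the required inequality on $(a,b)$.

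\emph{Step 3 (Locally strict convergence).} Repeating the construction in Step~2 with the specific choice $\tilde w=w_\infty$, minimality of $w_{n_k}$ tested against its own recovery sequence gives
\begin{equation*}
\limsup_{k\to+\infty}\RPMF_{\ep_{n_k}}(\beta,g_{n_k},(a',b'),w_{n_k})\leq\JF_{1/2}(\alpha_0,\beta,Mx,(a',b'),w_\infty),
\end{equation*}
and the matching Gamma-liminf forces equality. Hence $w_{n_k}$ is itself a recovery sequence on $(a',b')$; statement~(\ref{stat:ABG-uc}) of Theorem~\ref{thm:ABG} yields $w_{n_k}\auto w_\infty$ strictly in $BV((a',b'))$, and statement~(\ref{strict:sub-int}) of Remark~\ref{rmk:strict} passes this to $BV((a,b))$. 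Since $(a,b)$ was arbitrary with non-jump endpoints, the locally strict convergence on $\re$ follows. The main obstacle I expect is Step~1, whose heart is Proposition~\ref{prop:iteration} -- the nested-interval iteration flagged in the introduction as the technical core of the paper. A secondary but nontrivial difficulty is the boundary-data matching in Step~2: without being able to select endpoints where $w_{n_k}'$ is small, the recovery sequence argument cannot be closed and the whole scheme collapses.
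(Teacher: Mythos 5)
Your proposal is correct and follows essentially the same route as the paper: Proposition~\ref{prop:iteration} plus statement~(\ref{stat:ABG-cpt}) of Theorem~\ref{thm:ABG} and a diagonal extraction for compactness, then local minimality and strict convergence obtained by testing against recovery sequences with matched boundary data from statement~(\ref{stat:ABG-recovery}), combined with the Gamma-liminf and statement~(\ref{stat:ABG-uc}). Your ``good endpoints'' selection is exactly the paper's mechanism, realized there by noting that the bounded rescaled energy forces $\log(1+w_{n_k}'^{2})\to 0$ in $L^{1}\loc$, hence (along a further subsequence) $w_{n_k}\to w_{\infty}$ and $w_{n_k}'\to 0$ almost everywhere, so almost every point can serve as an endpoint.
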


The result for one-sided local minimizers is analogous. We state it in the case of right-hand local minimizers. 

\begin{prop}[Convergence to semi-entire local minimizers]\label{prop:bu2Rminloc}

Let $M$ and $\beta$ be real numbers, with $\beta>0$. For every positive integer $n$, let $\ep_{n}\in(0,1)$ and $L_{n}>0$ be real numbers, let $g_{n}:(0,L_{n})\to\re$ be a continuous function, and let $w_{n}\in H^{2}((0,L_{n}))$.

Let us assume that
\begin{enumerate}
\renewcommand{\labelenumi}{(\roman{enumi})}

\item  as $n\to +\infty$ it turns out that $\ep_{n}\to 0^{+}$ and $L_{n}\to +\infty$,

\item  $g_{n}(x)\to Mx$ uniformly on bounded subsets of $(0,+\infty)$,

\item   for every positive integer $n$ it turns out that
\begin{equation}
w_{n}\in\argmin\Rloc\left\{\RPMF_{\ep_{n}}(\beta,g_{n},(0,L_{n}),w):w\in H^{2}((0,L_{n}))\right\},
\nonumber
\end{equation}

\item  there exists a positive real number $C_{0}$ such that
\begin{equation}
\RPMF_{\ep_{n}}(\beta,g_{n},(0,L_{n}),w_{n})\leq\frac{C_{0}}{\ep_{n}}
\qquad
\forall n\geq 1.
\nonumber
\end{equation}

\end{enumerate}

Let $w_{\infty}$ denote the unique right-hand semi-entire local minimizer for the functional (\ref{defn:JF}) with $\alpha$ given by~(\ref{defn:alpha-0}), namely the function defined by (\ref{defn:semi-entire}).

Then for every $L>0$ that is not a jump point of $w_{\infty}$ it turns out that
\begin{equation}
w_{n}(x)\auto w_{\infty}(x)
\quad\text{in }BV((0,L)).
\nonumber
\end{equation}

\end{prop}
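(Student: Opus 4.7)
The plan is to adapt the strategy of Proposition~\ref{prop:bu2minloc} to the one-sided setting, in three stages: an iteration-based improvement of the energy bound, extraction of an $L^2\loc$ cluster point and its identification as the unique right-hand semi-entire local minimizer $w_\infty$, and finally the upgrade from $L^2$ to strict $BV$-convergence.

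\emph{Step 1 (compactness).} The bound $\RPMF_{\ep_n}(\beta,g_n,(0,L_n),w_n)\leq C_0/\ep_n$ controls the total energy only on a diverging interval, and by itself is too weak to give direct compactness. I would invoke the delicate iteration argument mentioned in the introduction (nested-intervals improvement of the bound) to upgrade this to a uniform bound $\RPMF_{\ep_n}(\beta,g_n,(0,L),w_n)\leq C(L)$ for every fixed $L>0$ and every $n$ large enough, exploiting right-hand local minimality in a nontrivial way. This step is likely to be the main obstacle, as it is the technical core of the whole theory. Once the improved bound is available, the fidelity term together with $g_n\to Mx$ on bounded sets provides a uniform $L^2$-bound on $w_n$ on each $(0,L)$; statement~(\ref{stat:ABG-cpt}) of Theorem~\ref{thm:ABG} combined with a diagonal procedure over $L\in\n$ yields an increasing sequence $\{n_{k}\}$ and a function $w_\infty^*$ such that $w_{n_{k}}\to w_\infty^*$ in $L^2\loc([0,+\infty))$, with $w_\infty^*\in\PJ((0,L))$ for every $L>0$.

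\emph{Step 2 (the limit is a right-hand semi-entire local minimizer).} Fix $L>0$ that is not a jump point of $w_\infty^*$, and let $\tilde w\in\PJ((0,L))$ be any competitor with $\tilde w(L)=w_\infty^*(L)$ in the sense of (\ref{defn:PJ-BC}). Pick $L'>L$, also not a jump point of $w_\infty^*$, and use statement~(\ref{stat:ABG-recovery}) of Theorem~\ref{thm:ABG} to approximate the glued function
\[
\widehat w(x):=\tilde w(x)\mathbbm{1}_{(0,L)}(x)+w_\infty^*(x)\mathbbm{1}_{[L,L')}(x)
\]
by a sequence $\tilde w_{n_{k}}\in H^2((0,L'))$ with prescribed right-endpoint data $(\tilde w_{n_{k}}(L'),\tilde w_{n_{k}}'(L'))=(w_{n_{k}}(L'),w_{n_{k}}'(L'))$, whose convergence to $(w_\infty^*(L'),0)$ is guaranteed by Remark~\ref{rmk:strict}(\ref{strict:cont}). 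Gluing with $w_{n_{k}}$ on $(L',L_{n_{k}})$ produces a competitor admissible for the right-hand local minimality of $w_{n_{k}}$. Comparing energies, passing to the limit $k\to+\infty$, and combining the liminf inequality on $(0,L)$ with continuity of the fidelity term and the recovery-sequence property on $(L,L')$ yields
\[
\JF_{1/2}(\alpha_0,\beta,Mx,(0,L),w_\infty^*)\leq\JF_{1/2}(\alpha_0,\beta,Mx,(0,L),\tilde w).
\]
Hence $w_\infty^*$ is a right-hand semi-entire local minimizer, and by the uniqueness part of Proposition~\ref{prop:loc-min-class} we conclude $w_\infty^*=w_\infty$.

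\emph{Step 3 (strict convergence and promotion to the full family).} Choosing in particular $\tilde w=w_\infty$ in Step~2 turns the previous inequality into an equality; since the fidelity terms converge (by $L^2$-convergence of $w_{n_{k}}$ and of $g_{n_{k}}\to Mx$ on $(0,L)$), we obtain $\lim_{k\to+\infty}\RPM_{\ep_{n_{k}}}((0,L),w_{n_{k}})=\alpha_0\J_{1/2}((0,L),w_\infty)$. Statement~(\ref{stat:ABG-uc}) of Theorem~\ref{thm:ABG} then yields $w_{n_{k}}\auto w_\infty$ in $BV((0,L))$. Since the limit $w_\infty$ is independent of the subsequence extracted in Step~1, a standard Urysohn argument promotes strict convergence from the subsequence to the full family, which is the desired conclusion.
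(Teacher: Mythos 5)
Your overall route is the same as the paper's: one-sided iteration (Proposition~\ref{prop:iteration} in the form of Remark~\ref{rmk:iteration}) to upgrade the $C_{0}/\ep_{n}$ bound to an $n$-independent bound on each $(0,L)$, compactness from statement~(\ref{stat:ABG-cpt}) of Theorem~\ref{thm:ABG} plus a diagonal argument, identification of the limit as a right-hand semi-entire local minimizer via recovery sequences with prescribed right-endpoint data (statement~(\ref{stat:ABG-recovery})), uniqueness from Proposition~\ref{prop:loc-min-class}, and strict convergence from statement~(\ref{stat:ABG-uc}), with a subsequence-uniqueness argument to pass to the full family. The only structural difference (gluing a competitor on $(0,L)$ with $w_{\infty}^{*}$ on $(L,L')$ rather than comparing directly on $(0,L)$) is harmless.

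There is, however, one genuine gap in Step~2: you need the boundary data $\left(w_{n_{k}}(L'),w_{n_{k}}'(L')\right)\to\left(w_{\infty}^{*}(L'),0\right)$ in order to invoke statement~(\ref{stat:ABG-recovery}) of Theorem~\ref{thm:ABG}, and your justification via statement~(\ref{strict:cont}) of Remark~\ref{rmk:strict} does not work at this stage. First, that remark presupposes strict convergence, which is precisely what you are trying to prove (it only becomes available in Step~3), so the argument as written is circular; second, even granting strict convergence, the remark says nothing about the derivatives, so it cannot give $w_{n_{k}}'(L')\to 0$. The correct fix, which is what the paper does, is to harvest this information in Step~1: the improved bound gives $\int_{0}^{L'}\log\left(1+w_{n_{k}}'(x)^{2}\right)dx\leq C(L')\,\omega(\ep_{n_{k}})^{2}\to 0$, hence $\log\left(1+w_{n_{k}}'^{2}\right)\to 0$ in $L^{1}\loc$, and up to a further subsequence both $w_{n_{k}}(x)\to w_{\infty}^{*}(x)$ and $w_{n_{k}}'(x)\to 0$ for almost every $x>0$; one then chooses $L'$ among this full-measure set of points (and not a jump point of $w_{\infty}^{*}$), which is enough since such $L'$ are dense, and the conclusion for an arbitrary non-jump $L$ follows by restriction (statement~(\ref{strict:sub-int}) of Remark~\ref{rmk:strict}). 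With this modification your proof is complete and coincides in substance with the paper's argument.
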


%\clearpage

\setcounter{equation}{0}
\section{Proofs of main results}\label{sec:strategy}

In this section we assume that the results stated in section~\ref{sec:gconv} and section~\ref{sec:loc-min} are valid, and using them we prove all the main results of section~\ref{sec:statements} concerning the behavior of minima and minimizers. We hope that this presentation allows to highlight the main ideas without focusing on the technical details that will be presented in the next section.

\subsection{Asymptotic behavior of minima (Theorem~\ref{thm:asympt-min})}

The proof of Theorem~\ref{thm:asympt-min} consists of two main parts. In the first part (estimate from below) we consider any family $\{u_{\ep}\}\subseteq H^{2}((0,1))$ and we show that
\begin{equation}
\liminf_{\ep\to 0^{+}}\frac{\PMF_{\ep}(\beta,f,(0,1),u_{\ep})}{\omep^{2}}\geq
10\left(\frac{2\beta}{27}\right)^{1/5}\int_{0}^{1}|f'(x)|^{4/5}\,dx.
\label{est:lim-SPM-below}
\end{equation}

In the second part (estimate from above) we construct a family $\{u_{\ep}\}\subseteq H^{2}((0,1))$ such that
\begin{equation}
\limsup_{\ep\to 0^{+}}\frac{\PMF_{\ep}(\beta,f,(0,1),u_{\ep})}{\omep^{2}}\leq
10\left(\frac{2\beta}{27}\right)^{1/5}\int_{0}^{1}|f'(x)|^{4/5}\,dx.
\label{est:lim-SPM-above}
\end{equation}

\subsubsection{Estimate from below}

\paragraph{\textmd{\textit{Interval subdivision and approximation of the forcing term}}}

Let us fix two real numbers $L>0$ and $\eta\in(0,1)$. For every $\ep\in(0,1)$ we set
\begin{equation}
N_{\ep,L}:=\left\lfloor\frac{1}{L\omep}\right\rfloor
\qquad\quad\text{and}\quad\qquad
L_{\ep}:=\frac{1}{N_{\ep,L}\omep}.
\label{defn:NepAep}
\end{equation}

We observe that $N_{\ep,L}$ is an integer, and that $L_{\ep}\to L$ when $\ep\to 0^{+}$. We observe also that $[0,1]$ is (up to a finite number of points) the disjoint union of the $N_{\ep,L}$ intervals of length $L_{\ep}\omep$ defined by
\begin{equation}
I_{\ep,k}:=((k-1)L_{\ep}\omep,kL_{\ep}\omep)
\qquad
\forall k\in\{1,\ldots,N_{\ep,L}\},
\label{defn:Iepk}
\end{equation}
and we consider  the piecewise affine function $f_{\ep,L}:[0,1]\to\re$ that interpolates the values of $f$ at the endpoints of these intervals, namely the function defined by
\begin{equation}
f_{\ep,L}(x):=M_{\ep,L,k}(x-(k-1)L_{\ep}\omep)+f((k-1)L_{\ep}\omep)
\qquad
\forall x\in I_{\ep,k},
\label{defn:fepL}
\end{equation}
where
\begin{equation}
M_{\ep,L,k}:=\frac{f(kL_{\ep}\omep)-f((k-1)L_{\ep}\omep)}{L_{\ep}\omep}.
\nonumber
\end{equation}

From the $C^{1}$ regularity of $f$ we deduce that
\begin{equation}
|M_{\ep,L,k}|\leq\max\{|f'(x)|:x\in[0,1]\}=:M_{\infty}
\label{est:mepL-bound}
\end{equation}
for every admissible value of $\ep$, $L$ and $k$, and that the family $\{f_{\ep,L}\}$ converges to $f$ in the sense that
\begin{equation}
\lim_{\ep\to 0^{+}}\frac{1}{\omep^{2}}\int_{0}^{1}(f(x)-f_{\ep,L}(x))^{2}\,dx=0.
\label{lim:fepL2f}
\end{equation}

Moreover, we deduce also that $f_{\ep,L}'(x)\to f'(x)$ uniformly in $[0,1]$, and in particular
\begin{equation}
\lim_{\ep\to 0^{+}}L_{\ep}\omep\sum_{k=1}^{N_{\ep,L}}|M_{\ep,L,k}|^{4/5}=
\lim_{\ep\to 0^{+}}\int_{0}^{1}|f_{\ep,L}'(x)|^{4/5}\,dx=
\int_{0}^{1}|f'(x)|^{4/5}\,dx.
\label{lim:fepL'2f'}
\end{equation}

Finally, from the inequality
\begin{equation}
(a+b)^{2}\geq(1-\eta)a^{2}+\left(1-\frac{1}{\eta}\right)b^{2}
\qquad
\forall\eta\in(0,1),
\quad
\forall (a,b)\in\re^{2},
\nonumber
\end{equation}
we obtain the estimate
\begin{equation}
\int_{0}^{1}(u_{\ep}-f)^{2}\,dx\geq
(1-\eta)\int_{0}^{1}(u_{\ep}-f_{\ep,L})^{2}\,dx
+\left(1-\frac{1}{\eta}\right)\int_{0}^{1}(f-f_{\ep,L})^{2}\,dx,
\nonumber
\end{equation}
from which we conclude that
\begin{eqnarray}
\PMF_{\ep}(\beta,f,(0,1),u_{\ep})& \geq & (1-\eta)\PMF_{\ep}(\beta,f_{\ep,L},(0,1),u_{\ep})
\nonumber
\\[0.5ex]
&  & 
\mbox{}+\left(1-\frac{1}{\eta}\right)\beta\int_{0}^{1}(f(x)-f_{\ep,L}(x))^{2}\,dx.
\label{est:SPM-ABGF}
\end{eqnarray}

\paragraph{\textmd{\textit{Reduction to a common interval}}}

We prove that
\begin{equation}
\PMF_{\ep}(\beta,f_{\ep,L},(0,1),u_{\ep})\geq
\omep^{3}\sum_{k=1}^{N_{\ep,L}}\mu_{\ep}(\beta,L,M_{\ep,L,k}),
\label{ineq:ABGF-mLM}
\end{equation}
where $\mu_{\ep}(\beta,L,M_{\ep,L,k})$ is defined by (\ref{defn:muep}). To this end, we begin by observing that
\begin{equation}
\PMF_{\ep}(\beta,f_{\ep,L},(0,1),u_{\ep})=
\sum_{k=1}^{N_{\ep,L}}\PMF_{\ep}(\beta,f_{\ep,L},I_{\ep,k},u_{\ep}).
\label{ineq:ABGF-sum}
\end{equation}

Each of the terms of the sum can be reduced to the common interval $(0,L_{\ep})$ by introducing the function $v_{\ep,L,k}:(0,L_{\ep})\to\re$ defined by
\begin{equation}
v_{\ep,L,k}(y):=\frac{\uep((k-1)L_{\ep}\omep+\omep y)-f((k-1)L_{\ep}\omep)}{\omep}
\qquad
\forall y\in(0,L_{\ep}).
\label{defn:veLK}
\end{equation}

Indeed, with the change of variable $x=(k-1)L_{\ep}\omep+\omep y$, we obtain that
\begin{equation}
\int_{I_{\ep,k}}(\uep(x)-f_{\ep,L}(x))^{2}\,dx=
\omep^{3}\int_{0}^{L_{\ep}}(v_{\ep,L,k}(y)-M_{\ep,L,k}\,y)^{2}\,dy
\nonumber
\end{equation}
and
\begin{equation}
\int_{I_{\ep,k}}
\left\{\ep^{6}\omep^{4}\uep''(x)^{2}+\log\left(1+\uep'(x)^{2}\right)\right\}dx
=\omep^{3}\RPM_{\ep}((0,L_{\ep}),v_{\ep,L,k}),
\nonumber
\end{equation}
and therefore
\begin{eqnarray*}
\PMF_{\ep}(\beta,f_{\ep,L},I_{\ep,k},u_{\ep}) & = &
\omep^{3}\RPMF_{\ep}(\beta,M_{\ep,L,k}\,x,(0,L_{\ep}),v_{\ep,L,k})
\\[1ex]
& \geq & \omep^{3}\mu_{\ep}(\beta,L_{\ep},M_{\ep,L,k})
\\[1ex]
& \geq &  \omep^{3}\mu_{\ep}(\beta,L,M_{\ep,L,k}),
\label{eqn:1000-620}
\end{eqnarray*}
where in the last inequality we exploited that $L_{\ep}\geq L$, and $\mu_{\ep}$ is monotone with respect to the length of the interval. Plugging this inequality into (\ref{ineq:ABGF-sum}) we obtain (\ref{ineq:ABGF-mLM}).

\paragraph{\textmd{\textit{Convergence to minima of the limit problem}}}

There exists $\ep_{0}\in(0,1)$ such that
\begin{equation}
\mu_{\ep}(\beta,L,M_{\ep,L,k})\geq\mu_{0}(\alpha_{0},\beta,L,M_{\ep,L,k})-\eta
\label{th:meLM-mu0}
\end{equation}
for every $\ep\in(0,\ep_{0})$ and every $k\in\{1,\ldots,N_{\ep,L}\}$, where the function $\mu_{0}$ is defined according to (\ref{defn:mu0}), and $\alpha_{0}$ is defined by (\ref{defn:alpha-0}).

Indeed, this estimate follows from Proposition~\ref{prop:mu}, and in particular from the uniform convergence (\ref{th:lim-mu-unif}), after observing that the values of $M_{\ep,L,k}$ are uniformly bounded because of (\ref{est:mepL-bound}).

\paragraph{\textmd{\textit{Conclusion}}}

From the estimate from below in (\ref{th:est-mu-abLM}) we know that
\begin{equation*}
\mu_{0}(\alpha_{0},\beta,L,M_{\ep,L,k})\geq
c_{1}|M_{\ep,L,k}|^{4/5}L-c_{2}|M_{\ep,L,k}|^{1/5},
\end{equation*}
where $c_{1}$ and $c_{2}$ are given by (\ref{defn:c123}), and therefore in particular
\begin{equation}
c_{1}:=\frac{5}{4}\left(\frac{\alpha_{0}^{4}\beta}{3}\right)^{1/5}=
10\left(\frac{2\beta}{27}\right)^{1/5}.
\label{defn:c1}
\end{equation}

Summing over $k$, from (\ref{ineq:ABGF-mLM}) and (\ref{th:meLM-mu0}) we obtain that
\begin{eqnarray*}
\lefteqn{\hspace{-5em}
\frac{\PMF_{\ep}(\beta,f_{\ep,L},(0,1),u_{\ep})}{\omep^{2}} \;\geq\;  
\omep\sum_{k=1}^{N_{\ep,L}}\mu_{\ep}(\beta,L,M_{\ep,L,k})}
\\
\qquad & \geq & \omep\sum_{k=1}^{N_{\ep,L}}\mu_{0}(\alpha_{0},\beta,L,M_{\ep,L,k})-\eta\omep N_{\ep,L}
\\
& \geq & c_{1}L\omep\sum_{k=1}^{N_{\ep,L}}|M_{\ep,L,k}|^{4/5}
-c_{2}\omep N_{\ep,L}M_{\infty}^{1/5}-\eta\omep N_{\ep,L}.
\end{eqnarray*}

Finally, plugging this estimate into (\ref{est:SPM-ABGF}) we deduce that
\begin{eqnarray*}
\frac{\PMF_{\ep}(\beta,f,(0,1),u_{\ep})}{\omep^{2}} & \geq &
(1-\eta)c_{1}\frac{L}{L_{\ep}}\cdot L_{\ep}\omep\sum_{k=1}^{N_{\ep,L}}|M_{\ep,L,k}|^{4/5}
\\[1ex]
& & \mbox{}-\omep N_{\ep,L}\cdot(1-\eta)\left(c_{2}M_{\infty}^{1/5}+\eta\right)
\\[1ex]
& & \mbox{}+\left(1-\frac{1}{\eta}\right)
\frac{\beta}{\omep^{2}}\int_{0}^{1}(f(x)-f_{\ep,L}(x))^{2}\,dx.
\end{eqnarray*}

Now we let $\ep\to 0^{+}$, and we exploit (\ref{lim:fepL'2f'}) in the first line, the fact that $\omep N_{\ep,L}\to 1/L$ in the second line, and (\ref{lim:fepL2f}) in the third line. We conclude that
\begin{equation*}
\liminf_{\ep\to 0^{+}}\frac{\PMF_{\ep}(\beta,f,(0,1),u_{\ep})}{\omep^{2}} \geq 
(1-\eta)\left\{c_{1}\int_{0}^{1}|f'(x)|^{4/5}\,dx
-\frac{c_{2}M_{\infty}^{1/5}+\eta}{L}\right\}.
\end{equation*}

Finally, letting $\eta\to 0^{+}$ and $L\to +\infty$, and recalling that $c_{1}$ is given by (\ref{defn:c1}), we obtain exactly (\ref{est:lim-SPM-below}).

%\clearpage

\subsubsection{Estimate from above}

We show the existence of a family $\{u_{\ep}\}\subseteq H^{2}((0,1))$ for which (\ref{est:lim-SPM-above}) holds true. This amounts to proving the asymptotic optimality of all the steps in the proof of the estimate from below.

\paragraph{\textmd{\textit{Interval subdivision and approximation of the forcing term}}}

Let us fix again two real numbers $L>0$ and $\eta\in(0,1)$, and for every $\ep\in(0,1)$ let us define $N_{\ep,L}$ and $L_{\ep}$ as in (\ref{defn:NepAep}), the intervals $I_{\ep,k}$ as in (\ref{defn:Iepk}), and the piecewise affine function $f_{\ep,L}:(0,1)\to\re$ as in (\ref{defn:fepL}). Then we exploit the inequality

\begin{equation*}
(a+b)^{2}\leq(1+\eta)a^{2}+\left(1+\frac{1}{\eta}\right)b^{2}
\qquad
\forall\eta\in(0,1),
\quad
\forall (a,b)\in\re^{2},
\end{equation*}
and for every $u\in H^{2}((0,1))$ we obtain the estimate
\begin{eqnarray*}
\PMF_{\ep}(\beta,f,(0,1),u) & \leq &
(1+\eta)\PMF_{\ep}(\beta,f_{\ep,L},(0,1),u)
\\[1ex]
& & \mbox{}+\left(1+\frac{1}{\eta}\right)\beta\int_{0}^{1}(f(x)-f_{\ep,L}(x))^{2}\,dx.
\label{est:SPM-ABGF-above}
\end{eqnarray*}

\paragraph{\textmd{\textit{Reduction to a common interval}}}

We claim that there exists $u_{\ep}\in H^{2}((0,1))$ such that
\begin{eqnarray*}
\PMF_{\ep}(\beta,f_{\ep,L},(0,1),u_{\ep}) & = &
\omep^{3}\sum_{k=1}^{N_{\ep,L}}\mu_{\ep}^{*}(\beta,L_{\ep},M_{\ep,L,k})
\\
& \leq & 
\left(\frac{L_{\ep}}{L}\right)^{3}\omep^{3}\sum_{k=1}^{N_{\ep,L}}\mu_{\ep}^{*}(\beta,L,M_{\ep,L,k}),
\label{ineq:ABGF-mLM*}
\end{eqnarray*}
where $\mu_{\ep}^{*}$ is defined by (\ref{defn:muep*}), and the inequality follows from (\ref{th:monot-muep*}).

To this end, in analogy with the previous case we observe that the equalities
\begin{eqnarray*}
\PMF_{\ep}(\beta,f_{\ep,L},(0,1),u_{\ep}) & = &
\sum_{k=1}^{N_{\ep,L}}\PMF_{\ep}(\beta,f_{\ep,L},I_{\ep,k},u_{\ep})
\\[1ex]
& = & 
\omep^{3}\sum_{k=1}^{N_{\ep,L}}\RPMF_{\ep}(\beta,M_{\ep,L,k}\,x,(0,L_{\ep}),v_{\ep,L,k})
\end{eqnarray*}
hold true for every $u_{\ep}\in H^{2}((0,1))$, provided that $u_{\ep}(x)$ and $v_{\ep,L,k}(x)$ are related by (\ref{defn:veLK}). At this point it is enough to choose $u_{\ep}$ in such a way that $v_{\ep,L,k}(x)$ coincides with a minimizer in the definition of $\mu_{\ep}^{*}(\beta,L_{\ep},M_{\ep,L,k})$ for every admissible choice of $k$. 

Due to the boundary conditions in (\ref{defn:muep*}), the resulting function $\uep(x)$ coincides with the forcing term $f(x)$ in the nodes of the form $x=kL_{\ep}\omep$, its derivative vanishes in the same points, and the profile in each subinterval is (up to homotheties and translations) a minimizer to (\ref{defn:muep*}). As a consequence, the different pieces glue together in a $C^{1}$ way, and thus the resulting function belongs to $H^{2}((0,1))$. 

\paragraph{\textmd{\textit{Convergence to minima of the limit problem}}}

As in the case of the estimates from below we rely on Proposition~\ref{prop:mu}, and in particular on the uniform convergence (\ref{th:lim-mu*-unif}), in order to deduce that there exists $\ep_{0}\in(0,1)$ such that
\begin{equation*}
\left(\frac{L_{\ep}}{L}\right)^{3}\mu_{\ep}^{*}(\beta,L,M_{\ep,L,k})\leq
\mu_{0}^{*}(\alpha_{0},\beta,L,M_{\ep,L,k})+\eta
\end{equation*}
for every $\ep\in(0,\ep_{0})$ and every $k\in\{1,\ldots,N_{\ep,L}\}$. We can absorb the cubic factor into  $\eta$ because $L_{\ep}\to L$, and $\mu_{\ep}^{*}(\beta,L,M_{\ep,L,k})$ is uniformly bounded for $\ep$ small because of the uniform bound on the slopes $M_{\ep,L,k}$ and the continuity of the limit $\mu_{0}^{*}$ with respect to~$M$.

\paragraph{\textmd{\textit{Conclusion}}}

Now we exploit the estimate from above in (\ref{th:est-mu*-abLM}), and we find that
\begin{equation*}
\mu_{0}^{*}(\alpha_{0},\beta,L,M_{\ep,L,k})\leq
c_{1}|M_{\ep,L,k}|^{4/5}L+c_{3}|M_{\ep,L,k}|^{1/5},
\end{equation*}
where again $c_{1}$ is given by (\ref{defn:c1}), and as in the previous case we conclude that
\begin{eqnarray*}
\frac{\PMF_{\ep}(\beta,f,(0,1),u_{\ep})}{\omep^{2}} & \leq &
(1+\eta)c_{1}L\omep\sum_{k=1}^{N_{\ep,L}}|M_{\ep,L,k}|^{4/5}
\\[1ex]
& & \mbox{}+\omep N_{\ep,L}\cdot(1+\eta)\left(c_{3}M_{\infty}^{1/5}+\eta\right)
\\[1ex]
& & \mbox{}+\left(1+\frac{1}{\eta}\right)
\frac{\beta}{\omep^{2}}\int_{0}^{1}(f(x)-f_{\ep,L}(x))^{2}\,dx.
\end{eqnarray*}

Letting $\ep\to 0^{+}$, we obtain that this family $\{u_{\ep}\}$ satisfies
\begin{equation*}
\limsup_{\ep\to 0^{+}}\frac{\PMF_{\ep}(\beta,f,(0,1),u_{\ep})}{\omep^{2}} \leq 
(1+\eta)\left\{c_{1}\int_{0}^{1}|f'(x)|^{4/5}\,dx
+\frac{c_{3}M_{\infty}^{1/5}+\eta}{L}\right\}.
\end{equation*}

Now we observe that the right-hand side tends to the right-hand side of (\ref{est:lim-SPM-above}) when $\eta\to 0^{+}$ and $L\to +\infty$. Therefore, with a standard diagonal procedure we can find a family $\{u_{\ep}\}\subseteq H^{2}((0,1))$ for which exactly (\ref{est:lim-SPM-above}) holds true.
\qed

%\clearpage

\subsection{Blow-ups at standard resolution (Theorem~\ref{thm:BU})}

The proof of Theorem~\ref{thm:BU} consists of three main parts. In the first two parts we address the compactness of fake and true blow-ups. In the final part we show how to achieve all possible translations of the canonical staircase.

\subsubsection{Compactness of fake blow-ups and oblique translations}\label{sec:fake-bu}

Let us set for simplicity $x_{n}:=x_{\ep_{n}}$, and let $w_{n}(y):=w_{\ep_{n}}(y)$ denote the corresponding fake blow-ups, defined in the interval $(A_{n},B_{n})$ with
\begin{equation}
A_{n}:=-\frac{x_{n}}{\omepn},
\qquad\qquad
B_{n}:=\frac{1-x_{n}}{\omepn}.
\label{defn:An-Bn}
\end{equation}

We need to show that the sequence $\{w_{n}(y)\}$ has a subsequence that converges locally strictly in $BV\loc(\re)$ to some oblique translation of the canonical $(H,V)$-staircase. To this end, we introduce the function $g_{n}:(A_{n},B_{n})\to\re$ defined by
\begin{equation}
g_{n}(y):=\frac{f(x_{n}+\omepn y)-f(x_{n})}{\omepn}
\qquad
\forall y\in(A_{n},B_{n}).
\label{defn:gn}
\end{equation}

We are now in a position to apply Proposition~\ref{prop:bu2minloc}. Let us check the assumptions.
\begin{itemize}

\item  Since $x_{n}\to x_{0}\in(0,1)$, passing to the limit in (\ref{defn:An-Bn}) we see that  $A_{n}\to -\infty$ and $B_{n}\to +\infty$.

\item  Since the forcing term $f(x)$ is of class $C^{1}$, passing to the limit in (\ref{defn:gn}) we see that $g_{n}(y)\to f'(x_{0})\cdot y$ uniformly on bounded subsets of $\re$.

\item   With the change of variable $x=x_{n}+\omepn y$ we obtain that
\begin{equation}
\PMF_{\ep_{n}}(\beta,f,(0,1),u_{\ep_{n}})=
\omepn^{3}\cdot\RPMF_{\ep_{n}}(\beta,g_{n},(A_{n},B_{n}),w_{n}).
\label{eqn:PMF-vs-RPMF}
\end{equation}

Since $u_{\ep_{n}}(x)$ is a minimizer of the original functional $u\mapsto\PMF_{\ep_{n}}(\beta,f,(0,1),u)$, it follows that $w_{n}(y)$ is a minimizer of $w\mapsto\RPMF_{\ep_{n}}(\beta,g_{n},(A_{n},B_{n}),w)$.

\item  Due to (\ref{eqn:PMF-vs-RPMF}), estimate (\ref{hp:bound-G}) follows from Theorem~\ref{thm:asympt-min} as soon as $|\log\ep_{n}|\geq 1$.

\end{itemize}

At this point, from Proposition~\ref{prop:bu2minloc} we deduce that the sequence $\{w_{n}(y)\}$ converges locally strictly in $BV\loc(\re)$, at least up to subsequences, to an entire local minimizer of the limit functional (\ref{defn:JF}), with $\alpha$ given by (\ref{defn:alpha-0}). Finally, from Proposition~\ref{prop:loc-min-class} we know that all these entire local minimizers are oblique translations of the canonical $(H,V)$-staircase, with parameters given by (\ref{defn:HV}). 

\begin{rmk}[Back to Remark~\ref{rmk:BU-semi-int}]\label{rmk:boundary-proof}
\begin{em}

Let consider the case where $x_{\ep}\to x_{0}\in\{0,1\}$. If (\ref{hp:BU-internal}) holds true, then again $A_{n}\to -\infty$ and $B_{n}\to +\infty$ for every sequence $\ep_{n}\to 0^{+}$, and hence the previous proof still works. If (\ref{hp:BU-internal}) fails, then when $x_{0}=0$ it may happen that $A_{n}\to A_{\infty}\in(-\infty,0]$ and $B_{n}\to +\infty$ for some sequence $\ep_{n}\to 0^{+}$. 

In this case it is convenient to introduce the translated functions
\begin{equation*}
\widehat{w}_{n}(y):=w_{n}(y+A_{n})-f'(0)A_{n}
\qquad\text{and}\qquad
\widehat{g}_{n}(y):=g_{n}(y+A_{n})-f'(0)A_{n}.
\end{equation*}

We observe that these functions are defined in the interval $(0,L_{n})$ with $L_{n}:=B_{n}-A_{n}$, so that $L_{n}\to +\infty$. We observe also that
\begin{equation*}
\widehat{w}_{n}\in\argmin\Rloc\left\{\RPMF_{\ep_{n}}(\beta,\widehat{g}_{n},(0,L_{n}),v):v\in H^{2}((0,L_{n}))\right\},
\end{equation*}
and that $\widehat{g}_{n}(y)\to f'(0)\cdot y$ uniformly on bounded subsets of $(0,+\infty)$.

This means that we are in the framework of Proposition~\ref{prop:bu2Rminloc}, from which we deduce that the whole sequence $\{\widehat{w}_{n}(y)\}$ converges to the unique semi-entire local minimizer in $(0,+\infty)$ of the limit functional (\ref{defn:JF}), with $\alpha$ given by (\ref{defn:alpha-0}). This semi-entire local minimizer is given by (\ref{defn:semi-entire}), and the convergence is strict in $BV((0,L))$ for every $L>0$ that is not a jump point of the limit. This is a rigorous way of saying that $w_{n}(y)$ converges to $w(y-A_{\infty})+f'(0)A_{\infty}$, and the latter is the oblique translation of the unique semi-entire local minimizer that ``starts in $y=A_{\infty}$''. 

The case where $x_{0}=1$, and for some sequence $\ep_{n}\to 0^{+}$ it happens that $A_{n}\to -\infty$ and $B_{n}\to B_{\infty}\in[0,+\infty)$, is symmetric.

\end{em}
\end{rmk}

%\clearpage

\subsubsection{Compactness of true blow-ups and graph translations}\label{sec:true-bu}

Let us define $x_{n}$ and $w_{n}(y)$ as before, and let $v_{n}(y):=v_{\ep_{n}}(y)$ denote the corresponding true blow-ups. We observe that true blow-ups are related to the fake blow-ups by the equality
\begin{equation}
v_{n}(y)=w_{n}(y)-w_{n}(0)
\qquad
\forall y\in(A_{n},B_{n}),
\label{fake-vs-true-bu}
\end{equation}
and therefore the asymptotic behavior of the sequence $\{v_{n}(y)\}$ can be deduced from the asymptotic behavior of the sequence $\{w_{n}(y)\}$. More precisely, let us assume that
\begin{equation*}
w_{n_{k}}(y)\auto S_{H,V}(y-H\tau_{0})+V\tau_{0}
\quad\mbox{in }BV\loc(\re)
\end{equation*}
for some sequence $n_{k}\to+\infty$ and some $\tau_{0}\in[-1,1]$. Then we distinguish two cases. 

\begin{itemize}

\item  Let us assume that $|\tau_{0}|<1$. In this case $y=0$ is not a discontinuity point of the limit of fake blow-ups, and hence the strict convergence implies pointwise convergence (see statement~(\ref{strict:cont}) in Remark~\ref{rmk:strict}), so that
\begin{equation*}
\lim_{k\to +\infty}w_{n_{k}}(0)= 
S_{H,V}(-H\tau_{0})+V\tau_{0}= 
V\tau_{0}.
\end{equation*}

Therefore, from (\ref{fake-vs-true-bu}) we deduce that $v_{n_{k}}(y)\auto S_{H,V}(y-H\tau_{0})$ in $BV\loc(\re)$, and we conclude by observing that the limit is a graph translation of horizontal type of the canonical $(H,V)$-staircase, as required.

\item  Let us assume that $\tau_{0}=\pm 1$, and hence $\tau_{0}=1$ without loss of generality (because  oblique translations corresponding to $\tau_{0}=1$ and $\tau_{0}=-1$ coincide). In this case $y=0$ is a discontinuity point of the limit of fake blow-ups, and hence strict convergence (see statement~(\ref{strict:cont}) in Remark~\ref{rmk:strict}) implies only that
\begin{equation*}
-V\leq\liminf_{k\to +\infty}w_{n_{k}}(0)\leq\limsup_{k\to +\infty}w_{n_{k}}(0)\leq V.
\end{equation*}

As a consequence, up to a further subsequence (not relabeled), $w_{n_{k}}(0)$ tends to some value in $[-V,V]$ that we can always write in the form $V\tau_{1}$ for some real number $\tau_{1}\in[-1,1]$. Therefore, from (\ref{fake-vs-true-bu}) we deduce that, along this further subsequence,
\begin{equation*}
v_{n_{k}}(y)\auto
S_{H,V}(y-H)+V-V\tau_{1}
\quad\mbox{in }BV\loc(\re),
\end{equation*}
and we conclude by observing that the limit is a graph translation of vertical type of the canonical $(H,V)$-staircase, as required.

\end{itemize}

%\clearpage

\subsubsection{Realization of all possible oblique/horizontal/vertical translations}

In the constructions we can assume, without loss of generality, that $f'(x_{0})\neq 0$, because otherwise all families of fake or true blow-ups converge to the trivial staircase that is identically~0, in which case there is nothing to prove.

\paragraph{\textmd{\textit{Canonical staircase}}}

We show that there exists a family $x_{\ep}'\to x_{0}$ satisfying (\ref{th:liminf-limsup}), and (\ref{th:BU-conv-fake}) with $w_{0}(y):=S_{H,V}(y)$. The natural idea is to look for the fake blow-ups that minimize some distance from the desired limit. To this end, for every $\ep\in(0,1)$ small enough we consider the function
\begin{equation*}
\psi_{\ep}(x):=
\int_{-H}^{H}\left|\frac{u_{\ep}(x+\omep y)-f(x)}{\omep}\right|\,dy.
\end{equation*}

It is a continuous function of $x$, and therefore it admits at least one minimum point $x_{\ep}'$ in the interval $[x_{\ep}-H\omep,x_{\ep}+H\omep]$. We claim that $\{x_{\ep}'\}$ is the required family. To begin with, we observe that (\ref{th:liminf-limsup}) is automatic from the definition, and we call
\begin{equation}
w_{\ep}(y):=\frac{u_{\ep}(x_{\ep}'+\omep y)-f(x_{\ep}')}{\omep}
\label{defn:wep'}
\end{equation}
the corresponding fake blow-ups. If we assume by contradiction that $\{w_{\ep}(y)\}$ does not converge to $S_{H,V}(x)$, then from the compactness result we know that there exists a sequence $\ep_{n}\to 0^{+}$ such that $w_{\ep_{n}}(y)$ converges locally strictly in $BV\loc(\re)$ to some oblique translation $z_{0}(y)$ of $S_{H,V}(y)$, different from $S_{H,V}(y)$ itself, and in particular
\begin{equation*}
\lim_{n\to +\infty}\psi_{\ep_{n}}(x_{\ep_{n}}')=
\lim_{n\to+\infty}\int_{-H}^{H}|w_{\ep_{n}}(y)|\,dy=
\int_{-H}^{H}|z_{0}(y)|\,dy>0.
\end{equation*}

On the other hand, since $z_{0}$ is an oblique translation, it can be written in the form
\begin{equation*}
z_{0}(y)=S_{H,V}(y-H\tau_{1})+V\tau_{1}
\end{equation*}
for a suitable $\tau_{1}\in[-1,1]$, with $\tau_{1}\neq 0$. Now for every positive integer $n$ we set
\begin{equation*}
x_{\ep_{n}}'':=x_{\ep_{n}}'+(2k_{n}+\tau_{1})H\omepn,
\end{equation*}
where $k_{n}\in\{-1,0,1\}$ is chosen in such a way that 
\begin{equation*}
x_{\ep_{n}}-H\omepn\leq
x_{\ep_{n}}''<
x_{\ep_{n}}+H\omepn
\end{equation*}
(we point out that there is always exactly one possible choice of $k_{n}$).  We claim that 
\begin{equation}
\frac{u_{\ep_{n}}(x_{\ep_{n}}''+\omepn y)-f(x_{\ep_{n}}'')}{\omepn}
\auto S_{H,V}(y)
\qquad
\text{in }BV\loc(\re),
\label{claim:BU2v0}
\end{equation}
and in particular the convergence is also in $L^{1}((-H,H))$. This implies that $\psi_{\ep_{n}}(x_{\ep_{n}}'')\to 0$, and hence $\psi_{\ep_{n}}(x_{\ep_{n}}'')<\psi_{\ep_{n}}(x_{\ep_{n}}')$ when $n$ is large enough, thus contradicting the minimality of $x_{\ep_{n}}'$. In order to prove (\ref{claim:BU2v0}), up to subsequences (not relabeled) we can always assume that $k_{n}$ is actually a constant $k_{\infty}$. Now we observe that
\begin{equation*}
\frac{u_{\ep_{n}}(x_{\ep_{n}}''+\omepn y)-f(x_{\ep_{n}}'')}{\omepn}=
w_{\ep_{n}}(y+(2k_{\infty}+\tau_{1})H)-
\frac{f(x_{\ep_{n}}'')-f(x_{\ep_{n}}')}{\omepn},
\end{equation*}
so that in particular
\begin{equation*}
w_{\ep_{n}}(y+(2k_{\infty}+\tau_{1})H)\auto
z_{0}(y+(2k_{\infty}+\tau_{1})H)=
S_{H,V}(y+2k_{\infty}H)+V\tau_{1},
\end{equation*}
and
\begin{equation*}
\lim_{n\to +\infty}\frac{f(x_{\ep_{n}}'')-f(x_{\ep_{n}}')}{\omepn}=
(2k_{\infty}+\tau_{1})H\cdot f'(x_{0})=
(2k_{\infty}+\tau_{1})V.
\end{equation*}

It follows that
\begin{equation*}
\frac{u_{\ep_{n}}(x_{\ep_{n}}''+\omepn y)-f(x_{\ep_{n}}'')}{\omepn}\auto
S_{H,V}(y+2k_{\infty}H)-2k_{\infty}V,
\end{equation*}
and we conclude by observing that the latter coincides with $S_{H,V}(y)$. This completes the proof of (\ref{claim:BU2v0}).

\paragraph{\textmd{\textit{All oblique translations}}}

Let $x_{\ep}'\to x_{0}$ be the family that we found in the previous paragraph, namely a family satisfying (\ref{th:liminf-limsup}), and (\ref{th:BU-conv-fake}) with $w_{0}(y):=S_{H,V}(y)$. If we need to obtain a different oblique translation of the form $w_{0}(y)=S_{H,V}(y-H\tau_{0})+V\tau_{0}$ for some $\tau_{0}\in(-1,1]$, then it is enough to consider the family
\begin{equation*}
x_{\ep}'':=x_{\ep}'-H\tau_{0}\omep+2k_{\ep}H\omep,
\end{equation*}
where $k_{\ep}\in\{-1,0,1\}$ is chosen in such a way that $x_{\ep}''\in[x_{\ep}-H\omep,x_{\ep}+H\omep]$. Indeed, it is enough to observe that
\begin{equation}
\frac{u_{\ep}(x_{\ep}''+\omep y)-f(x_{\ep}'')}{\omep}=
w_{\ep}(y+(2k_{\ep}-\tau_{0})H)-
\frac{f(x_{\ep}'')-f(x_{\ep}')}{\omep},
\label{eqn:fbu-''-'}
\end{equation}
where $w_{\ep}$ is the family of fake blow-ups with centers in $x_{\ep}'$. At this point, if needed we split the family into three subfamilies according to the value of $k_{\ep}$. In the subfamily where $k_{\ep}$ is equal to some constant $k_{0}$ we obtain that
\begin{equation*}
w_{\ep}(y+(2k_{\ep}-\tau_{0})H)\auto
S_{H,V}(y+(2k_{0}-\tau_{0})H),
\end{equation*}
and
\begin{equation*}
\frac{f(x_{\ep}'')-f(x_{\ep}')}{\omep}\to
(2k_{0}-\tau_{0})H\cdot f'(x_{0})=(2k_{0}-\tau_{0})V.
\end{equation*}

This implies that the left-hand side of (\ref{eqn:fbu-''-'}) converges locally strictly to
\begin{equation*}
S_{H,V}(y+(2k_{0}-\tau_{0})H)-(2k_{0}-\tau_{0})V,
\end{equation*}
which is equal to $S_{H,V}(y-H\tau_{0})+V\tau_{0}$, independently of $k_{0}$, as required.

\paragraph{\textmd{\textit{Graph translations of horizontal type}}}

In this paragraph we show that any graph translation of the form $S_{H,V}(y-H\tau_{0})$, with $\tau_{0}\in[-1,1]$, can be obtained as the limit of a suitable family of true blow-ups whose centers satisfy (\ref{th:liminf-limsup}). 

To begin with, we observe that the set of possible limits is closed with respect to the locally strict convergence in $BV\loc(\re)$, and therefore it is enough to obtain all limits with $\tau_{0}$ in the open interval $(-1,1)$. In this case, we claim that we can take the same family $x_{\ep}'\to x_{0}$ whose fake blow-ups converge to $S_{H,V}(y-H\tau_{0})-V\tau_{0}$, with the same value of $\tau_{0}$. Indeed, we observe again that
\begin{equation}
\frac{\uep(x_{\ep}'+\omep y)-\uep(x_{\ep}')}{\omep}=
w_{\ep}(y)-w_{\ep}(0),
\label{eqn:fake-true}
\end{equation}
where $w_{\ep}(y)$ is defined by (\ref{defn:wep'}). Now we know that $w_{\ep}(y)\auto S_{H,V}(y-H\tau_{0})-V\tau_{0}$. Moreover, if $\tau_{0}\in(-1,1)$ the limit function is continuous in $y=0$, and therefore the strict convergence implies also that $w_{\ep}(0)\to S_{H,V}(-H\tau_{0})-V\tau_{0}=-V\tau_{0}$. Plugging these two result into (\ref{eqn:fake-true}) we obtain that the left-hand side converges to $S_{H,V}(y-H\tau_{0})$, as required.

\paragraph{\textmd{\textit{Graph translations of vertical type}}}

In this final paragraph we show that any graph translation of the form $S_{H,V}(y-H)+(1-\tau_{0})V$, with $\tau_{0}\in[-1,1]$, can be obtained as the limit of a suitable family of true blow-ups whose centers satisfy (\ref{th:liminf-limsup}). To begin with, as in the case of graph translations of horizontal type we reduce ourselves to the case where $\tau_{0}\in(-1,1)$.  

In this case we consider the family $x_{\ep}'\to x_{0}$ whose fake blow-ups $w_{\ep}(y)$ defined by (\ref{defn:wep'}) converge to $S_{H,V}(y)$. Since $S_{H,V}(y)$ is continuous in $y=-2H$ and $y=2H$, the strict convergence implies in particular that
\begin{equation*}
\lim_{\ep\to 0^{+}}w_{\ep}(-2H)=-2V
\qquad\quad\text{and}\quad\qquad
\lim_{\ep\to 0^{+}}w_{\ep}(2H)=2V.
\end{equation*}

Recalling that $w_{\ep}(y)$ is continuous in $y$ and vanishes for $y=0$, this means that when $\ep\in(0,1)$ is small enough there exist $a_{\ep}\in(-2H,0)$ and $b_{\ep}\in(0,2H)$ such that
\begin{equation}
w_{\ep}(a_{\ep})=(-1+\tau_{0})V
\qquad\quad\text{and}\quad\qquad
w_{\ep}(b_{\ep})=(1+\tau_{0})V.
\label{defn:aep-bep}
\end{equation}

These two conditions imply in particular that
\begin{equation}
\lim_{\ep\to 0^{+}}a_{\ep}=-H
\qquad\quad\text{and}\quad\qquad
\lim_{\ep\to 0^{+}}b_{\ep}=H,
\label{lim:aep-bep}
\end{equation}
because a limit of blow-ups can be different from an integer multiple of $2V$ only in the jump points of the limit function $S_{H,V}(y)$.

Now we set
\begin{equation*}
x_{\ep}'':=\begin{cases}
x_{\ep}'+\omep a_{\ep}\quad  & \text{if }x_{\ep}'\geq x_{\ep}, \\
x_{\ep}'+\omep b_{\ep}  & \text{if }x_{\ep}'< x_{\ep},
\end{cases}
\end{equation*}
and we claim that this is the required family. Indeed, from the definition it follows that
\begin{equation*}
\frac{|x_{\ep}''-x_{\ep}|}{\omep}\leq
\max\left\{\frac{|x_{\ep}'-x_{\ep}|}{\omep},-a_{\ep},b_{\ep}\right\},
\end{equation*}
and therefore (\ref{th:liminf-limsup}) for $x_{\ep}''$ follows from (\ref{th:liminf-limsup}) for $x_{\ep}'$ and (\ref{lim:aep-bep}). 

In order to compute the limit of the true blow-ups with center in $x_{\ep}''$, we consider the two subfamilies where $x_{\ep}''$ is defined using $a_{\ep}$ or $b_{\ep}$. In the first case from (\ref{defn:aep-bep}) and (\ref{lim:aep-bep}) we deduce that
\begin{eqnarray*}
\frac{\uep(x_{\ep}''+\omep y)-\uep(x_{\ep}'')}{\omep} & = & 
w_{\ep}(y+a_{\ep})-w_{\ep}(a_{\ep}),
\\
& \auto &
S_{H,V}(y-H)-(-1+\tau_{0})V,
\end{eqnarray*}
as required. Analogously, in the second case we obtain that
\begin{equation*}
\frac{\uep(x_{\ep}''+\omep y)-\uep(x_{\ep}'')}{\omep}\auto S_{H,V}(y+H)-(1+\tau_{0})V,
\end{equation*}
which again coincides with $S_{H,V}(y-H)+(1-\tau_{0})V$, as required.
\qed

%\clearpage

\subsection{Convergence of minimizers to the forcing term}

\subsubsection{Strict convergence (statement~(1) of Theorem~\ref{thm:varifold})}

Since the limit $f(x)$ is continuous, we know that uniform convergence in $[0,1]$ follows from strict convergence (see statement~(\ref{strict:cont}) in Remark~\ref{rmk:strict}). As for strict convergence, we already know from Proposition~\ref{prop:basic} that $u_{\ep}\to f$ in $L^{2}((0,1))$. Therefore, it remains to show that (the opposite inequality is trivial)
\begin{equation}
\limsup_{\ep\to 0^{+}}\int_{0}^{1}|u_{\ep}'(x)|\,dx\leq
\int_{0}^{1}|f'(x)|\,dx.
\label{th:limsup-strict}
\end{equation}

Let us assume by contradiction that this is not the case, and hence there exist a positive real number $\eta_{0}$ and a sequence $\{\ep_{n}\}\subseteq(0,1)$ such that $\ep_{n}\to 0^{+}$ and
\begin{equation}
\int_{0}^{1}|u_{\ep_{n}}'(x)|\,dx\geq
\int_{0}^{1}|f'(x)|\,dx+\eta_{0}
\qquad
\forall n\in\n.
\label{defn:eta-0}
\end{equation}

For every fixed positive real number $L$, in analogy with (\ref{defn:NepAep}) we set
\begin{equation*}
N_{n}:=\left\lfloor\frac{1}{L\omepn}\right\rfloor
\qquad\quad\text{and}\quad\qquad
L_{n}:=\frac{1}{N_{n}\omepn},
\end{equation*}
and we consider the intervals $I_{n,k}:=((k-1)L_{n}\omepn,kL_{n}\omepn)$ with $k\in\{1,\ldots,N_{n}\}$. Since we can rewrite (\ref{defn:eta-0}) in the form
\begin{equation*}
\sum_{k=1}^{N_{n}}\int_{I_{n,k}}\left(|u_{\ep_{n}}'(x)|-|f'(x)|\right)\,dx\geq\eta_{0},
\end{equation*}
we deduce that for every $n\in\n$ there exists an integer $k_{n}$ such that
\begin{equation}
\int_{I_{n,k_{n}}}\left(|u_{\ep_{n}}'(x)|-|f'(x)|\right)\,dx\geq\frac{\eta_{0}}{N_{n}}.
\label{defn:kn}
\end{equation}

Now we set $x_{n}:=(k_{n}-1)\omepn L_{n}$, and we consider the corresponding fake blow-ups 
\begin{equation}
w_{n}(y):=\frac{u_{\ep_{n}}(x_{n}+\omepn y)-f(x_{n})}{\omepn}.
\label{defn:FBU-strict}
\end{equation}

With the change of variable $x=x_{n}+\omep y$, we can rewrite (\ref{defn:kn}) in the form
\begin{equation}
\int_{0}^{L_{n}}\left(|w_{n}'(y)|-|f'(x_{n}+\omepn y)|\strut\right)\,dy\geq
\frac{\eta_{0}}{N_{n}\omepn}\geq
\eta_{0}L.
\label{defn:kn-bis}
\end{equation}

Up to subsequences (not relabeled) we can always assume that $x_{n}$ converges to some $x_{\infty}\in[0,1]$. Let us assume for a while that $x_{\infty}\in(0,1)$. From the continuity of $f'(x)$ we deduce that $f'(x_{n}+\omepn y)\to f'(x_{\infty})$ uniformly on bounded subsets of $\re$ and in particular, since $L_{n}\to L$, we obtain that
\begin{equation}
\lim_{n\to+\infty}\int_{0}^{L_{n}}|f'(x_{n}+\omepn y)|\,dy=|f'(x_{\infty})|L.
\label{est:lim-f'}
\end{equation}

Moreover, from statement~(1) of Theorem~\ref{thm:BU} we deduce that, up to a further subsequence (not relabeled), $w_{n}\auto w_{\infty}$ in $BV\loc(\re)$, where $w_{\infty}$ is an oblique translation of a canonical staircase with parameters depending on $\beta$ and $f'(x_{\infty})$. As a consequence, from statement~(\ref{strict:sub-int}) of Remark~\ref{rmk:strict} we obtain that 
\begin{equation*}
\limsup_{n\to +\infty}\int_{0}^{L_{n}}|w_{n}'(y)|\,dy\leq
\lim_{n\to +\infty}\int_{a}^{b}|w_{n}'(y)|\,dy=
|Dw_{\infty}|((a,b))
\end{equation*}
for every interval $(a,b)\supseteq[0,L]$ whose endpoints $a$ and $b$ are not jump points of $w_{\infty}$. If we consider any sequence of such intervals whose intersection is $[0,L]$, we deduce that
\begin{equation}
\limsup_{n\to +\infty}\int_{0}^{L_{n}}|w_{n}'(y)|\,dy\leq
|Dw_{\infty}|([0,L]).
\label{est:lim-wn'}
\end{equation}

From (\ref{defn:kn-bis}), (\ref{est:lim-f'}) and (\ref{est:lim-wn'}) we conclude that
\begin{equation}
|Dw_{\infty}|([0,L])-|f'(x_{\infty})|L\geq\eta_{0}L.
\label{est:L-V}
\end{equation}

Now we observe that the left-hand side is the difference between the total variation of $w_{\infty}$ in $[0,L]$ and the total variation of the line $y\mapsto |f'(x_{\infty})|y$ in the same interval. Since $w_{\infty}(y)$ is a staircase with the property that the midpoints of the vertical parts of the steps lie on the same line, the left-hand side of (\ref{est:L-V}) is bounded from above by the height of each step of the staircase. Now both $w_{\infty}$ and $x_{\infty}$ might depend on $L$, but in any case the height of the steps depends only on $\beta$ and $|f'(x_{\infty})|$, and the latter is bounded independently on $L$ because $f$ is of class $C^{1}$. In conclusion, the left-hand side of (\ref{est:L-V}) is bounded from above independently of $L$, and this contradicts (\ref{est:L-V}) when $L$ is large enough.

Let us consider now the case where $x_{\infty}=0$ (the case $x_{\infty}=1$ is symmetric). In this case we consider the sequence $\{k_{n}\}$. If it is unbounded, then up to subsequences we can assume that it diverges to $+\infty$. In this case the intervals where the functions $w_{n}(y)$ of (\ref{defn:FBU-strict}) are defined invade eventually the whole real line, and therefore the previous argument works without any change (see also Remark~\ref{rmk:BU-semi-int}).

If the sequence $\{k_{n}\}$ is bounded, then up to subsequences we can assume that it is equal to some fixed positive integer $k_{\infty}$. In this case the functions $w_{n}(y)$ are all defined in the same half-line $y>y_{\infty}$ with $y_{\infty}:=-(k_{\infty}-1)L$, and in this half-line they converge to a limit staircase $w_{\infty}(y)$ (see Remark~\ref{rmk:boundary-proof}). The convergence is strict in every interval of the form $(y_{\infty},b)$, where $b$ is not a jump point of $w_{\infty}(y)$, and of course also in all intervals of the form $(a,b)$ where $a$ and $b$ are not jump points of $w_{\infty}(y)$. Moreover, the function $w_{\infty}$ is the unique semi-entire right-hand local minimizer of $\JF_{1/2}$ with the appropriate parameters in this half-line, namely the suitable oblique translation of the function defined in (\ref{defn:semi-entire}), which is again a staircase with the property that the midpoints of the vertical parts of the steps lie on the line $f'(x_{\infty})y$. 

At the end of the day, we obtain that (\ref{est:L-V}) holds true also in this case, and as before the right-hand side depends only on the difference between the ``values'' of $w_{\infty}(y)$ and of the line $f'(x_{\infty})y$ at the two endpoints. This difference is bounded from above by the height of the steps of $w_{\infty}$. These steps could be either the ``ordinary steps'' or the ``initial step'', which is higher, but in any case their height is independent of $L$.

%\clearpage

\subsubsection{Varifold convergence (statement~(2) of Theorem~\ref{thm:varifold})}

\paragraph{\textmd{\textit{Notations and splitting of the graph}}}

In analogy with (\ref{defn:V0+-}), for every $\ep\in(0,1)$ we set
\begin{equation*}
V_{\ep}^{+}:=\{x\in[0,1]:u_{\ep}'(x)>0\}
\qquad\text{and}\qquad
V_{\ep}^{-}:=\{x\in[0,1]:u_{\ep}'(x)<0\}.
\end{equation*}

From statement~(\ref{th:conv*+-}) of Remark~\ref{rmk:strict} we know that the strict convergence of $u_{\ep}(x)$ to $f(x)$ implies in particular that 
\begin{equation}
\lim_{\ep\to 0^{+}}\int_{V_{\ep}^{+}}g(x)u_{\ep}'(x)\,dx=
\int_{V_{0}^{+}}g(x)f'(x)\,dx
\label{th:conv-measure}
\end{equation}
for every continuous function $g:[0,1]\to\re$, and similarly with $V_{\ep}^{-}$ and $V_{0}^{-}$.

We observe also that the strict convergence $u_{\ep}\auto f$ in $BV((0,1))$ implies that the family $\{u_{\ep}(x)\}$ is bounded in $L^{\infty}((0,1))$, and therefore there exist real numbers $\ep_{0}\in(0,1)$ and $M_{0}\geq 0$ such that
\begin{equation}
|\phi(x,u_{\ep}(x),\arctan p)|\leq M_{0}
\qquad
\forall(x,p)\in[0,1]\times\re
\quad
\forall\ep\in(0,\ep_{0}).
\label{est:bound-phi}
\end{equation}

Now for every $a\in(0,1)$ we define the three sets
\begin{gather*}
I_{a}:=\left\{(x,s,p)\in[0,1]\times\re\times\re:|s-f(x)|\leq a,\ |p|\leq a\right\},
\\[1ex]
I_{a}^{+}:=\left\{(x,s,p)\in[0,1]\times\re\times\re:|s-f(x)|\leq a,\ p\geq 1/a\right\},
\\[1ex]
I_{a}^{-}:=\left\{(x,s,p)\in[0,1]\times\re\times\re:|s-f(x)|\leq a,\ p\leq-1/a\right\},
\end{gather*}
and the corresponding three constants
\begin{gather*}
\Gamma_{a}:=\max\left\{|\phi(x,s,\arctan p)-\phi(x,f(x),0)|:(x,s,p)\in I_{a}\right\},
\\[1ex]
\Gamma_{a}^{+}:=\max\left\{|\phi(x,s,\arctan p)-\phi(x,f(x),\pi/2)|:(x,s,p)\in I_{a}^{+}\right\},
\\[1ex]
\Gamma_{a}^{-}:=\max\left\{|\phi(x,s,\arctan p)-\phi(x,f(x),-\pi/2)|:(x,s,p)\in I_{a}^{-}\right\}.
\end{gather*}

We observe that, due to the boundedness of $f(x)$ and the uniform continuity of $\phi$ in bounded sets, these constants satisfy
\begin{equation}
\lim_{a\to 0^{+}}\Gamma_{a}=\lim_{a\to 0^{+}}\Gamma_{a}^{+}=\lim_{a\to 0^{+}}\Gamma_{a}^{-}=0.
\label{lim-gamma-vari}
\end{equation} 

Finally, for every $\ep\in(0,1)$ and every $a\in(0,1)$, we write the interval $[0,1]$ as the disjoint union of the four sets
\begin{gather}
H_{a,\ep}:=\left\{x\in[0,1]:|u_{\ep}'(x)|\leq a\right\},
\label{defn:H-a-ep}
\\[1ex]
V_{a,\ep}^{+}:=\left\{x\in[0,1]:u_{\ep}'(x)\geq 1/a\right\},
\qquad
V_{a,\ep}^{-}:=\left\{x\in[0,1]:u_{\ep}'(x)\leq -1/a\right\},
\label{defn:V-a-ep}
\\[1ex]
M_{a,\ep}:=\left\{x\in[0,1]:a<|u_{\ep}'(x)|<1/a\right\},
\label{defn:M-a-ep}
\end{gather}
and accordingly we write
\begin{equation*}
\int_{0}^{1}
\phi\left(x,u_{\ep}(x),\arctan(u_{\ep}'(x))\strut\right)\sqrt{1+u_{\ep}'(x)^{2}}\,dx=
I_{a,\ep}^{H}+I_{a,\ep}^{+}+I_{a,\ep}^{-}+I_{a,\ep}^{M},
\end{equation*}
where the four terms in the right-hand side are the integrals over the four sets defined above. We observe that
\begin{eqnarray*}
\PMF_{\ep}(\beta,f,(0,1),u_{\ep}) & \geq & 
\int_{0}^{1}\log\left(1+u_{\ep}'(x)^{2}\right)\,dx
\\[0.5ex]
& \geq & \log\left(1+a^{2}\right)\left(|V_{a,\ep}^{+}|+|V_{a,\ep}^{-}|+|M_{a,\ep}|\right),
\end{eqnarray*}
and, since the left-hand side tends to~0, we deduce that
\begin{equation*}
\lim_{\ep\to 0^{+}}|V_{a,\ep}^{+}|=
\lim_{\ep\to 0^{+}}|V_{a,\ep}^{-}|
=\lim_{\ep\to 0^{+}}|M_{a,\ep}|=0
\qquad
\forall a\in(0,1),
\end{equation*}
and as a consequence
\begin{equation*}
\lim_{\ep\to 0^{+}}|H_{a,\ep}^{+}|=1
\qquad
\forall a\in(0,1).
\end{equation*}

We claim that for every fixed $a\in(0,1)$ it turns out that
\begin{gather}
\limsup_{\ep\to 0^{+}}\left|
I_{a,\ep}^{H}-\int_{0}^{1}\phi(x,f(x),0)\,dx
\right|\leq M_{0}\left(\sqrt{1+a^{2}}-1\right)+\Gamma_{a},
\label{th:limsup-H}
\\[1ex]
\lim_{\ep\to 0^{+}}I_{a,\ep}^{M}=0,
\label{th:limsup-M}
\\[1ex]
\limsup_{\ep\to 0^{+}}\left|
I_{a,\ep}^{+}-\int_{V_{0}^{+}}\phi(x,f(x),\pi/2)\cdot f'(x)\,dx
\right|\leq\Gamma_{a}^{+}\int_{0}^{1}|f'(x)|\,dx+M_{0}a,
\label{th:limsup-V+}
\\[1ex]
\limsup_{\ep\to 0^{+}}\left|
I_{a,\ep}^{-}-\int_{V_{0}^{-}}\phi(x,f(x),-\pi/2)\cdot |f'(x)|\,dx
\right|\leq\Gamma_{a}^{-}\int_{0}^{1}|f'(x)|\,dx+M_{0}a.
\label{th:limsup-V-}
\end{gather}

If we prove these claims, then we let $a\to 0^{+}$ and from (\ref{lim-gamma-vari}) we obtain exactly (\ref{th:varifold}). 

In words, this means that the integral in the left-hand side of (\ref{th:varifold}) splits into the four integrals over the regions (\ref{defn:H-a-ep}), (\ref{defn:V-a-ep}), (\ref{defn:M-a-ep}), which behave as follows. 
\begin{itemize}

\item The integral over the ``intermediate'' region $M_{a,\ep}$ disappears in the limit.

\item  The integral over the ``horizontal'' region $H_{a,\ep}$ tends to the first integral in the right hand side of (\ref{th:varifold}), in which the ``tangent component'' is horizontal.

\item  The integrals over the two ``vertical'' regions $V^{+}_{a,\ep}$ and $V^{-}_{a,\ep}$ tend to the two integrals over $V_{0}^{+}$ and $V_{0}^{-}$ in the right hand side of (\ref{th:varifold}). In this two integrals the ``tangent component'' is vertical.

\end{itemize}  

\paragraph{\textmd{\textit{Estimate in the intermediate regime}}}

From (\ref{est:bound-phi}) we know that
\begin{equation*}
\left|\phi(x,u_{\ep}(x),\arctan(u_{\ep}'(x)))\right|\sqrt{1+u_{\ep}'(x)^{2}}\leq
M_{0}\sqrt{1+\frac{1}{a^{2}}}
\qquad
\forall x\in M_{a,\ep},
\end{equation*}
and therefore
\begin{equation*}
|I_{\ep,a}^{M}|\leq M_{0}\sqrt{1+\frac{1}{a^{2}}}\cdot|M_{a,\ep}|.
\end{equation*}

Since $|M_{a,\ep}|\to 0$ as $\ep\to 0^{+}$, this proves (\ref{th:limsup-M}).

\paragraph{\textmd{\textit{Estimate in the horizontal regime}}}

In order to prove (\ref{th:limsup-H}), we observe that
\begin{eqnarray*}
I_{a,\ep}^{H}-\int_{0}^{1}\phi(x,f(x),0)\,dx & = & 
\int_{H_{a,\ep}}\phi\left(x,u_{\ep}(x),\arctan(u_{\ep}'(x))\strut\right)\left(\sqrt{1+u_{\ep}'(x)^{2}}-1\right)\,dx
\\
& & +\int_{H_{a,\ep}}\left\{\phi\left(x,u_{\ep}(x),\arctan(u_{\ep}'(x))\strut\right)-\phi(x,f(x),0)\right\}\,dx,
\\
& & +\int_{H_{a,\ep}}\phi(x,f(x),0)\,dx-
\int_{0}^{1}\phi(x,f(x),0)\,dx.
\end{eqnarray*}

The absolute value of the first line in the right-hand side is less than or equal to $M_{0}\left(\sqrt{1+a^{2}}-1\right)$. The absolute value of the second line is less than or equal to $\Gamma_{a}$ provided that
\begin{equation}
|u_{\ep}(x)-f(x)|\leq a
\qquad
\forall x\in[0,1],
\label{est:uep-f}
\end{equation}
and this happens whenever $\ep$ is small enough. The third line tends to~0 because $|H_{\ep,a}|\to 1$ as $\ep\to 0^{+}$. This is enough to establish (\ref{th:limsup-H}).

\paragraph{\textmd{\textit{Estimate in the vertical regime}}}

In order to prove (\ref{th:limsup-V+}), we observe that
\begin{eqnarray*}
\lefteqn{\hspace{-3em}
I_{a,\ep}^{+}-\int_{V_{0}^{+}}\phi(x,f(x),\pi/2)\cdot f'(x)\,dx =
}
\\  
\qquad\qquad & = & \int_{V_{a,\ep}^{+}}\phi\left(x,u_{\ep}(x),\arctan(u_{\ep}'(x))\strut\right)\left(\sqrt{1+u_{\ep}'(x)^{2}}-u_{\ep}'(x)\right)\,dx
\\[0.5ex]
& & +\int_{V_{a,\ep}^{+}}\left\{\phi\left(x,u_{\ep}(x),\arctan(u_{\ep}'(x))\strut\right)-\phi(x,f(x),\pi/2)\right\}u_{\ep}'(x)\,dx,
\\[0.5ex]
& & +\int_{V_{a,\ep}^{+}}\phi(x,f(x),\pi/2)u_{\ep}'(x)\,dx
-\int_{V_{\ep}^{+}}\phi(x,f(x),\pi/2)u_{\ep}'(x)\,dx
\\[0.5ex]
& & +\int_{V_{\ep}^{+}}\phi(x,f(x),\pi/2)u_{\ep}'(x)\,dx
-\int_{V_{0}^{+}}\phi(x,f(x),\pi/2)f'(x)\,dx
\\[0.5ex]
& =: & L_{1}+L_{2}+L_{3}+L_{4}.
\end{eqnarray*}

Let us consider the four lines separately. The first line can be estimated as
\begin{equation*}
|L_{1}|\leq M_{0}\max\left\{\sqrt{1+p^{2}}-p:p\geq 1/a\right\}|V_{a,\ep}^{+}|\leq
M_{0}\cdot\frac{a}{2}\cdot|V_{a,\ep}^{+}|,
\end{equation*}
and this tends to~0 when $\ep\to 0^{+}$. The second line can be estimated as
\begin{equation*}
|L_{2}|\leq\Gamma_{a}^{+}\cdot\int_{0}^{1}|u_{\ep}'(x)|\,dx
\end{equation*}
whenever (\ref{est:uep-f}) holds true, namely when $\ep$ is small enough. For the third line we observe that $V_{\ep}^{+}\setminus V_{a,\ep}^{+}\subseteq H_{a,\ep}\cup M_{a,\ep}$, and therefore
\begin{eqnarray*}
|L_{3}| & \leq  &
\int_{H_{a,\ep}}|\phi(x,f(x),0)|\cdot|u_{\ep}'(x)|\,dx+
\int_{M_{a,\ep}}|\phi(x,f(x),0)|\cdot|u_{\ep}'(x)|\,dx
\\
& \leq &
M_{0}a+M_{0}\cdot\frac{1}{a}\cdot|M_{a,\ep}|. 
\end{eqnarray*}

Finally, we observe that $L_{4}\to 0$ as $\ep\to 0^{+}$ because of (\ref{th:conv-measure}).  Recalling (\ref{th:limsup-strict}) and the fact that $|M_{a,\ep}|\to 0$ as $\ep\to 0^{+}$, from the previous estimates we conclude that
\begin{eqnarray*}
\limsup_{\ep\to 0^{+}}|L_{1}+L_{2}+L_{3}+L_{4}| & \leq &
\limsup_{\ep\to 0^{+}}\Gamma_{a}^{+}\cdot\int_{0}^{1}|u_{\ep}'(x)|\,dx+
M_{0}a
\\
& = &
\Gamma_{a}^{+}\int_{0}^{1}|f'(x)|\,dx+M_{0}a,
\end{eqnarray*}
which proves (\ref{th:limsup-V+}). The proof of (\ref{th:limsup-V-}) is analogous.
\qed

%\clearpage

\subsection{Low resolution blow-ups (Corollary~\ref{thm:spm-bu-low})}

The pointwise convergence for $y=0$ is trivial, and therefore it is enough to check the convergence of total variations, which in turn reduces to
\begin{equation}
\limsup_{\ep\to 0^{+}}\int_{a}^{b}|u_{\ep}'(x_{\ep}+\alpha_{\ep}y)|\,dy\leq
|f'(x_{0})|(b-a)
\label{th:limsup-TV-alpha}
\end{equation}
for every interval $(a,b)\subseteq\re$. If we assume by contradiction that (\ref{th:limsup-TV-alpha}) fails, than the same argument we exploited in the proof of (\ref{th:limsup-strict}) shows that there exist a positive real number $\eta_{0}$, a sequence $\{\ep_{n}\}\subseteq(0,1)$ such that $\ep_{n}\to 0^{+}$, and a sequence of positive integers $k_{n}$ such that
\begin{equation}
\int_{0}^{L_{n}}|u_{\ep_{n}}'(\widehat{x}_{n}+\omepn y)|\,dy-
|f'(x_{0})|L_{n}\geq
\eta_{0}\cdot\frac{\alpha_{\ep_{n}}}{\omepn N_{n}},
\label{est:TV-alpha}
\end{equation}
where now
\begin{equation*}
N_{n}:=\left\lfloor\frac{(b-a)\alpha_{\ep_{n}}}{L\omepn}\right\rfloor,
\qquad
L_{n}:=\frac{(b-a)\alpha_{\ep_{n}}}{N_{n}\omepn},
\qquad
\widehat{x}_{n}:=x_{\ep_{n}}+a\alpha_{\ep_{n}}+L_{n}\omepn(k_{n}-1),
\end{equation*}
and $k_{n}\in\{1,\ldots,N_{n}\}$. The integral in the left-hand side of (\ref{est:TV-alpha}) coincides with the total variation in the interval $(0,L_{n})$ of the fake blow-up of $u_{\ep_{n}}(x)$, at the standard scale $\omepn$, with center in $\widehat{x}_{n}$. Since $\widehat{x}_{n}\to x_{0}$ (here we exploit again that $k_{n}\leq N_{n}$ and $\omep/\alpha_{\ep}\to 0$), we know that these fake blow-ups converge strictly (up to subsequences) to some staircase $w_{\infty}$. Therefore, passing to the limit in (\ref{est:TV-alpha}) we deduce that
\begin{equation*}
|Dw_{\infty}|([0,L])-|f'(x_{0})|L\geq\frac{\eta_{0}L}{b-a},
\end{equation*}
and we conclude exactly as in the proof of (\ref{th:limsup-strict}).
\qed

%\clearpage

\setcounter{equation}{0}
\section{Asymptotic analysis of local minimizers}\label{sec:loc-min-proof}

This section is the technical core of the paper. Here we prove all the results that we stated in section~\ref{sec:loc-min}.

\subsection{Preliminary lemmata}

\begin{lemma}\label{lemma:equipartition}

Let $C_{0}$ and $C_{1}$ be two positive real numbers. Let us consider the function $\varphi:(0,1)\to\re$ defined by
\begin{equation*}
\varphi(t):=C_{0}\left(\sqrt{t}+\sqrt{1-t}\right)+
C_{1}\left(t^{3}+(1-t)^{3}\right),
\end{equation*}
and let us assume that there exists $t_{0}\in(0,1)$ such that $\varphi(t)\geq\varphi(t_{0})$ for every $t\in(0,1)$.

Then it turns out that $t_{0}=1/2$.

\end{lemma}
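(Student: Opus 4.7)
The plan is to exploit the manifest symmetry $\varphi(1-t)=\varphi(t)$ on $(0,1)$: if some $t_{0}\neq 1/2$ were a minimizer, then so would be $1-t_{0}$, producing two distinct interior minimizers whose existence I will rule out by a convexity argument on the interval between them. So suppose for contradiction that $t_{0}\in(0,1)$ is a minimizer with $t_{0}\neq 1/2$, and, without loss of generality, $t_{0}>1/2$; then both $t_{0}$ and $1-t_{0}$ lie in $(0,1)$ and satisfy the first- and second-order necessary conditions
\[
\varphi'(t_{0})=\varphi'(1-t_{0})=0,\qquad \varphi''(t_{0})\geq 0,\qquad \varphi''(1-t_{0})\geq 0.
\]

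The main computation I would then carry out is
\[
\varphi''(t)=6C_{1}-\frac{C_{0}}{4}\bigl[t^{-3/2}+(1-t)^{-3/2}\bigr].
\]
Since $t\mapsto t^{-3/2}$ and $t\mapsto(1-t)^{-3/2}$ are both strictly convex on $(0,1)$, their sum is strictly convex, and hence $\varphi''$ is strictly concave on $(0,1)$. By the symmetry one has $\varphi''(1-t_{0})=\varphi''(t_{0})$, so strict concavity of $\varphi''$ yields $\varphi''(t)>\varphi''(t_{0})\geq 0$ for every $t\in(1-t_{0},t_{0})$. Consequently $\varphi$ is strictly convex on $[1-t_{0},t_{0}]$, so $\varphi'$ is strictly increasing on this nondegenerate interval, contradicting $\varphi'(1-t_{0})=0=\varphi'(t_{0})$.

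I do not expect any substantial obstacle here: the entire argument rests on the elementary fact that $t^{-3/2}$ and $(1-t)^{-3/2}$ are strictly convex, which promotes the mere non-negativity of $\varphi''$ at the two putative symmetric minimizers into strict positivity of $\varphi''$ between them, and thus into strict convexity of $\varphi$ on $[1-t_{0},t_{0}]$ — incompatible with having two distinct critical points there.
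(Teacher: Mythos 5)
Your proof is correct, and it is genuinely different from the one in the paper. The paper's argument substitutes $t=\sin^{2}\theta$, factors the derivative as $g'(\theta)=(\cos\theta-\sin\theta)\bigl(C_{0}-6C_{1}\cos\theta\sin\theta(\cos\theta+\sin\theta)\bigr)$, studies the auxiliary function $\psi(\theta)=\cos\theta\sin\theta(\cos\theta+\sin\theta)$ (increasing up to $\pi/4$, decreasing afterwards, maximum $1/\sqrt2$), and then splits into the cases $C_{0}\sqrt2\geq 6C_{1}$ and $C_{0}\sqrt2<6C_{1}$, obtaining in each case the full list of interior critical points and checking that the only one that can be a minimum is $\theta=\pi/4$. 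You avoid both the substitution and the case analysis: the symmetry $\varphi(1-t)=\varphi(t)$ produces a second interior minimizer $1-t_{0}$; the explicit formula $\varphi''(t)=6C_{1}-\tfrac{C_{0}}{4}\bigl[t^{-3/2}+(1-t)^{-3/2}\bigr]$ shows $\varphi''$ is strictly concave, and since $\varphi''(1-t_{0})=\varphi''(t_{0})\geq 0$ (second-order necessary condition), strict concavity forces $\varphi''>0$ on $(1-t_{0},t_{0})$, so $\varphi'$ is strictly increasing there and cannot vanish at both endpoints. All the individual steps (strict convexity of $t^{-3/2}$ and $(1-t)^{-3/2}$, the endpoint inequality for strictly concave functions, the passage from $\varphi''>0$ on the open interval to strict monotonicity of $\varphi'$ on the closed one) are sound, so there is no gap. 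The trade-off: the paper's computation yields more structural information (the exact number and nature of the critical points, e.g.\ that for $C_{0}\sqrt2\geq 6C_{1}$ there is no interior minimum at all), while your route is shorter, needs only the second-order necessary condition at one point, and would transfer verbatim to any functional of the form $g(t)+g(1-t)$ whose second derivative is strictly concave.
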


\begin{proof}

With the variable change $t=\sin^{2}\theta$, we can restate the claim as follows. Let us consider the function $g:(0,\pi/2)\to\re$ defined by
\begin{equation*}
g(\theta):=C_{0}\left(\cos\theta+\sin\theta\right)+
C_{1}\left(\cos^{6}\theta+\sin^{6}\theta\right);
\end{equation*}
if there exists $\theta_{0}\in(0,\pi/2)$ such that
\begin{equation}
g(\theta)\geq g(\theta_{0})
\qquad
\forall\theta\in(0,\pi/2),
\label{hp:g-min-loc}
\end{equation}
then necessarily $\theta_{0}=\pi/4$. 

In order to prove this claim, we observe that the derivative of $g(\theta)$ is
\begin{equation}
g'(\theta)=(\cos\theta-\sin\theta)
\left(C_{0}-6C_{1}\cos\theta\sin\theta(\cos\theta+\sin\theta)\right).
\label{eqn:g'}
\end{equation}

Let us consider the function $\psi(\theta):=\cos\theta\sin\theta(\cos\theta+\sin\theta)$, whose derivative is 
\begin{equation*}
\psi'(\theta)=(\cos\theta-\sin\theta)(1+3\cos\theta\sin\theta).
\end{equation*}

It follows that $\psi(\theta)$ is increasing in $[0,\pi/4]$ and decreasing in $[\pi/4,\pi/2]$, and its maximum values is $\psi(\pi/4)=1/\sqrt{2}$. Now we distinguish two cases.
\begin{itemize}

\item  If $C_{0}\sqrt{2}\geq 6C_{1}$, then the sign of $g'(\theta)$ coincides with the sign of $\cos\theta-\sin\theta$. It follows that $\pi/4$ is the unique stationary point of $g(\theta)$ in $(0,\pi/4)$, but it is a maximum point, and therefore in this case there is no $\theta_{0}\in(0,\pi/2)$ for which (\ref{hp:g-min-loc}) holds true.

\item  If $C_{0}\sqrt{2}<6C_{1}$, then also the second term in the right-hand side of (\ref{eqn:g'}) changes its sign in two points of the form $\pi/4\pm\theta_{1}$ for some $\theta_{1}\in(0,\pi/4)$. In this case it turns out that $g(\theta)$ has three stationary points in $(0,\pi/2)$, namely $\pi/4\pm\theta_{1}$ (which are maximum points) and $\pi/4$, which is a minimum point (local or global depending on $C_{0}$ and $C_{1}$).

\end{itemize}

In any case, if $g(\theta)$ has a minimum point in $(0,\pi/2)$, this is necessarily $\pi/4$.
\end{proof}

%\clearpage

\begin{lemma}\label{lemma:ABG}

Let $(a,b)\subseteq\re$ be an interval, and let $A_{0}$, $A_{1}$, $B_{0}$, $B_{1}$ be four real numbers. Let us consider the minimum problem
\begin{equation*}
\min\left\{\int_{a}^{b}w''(y)^{2}\,dy:
w\in H^{2}((a,b)),\ \left(w(a),w'(a),w(b),w'(b)\strut\right)=(A_{0},A_{1},B_{0},B_{1})\right\}.
\end{equation*}

Then the unique minimum point is the function
\begin{equation*}
w_{0}(y)=P\left(y-\frac{a+b}{2}\right),
\end{equation*}
where $P(x)=c_{0}+c_{1}x+c_{2}x^{2}+c_{3}x^{3}$ is the polynomial of degree three with coefficients
\begin{equation*}
\left.\begin{array}{c@{\qquad}c}
\displaystyle
c_{0}:=\frac{A_{0}+B_{0}}{2}-\frac{B_{1}-A_{1}}{8}(b-a), & 
\displaystyle
c_{1}:=\frac{3(B_{0}-A_{0})}{2(b-a)}-\frac{A_{1}+B_{1}}{4}, 
\\[3ex]
\displaystyle
c_{2}:=\frac{B_{1}-A_{1}}{2(b-a)}, & 
\displaystyle
c_{3}:=-\frac{2(B_{0}-A_{0})}{(b-a)^{3}}+\frac{A_{1}+B_{1}}{(b-a)^{2}}.
\end{array}\right.
\end{equation*}

As a consequence, the minimum value is
\begin{equation*}
\frac{(B_{1}-A_{1})^{2}}{b-a}+
\frac{12}{(b-a)^{3}}\left[(B_{0}-A_{0})-\frac{A_{1}+B_{1}}{2}(b-a)\right]^{2},
\end{equation*}
and the minimum point satisfies the pointwise estimates
\begin{equation*}
|w_{0}(y)|\leq\frac{3(|A_{0}|+|B_{0}|)}{2}+\frac{|A_{1}|+|B_{1}|}{2}(b-a)
\qquad
\forall y\in[a,b],
\end{equation*}
and
\begin{equation*}
|w_{0}'(y)|\leq\frac{3|B_{0}-A_{0}|}{b-a}+\frac{3(|A_{1}|+|B_{1}|)}{2}
\qquad
\forall y\in[a,b].
\end{equation*}

\end{lemma}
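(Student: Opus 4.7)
The plan is to recognize this as the standard cubic Hermite interpolation problem and proceed in four short steps, each of which is essentially a direct calculation rather than a deep argument.

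First, I would dispose of uniqueness and the cubic character of the minimizer via a single integration by parts. Given any candidate minimizer $w\in H^2((a,b))$ satisfying the four boundary conditions, write $w=P+\eta$, where $P$ is the cubic polynomial defined in the statement (with $m:=(a+b)/2$) and $\eta\in H^2((a,b))$ satisfies the homogeneous boundary data $\eta(a)=\eta'(a)=\eta(b)=\eta'(b)=0$. Since $P$ is cubic, $P^{(4)}\equiv 0$, so integrating by parts twice gives
\begin{equation*}
\int_a^b P''(y)\,\eta''(y)\,dy
=\bigl[P''\eta'\bigr]_a^b-\bigl[P'''\eta\bigr]_a^b+\int_a^b P^{(4)}(y)\,\eta(y)\,dy=0,
\end{equation*}
so that $\int_a^b w''^2\,dy=\int_a^b P''^2\,dy+\int_a^b\eta''^2\,dy\geq\int_a^b P''^2\,dy$, with equality only if $\eta''\equiv 0$, which combined with the homogeneous boundary data forces $\eta\equiv 0$. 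This yields both existence and uniqueness of the minimizer, provided $P$ satisfies the four prescribed boundary conditions.

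Second, I would verify that the polynomial $P$ with the stated coefficients does satisfy the four boundary conditions. Writing $P(x)=c_0+c_1x+c_2x^2+c_3x^3$ and $L:=b-a$, the conditions at $y=a$ and $y=b$ (that is, at $x=\mp L/2$) become a $4\times 4$ linear system in $(c_0,c_1,c_2,c_3)$. Splitting the equations into symmetric and antisymmetric combinations,
\begin{equation*}
\begin{cases}
2c_0+c_2L^2/2=A_0+B_0,\\
c_1L+c_3L^3/4=B_0-A_0,\\
2c_1+3c_3L^2/2=A_1+B_1,\\
2c_2L=B_1-A_1,
\end{cases}
\end{equation*}
the last equation gives $c_2$ immediately, the first then gives $c_0$, and the second and third form a $2\times 2$ linear system in $(c_1,c_3)$ that yields the announced formulas. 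This is a routine computation.

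Third, the minimum value. Since $P''(x)=2c_2+6c_3x$ and the integral is taken over the symmetric interval $[-L/2,L/2]$, the cross term vanishes and
\begin{equation*}
\int_a^b P''(y)^2\,dy=4c_2^2L+3c_3^2L^3.
\end{equation*}
Substituting the formulas for $c_2$ and $c_3$ gives $(B_1-A_1)^2/L$ for the first piece and $\frac{12}{L^3}\bigl[(B_0-A_0)-(A_1+B_1)L/2\bigr]^2$ for the second, matching the statement.

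Fourth, the pointwise estimates follow by a direct triangle inequality on the coefficients, using $|x|\leq L/2$ for $x=y-m$. For $|w_0(y)|$, one bounds $|c_0|+|c_1|L/2+|c_2|L^2/4+|c_3|L^3/8$ and regroups terms (using $|B_0-A_0|\leq|A_0|+|B_0|$ where needed); the $(|A_0|+|B_0|)$ coefficients sum to $\tfrac12+\tfrac34+0+\tfrac14=\tfrac32$ and the $(|A_1|+|B_1|)L$ coefficients sum to $\tfrac18\cdot 4=\tfrac12$. For $|w_0'(y)|$ one bounds $|c_1|+|c_2|L+3|c_3|L^2/4$ without needing to replace $|B_0-A_0|$; the $|B_0-A_0|/L$ coefficients sum to $\tfrac32+\tfrac32=3$ and the $(|A_1|+|B_1|)$ coefficients to $\tfrac14+\tfrac12+\tfrac34=\tfrac32$.

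There is no real obstacle here; the only thing to be careful about is the bookkeeping in step four, since the two pointwise estimates use slightly different groupings (one in terms of $|A_0|+|B_0|$ and the other in terms of $|B_0-A_0|$), which is why the constants in front differ ($3/2$ versus $3$).
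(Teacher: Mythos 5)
Your proposal is correct, and all the computations check out: the symmetric/antisymmetric $4\times4$ system does yield the stated coefficients, the integral $\int_a^b P''^2\,dy=4c_2^2L+3c_3^2L^3$ reproduces the stated minimum value, and the coefficient bookkeeping in the two pointwise estimates ($\tfrac12+\tfrac34+\tfrac14=\tfrac32$ and $4\cdot\tfrac18=\tfrac12$ for $|w_0|$; $\tfrac32+\tfrac32=3$ and $\tfrac14+\tfrac12+\tfrac34=\tfrac32$ for $|w_0'|$) is accurate. The paper's own proof is a one-line appeal to the Euler--Lagrange equation: minimizers of $\int w''^2$ are cubics, and $w_0$ is the unique cubic matching the four boundary data; the explicit coefficients, minimum value and pointwise bounds are left as routine computations. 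Your route is essentially the weak form of the same observation, but packaged as an orthogonal decomposition $w=P+\eta$ with $\int_a^b P''\eta''\,dy=0$, which has the advantage of being fully self-contained: the Pythagorean identity $\int w''^2=\int P''^2+\int\eta''^2$ delivers existence and uniqueness of the minimizer in one stroke, without invoking the direct method or the regularity of minimizers that the Euler--Lagrange argument implicitly presupposes (note that equality forces $\eta''\equiv0$, and the homogeneous data $\eta(a)=\eta'(a)=0$ then kill the affine part, as you say). In short, what you do differently is replace ``critical points are cubics'' by ``the cubic is the orthogonal projection,'' and what you gain is a complete elementary proof of the whole lemma, including the parts the paper only states.
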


\begin{proof}

From Euler-Lagrange equation we know that minimizers are polynomials of degree three, and $w_{0}$ is the unique such polynomial that fits the boundary conditions.
\end{proof}

%\clearpage

\begin{lemma}\label{lemma:bound-D-H}

Let $(a,b)\subseteq\re$ be an interval, and let $D$ and $H$ be positive real numbers. Let $\ep\in(0,1)$ be a real number such that 
\begin{equation}
2\ep^{2}\left(\sqrt{H}+\ep^{2}D\right)<(b-a)
\label{hp:bound-D-H-1}
\end{equation}
and
\begin{equation}
\frac{2}{|\log\ep|}\log\left(1+\frac{45}{2\ep^{4}}\left(\sqrt{H}+\ep^{2}D\right)^{2}\right)\leq 18.
\label{hp:bound-D-H-2}
\end{equation}

Then for every $(A_{0},B_{0})\in[-H,H]^{2}$ and every $(A_{1},B_{1})\in[-D,D]^{2}$ there exists a function $w\in H^{2}((a,b))$ satisfying the boundary conditions
\begin{equation}
\left(w(a),w'(a),w(b),w'(b)\strut\right)=(A_{0},A_{1},B_{0},B_{1}),
\label{hp:BC}
\end{equation}
and the estimates
\begin{gather}
\RPM_{\ep}((a,b),w)\leq 80\left(\sqrt{H}+\ep^{2}D\right),
\label{th:bound-ABG}
\\
\int_{a}^{b}w(x)^{2}\,dx\leq
10\ep^{2}\left(\sqrt{H}+\ep^{2}D\right)^{5}.
\label{th:bound-int}
\end{gather}

\end{lemma}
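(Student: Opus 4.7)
The plan is to construct $w$ explicitly by a cubic interpolation near each endpoint and $w\equiv 0$ in the middle. Set $\lambda:=\sqrt{H}+\ep^{2}D$ and $\ell:=\ep^{2}\lambda$, so that hypothesis (\ref{hp:bound-D-H-1}) reads exactly $2\ell<b-a$ and the two boundary layers are disjoint. I would then define $w$ to coincide on $[a,a+\ell]$ with the unique cubic from Lemma~\ref{lemma:ABG} with boundary data $(A_{0},A_{1})$ on the left and $(0,0)$ on the right, to vanish identically on $[a+\ell,b-\ell]$, and to coincide on $[b-\ell,b]$ with the symmetric cubic running from $(0,0)$ to $(B_{0},B_{1})$. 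The three pieces glue in a $C^{1}$ way at $a+\ell$ and $b-\ell$, so $w\in H^{2}((a,b))$ and the boundary conditions (\ref{hp:BC}) hold.

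All the estimates then reduce to inserting the explicit bounds of Lemma~\ref{lemma:ABG} on the two intervals of length $\ell$ and using the two elementary inequalities $H\leq\lambda^{2}$ and $\ep^{2}D\leq\lambda$, which are built into the definition of $\lambda$. For the curvature term, the minimum-value formula in Lemma~\ref{lemma:ABG} on $[a,a+\ell]$ (and the bound $(x+y)^{2}\leq 2x^{2}+2y^{2}$) gives
\begin{equation*}
\int_{a}^{a+\ell}w''(y)^{2}\,dy\leq\frac{7D^{2}}{\ell}+\frac{24H^{2}}{\ell^{3}},
\end{equation*}
so multiplying by $\ep^{6}$ and substituting $\ell=\ep^{2}\lambda$ collapses both summands into $O(\lambda)$; explicitly, $\ep^{6}\int w''^{2}\leq 31\lambda$. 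For the first-order term, the pointwise derivative bound in Lemma~\ref{lemma:ABG} together with the same two inequalities yields the key identity
\begin{equation*}
|w'(y)|^{2}\leq\frac{18H^{2}}{\ep^{4}\lambda^{2}}+\frac{9D^{2}}{2}\leq\frac{45\lambda^{2}}{2\ep^{4}},
\end{equation*}
which is precisely the argument of the logarithm in hypothesis (\ref{hp:bound-D-H-2}). That hypothesis then forces $\log(1+w'^{2})\leq 9|\log\ep|$ pointwise, and dividing by $\omep^{2}=\ep^{2}|\log\ep|$ and integrating over an interval of length $\ell=\ep^{2}\lambda$ gives a contribution bounded by $9\lambda$. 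Summing the two contributions on $[a,a+\ell]$ yields $40\lambda$, and doubling for the symmetric layer on the right produces exactly (\ref{th:bound-ABG}) with the stated constant $80$.

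The $L^{2}$ estimate (\ref{th:bound-int}) is even simpler: the sup bound on $w$ in Lemma~\ref{lemma:ABG}, combined once more with $H\leq\lambda^{2}$ and $\ep^{2}D\leq\lambda$, produces $|w(y)|^{2}\leq 5\lambda^{4}$ on each boundary layer, and multiplication by $\ell=\ep^{2}\lambda$ and a factor $2$ yields $10\ep^{2}\lambda^{5}$.

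No serious obstacle is expected: the argument is a bookkeeping exercise once the correct length scale is guessed. The only delicate point is the choice of $\ell$, which must be simultaneously large enough to absorb the $H^{2}/\ell^{3}$ curvature contribution and small enough to absorb the $D^{2}/\ell$ one into the target scale $\lambda$; these two requirements single out $\ell=\ep^{2}\lambda$ up to a constant, and the sum $\sqrt{H}+\ep^{2}D$ in the definition of $\lambda$ is exactly what balances the $H$-dominated and $D$-dominated regimes.
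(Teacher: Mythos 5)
Your construction and estimates coincide with the paper's own proof: the same cubic boundary layers from Lemma~\ref{lemma:ABG} glued to zero in the middle, with layer length $\ep^{2}(\sqrt{H}+\ep^{2}D)$ (the paper introduces a general $\eta$ and fixes this value at the end, which is only a cosmetic difference), and your constants $31\lambda+9\lambda$ per layer and the $L^{2}$ bookkeeping all check out. The proposal is correct and essentially identical to the paper's argument.
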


\begin{proof}

For every real number $\eta\in(0,(b-a)/2)$, let us consider the function
\begin{equation*}
w(x):=\begin{cases}
\varphi_{1}(x)\quad  & \text{if } x\in[a,a+\eta], 
\\
0 & \text{if } x\in[a+\eta,b-\eta], 
\\
\varphi_{2}(x) & \text{if } x\in[b-\eta,b],
\end{cases}
\end{equation*}
where $\varphi_{1}(x)$ is the unique polynomial of degree three such that 
\begin{equation*}
\varphi_{1}(a)=A_{0},
\qquad
\varphi_{1}'(a)=A_{1},
\qquad
\varphi_{1}(a+\eta)=\varphi_{1}'(a+\eta)=0,
\end{equation*}
and $\varphi_{2}(x)$ is the unique polynomial of degree three such that
\begin{equation*}
\varphi_{2}(b)=B_{0},
\qquad
\varphi_{2}'(b)=B_{1},
\qquad
\varphi_{2}(b-\eta)=\varphi_{2}'(b-\eta)=0.
\end{equation*}

We observe that $w$ belongs to $H^{2}((a,b))$, and fulfills the boundary conditions (\ref{hp:BC}). From Lemma~\ref{lemma:ABG} we deduce that $w(x)$ satisfies the integral estimate
\begin{equation*}
\int_{a}^{a+\eta}w''(x)^{2}\,dx\leq
\frac{D^{2}}{\eta}+\frac{12}{\eta^{3}}\left(H+\frac{D}{2}\eta\right)^{2}\leq
\frac{7D^{2}}{\eta}+\frac{24H^{2}}{\eta^{3}},
\end{equation*}
and the pointwise estimates
\begin{equation*}
|w(x)|\leq\frac{3H}{2}+\frac{D\eta}{2}
\qquad\quad\text{and}\quad\qquad
|w'(x)|\leq\frac{3H}{\eta}+\frac{3D}{2}
\end{equation*}
for every $x\in[a,a+\eta]$, from which we deduce that
\begin{equation*}
\int_{a}^{a+\eta}w(x)^{2}\,dx\leq
\frac{9H^{2}\eta}{2}+\frac{D^{2}\eta^{3}}{2},
\end{equation*}
and
\begin{equation*}
\int_{a}^{a+\eta}\log\left(1+w'(x)^{2}\right)\,dx\leq
\eta\log\left(1+\frac{18H^{2}}{\eta^{2}}+\frac{9D^{2}}{2}\right).
\end{equation*}

Analogous estimates hold true in the interval $[b-\eta,b]$, while of course there is no contribution from the central interval $[a+\eta,b-\eta]$. It follows that
\begin{equation}
\RPM_{\ep}((a,b),w) \leq 
\frac{\eta}{\ep^{2}}\left\{\left(\frac{14D^{2}}{\eta^{2}}+\frac{48H^{2}}{\eta^{4}}\right)\ep^{8}+
\frac{2}{|\log\ep|}\log\left(1+\frac{18H^{2}}{\eta^{2}}+\frac{9D^{2}}{2}\right)\right\},
\label{est:bound-ABG}
\end{equation}
and
\begin{equation}
\int_{a}^{b}w(x)^{2}\,dx\leq
9 H^{2}\eta+D^{2}\eta^{3}.
\label{est:bound-int}
\end{equation}

Now we set $\eta:=\ep^{2}\left(\sqrt{H}+\ep^{2}D\right)$. This choice is admissible because $\eta<(b-a)/2$ due to (\ref{hp:bound-D-H-1}). We observe also that $\eta^{4}\geq\ep^{8}H^{2}$ and $\eta^{2}\geq\ep^{8}D^{2}$. As a consequence, from (\ref{est:bound-int}) we conclude that
\begin{equation*}
\int_{a}^{b}w(x)^{2}\,dx\leq
9 H^{2}\eta+D^{2}\eta^{3}\leq
\frac{10\eta^{5}}{\ep^{8}},
\end{equation*}
which proves (\ref{th:bound-int}). Similarly, we obtain that
\begin{equation*}
\left(\frac{14D^{2}}{\eta^{2}}+\frac{48H^{2}}{\eta^{4}}\right)\ep^{8}\leq 62,
\end{equation*}
and
\begin{equation*}
\frac{2}{|\log\ep|}\log\left(1+\frac{18H^{2}}{\eta^{2}}+\frac{9D^{2}}{2}\right)\leq
\frac{2}{|\log\ep|}\log\left(1+\frac{45}{2}\frac{\eta^{2}}{\ep^{8}}\right)\leq 
18,
\end{equation*}
where in the last inequality we exploited (\ref{hp:bound-D-H-2}). Plugging these estimates into (\ref{est:bound-ABG}) we obtain (\ref{th:bound-ABG}).
\end{proof}

%\clearpage

\subsection{Proof of Proposition~\ref{prop:loc-min-discr} and Proposition~\ref{prop:loc-min-class}}\label{subsec:loc-min-proof}

In this subsection we prove the two propositions in the same time. The common idea is that every local minimizer to the functional (\ref{defn:JF}) is a staircase where all the steps have the same length and the same height, and this staircase intersects the graph of the forcing term $Mx$ in the midpoint of every horizontal step. This structure applies to entire local minimizers, but also to minimizers to (\ref{defn:mu0}), with the possible exception that the length of the two steps at the boundary might be different. Once that this structure has been established, in both cases we only need to optimize with respect to the length of the steps. 

The proof of the structure result is rather lengthy, because we need first to show that the jump set is discrete, then that the steps are symmetric with respect to the forcing term, and finally that all the steps have the same length.

Since the parameters $\alpha$, $\beta$ and $M$ are fixed once for all, for the sake of shortness in the sequel the functional (\ref{defn:JF}) is denoted only by $\JF(\Omega,w)$. When needed, we also assume that $M>0$ (the case $M<0$ is symmetric, and the easier case $M=0$ is treated in the last paragraph of the proof).

\paragraph{\textmd{\textit{The jump set of local minimizers is discrete}}}

Let us assume that $w_{0}(x)$ is a local minimizer for the functional $\JF((a,b),w)$ in some interval $(a,b)\subseteq\re$. We prove that the set of jump points of $w_{0}$ in $(a,b)$ is finite.

To this end, let us assume by contradiction that this is not the case. Due to the structure of the elements of the space $\PJ((a,b))$, we know that there exist a sequence $\{s_{k}\}\subseteq(a,b)$ of distinct real numbers, a real number $c_{0}$, and a sequence $\{J_{k}\}$ of real numbers different from zero such that
\begin{equation*}
\sum_{k=1}^{\infty}|J_{k}|<+\infty
\end{equation*}
and
\begin{equation}
w_{0}(x)=c_{0}+\sum_{k=1}^{\infty}J_{k}\mathbbm{1}_{(s_{k},b)}(x)
\qquad
\forall x\in(a,b).
\label{defn:v0-jump}
\end{equation}

For every integer $n\geq 2$ we consider the real number
\begin{equation*}
R_{n}:=\sum_{k=n+1}^{\infty}|J_{k}|,
\end{equation*}
and the function $w_{n}:(a,b)\to\re$ defined by
\begin{equation}
w_{n}(x):=c_{0}+\left(J_{1}+\sum_{k=n+1}^{\infty}J_{k}\right)\mathbbm{1}_{(s_{1},b)}(x)
+\sum_{k=2}^{n}J_{k}\mathbbm{1}_{(s_{k},b)}(x)
\qquad
\forall x\in(a,b).
\label{defn:vn-jump}
\end{equation}

We observe that $R_{n}\to 0$, and the function $w_{n}$ has a finite number of jumps located in $s_{1},\ldots,s_{n}$, and the jump in $s_{1}$ has ``absorbed'' all the heights of the jumps in $s_{i}$ with $i\geq n+1$ (the jump height in $s_{1}$ might also vanish). In this way it turns out that
\begin{equation*}
\lim_{x\to a^{+}}w_{n}(x)=\lim_{x\to a^{+}}w_{0}(x)=c_{0}
\qquad\text{and}\qquad
\lim_{x\to b^{-}}w_{n}(x)=\lim_{x\to b^{-}}w_{0}(x)=c_{0}+\sum_{k=1}^{\infty}J_{k},
\end{equation*}
and therefore $w_{0}$ and $w_{n}$ have the same ``boundary data''. As a consequence, due to the minimality of $w_{0}(x)$ this implies that
\begin{equation}
\JF((a,b),w_{n})\geq\JF(a,b),w_{0})
\qquad
\forall n\geq 2.
\label{ineq:vn-v0}
\end{equation}

On the other hand, from (\ref{defn:v0-jump}) and (\ref{defn:vn-jump}) we obtain that
\begin{equation*}
\J_{1/2}((a,b),w_{0})-\J_{1/2}((a,b),w_{n})=
\sum_{k=n+1}^{\infty}|J_{k}|^{1/2}+|J_{1}|^{1/2}-
\left|J_{1}+\sum_{k=n+1}^{\infty}J_{k}\right|^{1/2}.
\end{equation*}

Due to the subadditivity of the square root, the first term can be estimated as
\begin{equation*}
\sum_{k=n+1}^{\infty}|J_{k}|^{1/2}\geq
\left(\sum_{k=n+1}^{\infty}|J_{k}|\right)^{1/2}=(R_{n})^{1/2},
\end{equation*}
while for the second and third term it turns out that
\begin{equation*}
|J_{1}|^{1/2}-\left|J_{1}+\sum_{k=n+1}^{\infty}J_{k}\right|^{1/2}\geq
|J_{1}|^{1/2}-\left(|J_{1}|+R_{n}\right)^{1/2}\geq
-\frac{R_{n}}{2|J_{1}|^{1/2}}.
\end{equation*}

From these two inequalities it follows that
\begin{equation}
\J_{1/2}((a,b),w_{0})-\J_{1/2}((a,b),w_{n})\geq 
(R_{n})^{1/2}-\frac{R_{n}}{2|J_{1}|^{1/2}}.
\label{ineq:J1/2-v0-vn}
\end{equation}

Moreover, from (\ref{defn:vn-jump}) we obtain also that
\begin{equation*}
|w_{0}(x)-w_{n}(x)|\leq R_{n}
\qquad
\forall x\in(a,b)
\end{equation*}
and
\begin{equation*}
|w_{n}(x)-Mx|\leq |c_{0}|+\sum_{k=1}^{\infty}|J_{k}|+M\max\{|a|,|b|\}=:V_{\infty}
\qquad
\forall x\in(a,b),
\end{equation*}
and therefore
\begin{eqnarray}
\int_{a}^{b}(w_{0}(x)-Mx)^{2}\,dx & \geq & 
\int_{a}^{b}\left[(w_{n}(x)-Mx)^{2}+2(w_{n}(x)-Mx)(w_{0}(x)-w_{n}(x))\right]\,dx
\nonumber
\\
& \geq &
\int_{a}^{b}(w_{n}(x)-Mx)^{2}\,dx-2(b-a)V_{\infty}R_{n}
\label{ineq:int-v0-vn}
\end{eqnarray}
for every $n\geq 2$. From (\ref{ineq:J1/2-v0-vn}) and (\ref{ineq:int-v0-vn}) we conclude that
\begin{equation*}
\JF((a,b),w_{0})-\JF((a,b),w_{n})\geq
\alpha(R_{n})^{1/2}-\frac{\alpha R_{n}}{2|J_{1}|^{1/2}}-2\beta(b-a)V_{\infty}R_{n}
\end{equation*}

When $R_{n}\to 0^{+}$ the right-hand side is positive, and this contradicts (\ref{ineq:vn-v0}).

\paragraph{\textmd{\textit{Existence of jump points and intersections}}}

Let us assume that $M>0$, and let us set
\begin{equation}
L_{0}:=\left(\frac{64\alpha^{2}}{\beta^{2}M^{3}}\right)^{1/5}.
\label{defn:L0}
\end{equation}

We claim that, if $w_{0}(x)$ is a local minimizer in some interval $(a,b)\subseteq\re$ with length $b-a>L_{0}$, then $w_{0}(x)$ has either at least one jump point in $(a,b)$ or at least one intersection with the line $Mx$, namely there exists $z_{0}\in(a,b)$ such that $w_{0}(z_{0})=Mz_{0}$.

Indeed, let us assume by contradiction that this is not the case. Then in $(a,b)$ the function $w_{0}(x)$ is a constant of the form $Ma-c$ or $Mb+c$ for some real number $c\geq 0$. In both cases it turns out that
\begin{equation}
\JF((a,b),w_{0})=\left(\frac{M^{2}}{3}(b-a)^{3}+M(b-a)^{2}c+(b-a)c^{2}\right)\beta.
\label{eqn:F-ab-v0}
\end{equation}

For every real number $\tau$ with $0<2\tau<b-a$, let us consider the function $w_{\tau}:(a,b)\to\re$ defined by
\begin{equation}
w_{\tau}(x):=
\begin{cases}
\dfrac{M(a+b)}{2}\quad & \text{if }a+\tau<x<b-\tau, \\[1ex]
w_{0}(x) & \text{if }x\in(a,b)\setminus(a+\tau,b-\tau).
\end{cases}
\label{defn:v-tau-1}
\end{equation}

Since $w_{\tau}$ coincides with $w_{0}$ in a neighborhood of the boundary, from the minimality of $w_{0}$ we deduce that $\JF((a,b),w_{\tau})\geq\JF((a,b),w_{0})$ for every admissible value of $\tau$, and in particular
\begin{equation}
\lim_{\tau\to 0^{+}}\JF((a,b),w_{\tau})\geq\JF((a,b),w_{0}).
\label{ineq:F-vtau-v0}
\end{equation}

The right-hand side is given by (\ref{eqn:F-ab-v0}). As for the left-hand side, we observe that $w_{\tau}$ has two equal jumps of height $c+M(b-a)/2$, while the integral term can be computed starting from the explicit expression (\ref{defn:v-tau-1}). We obtain that
\begin{equation}
\lim_{\tau\to 0^{+}}\JF((a,b),w_{\tau})=
2\alpha\left(c+\frac{M(b-a)}{2}\right)^{1/2}+
\frac{\beta M^{2}}{12}(b-a)^{3}.
\label{eqn:F-ab-vtau}
\end{equation} 

Plugging (\ref{eqn:F-ab-vtau}) and (\ref{eqn:F-ab-v0}) into (\ref{ineq:F-vtau-v0}) we conclude that
\begin{equation}
2\alpha\left(c+\frac{M(b-a)}{2}\right)^{1/2}\geq
\frac{\beta M^{2}}{4}(b-a)^{3}+\beta M(b-a)^{2}c+\beta(b-a)c^{2}.
\label{ineq:absurd}
\end{equation}

We claim that this is impossible if $c\geq 0$ and $b-a>L_{0}$. To this end, we write (\ref{defn:L0}) in the equivalent form $\beta^{2}M^{3}L_{0}^{5}=64\alpha^{2}$, from which we deduce that
\begin{equation}
\beta^{2}M^{3}(b-a)^{5}>64\alpha^{2}
\label{ineq:L0}
\end{equation}
because $b-a>L_{0}$. Now we distinguish two cases.
\begin{itemize}

\item  Let us assume that $c\leq M(b-a)/2$. Multiplying (\ref{ineq:L0}) by $M(b-a)$, and taking the square root, we obtain that
\begin{equation*}
\beta M^{2}(b-a)^{3}>8\alpha[M(b-a)]^{1/2},
\end{equation*}
and therefore
\begin{equation*}
2\alpha\left(c+\frac{M(b-a)}{2}\right)^{1/2}\leq 2\alpha[M(b-a)]^{1/2}
<\frac{\beta M^{2}}{4}(b-a)^{3}.
\end{equation*}

Since the latter is less than or equal to the right-had side of (\ref{ineq:absurd}), we have reached a contradiction in this case.

\item  Let us assume that $c\geq M(b-a)/2$, and in particular that $c$ is positive. We observe that this condition can be rewritten as
\begin{equation*}
2\sqrt{2}\,c^{3/2}\geq M^{3/2}(b-a)^{3/2},
\end{equation*}
while (\ref{ineq:L0}) can be rewritten in the form
\begin{equation*}
\beta(b-a)>\frac{8\alpha}{M^{3/2}(b-a)^{3/2}}.
\end{equation*}

Since $c>0$, from these inequalities it follows that
\begin{equation*}
\beta(b-a)c^{2}>
\frac{8\alpha}{M^{3/2}(b-a)^{3/2}}\cdot c^{2}
\geq 2\alpha\sqrt{2c}\geq
2\alpha\left(c+\frac{M(b-a)}{2}\right)^{1/2}.
\end{equation*}

Since the first term is less than or equal to the right-had side of (\ref{ineq:absurd}), we have reached a contradiction also in this case.

\end{itemize}

\paragraph{\textmd{\textit{Symmetry of jumps}}}

Let $w_{0}(x)$ be a local minimizer in some interval $(a,b)\subseteq\re$, and let $s\in(a,b)$ be a jump point of $w_{0}$. From the first step we already know that $s$ is isolated and therefore, up to restricting to a smaller interval, we can assume that $w_{0}(x)$ is equal to some constant $A$ in $(a,s)$, and to some constant $B\neq A$ in $(s,b)$. We claim that
\begin{equation}
Ms-A=B-Ms
\label{th:jump-symm}
\end{equation}
and that, if $M\neq 0$, the two terms have the same sign of $M$.

To this end, for every $\tau\in(a,b)$ we consider the function $w_{\tau}:(a,b)\to\re$ that is equal to $A$ in $(a,\tau)$, and equal to $B$ in $(\tau,b)$, and we set
\begin{equation*}
\varphi(\tau):=\JF((a,b),w_{\tau})=\alpha\sqrt{B-A}+\beta\int_{a}^{\tau}(A-Mx)^{2}\,dx
+\beta\int_{\tau}^{b}(B-Mx)^{2}\,dx.
\end{equation*}

Since $w_{\tau}$ coincides with $w_{0}$ in a neighborhood of the boundary of the interval, from the minimality of $w_{0}$ we deduce that $\varphi(\tau)$ attains its minimum in $(a,b)$ when $\tau=s$. This implies in particular that
\begin{equation}
0=\varphi'(s)=\beta\left[(Ms-A)^{2}-(B-Ms)^{2}\right]
\label{th:symm-eq}
\end{equation}
and
\begin{equation}
0\leq\varphi''(s)=2\beta M[(Ms-A)+(B-Ms)].
\label{th:symm-ineq}
\end{equation}

Since $\beta>0$ and $B\neq A$, equality (\ref{th:symm-eq}) implies (\ref{th:jump-symm}). If in addition $M\neq 0$, then (\ref{th:symm-ineq}) implies that the two terms in (\ref{th:jump-symm}) have the same sign of~$M$.

\paragraph{\textmd{\textit{Equipartition of intersections}}}

Let us assume that $M>0$, let $w_{0}(x)$ be a local minimizer in some interval $(a,b)\subseteq\re$, and let 
\begin{equation*}
a<z_{1}<z_{2}<\ldots<z_{n}<b 
\end{equation*}
denote the intersections in $(a,b)$ of $w_{0}(x)$ with the line $Mx$, namely the solutions to the equation $w_{0}(x)=Mx$. We observe that between any two intersections there is necessarily at least one jump point, and therefore from the previous steps we know that their number is finite. We claim that
\begin{equation*}
z_{2}-z_{1}=z_{3}-z_{2}=\ldots=z_{n}-z_{n-1}.
\end{equation*}

In order to show the claim it is enough to show that, if $z_{1}<z_{2}<z_{3}$ are three consecutive intersections, then $z_{2}-z_{1}=z_{3}-z_{2}$. To this end, we restrict to the interval $(z_{1},z_{3})$ and we observe that, due to the previous steps, it turns out that
\begin{equation*}
w_{0}(x)=\begin{cases}
Mz_{1}\quad & \text{if }x\in(z_{1},(z_{1}+z_{2})/2), \\
Mz_{2} & \text{if }x\in((z_{1}+z_{2})/2,(z_{2}+z_{3})/2), \\
Mz_{3} & \text{if }x\in((z_{2}+z_{3})/2,z_{3}).
\end{cases}
\end{equation*}

For every $\tau\in(z_{1},z_{3})$ we consider the function $w_{\tau}:(z_{1},z_{3})\to\re$ defined by
\begin{equation*}
w_{\tau}(x)=\begin{cases}
Mz_{1}\quad & \text{if }x\in(z_{1},(z_{1}+\tau)/2), \\
M\tau & \text{if }x\in((z_{1}+\tau)/2,(\tau+z_{3})/2), \\
Mz_{3} & \text{if }x\in((\tau+z_{3})/2,z_{3}).
\end{cases}
\end{equation*}

From this explicit expression it follows that
\begin{equation*}
\JF((z_{1},z_{3}),w_{\tau})=
\alpha\sqrt{M}\left(\sqrt{\tau-z_{1}}+\sqrt{z_{3}-\tau}\,\strut\right)+
\frac{\beta M^{2}}{12}\left\{(\tau-z_{1})^{3}+(z_{3}-\tau)^{3}\right\}.
\end{equation*}

Since $w_{\tau}$ coincides with $w_{0}$ in a neighborhood of the boundary of the interval, from the minimality of $w_{0}$ we deduce that this function of $\tau$ attains its minimum in $(z_{1},z_{3})$ when $\tau=z_{2}$. With the variable change $\tau=z_{1}+t(z_{3}-z_{1})$ this is equivalent to saying that the function
\begin{equation*}
\varphi(t):=C_{0}\left(\sqrt{t}+\sqrt{1-t}\right)+
C_{1}\left(t^{3}+(1-t)^{3}\right),
\end{equation*}
where 
\begin{equation*}
C_{0}:=\alpha\sqrt{M}\sqrt{z_{3}-z_{1}}
\qquad\quad\text{and}\quad\qquad
C_{1}:=\frac{\beta M^{2}}{12}(z_{3}-z_{1})^{3},
\end{equation*}
attains its minimum in $(0,1)$ when $t=(z_{2}-z_{1})/(z_{3}-z_{1})$. On the other hand, from Lemma~\ref{lemma:equipartition} we know that the only possible minimum point is $t=1/2$, and this implies that $z_{2}$ is the midpoint of $(z_{1},z_{3})$.

\paragraph{\textmd{\textit{Estimate from below for the minimum}}}

We are now ready to prove the estimate from below in (\ref{th:est-mu-abLM}). Again we consider the case where $M>0$.

To begin with, we observe that this estimate is trivial when $L\leq 8L_{0}$, because in this case the left-hand side is nonpositive. If $L>8L_{0}$, then from the previous steps we know that any minimizer $w_{0}\in\PJ((0,L))$ intersects the line $Mx$ in at least one point $a_{0}\in(0,4L_{0})$, and in at least one point $b_{0}\in(L-4L_{0},L)$. Indeed, we know that in $(0,2L_{0})$ there exists at least one intersection or jump (because the length of the interval is greater than $L_{0}$), and the same in $(2L_{0},4L_{0})$, and in any case between any two jumps there exists at least one intersection because of the symmetry of jumps.

Now we know that the interval $(a_{0},b_{0})$ is divided into $n\geq 1$ intervals of equal length whose endpoints are intersections. Moreover, $w_{0}(x)$ has exactly one jump point in the midpoint between any two consecutive intersection. As a consequence, the shape of $w_{0}(x)$ in $(a_{0},b_{0})$ depends only on $n$, and with an elementary computation we find that
\begin{eqnarray*}
\JF((0,L),w_{0}) & \geq & \JF((a_{0},b_{0}),w_{0})
\\[1ex]
& = & n\left\{\alpha\sqrt{\frac{M(b_{0}-a_{0})}{n}}+
\frac{\beta M^{2}}{12}\left(\frac{b_{0}-a_{0}}{n}\right)^{3}\right\}.
\end{eqnarray*}

Therefore, from the inequality
\begin{equation*}
A+B\geq 5\left(\frac{A^{4}B}{4^{4}}\right)^{1/5}
\qquad
\forall(A,B)\in[0,+\infty)^{2}
\end{equation*}
we conclude that
\begin{equation*}
\JF((0,L),w_{0})\geq\frac{5}{4}\left(\frac{\alpha^{4}\beta M^{4}}{3}\right)^{1/5}(b_{0}-a_{0})
\geq\frac{5}{4}\left(\frac{\alpha^{4}\beta M^{4}}{3}\right)^{1/5}(L-8L_{0}).
\end{equation*}

Plugging (\ref{defn:L0}) into this inequality we obtain the estimate from below in (\ref{th:est-mu-abLM}).

\paragraph{\textmd{\textit{Estimate from above for the minimum}}}

Let us prove the estimate from above in (\ref{th:est-mu-abLM}).

Let $n:=\lceil L/(2H)\rceil$ denote the smallest integer greater than or equal to $L/(2H)$, where $H$ is defined by (\ref{defn:lambda-0}), and let us consider the function $w_{0}\in\PJ((0,2nH))$ that has intersections with the line $Mx$ in $0$, $2H$, $4H$, \ldots, $2nH$, and jumps in the midpoints of the intervals between consecutive intersections. Since $w_{0}$ is a competitor for the minimum problem (\ref{defn:mu0*}) in the interval $(0,2nH)$, from the monotonicity of $\mu_{0}^{*}$ with respect to $L$ we deduce that
\begin{eqnarray}
\mu_{0}^{*}(\alpha,\beta,L,M) & \leq & \mu_{0}^{*}(\alpha,\beta,2nH,M)
\nonumber
\\[0.5ex]
& \leq & \JF((0,2nH),w_{0})
\nonumber
\\[0.5ex]
& = & n\left(\alpha\sqrt{2MH}+\frac{2\beta M^{2}}{3}H^{3}\right)
\nonumber
\\[0.5ex]
& \leq & \left(\frac{L}{2H}+1\right)
\left(\alpha\sqrt{2MH}+\frac{2\beta M^{2}}{3}H^{3}\right),
\label{est:mu0-H0}
\end{eqnarray}
and we conclude by remarking that the last term coincides with the right-hand side of (\ref{th:est-mu*-abLM}) when $H$ is given by (\ref{defn:lambda-0}).

\paragraph{\textmd{\textit{Structure of entire local minimizers}}}

Let $w_{0}(x)$ be an entire local minimizer. From the previous steps applied in every interval of the form $(-L,L)$, with $L\to +\infty$, we know that the set of intersection points of $w_{0}(x)$ with $Mx$ is discrete and divides the line into segments of the same length $2h>0$, whose midpoints are the unique jump points of $w_{0}$. This is enough to conclude that $w_{0}(x)$ is an oblique translation of some staircase with steps of horizontal length $2h$ and vertical height $2Mh$. It remains only to show that $h=H$, where $H$ is given by (\ref{defn:lambda-0}).

Up to an oblique translation, we can always assume that the intersections are the points of the form $2zh$ with $z\in\z$. Let us consider the interval $(0,2nh)$, where $n$ is a positive integer. Applying (\ref{est:mu0-H0}) with $L=2nh$ we deduce that
\begin{eqnarray*}
n\left(\alpha\sqrt{2Mh}+\frac{2\beta M^{2}}{3}h^{3}\right) & = & \JF((0,2nh),w_{0})
\\
& = & 
\mu_{0}^{*}(\alpha,\beta,2nh,M)
\\[1ex]
& \leq & \left(\frac{2nh}{2H}+1\right)
\left(\alpha\sqrt{2MH}+\frac{2\beta M^{2}}{3}H^{3}\right).
\end{eqnarray*}

Dividing by $nh$, and letting $n\to +\infty$, we conclude that
\begin{equation*}
\alpha\frac{\sqrt{2M}}{\sqrt{h}}+\frac{2\beta M^{2}}{3}h^{2}\leq
\alpha\frac{\sqrt{2M}}{\sqrt{H}}+\frac{2\beta M^{2}}{3}H^{2},
\end{equation*}
and this inequality is possible if and only if $h=H$, because $H$ is the unique minimum point of the left-hand side as a function of $h>0$.

\paragraph{\textmd{\textit{Structure of semi-entire local minimizers}}}

Let $w_{0}:(0,+\infty)\to\re$ be a right-hand semi-entire local minimizer. Let $z_{0}<z_{1}<z_{2}<\ldots$ denote the intersection points of $w_{0}$. Arguing as in the case of entire local minimizers we can show that $z_{k+1}-z_{k}=2H$ for every $k\geq 0$. It remains to find the value of $z_{0}$. To this end, for every real number $\tau$ we consider the function
\begin{equation*}
w_{\tau}(x):=w_{0}(x)+M\tau\mathbbm{1}_{(0,z_{0}+H)}(x)
\qquad
\forall x>0.
\end{equation*}

If we restrict to the interval $(0,z_{1})=(0,z_{0}+2H)$, then $w_{\tau}$ and $w_{0}$ have the same boundary value in $z_{1}$, and therefore by the minimality of $w_{0}$ we know that the function $\varphi(\tau):=\JF_{1/2}((0,z_{1}),w_{\tau})$ has a minimum point in $\tau=0$. On the other hand an easy computation reveals that
\begin{equation*}
\varphi(\tau)=
\alpha\sqrt{M(2H-\tau)}+\beta\int_{0}^{z_{0}+H}M^{2}(z_{0}+\tau-x)^{2}\,dx+
\beta\int_{z_{0}+H}^{z_{1}}M^{2}(z_{1}-x)^{2}\,dx,
\end{equation*}
and therefore
\begin{equation}
0=\varphi'(0)=
-\frac{\alpha\sqrt{M}}{2\sqrt{2H}}+\beta M^{2}\left(z_{0}^{2}-H^{2}\right).
\label{eqn:deriv-phi}
\end{equation}

Finally, we observe that the definition of $H$ in (\ref{defn:lambda-0}) implies that
\begin{equation*}
\frac{\alpha\sqrt{M}}{2\sqrt{2H}}=\frac{2\beta M^{2}}{3}H^{2}.
\end{equation*}

Plugging this identity into (\ref{eqn:deriv-phi}) we obtain that $z_{0}=(5/3)^{1/2}H$, as required.

\paragraph{\textmd{\textit{Existence of entire and semi-entire local minimizers}}}

Up to this point we have just shown that, if entire or semi-entire local minimizers exist, then they have the prescribed form. It remains to show that all oblique translations of the canonical $(H,V)$-staircase are actually entire local minimizers, and that the function $w(x)$ defined by (\ref{defn:semi-entire}) is actually a right-hand semi-entire local minimizer.

The argument is rather standard, and therefore we limit ourselves to sketching the main steps in the case of the canonical $(H,V)$-staircase $S_{H,V}(x)$ (the case of its oblique translations and of semi-entire minimizers is analogous). It is enough to show that, for every positive integer $n$, the function $S_{H,V}(x)$ minimizes $\JF_{1/2}((-2nH,2nH),u)$ among all functions $u\in\PJ((-2nH,2nH))$ that coincide with $S_{H,V}$ at the endpoints. To begin with, we show that the minimum exists. This follows from a standard application of the direct method in the calculus of variations, as in the proof of statement~(\ref{prop:existence}) of Proposition~\ref{prop:mu}. Once we know that the minimum exists, we go back through all the previous steps in order to show that the minimum has only a finite number of equi-spaced intersections points, and their number is the one we expect.

\paragraph{\textmd{\textit{The case $M=0$}}}

When the forcing term vanishes, the estimates of Proposition~\ref{prop:loc-min-discr} are actually trivial. As for Proposition~\ref{prop:loc-min-class}, in this case we have to show that the function $w_{0}(x)\equiv 0$ is the unique entire or semi-entire local minimizer. This can be proved in the following way. Let $w_{0}(x)$ be any entire or semi-entire local minimizer.
\begin{itemize}

\item  We show that the jump set of $w_{0}(x)$ is discrete. This can be done as in the general case, since in that paragraph we never used that $M\neq 0$.

\item  We show the symmetry of jumps (\ref{th:jump-symm}) as in the general case, since that equality was proved without using that $M\neq 0$. As a consequence, we deduce that $|w_{0}(x)|$ is constant.

\item  We show that $w_{0}(x)$ vanishes identically. Indeed, when we consider a long enough interval, any function $w_{0}(x)$ with $|w_{0}(x)|$ constant and different from~0 is worse (due to the overwhelming cost of the fidelity term) than a function with the same boundary values that has two jump points close to the boundary and vanishes elsewhere.

\end{itemize}

This completes the proof also in this special case.
\qed

\begin{rmk}
\begin{em}

The existence of entire and semi-entire local minimizers follows also as a corollary of Proposition~\ref{prop:bu2minloc} and Proposition~\ref{prop:bu2Rminloc}. 

\end{em}
\end{rmk}

%\clearpage

\subsection{Compactness and convergence of local minimizers}

In this subsection we prove  Proposition~\ref{prop:bu2minloc} and Proposition~\ref{prop:bu2Rminloc}. The key point in the argument is the following result, where we show that an estimate of order $\ep^{-1}$ for the energy $\RPMF_{\ep}$ in some interval implies an $\ep$-independent estimate for the same energy in a smaller interval. 

\begin{prop}[Boundedness of the energy in a smaller interval]\label{prop:iteration}

Let $L$, $\Gamma_{0}$, $\beta$ be positive real numbers. 

Then there exists two real numbers $\ep_{0}\in(0,1)$ and $\Gamma_{1}>0$ for which the following statement holds true. Let $f:[-(L+1),L+1]\to\re$ be a continuous function such that
\begin{equation}
|f(x)|\leq \Gamma_{0}
\qquad
\forall x\in[-(L+1),L+1],
\label{hp:f-infty}
\end{equation}
let $\ep\in(0,\ep_{0})$, and let 
\begin{equation}
w\in\argmin\loc\left\{\RPMF_{\ep}(\beta,f,(-(L+1),L+1),w):w\in H^{2}((-(L+1),L+1))\right\}
\label{hp:min-loc}
\end{equation}
be a local minimizer such that
\begin{equation}
\RPMF_{\ep}(\beta,f,(-(L+1),L+1),w)\leq\frac{\Gamma_{0}}{\ep}.
\label{hp:univ-bound}
\end{equation}

Then in the smaller interval $(-L,L)$ the local minimizer $w$ satisfies
\begin{equation}
\RPMF_{\ep}(\beta,f,(-L,L),w)\leq 4\Gamma_{1}.
\label{th:univ-bound}
\end{equation}

\end{prop}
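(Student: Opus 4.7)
The plan is to establish (\ref{th:univ-bound}) by an iterative shrinkage argument on a sequence of nested intervals $I_{k}:=(-L_{k},L_{k})$, with $L_{0}=L+1$ and $L_{k}$ decreasing toward $L$, so that $(-L,L)\subset I_{k}$ for every $k$. Writing $\eta_{k}:=L_{k}-L_{k+1}$, I will inductively derive an energy bound $\RPMF_{\ep}(w,I_{k})\leq E_{k}$ that decays rapidly in $k$, starting from $E_{0}:=\Gamma_{0}/\ep$ (given by hypothesis (\ref{hp:univ-bound})) and becoming bounded by $4\Gamma_{1}$ after an appropriate number $N$ of steps. The decay mechanism is a bootstrap based on local minimality and the competitor from Lemma~\ref{lemma:bound-D-H}.

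At each step, given $\RPMF_{\ep}(w,I_{k})\leq E_{k}$, I apply Markov's inequality to the restriction of the two quantities $\int\log(1+(w')^{2})$ (controlled by $\omep^{2}E_{k}$) and $\int(w-f)^{2}$ (controlled by $E_{k}/\beta$) to each half-annulus of $I_{k}\setminus I_{k+1}$. This produces subsets of positive measure in which both $|w'|$ and $|w-f|$ are quantitatively small. Since $w\in C^{1}$ and $f$ is continuous, I select actual points $c$ in the left half-annulus and $d$ in the right half-annulus such that, provided $\omep^{2}E_{k}/\eta_{k}$ is small (which holds for $\ep$ small in the regimes encountered),
\begin{equation*}
|w'(c)|,|w'(d)|\leq D:=C\,\omep\sqrt{E_{k}/\eta_{k}},
\qquad
|w(c)|,|w(d)|\leq H:=\Gamma_{0}+C\sqrt{E_{k}/\eta_{k}},
\end{equation*}
using (\ref{hp:f-infty}) and the estimate $\log(1+s)\geq s/2$ for $s\leq 1$. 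By construction $(c,d)\supset I_{k+1}$.

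Since $\sqrt{H}+\ep^{2}D$ grows only polynomially in $1/\ep$, the smallness conditions of Lemma~\ref{lemma:bound-D-H} are met for $\ep<\ep_{0}$, and that lemma provides a competitor $v\in H^{2}((c,d))$ matching the boundary data $(w(c),w'(c),w(d),w'(d))$ of $w$ and satisfying $\RPM_{\ep}((c,d),v)\leq 80(\sqrt{H}+\ep^{2}D)$ together with $\int_{c}^{d}v^{2}\leq 10\ep^{2}(\sqrt{H}+\ep^{2}D)^{5}$. Combining this with $\int_{c}^{d}(v-f)^{2}\leq 2\int v^{2}+2\int f^{2}$, the $L^{\infty}$ bound (\ref{hp:f-infty}), and the fact that $\ep^{2}(\sqrt{H}+\ep^{2}D)^{5}$ stays bounded throughout the iteration, the fidelity contribution of $v$ is $O(1)$. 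Local minimality (\ref{hp:min-loc}) then yields the inductive step
\begin{equation*}
\RPMF_{\ep}(w,I_{k+1})\leq\RPMF_{\ep}(w,(c,d))\leq\RPMF_{\ep}((c,d),v)\leq C_{1}(E_{k}/\eta_{k})^{1/4}+C_{2},
\end{equation*}
with $C_{1},C_{2}$ depending only on $L,\beta,\Gamma_{0}$.

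The principal difficulty, which is the technical heart of the argument, is the simultaneous calibration of the shrinkage lengths $\eta_{k}$ and the number of iterations $N=N(\ep)$. The recursion contracts $E_{k}$ by raising it to the power $1/4$, so $E_{k}$ behaves roughly like $\ep^{-4^{-k}}$ while that exponent is not yet negligible; taking $N$ of order $\log_{4}|\log\ep|$ makes $\ep^{-4^{-N}}$ of order~$1$. This growing $N$ must be reconciled with the constraint $\sum_{k<N}\eta_{k}<1$ (so that $(-L,L)\subset I_{N}$) and with the fact that $\eta_{k}$ cannot be taken too small (otherwise the factor $\eta_{k}^{-1/4}$ ruins the contraction). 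The bulk of the work consists in calibrating $\eta_{k}$ and $\ep_{0}$ so that everything fits, and in verifying at every step $k\leq N(\ep)$ both the Markov linearization of $\log(1+(\cdot)^{2})$ and the smallness hypotheses of Lemma~\ref{lemma:bound-D-H}.
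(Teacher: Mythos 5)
Your plan follows essentially the same route as the paper's proof: nested intervals shrinking from $(-(L+1),L+1)$ to $(-L,L)$, a mean-value/pigeonhole selection of good points in each annulus where both $|w'|$ and $|w-f|$ are controlled by the current energy divided by the gap, the competitor of Lemma~\ref{lemma:bound-D-H} combined with local minimality to contract the energy by a fractional power, and a number of iterations of order $\log|\log\ep|$. The mechanism and the constraints you identify are the right ones.

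The one step you have not carried out --- the simultaneous calibration of $\eta_{k}$, $N(\ep)$ and $\ep_{0}$ --- is exactly where the paper's proof lives, and a recursion of the form $E_{k+1}\le C_{1}(E_{k}/\eta_{k})^{1/4}+C_{2}$ with an $\ep$-dependent number of steps is awkward to iterate directly, because constants and the $\eta_{k}^{-1/4}$ losses compound over a growing number of steps. The paper sidesteps the recursion by fixing a closed-form induction hypothesis: it chooses the integer $n$ with $2^{-2^{n+1}}\le\ep<2^{-2^{n}}$, takes gaps $L_{n,k}-L_{n,k+1}=2^{-(n-k)}$ (which sum to less than $1$), and proves by finite induction the single statement $\RPMF_{\ep}(\beta,f,(-L_{n,k},L_{n,k}),w)\le\Gamma_{1}\,\ep^{-2^{-k}}$. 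Since $2^{n-k}\le\ep^{-2^{-k-1}}$, all the smallness checks (the linearization of the logarithm, hypotheses (\ref{hp:bound-D-H-1})--(\ref{hp:bound-D-H-2}) of Lemma~\ref{lemma:bound-D-H}, the absorption of the fidelity term into $\Gamma_{1}$) become uniform in $k$ and are imposed once through the conditions defining $\ep_{0}$ and $\Gamma_{1}$, and the final bound is $\Gamma_{1}\ep^{-2^{-n}}\le 4\Gamma_{1}$ by the choice of $n$. Two deliberate simplifications make this bookkeeping painless and you may want to adopt them: the contraction is weakened so that the exponent of $1/\ep$ exactly halves at each step (the competitor actually gives roughly $\ep^{-(3/8)2^{-k}}$, which is simply bounded by $\ep^{-2^{-k-1}}$), and one takes $D=1$ at the good points rather than tracking $D=C\omep\sqrt{E_{k}/\eta_{k}}$, since $\ep^{2}D$ is negligible anyway. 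With such a closed-form hypothesis your outline becomes a complete proof; as written, the decisive quantitative step is announced but not executed.
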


\begin{proof}

Let us consider the expression
\begin{equation*}
\Gamma_{2}:=\frac{\Gamma_{1}^{1/4}}{\beta^{1/4}}+\Gamma_{0}^{1/2}+1.
\end{equation*}

We observe that it is possible to choose a real number $\Gamma_{1}\geq\Gamma_{0}$ in such a way that
\begin{equation}
(80+20\beta)\Gamma_{2}+4\beta(L+1)\Gamma_{0}^{2}\leq \Gamma_{1},
\label{defn:M1}
\end{equation}
and it is possible to choose a real number $\ep_{0}\in(0,1/4)$ such that the inequalities
\begin{gather}
\Gamma_{1}\ep^{1/2}|\log\ep|\leq\log 2,
\qquad\qquad
\ep^{3/2}\,\Gamma_{2}\leq L,
\label{hp:ub-ep-1}
\\[1ex]
\frac{2}{|\log\ep|}\log\left(1+\frac{45}{2\ep^{5}}\cdot\Gamma_{2}^{2}\right)\leq 18,
\qquad\qquad
\ep^{5/8}\,\Gamma_{2}^{4}\leq 1
\label{hp:ub-ep-2}
\end{gather}
hold true for every $\ep\in(0,\ep_{0})$.

In the sequel we show that the statement holds true with these values of $\ep_{0}$ and $\Gamma_{1}$.

Since $\ep\in(0,\ep_{0})$ and $\ep_{0}<1/4$, there exists a unique positive integer $n$ such that
\begin{equation}
\frac{1}{4^{2^{n}}}=\frac{1}{2^{2^{n+1}}}\leq\ep<\frac{1}{2^{2^{n}}}.
\label{defn:ep-n}
\end{equation}

For every $k\in\{0,1,\ldots,n\}$ we set
\begin{equation*}
L_{n,k}:=L+1-\frac{1}{2^{n-k}},
\end{equation*}
and we observe that for every $k\in\{0,1,\ldots,n-1\}$ it turns out that
\begin{gather}
L\leq L_{n,k+1}<L_{n,k}<L+1,
\nonumber
\\
L_{n,k}-L_{n,k+1}=\frac{1}{2^{n-k}},
\label{eqn:diff-Lnk}
\end{gather}
and
\begin{equation}
2^{n-k}\leq
2^{2^{n-k-1}}=
\left\{\left(2^{2^{n}}\right)^{1/2}\right\}^{2^{-k}}<
\left(\frac{1}{\ep^{1/2}}\right)^{2^{-k}}.
\label{est:diff-Lnk}
\end{equation}

We claim that
\begin{equation}
\RPMF_{\ep}(\beta,f,(-L_{n,k},L_{n,k}),w)\leq 
\frac{\Gamma_{1}}{\ep^{2^{-k}}}
\qquad
\forall k\in\{0,1,\ldots,n\}.
\label{th:univ-bound-k}
\end{equation}
 
The case $k=0$ follows from assumption (\ref{hp:univ-bound}) because the interval $(-L_{n,0},L_{n,0})$ is contained in the interval $(-(L+1),L+1)$ and $\Gamma_{1}\geq\Gamma_{0}$. Since $L_{n,n}=L$, the case $k=n$ implies (\ref{th:univ-bound}) because of the estimate from below in (\ref{defn:ep-n}).

Now we prove (\ref{th:univ-bound-k}) by finite induction on $k$. Let us assume that (\ref{th:univ-bound-k}) holds true for some $k\in\{0,1,\ldots,n-1\}$, and let  us prove that it holds true also for $k+1$. To begin with, we focus on the interval $(L_{n,k+1},L_{n,k})$, and we observe that
\begin{eqnarray*}
\frac{\Gamma_{1}}{\ep^{2^{-k}}} & \geq & 
\RPMF_{\ep}(\beta,f,(-L_{n,k},L_{n,k}),w)
\\[1ex]
& \geq & \RPMF_{\ep}(\beta,f,(L_{n,k+1},L_{n,k}),w)
\\[1ex]
& \geq & 
\int_{L_{n,k+1}}^{L_{n,k}}\left\{\frac{1}{\omep^{2}}\log\left(1+w'(y)^{2}\right)+
\beta(w(y)-f(y))^{2}\right\}dy
\\[1ex]
& \geq & (L_{n,k}-L_{n,k+1})\cdot
\\[0.5ex]
& & 
\mbox{}\cdot
\min\left\{\frac{1}{\omep^{2}}\log\left(1+w'(y)^{2}\right)+
\beta(w(y)-f(y))^{2}:y\in[L_{n,k+1},L_{n,k}]\right\}.
\end{eqnarray*}

If $b_{k,\ep}\in[L_{n,k+1},L_{n,k}]$ is any minimum point, recalling (\ref{eqn:diff-Lnk}), (\ref{est:diff-Lnk}), and the first inequality in (\ref{hp:ub-ep-1}), this proves that
\begin{equation*}
\log\left(1+w'(b_{k,\ep})^{2}\right)\leq 
\frac{\Gamma_{1}}{\ep^{2^{-k}}}\cdot\omep^{2}\cdot 2^{n-k}\leq
\frac{\Gamma_{1}}{\left(\ep^{3/2}\right)^{2^{-k}}}\cdot\omep^{2}\leq
\Gamma_{1}\ep^{1/2}|\log\ep|\leq
\log 2,
\end{equation*}
and
\begin{equation*}
\beta(w(b_{k,\ep})-f(b_{k,\ep}))^{2}\leq 
\frac{\Gamma_{1}}{\ep^{2^{-k}}}\cdot 2^{n-k}\leq
\frac{\Gamma_{1}}{\left(\ep^{3/2}\right)^{2^{-k}}}.
\end{equation*}

From these two inequalities and (\ref{hp:f-infty}) we deduce that $|w'(b_{k,\ep})|\leq 1$ and
\begin{equation*}
|w(b_{k,\ep})|\leq 
\frac{\Gamma_{1}^{1/2}}{\beta^{1/2}\left(\ep^{3/4}\right)^{2^{-k}}}+\Gamma_{0}\leq
\frac{(\Gamma_{1}/\beta)^{1/2}+\Gamma_{0}}{\left(\ep^{3/4}\right)^{2^{-k}}}.
\end{equation*}

With an analogous argument, we can show that there exists $a_{k,\ep}\in[-L_{n,k},-L_{n,k+1}]$ such that
\begin{equation*}
|w'(a_{k,\ep})|\leq 1
\qquad\quad\mbox{and}\quad\qquad
|w(a_{k,\ep})|\leq\frac{(\Gamma_{1}/\beta)^{1/2}+\Gamma_{0}}{\left(\ep^{3/4}\right)^{2^{-k}}}.
\end{equation*}

Now we exploit that $w$ minimizes $\RPMF_{\ep}$ in the interval $(a_{k,\ep},b_{k,\ep})$ with respect to its boundary conditions, and we estimate the minimum value by applying Lemma~\ref{lemma:bound-D-H} with
\begin{equation*}
(a,b)=(a_{k,\ep},b_{k,\ep}),
\qquad\quad
D:=1,
\qquad\quad
H:=\frac{(\Gamma_{1}/\beta)^{1/2}+\Gamma_{0}}{\left(\ep^{3/4}\right)^{2^{-k}}}.
\end{equation*}

We observe that
\begin{equation*}
\sqrt{H}+\ep^{2}D\leq
\frac{\left[(\Gamma_{1}/\beta)^{1/2}+\Gamma_{0}\right]^{1/2}}{\left(\ep^{3/8}\right)^{2^{-k}}}+1\leq
\frac{\Gamma_{2}}{\left(\ep^{3/8}\right)^{2^{-k}}},
\end{equation*}
and in particular from the second inequality in (\ref{hp:ub-ep-1}) we obtain that
\begin{equation*}
\ep^{2}\left(\sqrt{H}+\ep^{2}D\right)\leq
\ep^{3/2}\Gamma_{2}\leq
L<
\frac{b_{k,\ep}-a_{k,\ep}}{2},
\end{equation*}
which shows that assumption (\ref{hp:bound-D-H-1}) is satisfied, while from the first inequality in (\ref{hp:ub-ep-2}) we obtain that
\begin{equation*}
\frac{2}{|\log\ep|}\log\left(1+\frac{45}{2\ep^{4}}\left(\sqrt{H}+\ep^{2}D\right)^{2}\right)\leq
\frac{2}{|\log\ep|}\log\left(1+\frac{45}{2\ep^{4}}\cdot\frac{\Gamma_{2}^{2}}{\ep}\right)\leq 
18.
\end{equation*}
which shows that assumption (\ref{hp:bound-D-H-2}) is satisfied. Therefore, from Lemma~\ref{lemma:bound-D-H} we deduce the existence of $w_{k,\ep}\in H^{2}((a_{k,\ep},b_{k,\ep}))$, with the same boundary values (function and derivative) of $w$, satisfying
\begin{equation*}
\RPM_{\ep}((a_{k,\ep},b_{k,\ep}),w_{k,\ep})\leq
80\frac{\Gamma_{2}}{\left(\ep^{3/8}\right)^{2^{-k}}}\leq
80\frac{\Gamma_{2}}{\ep^{2^{-k-1}}}
\end{equation*}
and
\begin{equation*}
\int_{a_{k,\ep}}^{b_{k,\ep}}w_{k,\ep}(x)^{2}\,dx\leq
10\ep^{2}\frac{\Gamma_{2}^{5}}{\left(\ep^{15/8}\right)^{2^{-k}}}=
10\frac{\ep^{11/8}}{\left(\ep^{15/8}\right)^{2^{-k}}}\cdot\left(\ep^{5/8}\Gamma_{2}^{4}\right)\cdot\Gamma_{2}\leq
10\frac{\Gamma_{2}}{\ep^{2^{-k-1}}},
\end{equation*}
where the last inequality follows from the second relation in (\ref{hp:ub-ep-2}). From the last two estimates and the minimality of $w$ we conclude that
\begin{eqnarray*}
\lefteqn{\hspace{-2em}
\RPMF_{\ep}(\beta,f,(-L_{n,k+1},L_{n,k+1}),w)
}
\\[1ex]
& \leq & 
\RPMF_{\ep}(\beta,f,(a_{k,\ep},b_{k,\ep}),w)
\\[1ex]
& \leq & 
\RPMF_{\ep}(\beta,f,(a_{k,\ep},b_{k,\ep}),w_{k,\ep})
\\[1ex]
& \leq &
\RPM_{\ep}((a_{k,\ep},b_{k,\ep}),w_{k,\ep})+
2\beta\int_{a_{k,\ep}}^{b_{k,\ep}}w_{k,\ep}(x)^{2}\,dx+
2\beta\int_{a_{k,\ep}}^{b_{k,\ep}}f(x)^{2}\,dx
\\[0.5ex]
& \leq & 
(80+20\beta)\frac{\Gamma_{2}}{\ep^{2^{-k-1}}}+
2\beta(2L+2)\Gamma_{0}^{2}
\\[0.5ex]
& \leq &
\frac{\Gamma_{1}}{\ep^{2^{-k-1}}},
\end{eqnarray*}
where the last two inequalities we exploited (\ref{hp:f-infty}) and (\ref{defn:M1}), respectively. This completes the inductive step, and hence also the proof.
\end{proof}

\begin{rmk}\label{rmk:iteration}
\begin{em}

Proposition~\ref{prop:iteration} can be extended in a straightforward way to one-sided local minimizers. To this end, it is enough to replace in the statement the interval $(-L,L)$ with $(0,L)$, the interval $(-(L+1),L+1)$ with $(0,L+1)$, and ``loc'' with ``R-loc''. The proof is analogous and somewhat simpler, because we just need to work on one side of the interval.

\end{em}
\end{rmk}

%\clearpage

\subsubsection{Proof of Proposition~\ref{prop:bu2minloc}}

\paragraph{\textmd{\textit{Existence of a limit}}}

We prove that there exist a function $w_{\infty}:\re\to\re$ and an increasing sequence $\{n_{k}\}$ of positive integers such that, for every $L>0$, the restriction of $w_{\infty}$ to the interval $(-L,L)$ belongs to $\PJ((-L,L))$ and $w_{n_{k}}(x)\to w_{\infty}(x)$ in $L^{2}((-L,L))$. 

To this end, it is enough to prove that, for every fixed real number $L>0$, it turns out that
\begin{equation}
\sup\left\{\RPM_{\ep_{n}}((-L,L),w_{n})+\int_{-L}^{L}w_{n}(x)^{2}\,dx:n\in\n\right\}<+\infty.
\label{th:F<=CL}
\end{equation}

Indeed, once that this uniform bound has been established (the supremum might depend on $L$, of course), the compactness result of statement~(\ref{stat:ABG-cpt}) of Theorem~\ref{thm:ABG} implies that the sequence $\{w_{n}\}$ is relatively compact in $L^{2}((-L,L))$ for this fixed value on $L$, and any limit function lies in $\PJ((-L,L))$. At this point we apply the result for a sequence of intervals $(-L_{k},L_{k})$ with  $L_{k}\to +\infty$, and with a classical diagonal procedure we obtain the subsequence that converges in all bounded intervals.

In order to prove (\ref{th:F<=CL}), we begin by observing that, due to assumption~(ii), there exists a constant $M_{L}$ such that
\begin{equation*}
|g_{n}(x)|\leq M_{L}
\qquad
\forall x\in[-(L+1),L+1],\quad\forall n\in\n.
\end{equation*}

At this point we apply Proposition~\ref{prop:iteration} with
\begin{equation*}
w(x):=w_{n}(x),
\qquad\quad
f(x):=g_{n}(x),
\qquad\quad
\Gamma_{0}:=\max\{M_{L},C_{0}\}.
\end{equation*}

This is possible because assumptions (\ref{hp:min-loc}) and (\ref{hp:univ-bound}) are satisfied for trivial reasons as soon as $[-(L+1),L+1]\subseteq(A_{n},B_{n})$ and $|\log\ep_{n}|\geq 1$. From Proposition~\ref{prop:iteration} we obtain that there exists a constant $\Gamma_{1}$ such that
\begin{equation}
\RPMF_{\ep_{n}}(\beta,g_{n},(-L,L),w_{n})\leq 4\Gamma_{1}
\label{est:ABGF-4Gamma}
\end{equation}
when $n$ is large enough. This implies (\ref{th:F<=CL}) because the left hand-side of (\ref{est:ABGF-4Gamma}) controls the first term in the left-hand side of (\ref{th:F<=CL}), while the integral can be estimated as
\begin{equation*}
\int_{-L}^{L}w_{n}(x)^{2}\,dx\leq
2\int_{-L}^{L}(w_{n}(x)-g_{n}(x))^{2}\,dx+2\int_{-L}^{L}g_{n}(x)^{2}\,dx,
\end{equation*}
where the first integral is controlled again by the left hand-side of (\ref{est:ABGF-4Gamma}), and the second integral is controlled because of the uniform bound on $g_{n}$.

\paragraph{\textmd{\textit{Characterization of the limit}}}

Let $w_{\infty}$ be any limit function identified in the first paragraph of the proof. We claim that $w_{\infty}$ is an entire local minimizer for the functional (\ref{defn:JF}) with $\alpha$ defined by (\ref{defn:alpha-0}).

The function $w_{\infty}(x)$ is by definition the limit in $L^{2}\loc(\re)$ of some sequence $w_{n_{k}}(x)$, and from the uniform bounds (\ref{est:ABGF-4Gamma}) we deduce also that $\log(1+w_{n_{k}}'(x)^{2})\to 0$ in $L^{1}\loc(\re)$. Up to further subsequences (not relabeled) we can assume that in both cases the convergence is also pointwise for almost every $x\in\re$. Now let us consider any interval $(a,b)\subseteq\re$ whose endpoints are not jump points of $w_{\infty}$, and such that $w_{n_{k}}(x)\to w_{\infty}(x)$ and $w_{n_{k}}'(x)\to 0$ for $x\in\{a,b\}$. 

Let $v\in\PJ((a,b))$ be any function with the same boundary conditions of $w_{\infty}(x)$ in the usual sense. From statement~(\ref{stat:ABG-recovery}) of Theorem~\ref{thm:ABG} applied with the quadruple of boundary data
\begin{equation*}
(w_{n_{k}}(a),w_{n_{k}}'(a),w_{n_{k}}(b),w_{n_{k}}'(b))\to
(w_{\infty}(a),0,w_{\infty}(b),0)=
(v(a),0,v(b),0)
\end{equation*}
we obtain a recovery sequence $\{v_{k}\}\subseteq H^{2}((a,b))$ for $v$ that has the same boundary conditions of $w_{n_{k}}$ in $a$ and $b$ (both on the function and on the derivative). From the minimality of $w_{n_{k}}$  we deduce that
\begin{equation*}
\RPMF_{\ep_{n_{k}}}(\beta,g_{n_{k}},(a,b),w_{n_{k}})\leq 
\RPMF_{\ep_{n_{k}}}(\beta,g_{n_{k}},(a,b),v_{k})
\end{equation*}
for every positive integer $k$. Letting $k\to +\infty$, and recalling statement~(\ref{stat:ABG-gconv}) of Theorem~\ref{thm:ABG}, we conclude that
\begin{eqnarray*}
\JF_{1/2}(\alpha_{0},\beta,M,(a,b),w_{\infty}) & \leq & 
\liminf_{k\to +\infty}\RPMF_{\ep_{n_{k}}}(\beta,g_{n_{k}},(a,b),w_{n_{k}})
\\[1ex]
& \leq & \lim_{k\to +\infty}\RPMF_{\ep_{n_{k}}}(\beta,g_{n_{k}},(a,b),v_{k})
\\[1ex]
& = & \JF_{1/2}(\alpha_{0},\beta,M,(a,b),v).
\end{eqnarray*}

Since $v$ is arbitrary, this proves that $w_{\infty}$ is a local minimizer of the limit functional in the interval $(a,b)$. Since intervals of this type invade the real line, this proves that $w_{\infty}$ is an entire local minimizer for the limit functional, as required.

\paragraph{\textmd{\textit{Strict convergence}}}

In the special case where $v(x)\equiv w_{\infty}(x)$ in $(a,b)$, the argument of the previous paragraph gives that
\begin{equation*}
\lim_{k\to +\infty}\RPM_{\ep_{n_{k}}}((a,b),w_{n_{k}})=
\alpha_{0}\J_{1/2}((a,b),w_{\infty}),
\end{equation*}
namely $\{w_{n_{k}}\}$ is a recovery sequence for $w_{\infty}$ in the interval $(a,b)$. At this point, from statement~(\ref{stat:ABG-uc}) of Theorem~\ref{thm:ABG}, we conclude that $w_{n_{k}}\auto w_{\infty}$  in $BV((a,b))$. Since intervals of this type invade the real line, this completes the proof.
\qed

%\clearpage

\subsubsection{Proof of Proposition~\ref{prop:bu2Rminloc}}

The proof is analogous to the proof of Proposition~\ref{prop:bu2minloc}, and hence we limit ourselves to sketching the argument. 

In the first step we show that there exist a function $w_{\infty}:(0,+\infty)\to\re$ and an increasing sequence $\{n_{k}\}$ of positive integers such that
\begin{itemize}

\item  the restriction of $w_{\infty}$ to the interval $(0,L)$ belongs to $\PJ((0,L))$ for every $L>0$,

\item  $w_{n_{k}}(x)\to w_{\infty}(x)$ in $L^{2}((0,L))$ and $\log(1+w_{n_{k}}'(x)^{2})\to 0$ in $L^{1}((0,L))$ for every $L>0$,

\item  $w_{n_{k}}(x)\to w_{\infty}(x)$ and $w_{n_{k}}'(x)\to 0$ for almost every $x>0$.

\end{itemize}  

The argument relies on the one-sided version of Proposition~\ref{prop:iteration} (see Remark~\ref{rmk:iteration}), and on the compactness result of statement~(\ref{stat:ABG-cpt}) of Theorem~\ref{thm:ABG}.

In the second step we consider intervals of the form $(0,L)$, where $L$ is any positive real number in which we have the pointwise convergence $w_{n_{k}}(L)\to w_{\infty}(L)$ and $w_{n_{k}}'(L)\to 0$, and such that $L$ is not a jump point of $w_{\infty}$ (both conditions hold true for almost every point in the half-line). Then we consider any function $v\in\PJ((0,L))$ such $v(L)=w_{\infty}(L)$, where boundary values are intended in the usual sense. From statement~(\ref{stat:ABG-recovery}) of Theorem~\ref{thm:ABG}, applied with the quadruple of initial data
\begin{equation*}
(v(0),0,w_{n_{k}}(L),w_{n_{k}}'(L))\to(v(0),0,w_{\infty}(L),0)=(v(0),0,v(L),0),
\end{equation*}
we obtain a recovery sequence $\{v_{k}\}\subseteq H^{2}((0,L))$ for $v$ that has the same boundary conditions of $w_{n_{k}}$ in $x=L$. Thus from the minimality of $w_{n_{k}}$ in $(0,L)$ we deduce as in the previous case that
\begin{equation*}
\JF_{1/2}(\alpha_{0},\beta,M,(0,L),w_{\infty})\leq\JF_{1/2}(\alpha_{0},\beta,M,(0,L),v).
\end{equation*}

Since $L$ can be chosen to be arbitrarily large, this is enough to conclude that $w_{\infty}$ is a right-hand minimizer in $(0,+\infty)$.

Finally, in the third step we conclude as before that the convergence is strict in every interval $(0,L)$ such that $L$ is not a jump point of $w_{\infty}$.
\qed

%\clearpage

\setcounter{equation}{0}
\section{Possible extensions}\label{sec:extension}

Our proof of Theorem~\ref{thm:asympt-min} relies just on the Gamma-convergence results for the rescaled functionals (\ref{defn:ABG}), and on the estimates of Proposition~\ref{prop:loc-min-discr} for the minima of the limit functional with linear forcing term. Our proofs of Theorems~\ref{thm:BU} and~\ref{thm:varifold} rely on the characterization of local minima for the limit functional, and on the compactness result that follows from Proposition~\ref{prop:iteration}. For these reasons, we expect that these results can be extended to more general models by just extending to these models the tools that we exploited here. For example, it is possible to consider more general fidelity terms of the form
\begin{equation*}
\int_{0}^{1}\beta(x)|u(x)-f(x)|^{p}\,dx,
\end{equation*}
for suitable choices of the exponent $p\geq 1$ (but also every $p>0$ should be fine) and of the coefficient $\beta(x)$, provided that it is continuous and strictly positive.

In the sequel we focus on less trivial generalizations that involve the principal part, and we discuss three possibilities.

\paragraph{\textmd{\textit{Different convex-concave Lagrangians}}}

We can replace the function $\phi(p):=\log(1+p^{2})$ with different functions, for example those presented in (\ref{defn:phi-1}) or (\ref{defn:phi-2}). This leads to functionals with principal part of the form
\begin{equation*}
\sPM_{\ep}(u):=\int_{0}^{1}\left\{\ep^{6}\omep^{4}u''(x)^{2}+\phi(u'(x))\right\}\,dx,
\end{equation*}
where now $\omep:=\ep\phi(1/\ep^{2})^{1/2}$. Under rather general assumptions on $\phi$, the blow-ups of minimizers at scale $\omep$ are local minimizers for the rescaled functionals
\begin{equation*}
\sRPM_{\ep}(\Omega,v):=\int_{\Omega}\left\{\ep^{6}v''(x)^{2}+\frac{1}{\omep^{2}}\phi(v'(x))\right\}\,dx,
\end{equation*}
and this family Gamma-converges to a suitable multiple of the functional $\J_{\sigma}(\Omega,v)$, which is the natural generalization of (\ref{defn:J}) obtained by replacing 1/2 with a different exponent $\sigma\in(0,1)$ that depends on the growth at infinity of $\phi(p)$ (actually in this case we obtain only exponents in $[1/2,1)$). All the results of this paper can be easily extended, more or less with the same techniques. 

\paragraph{\textmd{\textit{Higher order singular perturbation}}}

We can replace second order derivatives with derivatives of higher order. This leads to functionals with principal part of the form
\begin{equation*}
\sPM_{\ep}(u):=
\int_{0}^{1}\left\{\ep^{4k-2}\omep^{2k}u^{(k)}(x)^{2}+\log\left(1+u'(x)^{2}\right)\right\}\,dx,
\end{equation*}
where $u^{(k)}(x)$ denotes the derivative of $u(x)$ of order $k\geq 2$, and $\omep$ is defined as in (\ref{defn:omep}). Also in this case the rescaled functionals
\begin{equation*}
\sRPM_{\ep}(\Omega,v):=
\int_{\Omega}\left\{\ep^{4k-2}v^{(k)}(x)^{2}+\frac{1}{\omep^{2}}\log\left(1+v'(x)^{2}\right)\right\}\,dx
\end{equation*}
Gamma-converges to a suitable multiple of $\J_{\sigma}(\Omega,v)$, now with $\sigma=1/k$. Therefore, it seems reasonable that the results of this paper can be extended, even if some steps (for example the iteration argument in the compactness result) might require some extra work.

Of course, one can also combine a higher order singular perturbation with a different choice of $\phi(p)$, and/or choose a different exponent for the higher order derivative.

\paragraph{\textmd{\textit{Space discretization}}}

In a different direction, it is possible to consider a space discretization of the problem where derivatives are replaced by finite differences. This leads to functionals of the form
\begin{equation*}
\sPM_{\ep}(u):=\int_{0}^{1-\ep^{2}\omep}\log\left(1+\left(\frac{u(x+\ep^{2}\omep)-u(x)}{\ep^{2}\omep}\right)^{2}\right)\,dx,
\end{equation*}
possibly defined in the space of functions that are piecewise constant with steps of length $\ep^{2}\omep$, where now $\omep$ is defined as in (\ref{defn:omep}). This is equivalent to considering the original functional (\ref{defn:PM}), depending on true derivatives, but restricted to the space of functions that are piecewise affine, again with respect to some grid with size $\ep^{2}\omep$. The natural rescaling corresponding to blow-ups at scale $\omep$ leads to the family of functionals
\begin{equation*}
\sRPM_{\ep}(v):=\frac{1}{\omep^{2}}\int_{\Omega}\log\left(1+\left(\frac{v(x+\ep^{2})-v(x)}{\ep^{2}}\right)^{2}\right)\,dx.
\end{equation*}

Apparently this scaling of the Perona-Malik functional, although elementary, never appeared in the previous literature on the discrete model (see~\cite{2001-CPAM-Esedoglu,2003-M3AS-MorNeg,2011-SIAM-SlowSD,2018-ActApplMath-BraVal}). The Gamma-limit turns out to be a multiple of the functional $\J_{0}(\Omega,v)$, namely the functional that simply counts the number of jumps of $v$ in $\Omega$, regardless of jump heights. Again it is possible to extend the results of this paper, and some steps are even easier, for example the iterative argument in Proposition~\ref{prop:iteration} and the classification of local minimizers for the limit functional.

%\clearpage

\setcounter{equation}{0}
\section{Future perspectives and open problems}\label{sec:open}

In this final section we present some questions that remain open, and that could deserve further investigation.

The first one concerns uniqueness of minimizers, which is always a challenging question when the Lagrangian is non-convex. We recall that for the model (\ref{defn:AM}) uniqueness is known in some cases (see~\cite[Theorem~1.1 and subsequent Remark~4]{1993-CalcVar-Muller}), but in that case the forcing term is rather special and there are periodic boundary conditions. 

\begin{open}[Uniqueness of minimizers]

Let us consider the minimum problem (\ref{defn:DMnf}), under the same assumptions of Proposition~\ref{prop:basic}. Determine whether the minimizer is unique, at least when $\ep$ is small enough and/or the forcing term $f(x)$ is smooth enough.

\end{open}

Concerning Theorem~\ref{thm:asympt-min}, it could be interesting to investigate the asymptotic behavior of minima under weaker regularity assumptions on the forcing term $f(x)$. 

\begin{open}[Existence of the limit of rescaled minimum values]

Characterize all functions $f\in L^{2}((0,1))$ such that the limit in (\ref{th:asympt-min}) exists, or exists and is a real number, or exists and coincides with the right-hand side, up to defining $f'(x)$ in a suitable way.

\end{open}

The question is largely open. It is also conceivable that the vanishing order of $m(\ep,\beta,f)$ as $\ep\to 0^{+}$ depends on the regularity of $f(x)$ in terms of H\"older continuity, Sobolev exponents or even fractional Sobolev spaces, which motivates the following question.

\begin{open}[Vanishing order of minima vs regularity of the forcing term]

Find any connection between the vanishing order of $m(\ep,\beta,f)$ as $\ep\to 0^{+}$ and the regularity of the forcing term $f(x)$.

\end{open}

Here we present the results that we know for the time being.
\begin{itemize}

\item  For every $f\in\PJ((0,1))$ with a finite number of jumps it turns out that
\begin{equation}
\lim_{\ep\to 0^{+}}\frac{m(\ep,\beta,f)}{\omep^{5/2}}=
4\left(\frac{2}{3}\right)^{1/2}5^{3/4}\cdot\J_{1/2}((0,1),f).
\label{th:lim-PJ}
\end{equation}

The same should be true when $\J_{1/2}((0,1),f)<+\infty$.

\item It should not be difficult to extend (\ref{th:asympt-min}) to every $f\in H^{1}((0,1))$, and probably also to forcing terms that are the sum of a function $f_{1}\in H^{1}((0,1))$ and a function $f_{2}\in\PJ((0,1))$ with $\J_{1/2}((0,1),f_{2})<+\infty$. This extension should require only some technical adjustments in our proof, because the key point (\ref{lim:fepL2f}) remains true.

\item  Heuristically, when minimizing (\ref{defn:ABGF}) we can replace the rescaled Perona-Malik functional (\ref{defn:ABG}) by its Gamma-limit (\ref{defn:J}). This leads to a minimization problem in the class of pure jump functions, that we can further simplify  by restricting to competitors whose jump points are equally spaced at some fixed distance $\delta$, to be optimized with respect to $\ep$. By formalizing this idea we obtain the following two estimates from above.
\begin{itemize}

\item  If $f(x)$ is $a$-H\"older continuous with some exponent $a\in(0,1]$ and some constant $H$, then it turns out that
\begin{equation*}
\limsup_{\ep\to 0^{+}}\frac{m(\ep,\beta,f)}{\omep^{10a/(3a+2)}}\leq
c_{a}H^{4/(3a+2)}.
\end{equation*}

\item  If $f\in W^{1,p}((0,1))$ for some exponent $p\in[1,2]$, then it turns out that
\begin{equation*}
\limsup_{\ep\to 0^{+}}\frac{m(\ep,\beta,f)}{\omep^{(15p-10)/(7p-4)}}\leq
c_{p}\|f'\|_{L^{p}((0,1))}^{(5p-2)/(7p-4)}.
\end{equation*}

\end{itemize}

\item  The set of forcing terms $f\in L^{2}((0,1))$ for which the limit in (\ref{th:asympt-min}) exists has empty interior, even if we allow the limit to be~$+\infty$, and even if we restrict ourselves to a sequence $\ep_{n}\to 0^{+}$. Indeed, for every fixed $\ep_{n}\in(0,1)$, the function $f\to m(\ep_{n},\beta,f)$ is continuous in $L^{2}((0,1))$, and therefore also the function 
\begin{equation*}
\Psi_{n}(f):=\arctan\left(\frac{m(\ep_{n},\beta,f)}{\omega(\ep_{n})^{2}}\right)
\end{equation*}
is continuous in the same space. Let us assume by contradiction that $\Psi_{n}(f)$ converges to some $\Psi_{\infty}(f)$ for every $f$ in some open set $\mathcal{U}\subseteq L^{2}((0,1))$. Since $\mathcal{U}$ is a Baire space, and $\Psi_{\infty}$ is the pointwise limit of continuous functions, then necessarily $\Psi_{\infty}$ is continuous in some $G_{\delta}$ subset $\mathcal{V}\subseteq\mathcal{U}$. Now on the one hand we know from (\ref{th:lim-PJ}) that $\Psi_{\infty}(f)=0$ for every piecewise constant function with a finite number of jumps, and this class is dense in $L^{2}((0,1))$, and therefore $\Psi_{\infty}(f)=0$ for every $f\in\mathcal{V}$. On the other hand, also functions of class $C^{1}$ with right-hand side of (\ref{th:asympt-min}) greater than~1 are dense in $L^{2}((0,1))$, which implies that $\Psi_{\infty}(f)\geq 1$ for every $f\in\mathcal{V}$.

\end{itemize}

As for the convergence of minimizers, on the one hand we expect that the $C^{1}$ regularity of $f(x)$ is required in order to characterize the blow-ups of minimizers with $\ep$-dependent centers as we did in Theorem~\ref{thm:BU}. On the other hand, the statement of Theorem~\ref{thm:varifold} seems to require less regularity on $f(x)$, in contrast with our proof that is deeply based on Theorem~\ref{thm:BU}.

\begin{open}[Strict and varifold convergence of minimizers]

Extend the results of Theorem~\ref{thm:varifold} to less regular forcing terms, and in particular determine whether the results hold true for every $f\in BV((0,1))$ (up to a suitable extension of identity~(\ref{th:varifold}) to bounded variation functions).

\end{open}

Finally, since this paper deals with the Perona-Malik functional in dimension one, we conclude with the following natural and challenging question.

\begin{open}[Any space dimension]

Extend the results of this paper to higher dimensions.

\end{open}

%\clearpage

\setcounter{equation}{0}
\appendix

\section{Appendix}

In this final appendix we prove the results stated in section~\ref{sec:gconv}. To this end, we need three preliminary technical lemmata. The first one is the classical estimate from below for the rescaled Perona-Malik functional in an interval where $|u'(x)|$ is ``large'' (the argument is analogous to a step in the proof of~\cite[Proposition~3.3]{ABG}).

\begin{lemma}[Basic estimate from below]\label{lemma:basic-below}

Let $(\alpha,\beta)\subseteq\re$ be an interval, and let $u\in H^{2}((\alpha,\beta))$. Let us assume that there exists a real number $D>0$ such that $|u'(\alpha)|=|u'(\beta)|=D$, and $|u'(x)|\geq D$ for every $x\in(\alpha,\beta)$.

Then for every $\ep\in(0,1)$ it turns out that
\begin{equation}
\RPM_{\ep}((\alpha,\beta),u)\geq
4\left(\frac{2}{3}\right)^{1/2}\left(\frac{\log(1+D^{2})}{|\log\ep|}\right)^{3/4}
\left(|u(\beta)-u(\alpha)|-D(\beta-\alpha)\strut\right)^{1/2}.
\label{th:usual-below}
\end{equation}

\end{lemma}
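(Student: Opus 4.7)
The plan is to split $\RPM_{\ep}((\alpha,\beta),u) = I_1 + I_2$ with
\[
I_1 := \int_{\alpha}^{\beta} \ep^{6} (u''(x))^{2}\,dx, \qquad I_2 := \int_{\alpha}^{\beta} \frac{1}{\omep^{2}}\log\left(1+u'(x)^{2}\right)\,dx,
\]
bound each summand from below in terms of $\beta-\alpha$ and natural geometric quantities attached to $u$, and then combine the two bounds by a weighted AM-GM inequality with the unique exponent $\lambda=1/4$ that makes $(\beta-\alpha)$ cancel from the product.

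As a preliminary reduction, I would dispose of the trivial case $D=0$ (the right-hand side of (\ref{th:usual-below}) vanishes) and then note that if $D>0$ the hypotheses $|u'|\geq D$ and $|u'(\alpha)|=|u'(\beta)|=D$, together with the continuity of $u'$, prevent $u'$ from changing sign on $(\alpha,\beta)$. After replacing $u$ by $-u$ if necessary, I may assume $u'\geq D$ throughout, so that $u(\beta)-u(\alpha)\geq D(\beta-\alpha)$ and the excess
\[
\Delta := |u(\beta)-u(\alpha)|-D(\beta-\alpha) = \int_{\alpha}^{\beta}(u'(x)-D)\,dx
\]
is nonnegative.

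For $I_2$, the monotonicity of $s\mapsto\log(1+s^{2})$ together with $|u'|\geq D$ gives immediately $I_2\geq L(\beta-\alpha)/\omep^{2}$, where $L:=\log(1+D^{2})$. For $I_1$, I would set $w(x):=u'(x)-D$, so that $w(\alpha)=w(\beta)=0$, $\int_{\alpha}^{\beta}w\,dx=\Delta$, and $\int(w')^{2}=\int(u'')^{2}$. A short Euler-Lagrange computation shows that the minimum of $\int(w')^{2}$ among $H^{1}$ functions satisfying those two linear constraints is attained by the downward parabola
\[
w_{\star}(x)=\frac{6\Delta}{(\beta-\alpha)^{3}}(x-\alpha)(\beta-x),
\]
and equals $12\,\Delta^{2}/(\beta-\alpha)^{3}$. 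This yields $I_1\geq 12\,\ep^{6}\Delta^{2}/(\beta-\alpha)^{3}$. The sign constraint $w\geq 0$ is automatically satisfied by $w_{\star}$ and therefore plays no role.

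Finally, weighted AM-GM with $\lambda=1/4$ gives
\[
I_1+I_2 \geq (1/4)^{-1/4}(3/4)^{-3/4}\,I_1^{1/4}I_2^{3/4} = \frac{4}{3^{3/4}}\,I_1^{1/4}I_2^{3/4},
\]
and the choice $\lambda=1/4$ is forced by the requirement that $(\beta-\alpha)$ cancel from the product (the bounds scale as $(\beta-\alpha)^{-3}$ and $(\beta-\alpha)^{+1}$, so cancellation occurs exactly when $-3\lambda+(1-\lambda)=0$). Plugging in the two lower bounds, using $\ep/\omep = |\log\ep|^{-1/2}$, and simplifying via
\[
\frac{4\cdot 12^{1/4}}{3^{3/4}} = 4\left(\frac{12}{27}\right)^{1/4} = 4\left(\frac{4}{9}\right)^{1/4} = 4\sqrt{\tfrac{2}{3}},
\]
produces exactly inequality (\ref{th:usual-below}). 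The main delicate point is securing the sharp constant $12$ in the bound for $I_1$: a cruder chain (first $\Delta\leq\tfrac{1}{2}(\beta-\alpha)\int|u''|$, then Cauchy-Schwarz) yields only $I_1\geq 4\ep^{6}\Delta^{2}/(\beta-\alpha)^{3}$, which would give the weaker coefficient $4\sqrt{2}/3^{3/4}$ in place of the optimal $4\sqrt{2/3}$. The sharp $12$ requires treating both linear constraints simultaneously in the variational problem, the fortunate feature being that the constrained optimizer is automatically nonnegative.
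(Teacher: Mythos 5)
Your proof is correct and follows essentially the same route as the paper: the same splitting into the $\ep^{6}\int u''^{2}$ and logarithmic terms, the same observation that $u'$ cannot change sign so the excess $\Delta=|u(\beta)-u(\alpha)|-D(\beta-\alpha)$ is nonnegative, the same sharp constant $12\,\Delta^{2}/(\beta-\alpha)^{3}$, and the same weighted AM--GM $A+B\geq\frac{4}{3^{3/4}}(AB^{3})^{1/4}$. The only (immaterial) difference is that you obtain the constant $12$ by minimizing $\int (w')^{2}$ for $w=u'-D$ under zero boundary values and a prescribed integral, whereas the paper invokes its Lemma on the minimization of $\int u''^{2}$ with full Hermite boundary data; these are equivalent computations.
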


\begin{proof}

Let us observe that our assumptions imply that either $u'(x)\geq D$ for every $x\in(\alpha,\beta)$, or $u'(x)\leq -D$ for every $x\in(\alpha,\beta)$. In both cases it turns out that $u'(x)$ has the same value at the two endpoints of the interval. Moreover $u(\beta)-u(\alpha)$ has the same sign of $u'(\alpha)$ and $u'(\beta)$, and
\begin{equation*}
|u(\beta)-u(\alpha)|-D(\beta-\alpha)=
\left|(u(\beta)-u(\alpha))-\frac{u'(\alpha)+u'(\beta)}{2}(\beta-\alpha)\strut\right|\geq
0.
\end{equation*}

As a consequence, from Lemma~\ref{lemma:ABG} we obtain that
\begin{equation*}
\int_{\alpha}^{\beta}u''(x)^{2}\,dx\geq
\frac{12}{(\beta-\alpha)^{3}}\left(|u(\beta)-u(\alpha)|-D(\beta-\alpha)\strut\right)^{2},
\end{equation*}
and therefore
\begin{eqnarray*}
\RPM_{\ep}((\alpha,\beta),u) & = &
\int_{\alpha}^{\beta}\left(\ep^{6}u''(x)^{2}+
\frac{1}{\omep^{2}}\log\left(1+u'(x)^{2}\right)\right)\,dx
\\[1ex]
& \geq &
\frac{12\ep^{6}}{(\beta-\alpha)^{3}}\left(|u(\beta)-u(\alpha)|-D(\beta-\alpha)\strut\right)^{2}+
\frac{\beta-\alpha}{\omep^{2}}\log\left(1+D^{2}\right).
\end{eqnarray*}

Applying the classical inequality
\begin{equation*}
A+B\geq\frac{4}{3^{3/4}}\left(AB^{3}\right)^{1/4}
\qquad
\forall (A,B)\in[0,+\infty)^{2},
\end{equation*}
we obtain exactly (\ref{th:usual-below}).
\end{proof}

%\clearpage

The second lemma shows that for every function $u\in H^{2}((a,b))$ one can find a function $z\in\PJ((a,b))$ that is close to $u$ in terms of $L^{p}$ norm and total variation, and such that the $\RPM_{\ep}$ energy of $u$ is controlled from below by the $\J_{1/2}$ energy of $z$. An analogous result is proved in~\cite[Proposition~3.3]{ABG}.

\begin{lemma}[Substitution lemma]\label{lemma:split}

Let $(a,b)\subseteq\re$ be an interval, let $\ep_{n}\subseteq(0,1)$ be a sequence such that $\ep_{n}\to 0^{+}$, and let $\{u_{n}\}\subseteq H^{2}((a,b))$ be a sequence of functions such that
\begin{equation}
\sup\left\{\RPM_{\ep_{n}}((a,b),u_{n}):n\geq 1\right\}<+\infty.
\label{hp:RPM-bounded}
\end{equation}

Then there exists a sequence $\{z_{n}\}\subseteq\PJ((a,b))$ with the following properties.
\begin{enumerate}
\renewcommand{\labelenumi}{(\arabic{enumi})}

\item  It turns out that
\begin{equation}
\RPM_{\ep_{n}}((a,b),u_{n})\geq
M_{n}\cdot\J_{1/2}((a,b),z_{n})
\qquad
\forall n\geq 1,
\label{th:uz-energy}
\end{equation}
where
\begin{equation*}
M_{n}:=4\left(\frac{2}{3}\right)^{1/2}
\left\{\frac{1}{|\log\ep_{n}|}\log\left(1+\frac{1}{\ep_{n}^{4}|\log\ep_{n}|^{8}}\right)\right\}^{3/4}
\qquad
\forall n\geq 1.
\end{equation*}

\item  The function $z_{n}$ is asymptotically close to $u_{n}$ in the sense that
\begin{equation}
\lim_{n\to +\infty}\|u_{n}-z_{n}\|_{L^{p}((a,b))}= 0
\qquad
\forall p\in[1,+\infty).
\label{th:uz-Lp}
\end{equation}

\item  The total variation of $z_{n}$ is asymptotically close to the total variation of $u_{n}$ in the sense that
\begin{equation}
\lim_{n\to +\infty}\int_{a}^{b}|u_{n}'(x)|\,dx-|Dz_{n}|((a,b))=0.
\label{th:uz-TV}
\end{equation}

\end{enumerate}

\end{lemma}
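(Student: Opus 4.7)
The idea is a thresholding argument: split $(a,b)$ into a ``steep'' part where $|u_n'|$ exceeds a carefully chosen threshold $D_n$ and a ``flat'' part, apply Lemma~\ref{lemma:basic-below} on each steep connected component to extract the desired lower bound, and then define $z_n$ by replacing $u_n$ with a pure jump function that records the large oscillations.

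First, I would choose $D_n:=\ep_n^{-2}|\log\ep_n|^{-4}$, which makes $\log(1+D_n^2)/|\log\ep_n|\to 4$, so that $M_n\to 4(2/3)^{1/2}\cdot 4^{3/4}=16/\sqrt{3}=\alpha_0$. Since $u_n\in H^2((a,b))\subseteq C^1([a,b])$, the open set $E_n:=\{x\in(a,b):|u_n'(x)|>D_n\}$ decomposes as a countable disjoint union $\bigsqcup_{i}(\alpha_{n,i},\beta_{n,i})$; on each such interval $u_n'$ keeps a constant sign. The trivial estimate $\int_{E_n}\log(1+u_n'^2)\geq|E_n|\log(1+D_n^2)$, together with (\ref{hp:RPM-bounded}), gives $|E_n|\leq C\omega(\ep_n)^2/\log(1+D_n^2)\to 0$ (and in fact $D_n|E_n|\to 0$ thanks to the polylogarithmic cushion in $D_n$). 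Applying Lemma~\ref{lemma:basic-below} to each component with $D=D_n$, and summing, yields
\begin{equation*}
\RPM_{\ep_n}((a,b),u_n)\geq M_n\sum_{i}\bigl(|J_{n,i}|-D_n(\beta_{n,i}-\alpha_{n,i})\bigr)^{1/2},
\end{equation*}
where $J_{n,i}:=u_n(\beta_{n,i})-u_n(\alpha_{n,i})$.

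Next I would construct $z_n$. The steep components contribute their ``corrected jumps'' $\tilde J_{n,i}:=\operatorname{sign}(J_{n,i})(|J_{n,i}|-D_n(\beta_{n,i}-\alpha_{n,i}))$ placed at the midpoints $s_{n,i}:=(\alpha_{n,i}+\beta_{n,i})/2$. By themselves these jumps already make the inequality (\ref{th:uz-energy}) hold exactly, since the resulting $\J_{1/2}$ equals the right-hand side of the bound above. On each ``flat'' connected component $F_{n,j}\subseteq(a,b)\setminus E_n$, where $|u_n'|\leq D_n$, I would further enrich $z_n$ with additional jump points placed at a grid of mesh $\delta_n$ chosen so that $D_n\delta_n\to 0$ and so that the grid-jumps have heights $|K_{n,j,k}|=|u_n(x_{k+1})-u_n(x_k)|$ satisfying $|K_{n,j,k}|\leq D_n\delta_n$. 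The key quantitative balance is between two competing requirements: the grid must be fine enough to guarantee $\|u_n-z_n\|_{L^\infty(F_{n,j})}\leq D_n\delta_n\to 0$ and $|Dz_n|(F_{n,j})\to\int_{F_{n,j}}|u_n'|$, but coarse enough that the extra mass $\sum_{j,k}|K_{n,j,k}|^{1/2}$ can be absorbed without spoiling (\ref{th:uz-energy}). This is the technical heart of the argument, and is made possible by the fact that the portion of the energy $\int(1/\omega^2)\log(1+u_n'^2)$ coming from the flat part is genuinely extra budget not used in Step 3.

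The verification of the three conclusions then goes as follows. Property~(1) holds by construction of the steep jumps and the quantitative accounting of the grid jumps. For (2), on $F_{n,j}$ one has $|u_n-z_n|\leq D_n\delta_n\to 0$ pointwise, while on $E_n$ the difference is controlled by $\max_i|J_{n,i}|$; but the energy bound forces $\sum_i|\tilde J_{n,i}|^{1/2}\leq\RPM_{\ep_n}/M_n\leq C$, hence $\max_i|J_{n,i}|\leq C^2$, so that $\|u_n-z_n\|_{L^p(E_n)}^p\leq C^{2p}|E_n|\to 0$. For (3), on $E_n$ one uses $\bigl||J_{n,i}|-|\tilde J_{n,i}|\bigr|\leq D_n(\beta_{n,i}-\alpha_{n,i})$ summed to $D_n|E_n|\to 0$, and on $F_{n,j}$ the grid refinement gives $|Dz_n|(F_{n,j})\to\int_{F_{n,j}}|u_n'|$.

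\textbf{Main obstacle.} The delicate point is the simultaneous reconciliation of the three properties: any coarse grid on the flat part forces $\|u_n-z_n\|_{L^p}$ to stay bounded below by $D_n\delta_n$, while any fine grid threatens to contribute a divergent mass $\sum|K_{n,j,k}|^{1/2}\sim (b-a)(D_n/\delta_n)^{1/2}$ and spoil (\ref{th:uz-energy}). The argument therefore requires using, on the flat part, the fact that $\int \log(1+u_n'^2)\leq C\omega(\ep_n)^2\to 0$ concentrates most of the length of the flat region in the sub-region $\{|u_n'|\leq 1\}$, where $u_n$ is uniformly Lipschitz: only on a set of measure at most $C\omega(\ep_n)^2/\log 2\to 0$ can $|u_n'|$ exceed $1$. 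A two-scale grid (fine in $\{1<|u_n'|\leq D_n\}$, coarse in $\{|u_n'|\leq 1\}$) then reconciles the two constraints and closes the proof.
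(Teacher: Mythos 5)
Your treatment of the steep set is exactly the paper's: same threshold $D_n=\ep_n^{-2}|\log\ep_n|^{-4}$, same decomposition into connected components, same application of Lemma~\ref{lemma:basic-below} with the corrected jump heights $|J_{n,i}|-D_n(\beta_{n,i}-\alpha_{n,i})$ placed at the midpoints. The gap is in what you do on the flat set $\{|u_n'|\leq D_n\}$. Adding grid jumps there that record the local drift of $u_n$ is incompatible with property~(1), no matter how the mesh is tuned: the $\J_{1/2}$-cost of a jump of height $h$ is $h^{1/2}$, while the logarithmic energy available on a gently sloped stretch is quadratically small in the slope, so the ``extra budget'' you invoke is generically insufficient. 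Concretely, take $u_n(x)=\ep_n x$ on $(a,b)$: then $\RPM_{\ep_n}((a,b),u_n)=\frac{b-a}{\omega(\ep_n)^2}\log(1+\ep_n^2)\leq\frac{b-a}{|\log\ep_n|}\to 0$, the steep set is empty, and \emph{any} nonzero jump of $z_n$ recording the drift --- even a single jump of height $(b-a)\ep_n$ --- gives $M_n\J_{1/2}((a,b),z_n)\geq c\,\ep_n^{1/2}$, which eventually exceeds the energy. Your two-scale remedy does not repair this, because on $\{|u_n'|\leq 1\}$ a coarse grid of mesh $\delta$ still produces a total $\J_{1/2}$-mass of order $\delta^{-1/2}\sum_k|K_k|^{1/2}$-type quantities that the (possibly arbitrarily small) leftover energy cannot pay for; and a bounded-mesh grid also fails your own $L^{\infty}$ requirement, since on that region the drift per cell is only bounded by $\delta$.

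The resolution is that no jumps at all are needed on the flat set, because the hypothesis forces its contribution to the total variation of $u_n$ to vanish. Split the flat set at the level $1/|\log\ep_n|$ (splitting at level $1$, as in your last paragraph, is not enough: the Lipschitz region $\{|u_n'|\leq 1\}$ can still carry variation of order $b-a$). On $\{|u_n'|<1/|\log\ep_n|\}$ one has $\int|u_n'|\leq (b-a)/|\log\ep_n|\to 0$; on $\{1/|\log\ep_n|\leq|u_n'|\leq D_n\}$ the energy bound gives measure at most $C\ep_n^2|\log\ep_n|/\log(1+|\log\ep_n|^{-2})\sim C\ep_n^2|\log\ep_n|^3$, hence $\int|u_n'|\leq C/|\log\ep_n|\to 0$. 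So if you take $z_n$ locally constant off the steep components (equivalently: truncate $u_n'$ at $\pm D_n$, integrate the excess to get a $C^1$ function $w_n$, then collapse each steep component to a single jump at its midpoint), properties (2) and (3) follow from $\int_a^b|u_n'-w_n'|\,dx\to 0$ together with the uniform bound on the jump heights and $|A_n|\to 0$, and property (1) holds with equality in the accounting, since $\J_{1/2}(z_n)$ is exactly the sum produced by Lemma~\ref{lemma:basic-below}. This is the paper's proof; your construction needs to be amended in this way before the three conclusions can be verified simultaneously.
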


\begin{proof}

Let us consider the set
\begin{equation*}
A_{n}:=\left\{x\in(a,b):|u_{n}'(x)|>\frac{1}{\ep_{n}^{2}|\log\ep_{n}|^{4}}\right\}.
\end{equation*}

Since $A_{n}$ is an open set, we can write it as a finite or countable union of open disjoint intervals (its connected components), namely in the form
\begin{equation*}
A_{n}=\bigcup_{i\in I_{n}}(\alpha_{n,i},\beta_{n,i}),
\end{equation*}
where $I_{n}$ is a suitable index set.

Let $w_{n}:[a,b]\to\re$ be the function of class $C^{1}$ such that $w_{n}(a)=u_{n}(a)$, and
\begin{equation*}
w_{n}'(x):=\begin{cases}
0  & \text{if }x\in(a,b)\setminus A_{n}, \\
u_{n}'(x)-\dfrac{\sign(u_{n}'(x))}{\ep_{n}^{2}|\log\ep_{n}|^{4}}\quad  & \text{if }x\in A_{n}.
\end{cases}
\end{equation*}

We observe that $w_{n}'(x)$ is the difference between $u_{n}'(x)$ and the truncation of $u_{n}'(x)$ between the two values $\pm\ep_{n}^{-2}|\log\ep_{n}|^{-4}$. We deduce that in each of the intervals $
(\alpha_{n,i},\beta_{n,i})$ the sign of $w_{n}'(x)$ is constant and coincides with the sign of $u_{n}'(x)$ in the same interval, and in any case it turns out that
\begin{equation}
|w_{n}(\beta_{n,i})-w_{n}(\alpha_{n,i})|=
|u_{n}(\beta_{n,i})-u_{n}(\alpha_{n,i})|-
\frac{\beta_{n,i}-\alpha_{n,i}}{\ep_{n}^{2}|\log\ep_{n}|^{4}}.
\label{eqn:TV-wn-un}
\end{equation}

Finally, for every $i\in I_{n}$ we consider the midpoint $\gamma_{n,i}:=(\alpha_{n,i}+\beta_{n,i})/2$ of the interval $(\alpha_{n,i},\beta_{n,i})$, and we introduce the function $z_{n}\in\PJ((a,b))$ whose jump points are located in those midpoints, with height equal to the variation of $w_{n}$ in the corresponding intervals, and translated vertically so that $z_{n}(a)=w_{n}(a)=u_{n}(a)$. This function is given by
\begin{equation*}
z_{n}(x):=u_{n}(a)+
\sum_{i\in I_{n}}(w_{n}(\beta_{n,i})-w_{n}(\alpha_{n,i}))\mathbbm{1}_{(\gamma_{n,i},b)}(x)
\qquad
\forall x\in(a,b).
\end{equation*} 

With these definitions, we are now ready to prove the required estimates.

\paragraph{\textmd{\textit{Statement~(1)}}}

Let us apply Lemma~\ref{lemma:basic-below} to the function $u_{n}$ in the interval $(\alpha_{n,i},\beta_{n,i})$ with $D:=\ep_{n}^{-2}|\log\ep_{n}|^{-4}$. Recalling (\ref{eqn:TV-wn-un}), we obtain that
\begin{eqnarray*}
\RPM_{\ep_{n}}((a,b),u_{n}) & \geq &
\RPM_{\ep_{n}}(A_{n},u_{n})
\\[1ex]
& = & 
\sum_{i\in I_{n}}\RPM_{\ep_{n}}((\alpha_{n,i},\beta_{n,i}),u_{n})
\\
& \geq & M_{n}\sum_{i\in I_{n}}\left(|u_{n}(\beta_{n,i})-u_{n}(\alpha_{n,i})|-
\frac{\beta_{n,i}-\alpha_{n,i}}{\ep_{n}^{2}|\log\ep_{n}|^{4}}\right)^{1/2}
\\[0.5ex]
& = &
M_{n}\sum_{i\in I_{n}}\left|w_{n}(\beta_{n,i})-w_{n}(\alpha_{n,i})\right|^{1/2}
\\[0.5ex]
& = &
M_{n}\sum_{i\in I_{n}}\left|J_{z_{n}}(\gamma_{n,i})\right|^{1/2}
\\[0.5ex]
& = &
M_{n}\J_{1/2}((a,b),z_{n}),
\end{eqnarray*}
which proves (\ref{th:uz-energy}). 

\paragraph{\textmd{\textit{Statement~(2)}}}

In order to prove (\ref{th:uz-Lp}), we show that
\begin{equation}
u_{n}(x)-w_{n}(x)\to 0
\qquad
\text{uniformly in }[a,b]
\label{th:un-wn}
\end{equation}
and that for every $p\in[1,+\infty)$ it turns out that
\begin{equation}
w_{n}-z_{n}\to 0
\qquad
\text{in }L^{p}((a,b)).
\label{th:wn-zn}
\end{equation}

In order to prove (\ref{th:un-wn}) we introduce the sets
\begin{equation*}
B_{n}:=\left\{x\in(a,b):\frac{1}{|\log\ep_{n}|}\leq|u_{n}'(x)|\leq\frac{1}{\ep_{n}^{2}|\log\ep_{n}|^{4}}\right\}
\end{equation*}
and
\begin{equation*}
C_{n}:=\left\{x\in(a,b):|u_{n}'(x)|<\frac{1}{|\log\ep_{n}|}\right\}.
\end{equation*}

Let us estimate the measure of $A_{n}$, $B_{n}$, $C_{n}$. In the case of $C_{n}$ we consider the trivial estimate $|C_{n}|\leq b-a$. In the case of $A_{n}$ and $B_{n}$ we consider the term with the logarithm in (\ref{defn:ABG}), and we obtain that
\begin{equation*}
|A_{n}|\leq
\frac{\ep_{n}^{2}|\log\ep_{n}|}{\log(1+\ep_{n}^{-4}|\log\ep_{n}|^{-8})}\RPM_{\ep_{n}}((a,b),u_{n})
\end{equation*}
and
\begin{equation*}
|B_{n}|\leq\frac{\ep_{n}^{2}|\log\ep_{n}|}{\log(1+|\log\ep_{n}|^{-2})}\RPM_{\ep_{n}}((a,b),u_{n}).
\end{equation*}

Recalling (\ref{hp:RPM-bounded}), these estimates imply that
\begin{equation*}
\lim_{n\to +\infty}\frac{|A_{n}|}{\ep_{n}^{2}|\log\ep_{n}|^{4}}=
\lim_{n\to +\infty}\frac{|B_{n}|}{\ep_{n}^{2}|\log\ep_{n}|^{4}}=
0.
\end{equation*}

Now let us consider the function $u_{n}'(x)-w_{n}'(x)$. In $A_{n}$ it turns out that
\begin{equation*}
\int_{A_{n}}|u_{n}'(x)-w_{n}'(x)|\,dx=
\int_{A_{n}}\frac{1}{\ep_{n}^{2}|\log\ep_{n}|^{4}}\,dx=
\frac{|A_{n}|}{\ep_{n}^{2}|\log\ep_{n}|^{4}}.
\end{equation*}

In $B_{n}$ and $C_{n}$ it turns out that $w_{n}'(x)=0$, and hence
\begin{equation*}
\int_{B_{n}}|u_{n}'(x)-w_{n}'(x)|\,dx=
\int_{B_{n}}|u_{n}'(x)|\,dx\leq
\frac{|B_{n}|}{\ep_{n}^{2}|\log\ep_{n}|^{4}}
\end{equation*}
and
\begin{equation*}
\int_{C_{n}}|u_{n}'(x)-w_{n}'(x)|\,dx=
\int_{C_{n}}|u_{n}'(x)|\,dx\leq
\frac{|C_{n}|}{|\log\ep_{n}|}\leq
\frac{b-a}{|\log\ep_{n}|}.
\end{equation*}

From all these estimates we conclude that
\begin{equation}
\lim_{n\to +\infty}\int_{a}^{b}|u_{n}'(x)-w_{n}'(x)|\,dx=0,
\label{est:un'-wn'}
\end{equation}
which implies (\ref{th:un-wn}) because $u_{n}(a)=w_{n}(a)=0$ for every $n\geq 1$.

In order to prove (\ref{th:wn-zn}), we begin by observing that for every $x\in(a,b)\setminus A_{n}$ it turns out that
\begin{eqnarray*}
w_{n}(x) & = & 
w_{n}(a)+\int_{a}^{x}w_{n}'(t)\,dt
\\
& = &
u_{n}(a)+\sum_{\{i\in I_{n}:\beta_{n,i}\leq x\}}
\int_{\alpha_{n,i}}^{\beta_{n,i}}w_{n}'(t)\,dt
\\
& = &
u_{n}(a)+\sum_{\{i\in I_{n}:\beta_{n,i}\leq x\}}(w_{n}(\beta_{n,i})-w_{n}(\alpha_{n,i}))
\\
& = & z_{n}(x),
\end{eqnarray*}
which implies that $w_{n}(x)-z_{n}(x)=0$ when $x\not\in A_{n}$. On the other hand, when $x\in A_{n}$ it turns out that
\begin{equation*}
|w_{n}(x)-z_{n}(x)|\leq
|w_{n}(\beta_{n,i})-w_{n}(\alpha_{n,i})|=
|J_{z_{n}}(\gamma_{n,i})|\leq
\left(\J_{1/2}((a,b),z_{n})\right)^{2},
\end{equation*}
and therefore from (\ref{th:uz-energy}) we conclude that
\begin{eqnarray*}
\int_{a}^{b}|w_{n}(x)-z_{n}(x)|^{p}\,dx & = &
\sum_{i\in I_{n}}\int_{\alpha_{i,\ep_{n}}}^{\beta_{i,\ep_{n}}}|w_{n}(x)-z_{n}(x)|^{p}\,dx
\\[0.5ex]
& \leq &
\sum_{i\in I_{n}}(\beta_{i,\ep_{n}}-\alpha_{i,\ep_{n}})\cdot\left(\J_{1/2}((a,b),z_{n})\right)^{2p}
\\[0.5ex]
& = &
|A_{n}|\cdot\left(\J_{1/2}((a,b),z_{n})\right)^{2p}
\\[0.5ex]
& \leq & |A_{n}|\cdot\left(\frac{1}{M_{n}}\RPM_{\ep_{n}}((a,b),u_{n})\right)^{2p},
\end{eqnarray*}
which implies (\ref{th:wn-zn}) because $\RPM_{\ep_{n}}((a,b),u_{n})$ is bounded from above, $M_{n}$ is bounded from below, and $|A_{n}|\to 0$.

\paragraph{\textmd{\textit{Statement~(3)}}}

It remains to prove (\ref{th:uz-TV}). To this end, we just observe that
\begin{equation*}
|Dz_{n}|((a,b))=
\sum_{i\in I_{n}}|w_{n}(\beta_{n,i})-w_{n}(\alpha_{n,i})|=
\int_{A_{n}}|w_{n}'(x)|\,dx=
\int_{a}^{b}|w_{n}'(x)|\,dx,
\end{equation*}
and
\begin{equation*}
\left|
\int_{a}^{b}|u_{n}'(x)|\,dx-
\int_{a}^{b}|w_{n}'(x)|\,dx
\right|\leq
\int_{a}^{b}|u_{n}'(x)-w_{n}'(x)|\,dx,
\end{equation*}
and we conclude thanks to (\ref{est:un'-wn'}).
\end{proof}

%\clearpage

The last preliminary result is the classical lower semicontinuity of $\J_{1/2}$ (see for example~\cite[Theorems~4.7 and~4.8]{AFP}). Here we include an elementary proof in the one dimensional case, different from the original proof in~\cite{1989-BUMI-Ambrosio}, because we need to deduce the reinforced statement (true in dimension one) where the convergence of the energies implies the \emph{strict} convergence of the arguments.

\begin{lemma}[Lower semicontinuity of $\J_{1/2}$]\label{lemma:J-strict}

Let $(a,b)\subseteq\re$ be an interval, and let $\{z_{n}\}\subseteq\PJ((a,a))$ be a sequence with the following properties:
\begin{enumerate}
\renewcommand{\labelenumi}{(\roman{enumi})}

\item  there exists a constant $M$ such that
\begin{equation}
\J_{1/2}((a,b),z_{n})\leq M
\qquad
\forall n\geq 1,
\label{hp:bound-J1/2}
\end{equation}

\item  there exists $p\geq 1$ and $z_{\infty}\in L^{p}((a,b))$ such that $z_{n}\to z_{\infty}$ in $L^{p}((a,b))$.

\end{enumerate}

Then the following two statements hold true.
\begin{enumerate}
\renewcommand{\labelenumi}{(\arabic{enumi})}

\item \emph{(Lower semicontinuity).} It turns out that $z_{\infty}\in\PJ((a,b))$ and
\begin{equation}
\liminf_{n\to +\infty}\J_{1/2}((a,b),z_{n})\geq
\J_{1/2}((a,b),z_{\infty}).
\label{th:J-LSC}
\end{equation}

\item  \emph{(Strict convergence).} If in addition we assume that
\begin{equation}
\lim_{n\to +\infty}\J_{1/2}((a,b),z_{n})=\J_{1/2}((a,b),z_{\infty}),
\label{hp:zn-strict}
\end{equation}
then actually $z_{n}\auto z_{\infty}$ in $BV((a,b))$.

\end{enumerate}

\end{lemma}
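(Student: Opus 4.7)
The plan is to reduce the analysis to a finite-jump setting via truncation at a small threshold $\eta>0$. Two elementary bounds drive the proof: on one hand,
\begin{equation*}
\#\{k:|J_{n,k}|\geq\eta\}\leq\frac{\J_{1/2}((a,b),z_n)}{\sqrt{\eta}}\leq\frac{M}{\sqrt{\eta}},
\end{equation*}
and on the other,
\begin{equation*}
\sum_{|J_{n,k}|<\eta}|J_{n,k}|\leq\sqrt{\eta}\sum_{|J_{n,k}|<\eta}|J_{n,k}|^{1/2}\leq M\sqrt{\eta}.
\end{equation*}
Applied without the restriction on jump size, the same factorization $|J|=|J|^{1/2}\cdot|J|^{1/2}$ gives $|Dz_n|((a,b))\leq M^2$. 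Combined with $z_n\to z_\infty$ in $L^p$, a standard $BV$ compactness argument produces a subsequence (not relabeled) along which $z_n\to z_\infty$ pointwise almost everywhere, $Dz_n\rightharpoonup^{*}Dz_\infty$ as signed measures on $(a,b)$, and in particular $z_\infty\in BV((a,b))$ with $|Dz_\infty|((a,b))\leq M^2$.

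For statement~(1), setting $\eta_l:=2^{-l}$, I would apply for each $l$ a Helly-type extraction to the at most $(M+1)/\sqrt{\eta_l}$ positions (in the compact interval $[a,b]$) and heights (bounded by $M^2$) of the large jumps of $z_n$, then diagonalise in $l$. The associated truncated function
\begin{equation*}
z_n^{\geq\eta_l}(x):=z_n(a)+\sum_{|J_{n,k}|\geq\eta_l}J_{n,k}\mathbbm{1}_{(s_{n,k},b)}(x)
\end{equation*}
converges in $L^1((a,b))$ to a pure jump function $z^{(\eta_l)}$ with finitely many atoms, for which lower semicontinuity of $\J_{1/2}$ is trivial, so $\J_{1/2}((a,b),z^{(\eta_l)})\leq\liminf_n\J_{1/2}((a,b),z_n)=:L$. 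The small-jumps bound yields $\|z_n-z_n^{\geq\eta_l}\|_\infty\leq M\sqrt{\eta_l}$, hence $\|z^{(\eta_l)}-z_\infty\|_{L^1}\leq(b-a)M\sqrt{\eta_l}$. Moreover the jump sets $S_l$ of $z^{(\eta_l)}$ are nested with consistent heights (each new $l$ only adds atoms of height in $[\eta_{l+1},\eta_l)$), so their monotone union defines a countable family of jumps with summable $|J|^{1/2}\leq L$, yielding an element of $\PJ((a,b))$ that agrees with $z_\infty$ almost everywhere. This identifies $z_\infty\in\PJ((a,b))$ and gives $\J_{1/2}((a,b),z_\infty)=\sup_l\J_{1/2}((a,b),z^{(\eta_l)})\leq L$, proving~(\ref{th:J-LSC}).

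For statement~(2), $L^1$ convergence is immediate from the $L^p$ hypothesis, and lower semicontinuity of the total variation is standard, so only $\limsup_n|Dz_n|((a,b))\leq|Dz_\infty|((a,b))$ remains to be established. Splitting as above,
\begin{equation*}
|Dz_n|((a,b))=\sum_{|J_{n,k}|\geq\eta_l}|J_{n,k}|+\sum_{|J_{n,k}|<\eta_l}|J_{n,k}|,
\end{equation*}
the second term is bounded by $M\sqrt{\eta_l}$, and it suffices to show that the first term converges to $|Dz^{(\eta_l)}|((a,b))$ for each $l$, since $|Dz^{(\eta_l)}|((a,b))\nearrow|Dz_\infty|((a,b))$ by monotone exhaustion of the jump set. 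The hard part is precisely this last convergence: a priori, several jumps of $z_n$ may converge in position to a single atom of $z^{(\eta_l)}$ (``coalescence''), in which case their heights add up signed in $z^{(\eta_l)}$ while adding up in absolute value in the sum, so $\sum_i|J^{(i)}|$ can strictly exceed $|\sum_i J^{(i)}|$. The extra hypothesis $\J_{1/2}(z_n)\to\J_{1/2}(z_\infty)$ is exactly what rules this out: by strict subadditivity of the square root, any coalescence of $\ell\geq 2$ jumps produces a strictly positive loss $\sum_i|J^{(i)}|^{1/2}-|\sum_i J^{(i)}|^{1/2}$; tracked through the decomposition, any such loss (including analogous coalescence phenomena within the small-jumps family) would propagate to a strict gap $\liminf_n\J_{1/2}((a,b),z_n)>\J_{1/2}((a,b),z_\infty)$, contradicting the equality hypothesis. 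Once coalescence is excluded, the large-jump sum passes to the limit intact, and sending $l\to\infty$ yields the desired $\limsup$ bound. Finally, the uniqueness of the $L^p$ limit $z_\infty$ upgrades all the subsequential convergences to the full original sequence.
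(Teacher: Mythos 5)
Your overall strategy (dyadic truncation, Helly extraction, and strict subadditivity of the square root to forbid coalescence once the energies converge) is the same mechanism that drives the paper's proof, but two of your steps do not hold as written. First, the claim that the jump sets of the $z^{(\eta_l)}$ are ``nested with consistent heights'' and the resulting identity $\J_{1/2}((a,b),z_\infty)=\sup_l\J_{1/2}((a,b),z^{(\eta_l)})$ are false: a below-threshold jump can slide onto the location of an above-threshold jump and change its height in the limit. Concretely, on $(0,1)$ take $z_n:=\mathbbm{1}_{(1/2,1)}-\eta\,\mathbbm{1}_{(1/2+1/n,1)}$ with $\eta\in(0,1)$ fixed: for thresholds above $\eta$ you get $z^{(\eta_l)}=\mathbbm{1}_{(1/2,1)}$, for thresholds below $\eta$ you get $z^{(\eta_l)}=(1-\eta)\mathbbm{1}_{(1/2,1)}=z_\infty$, so no atom is ``added'', the height at $1/2$ changes, and $\sup_l\J_{1/2}((0,1),z^{(\eta_l)})=1>(1-\eta)^{1/2}=\J_{1/2}((0,1),z_\infty)$. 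The conclusion of statement~(1) survives (you only need an inequality, and at each fixed location the height of $z^{(\eta_l)}$ can change by at most the small-jump tail $M\sqrt{\eta_l}$, so one can pass to the limit in $l$ and use Fatou for sums), but the identification of $z_\infty$ as a pure jump function and the bound on its energy must be rerouted through such a limiting argument rather than through the claimed monotone union.

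For statement~(2) the crux --- that any coalescence ``propagates to a strict gap'' between $\liminf_n\J_{1/2}((a,b),z_n)$ and $\J_{1/2}((a,b),z_\infty)$ --- is exactly the hard point of the lemma and is asserted rather than proved; making it rigorous requires a quantitative refinement of the liminf inequality that keeps track of the subadditivity defect of every coalescing group, including groups involving the infinite family of small jumps. Moreover, coalescence is not the only loss mechanism: a large jump whose position drifts to an endpoint $a$ or $b$ disappears from $z^{(\eta_l)}$ and from $z_\infty$ without any merging, and it would equally prevent your large-jump sums from converging to $|Dz^{(\eta_l)}|((a,b))$; this case is not covered by your subadditivity argument and must also be ruled out by the equality hypothesis. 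The paper's proof handles both phenomena at once by enumerating all jumps of $z_n$ in decreasing order of height, extracting limits of all positions and heights, proving a uniform smallness of the tails, and then performing an explicit equality-case analysis of the two inequalities for $|Dz_\infty|((a,b))$ and $\J_{1/2}((a,b),z_\infty)$ (equality forces interior limit positions and at most one nonzero contribution per point); rewriting your truncation scheme along those lines is what is needed to close the gaps.
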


\begin{proof}

For every $n\geq 1$, let us write $z_{n}(x)$ in the form
\begin{equation}
z_{n}(x)=c_{n}+\sum_{i=1}^{\infty}J_{n}(i)\mathbbm{1}_{(s_{n}(i),+\infty)}(x)
\qquad
\forall x\in(a,b),
\label{eqn:zn}
\end{equation}
where $c_{n}:=z_{n}(a)$ (the boundary value is intended in the usual sense), $\{s_{n}(i)\}_{i\geq 1}\subseteq(a,b)$ is a sequence of distinct points, and $\{J_{n}(i)\}_{i\geq 1}$ is a sequence of real numbers such that $|J_{n}(i+1)|\leq|J_{n}(i)|$ for every $i\geq 1$. We observe that, even when the function $z_{n}$ has only a finite number of jump points, we can always write it in the form (\ref{eqn:zn}) by introducing infinitely many ``fake jumps'' with ``jump height'' equal to~0.

From assumptions (i) and (ii) we derive two types of estimates.
\begin{itemize}

\item  (Uniform bounds). From assumption (i) and the subadditivity of the square root we deduce that
\begin{equation}
\sum_{i=1}^{\infty}|J_{n}(i)|\leq 
\left(\sum_{i=1}^{\infty}|J_{n}(i)|^{1/2}\right)^{2}\leq 
M^{2}
\qquad
\forall n\geq 1.
\label{est:TV}
\end{equation}

Combined with assumption (ii), this implies that there exists a constant $M_{1}$ such that
\begin{equation}
|c_{n}|\leq M_{1}
\qquad
\forall n\geq 1.
\label{est:cn}
\end{equation}

Finally, from (\ref{est:TV}) and (\ref{est:cn}) we deduce that there exists a constant $M_{2}$ such that
\begin{equation}
\|z_{n}\|_{L^{\infty}((a,b))}\leq M_{2}
\qquad
\forall n\geq 1.
\label{est:zn-Linfty}
\end{equation}

\item  (Uniform smallness of the tails). We claim that for every $\ep>0$ there exists a positive integer $i_{\ep}$ such that
\begin{equation}
\sum_{i=i_{\ep}}^{\infty}|J_{n}(i)|\leq M\sqrt{\ep}
\qquad
\forall n\geq 1.
\label{est:tails}
\end{equation}

Indeed, if we define $i_{\ep}$ as the smallest integer greater that $M/\sqrt{\ep}$, then from (\ref{hp:bound-J1/2}) it turns out that $|J_{n}(i)|\leq\ep$ for at least one index $i\leq i_{\ep}$. At this point, from the monotonicity of $|J_{n}(i)|$ we conclude that
\begin{equation*}
|J_{n}(i)|\leq\ep
\qquad
\forall n\geq 1,
\quad
\forall i\geq i_{\ep},
\end{equation*}
and therefore
\begin{equation*}
\sum_{i=i_{\ep}}^{\infty}|J_{n}(i)|=
\sum_{i=i_{\ep}}^{\infty}|J_{n}(i)|^{1/2}\cdot|J_{n}(i)|^{1/2}\leq
\sqrt{\ep}\cdot\sum_{i=i_{\ep}}^{\infty}|J_{n}(i)|^{1/2}\leq
\sqrt{\ep}\cdot M,
\end{equation*}
which proves (\ref{est:tails}).

\end{itemize}

From the uniform bounds we obtain that, up to subsequences (not relabeled), the following limits exists as $n\to +\infty$:
\begin{equation*}
c_{n}\to c_{\infty},
\qquad\quad
J_{n}(i)\to J_{\infty}(i),
\qquad\quad
s_{n}(i)\to s_{\infty}(i)\in[a,b].
\end{equation*}

From these limits we deduce that
\begin{equation}
\liminf_{n\to +\infty}\J_{1/2}((a,b),z_{n})=
\liminf_{n\to +\infty}\sum_{i=1}^{\infty}|J_{n}(i)|^{1/2}\geq
\sum_{i=1}^{\infty}|J_{\infty}(i)|^{1/2},
\label{ineq:J-zinfty}
\end{equation}
and, due to the uniform smallness of the tails,
\begin{equation}
\lim_{n\to +\infty}|Dz_{n}|((a,b))=
\lim_{n\to +\infty}\sum_{i=1}^{\infty}|J_{n}(i)|=
\sum_{i=1}^{\infty}|J_{\infty}(i)|.
\label{ineq:TV-zinfty}
\end{equation}

At this point we can introduce the function 
\begin{equation}
\widehat{z}_{\infty}(x):=c_{\infty}+
\sum_{i=1}^{\infty}J_{\infty}(i)\mathbbm{1}_{(s_{\infty}(i),+\infty)}(x)
\qquad
\forall x\in(a,b).
\label{defn:z-infty}
\end{equation}

Exploiting again (\ref{est:tails}) we can show that $z_{n}(x)\to \widehat{z}_{\infty}(x)$ for every $x\in(a,b)$ that does not appear in the sequence $\{s_{\infty}(i)\}$. This almost everywhere pointwise convergence, together with the uniform bound (\ref{est:zn-Linfty}), implies that $z_{n}\to\widehat{z}_{\infty}$ in $L^{p}((a,b))$ for every $p\in[1,+\infty)$, and hence in particular that $z_{\infty}(x)=\widehat{z}_{\infty}(x)$.

Now we exploit the representation (\ref{defn:z-infty}) in order to compute the total variation of $z_{\infty}$ and $\J_{1/2}((a,b),z_{\infty})$. This is not immediate, because in the representation (\ref{defn:z-infty}) the points $s_{\infty}(i)$ are not necessarily distinct, and some of them might even coincide with the endpoints of the interval $(a,b)$, in which case they do not contribute to the total variation or to $\J_{1/2}$. In any case, the function defined by (\ref{defn:z-infty}) belongs to $\PJ((a,b))$, and its jump set is contained in the image of the sequence $\{s_{\infty}(i)\}$ intersected with the open interval $(a,b)$. Moreover, for every $s$ in this set, the jump height of $z_{\infty}$ in $s$ is given by
\begin{equation*}
J_{z_{\infty}}(s)=\sum_{\{i\geq 1:s_{\infty}(i)=s\}}J_{\infty}(i),
\end{equation*}
where of course the sum (or series) might also vanish. In particular, for every jump point $s$ of $z_{\infty}$ we obtain that
\begin{equation}
|J_{z_{\infty}}(s)|\leq\sum_{\{i\geq 1:s_{\infty}(i)=s\}}|J_{\infty}(i)|,
\label{est:J}
\end{equation}
with equality if and only if all terms in the sum have the same sign. Analogously, we obtain that
\begin{equation*}
|J_{z_{\infty}}(s)|^{1/2}\leq\sum_{\{i\geq 1:s_{\infty}(i)=s\}}|J_{\infty}(i)|^{1/2},
%\label{est:J1/2}
\end{equation*}
with equality if and only if at most one term in the sum is different from~0 (here we exploit that the square root is strictly subadditive).

From (\ref{est:J}) it follows that
\begin{equation}
|Dz_{\infty}|((a,b))=
\sum_{s\in S_{z_{\infty}}}|J_{z_{\infty}}(s)|\leq
\sum_{i=1}^{\infty}|J_{\infty}(i)|,
\label{eqn:TV-zinfty}
\end{equation}
with equality if and only if $s_{\infty}(i)\in(a,b)$ for every $i\geq 1$ such that $J_{\infty}(i)\neq 0$, and $J_{\infty}(i)\cdot J_{\infty}(j)\geq 0$ for every pair $(i,j)$ of distinct positive integers such that $s_{\infty}(i)=s_{\infty}(j)\in(a,b)$.

Analogously, it turns out that 
\begin{equation}
\J_{1/2}((a,b),z_{\infty})=
\sum_{s\in S_{z_{\infty}}}|J_{z_{\infty}}(s)|^{1/2}\leq
\sum_{i=1}^{\infty}|J_{\infty}(i)|^{1/2},
\label{eqn:J-zinfty}
\end{equation}
with equality if and only if $s_{\infty}(i)\in(a,b)$ for every $i\geq 1$ such that $J_{\infty}(i)\neq 0$, and $J_{\infty}(i)\cdot J_{\infty}(j)= 0$ for every pair $(i,j)$ of distinct positive integers such that $s_{\infty}(i)=s_{\infty}(j)\in(a,b)$. In particular, in all cases where equality occurs in (\ref{eqn:J-zinfty}), then equality occurs also in (\ref{eqn:TV-zinfty}). 

At this point we are ready to complete the proof. Indeed, (\ref{th:J-LSC}) follows from (\ref{ineq:J-zinfty}) and (\ref{eqn:J-zinfty}), provided that we start with the subsequence of $\{z_{n}\}$ that realizes the liminf in (\ref{th:J-LSC}). As for the strict convergence, under assumption (\ref{hp:zn-strict}) we have necessarily equality both in (\ref{ineq:J-zinfty}) and in (\ref{eqn:J-zinfty}), and hence we have equality also in (\ref{eqn:TV-zinfty}). At this point from (\ref{ineq:TV-zinfty}) and (\ref{eqn:TV-zinfty}) we conclude that $|Dz_{n}|((a,b))\to|Dz_{\infty}|((a,b))$ (to be over-pedantic, what we actually proved is that every subsequence of $\{z_{n}\}$ has a further subsequence with this property), which is what we need in order to conclude that the convergence is strict.
\end{proof}

\begin{rmk}
\begin{em}

The only properties of the square root that are relevant for Lemma~\ref{lemma:J-strict} are that it is a nonnegative function that is strictly sub-additive and satisfies $\sqrt{\sigma}/\sigma\to +\infty$ as $\sigma\to 0^{+}$.

\end{em}
\end{rmk}

%\clearpage

\subsection{Proof of Theorem~\ref{thm:ABG}}

\paragraph{\textmd{\textit{Statement~(1)}}}

Let us start with the liminf inequality. We need to prove that
\begin{equation}
\liminf_{n\to +\infty}\RPM_{\ep_{n}}((a,b),u_{n})\geq
\alpha_{0}\J_{1/2}((a,b),u)
\label{Gconv:liminf}
\end{equation}
for every sequence $\{u_{n}\}\subseteq H^{2}((a,b))$ such that $u_{n}\to u$ in $L^{2}((a,b))$, and every sequence $\{\ep_{n}\}\subseteq(0,1)$ such that $\ep_{n}\to 0^{+}$. Up to subsequences (not relabeled), we can assume that the left-hand side is bounded and that the liminf is actually a limit, and in particular that the sequence $\{\RPM_{\ep_{n}}((a,b),u_{n})\}$ is bounded. When this is the case, from Lemma~\ref{lemma:split} we obtain a sequence $\{z_{n}\}\subseteq\PJ((a,b))$ such that $z_{n}\to u$ in $L^{2}((a,b))$ and
\begin{equation}
\RPM_{\ep_{n}}((a,b),u_{n})\geq
M_{n}\cdot\J_{1/2}((a,b),z_{n})
\qquad
\forall n\geq 1.
\label{th:Jzn}
\end{equation}

Now we observe that $M_{n}\to\alpha_{0}$ as $n\to +\infty$, and from the lower semicontinuity of $\J_{1/2}$ (with respect to any $L^{p}$ convergence) we conclude that 
\begin{equation*}
\liminf_{n\to +\infty}\RPM_{\ep_{n}}((a,b),u_{n})\geq
\liminf_{n\to +\infty}M_{n}\cdot\J_{1/2}((a,b),z_{n})\geq
\alpha_{0}\J_{1/2}((a,b),u),
\end{equation*}
which proves (\ref{Gconv:liminf}).

For the limsup inequality, we refer to the proof of~\cite[Theorem~4.4]{2008-TAMS-BF}. The idea is rather classical. First of all, we reduce ourselves  to the case where $u$ has only a finite number of jump points, because this class is dense in $L^{2}((a,b))$ with respect to the energy $\J_{1/2}$. Given any function $u\in\PJ((a,b))$ with a finite number of jumps, we consider the function $u_{\ep}(x)$ that coincides with $u(x)$ outside some small intervals that contain a single jump point, and in each of these small intervals coincides with the cubic polynomial that interpolates the values at the boundary of the interval. From Lemma~\ref{lemma:ABG} we obtain the exact value of the integral of $u_{\ep}''(x)^{2}$, and an estimate from above for the integral of $\log(1+u_{\ep}'(x)^{2})$. If we optimize the length of each small interval in terms of $\ep$ and of the jump height, the resulting family is the required recovery family. 

We stress that, in the case where $u$ has a finite number of jumps, there exists a recovery family that coincides with $u$ in a fixed neighborhood of the boundary points $x=a$ and $x=b$.

\paragraph{\textmd{\textit{Statement~(2)}}}

Let us apply again Lemma~\ref{lemma:split}. We obtain a sequence $\{z_{n}\}\subseteq\PJ((a,b))$ satisfying (\ref{th:uz-Lp}) and (\ref{th:Jzn}). In particular, since $M_{n}$ is bounded from below by a positive constant, from (\ref{hp:Gconv-coercive}) we deduce that this sequence satisfies
\begin{equation*}
\sup_{n\in\n}\left\{\J_{1/2}((a,b),z_{n})+\int_{a}^{b}z_{n}(x)^{2}\,dx\right\}<+\infty.
\end{equation*}

From the classical compactness result for the functional $\J_{1/2}$ (whose proof in dimension one is more or less contained in the proof of Lemma~\ref{lemma:J-strict} above), it follows that $\{z_{n}\}$ is relatively compact in $L^{p}((a,b))$ for every $p\in[1,+\infty)$. Due to  (\ref{th:uz-Lp}), the same is true for $\{u_{n}\}$.

\paragraph{\textmd{\textit{Statement~(3)}}}

Let us apply again Lemma~\ref{lemma:split}. The resulting sequence $\{z_{n}\}$ converges to $u$. On the other hand, from the lower semicontinuity of $\J_{1/2}$, estimate (\ref{th:uz-energy}), and assumption (\ref{hp:recovery}), we deduce that
\begin{eqnarray*}
\J_{1/2}((a,b),u) & \leq &
\liminf_{n\to +\infty}\J_{1/2}((a,b),z_{n})
\\[0.5ex]
& \leq &
\limsup_{n\to +\infty}\J_{1/2}((a,b),z_{n})
\\
& \leq &
\limsup_{n\to +\infty}\frac{1}{M_{n}}\cdot\RPM_{\ep_{n}}((a,b),u_{n})
\\
& = &
\frac{1}{\alpha_{0}}\cdot\alpha_{0}\J_{1/2}((a,b),u).
\end{eqnarray*}

This implies that $\J_{1/2}((a,b),z_{n})\to\J_{1/2}((a,b),u)$, and therefore from Lemma~\ref{lemma:J-strict} we conclude that $z_{n}\auto u$ in $BV((a,b))$. In turn, this implies that also $u_{n}\auto u$ in $BV((a,b))$ because of (\ref{th:uz-TV}).

\paragraph{\textmd{\textit{Statement~(4)}}}

As in the proof of the limsup inequality for the Gamma-convergence result, we can assume that $u$ is a pure jump function with a finite number of jump points. When this is the case, we already know that there exists a recovery sequence $\widehat{u}_{n}\to u$ that coincides with $u$ in a neighborhood of the boundary, namely there exists $\eta>0$ such that for every $n\geq 1$ it turns out that $\widehat{u}_{n}(x)=u(x)=u(a)$ for every $x\in(a,a+\eta)$, and similarly $\widehat{u}_{n}(x)=u(x)=u(b)$ for every $x\in(b-\eta,b)$.

Now the idea is to modify $\widehat{u}_{n}$ in the two lateral intervals $(a,a+\eta)$ and $(b-\eta,b)$ in order to fulfill the given boundary conditions (\ref{hp:recovery-BC}). To this end, we set
\begin{equation*}
u_{n}(x):=\begin{cases}
u(a)+w_{1,n}(x)\quad      & \text{if }x\in(a,a+\eta], \\
\widehat{u}_{n}(x)     & \text{if }x\in[a+\eta,b-\eta],  \\
u(b)+w_{2,n}(x) & \text{if }x\in[b-\eta,b).
\end{cases}
\end{equation*}
where $w_{1,n}$ is the function given by Lemma~\ref{lemma:bound-D-H} applied in the interval  $(a,a+\eta)$ with boundary data
\begin{equation*}
\left(w_{1,n}(a),w_{1,n}'(a),w_{1,n}(a+\eta),w_{1,n}'(a+\eta)\right)=
(A_{0,n}-u(a),A_{1,n},0,0),
\end{equation*}
and $w_{2,n}$ is the function given by Lemma~\ref{lemma:bound-D-H} applied in the interval  $(b-\eta,b)$ with boundary data
\begin{equation*}
\left(w_{2,n}(a),w_{2,n}'(a),w_{2,n}(a+\eta),w_{2,n}'(a+\eta)\right)=
(0,0,B_{0,n}-u(b),B_{1,n}).
\end{equation*}

We observe that $u_{n}\in H^{2}((a,b))$ and
\begin{eqnarray*}
\RPM_{\ep_{n}}((a,b),u_{n}) & = &
\RPM_{\ep_{n}}((a,a+\eta),w_{1,n})
\\
& &
+\RPM_{\ep_{n}}((a+\eta,b-\eta),\widehat{u}_{n})
\\
& &
+\RPM_{\ep_{n}}((b-\eta,b),w_{2,n}).
\end{eqnarray*}

The second term coincides with $\RPM_{\ep_{n}}((a,b),\widehat{u}_{n})$, and therefore it converges to $\alpha_{0}\J_{1/2}((a,b),u)$ when $n\to +\infty$. Therefore, it is enough to show that the other two terms vanish in the limit. To this end, we observe that in the interval $(a,a+\eta)$ the assumptions of Lemma~\ref{lemma:bound-D-H} are satisfied with
\begin{equation*}
H=H_{n}:=|A_{0,n}-u(a)|
\qquad\text{and}\qquad
D=D_{n}:=|A_{1,n}|.
\end{equation*}

Since $H_{n}$ and $D_{n}$ tend to~0, we conclude that
\begin{equation*}
\lim_{n\to+\infty}\RPM_{\ep_{n}}((a,a+\eta),w_{1,n})\leq
\lim_{n\to+\infty}80\left(\sqrt{H_{n}}+\ep_{n}^{2}D_{n}\right)=
0.
\end{equation*}

In the same way we obtain that
\begin{equation*}
\lim_{n\to+\infty}\RPM_{\ep_{n}}((b-\eta,b),w_{2,n})=
0,
\end{equation*}
which completes the proof.
\qed

%\clearpage

\subsection{Proof of Proposition~\ref{prop:mu}}

\paragraph{\textmd{\textit{Statement~(\ref{prop:existence})}}}

In the case of $\mu_{\ep}$, $\mu_{\ep}^{*}$ and $\mu_{0}$, existence is a standard application of the direct method in the calculus of variations. The case of $\mu_{0}^{*}$ is less trivial because boundary conditions in $\PJ((0,L))$ do not pass to the limit, for example, with respect to $L^{2}$ convergence. This issue, however, can be fixed in a rather standard way. To this end, we relax boundary conditions by allowing ``jumps at the boundary'', namely we minimize
\begin{equation*}
\alpha\J_{1/2}((0,L),v)+\beta\int_{0}^{L}(v(x)-Mx)^{2}\,dx+
\alpha\left(|v(0)|^{1/2}+|v(L)-ML|^{1/2}\right)
\end{equation*}
over $\PJ((0,L))$, without boundary conditions. In this case the direct method works, and we claim that any minimizer $v(x)$ satisfies $v(0)=0$ and $v(L)=ML$. Indeed, let $v(x)$ be any minimizer, and let us consider the value in $x=0$ (the argument in $x=L$ is symmetric). Let us assume that $M>0$ (the case $M=0$ is trivial, and the case $M<0$ is symmetric).  Arguing as at the beginning of section~\ref{subsec:loc-min-proof} we can show that the set of jump points of $v$ is finite, and comparing with a competitor $v_{\tau}(x)$ which is equal to~0 in $(0,\tau)$, and equal to $v(x)$ elsewhere, we can conclude that $v(0)=0$. 

\paragraph{\textmd{\textit{Statement~(\ref{prop:M})}}}

We prove the result in the case of $\mu_{\ep}$, but the argument is analogous in the other three cases.

The symmetry follows from the simple remark that, if $v(x)$ is a minimizer for some $M$, then $-v(x)$ is a minimizer for $-M$.

As for the continuity, it follows from the fact that, if $M_{n}\to M_{\infty}$, then the fidelity term in 
$\RPMF_{\ep}(\beta,M_{n}x,(0,L),v)$ converges to the fidelity term in $\RPMF_{\ep}(\beta,M_{\infty}x,(0,L),v)$ uniformly on bounded subsets of $L^{2}((a,b))$.

As for monotonicity, let us consider any pair $0\leq M_{1}< M_{2}$. Let us choose any minimizer $v_{2}\in H^{2}((0,L))$ in the definition of $\mu_{\ep}(\beta,L,M_{2})$, and let us consider the function $v_{1}(x):=(M_{1}/M_{2})v_{2}(x)$. Elementary computations show that
\begin{equation*}
\RPM_{\ep}((0,L),v_{1})\leq\RPM_{\ep}((0,L),v_{2}),
\end{equation*}
and
\begin{equation*}
\int_{0}^{L}(v_{1}(x)-M_{1}\,x)^{2}\,dx=
\frac{M_{1}^{2}}{M_{2}^{2}}\int_{0}^{L}(v_{2}(x)-M_{2}\,x)^{2}\,dx\leq
\int_{0}^{L}(v_{2}(x)-M_{2}\,x)^{2}\,dx,
\end{equation*}
and therefore
\begin{eqnarray*}
\mu_{\ep}(\beta,L,M_{1}) & \leq &
\RPMF_{\ep}(\beta,M_{1}x,(0,L),v_{1})
\\
& \leq &
\RPMF_{\ep}(\beta,M_{2}x,(0,L),v_{2})
\\
& = &
\mu_{\ep}(\beta,L,M_{2}).
\end{eqnarray*}

\paragraph{\textmd{\textit{Statement~(\ref{prop:L})}}}

Let us consider any pair $0<L_{1}<L_{2}$, and let us examine separately the behavior of the four functions.

In the case of $\mu_{\ep}$, let $v_{2}(x)$ be any minimizer for $\mu_{\ep}(\beta,L_{2},M)$. Then the restriction of $v_{2}(x)$ to $(0,L_{1})$, which we call $v_{1}(x)$, is a competitor in the definition of $\mu_{\ep}(\beta,L_{1},M)$, and therefore as before we conclude that
\begin{eqnarray*}
\mu_{\ep}(\beta,L_{1},M) & \leq &
\RPMF_{\ep}(\beta,Mx,(0,L_{1}),v_{1})
\\
& \leq &
\RPMF_{\ep}(\beta,Mx,(0,L_{2}),v_{2})
\\
& = &
\mu_{\ep}(\beta,L_{2},M).
\end{eqnarray*}

The same argument works in the case of $\mu_{0}$.

In the case of $\mu_{0}^{*}$ we have to keep into account boundary conditions, and therefore we define
\begin{equation}
v_{1}(x)=\frac{L_{1}}{L_{2}}v_{2}\left(\frac{L_{2}x}{L_{1}}\right)
\qquad
\forall x\in(0,L_{1}),
\label{defn:v2->v1}
\end{equation}
and we observe that $\J_{1/2}((0,L_{1}),v_{1})\leq\J_{1/2}((0,L_{2}),v_{2})$ and
\begin{equation*}
\int_{0}^{L_{1}}(v_{1}(x)-Mx)^{2}\,dx=
\left(\frac{L_{1}}{L_{2}}\right)^{3}\int_{0}^{L_{2}}(v_{2}(x)-Mx)^{2}\,dx,
\end{equation*}
which again implies the conclusion.

Finally, the monotonicity of $\mu_{\ep}^{*}$ with respect to $L$ is in general false (the minimum diverges when $L\to 0^{+}$ due to the term with second order derivatives). In this case the natural definition (\ref{defn:v2->v1}), that preserves the boundary conditions (both on the function and on the derivative), reduces the fidelity term and the term with the logarithm, but increases the term with second order derivatives. What we do in this case is the opposite. We consider a minimizer $v_{1}(x)$ in the definition of $\mu_{\ep}^{*}(\beta,L_{1},M)$, and we define a function $v_{2}(x)$ in $(0,L_{2})$ in such a way that (\ref{defn:v2->v1}) holds true. With a simple variable change we see that
\begin{equation*}
\int_{0}^{L_{2}}v_{2}''(x)^{2}\,dx=
\frac{L_{1}}{L_{2}}\int_{0}^{L_{1}}v_{1}''(x)^{2}\,dx,
\end{equation*}
\begin{equation*}
\int_{0}^{L_{2}}\log\left(1+v_{2}'(x)^{2}\right)\,dx=
\frac{L_{2}}{L_{1}}\int_{0}^{L_{1}}\log\left(1+v_{1}'(x)^{2}\right)\,dx,
\end{equation*}
and
\begin{equation*}
\int_{0}^{L_{2}}(v_{2}(x)-Mx)^{2}\,dx=
\left(\frac{L_{2}}{L_{1}}\right)^{3}\int_{0}^{L_{1}}(v_{1}(x)-Mx)^{2}\,dx,
\end{equation*}
so that in particular
\begin{equation*}
\RPMF_{\ep}(\beta,(0,L_{2}),Mx,v_{2})\leq
\left(\frac{L_{2}}{L_{1}}\right)^{3}\RPMF_{\ep}(\beta,(0,L_{1}),Mx,v_{1}).
\end{equation*}

Since $v_{2}(x)$ is a competitor in the definition of $\mu_{\ep}^{*}(\beta,L_{2},M)$, this is enough to establish (\ref{th:monot-muep*}).

\paragraph{\textmd{\textit{Statement~(\ref{prop:pointwise})}}}

Pointwise convergence, namely convergence of minima, is a rather standard consequence of Gamma-convergence and equi-coerciveness. We point out that in the case of (\ref{defn:muep*}) and (\ref{defn:mu0*}) the functionals have to take the boundary conditions into account (the usual way is to set the functionals equal to $+\infty$ when the argument does not satisfy the boundary conditions), and in this case the limsup inequality in the Gamma-convergence result is slightly more delicate because it requires the control of boundary conditions for recovery sequences.

\paragraph{\textmd{\textit{Statement~(\ref{prop:uniform})}}}

The pointwise convergence (\ref{th:lim-mu}) is actually uniform with respect to $M$ (on bounded sets) because of the continuity and monotonicity with respect to $M$ of both $\mu_{\ep}$ and $\mu_{0}$. An analogous argument applies in the case of (\ref{th:lim-mu*}).
\qed

%\clearpage

\subsubsection*{\centering Acknowledgments}

We would like to thank Iacopo Ripoli for working on a preliminary version of this project in his master thesis~\cite{ripoli:tesi}. 

The first author is a member of the \selectlanguage{italian} ``Gruppo Nazionale per l'Analisi Matematica, la Probabilità e le loro Applicazioni'' (GNAMPA) of the ``Istituto Nazionale di Alta Matematica'' (INdAM). 

\selectlanguage{english}

%\addcontentsline{toc}{chapter}{Bibliography}

%\bibliographystyle{MaxNew}
%\bibliography{../../../BibTeX/PeronaMalik}

\begin{thebibliography}{10}
\providecommand{\url}[1]{\texttt{#1}}
\providecommand{\urlprefix}{URL }
\providecommand{\selectlanguage}[1]{\relax}
\providecommand{\eprint}[2][]{\url{#2}}

\bibitem{2001-CPAM-AlbertiMuller}
\textsc{G.~Alberti}, \textsc{S.~M\"{u}ller}.
\newblock A new approach to variational problems with multiple scales.
\newblock \emph{Comm. Pure Appl. Math.} \textbf{54} (2001), no.~7, 761--825.

\bibitem{ABG}
\textsc{R.~Alicandro}, \textsc{A.~Braides}, \textsc{M.~S. Gelli}.
\newblock Free-discontinuity problems generated by singular perturbation.
\newblock \emph{Proc. Roy. Soc. Edinburgh Sect. A} \textbf{128} (1998), no.~6,
  1115--1129.

\bibitem{2007-Amann}
\textsc{H.~Amann}.
\newblock Time-delayed {P}erona-{M}alik type problems.
\newblock \emph{Acta Math. Univ. Comenian. (N.S.)} \textbf{76} (2007), no.~1,
  15--38.

\bibitem{1989-BUMI-Ambrosio}
\textsc{L.~Ambrosio}.
\newblock A compactness theorem for a new class of functions of bounded
  variation.
\newblock \emph{Boll. Un. Mat. Ital. B (7)} \textbf{3} (1989), no.~4, 857--881.

\bibitem{AFP}
\textsc{L.~Ambrosio}, \textsc{N.~Fusco}, \textsc{D.~Pallara}.
\newblock \emph{{Functions of bounded variation and free discontinuity
  problems}}.
\newblock Oxford Mathematical Monographs, 2000.

\bibitem{2014-M3AS-BelChaGol}
\textsc{G.~Bellettini}, \textsc{A.~Chambolle}, \textsc{M.~Goldman}.
\newblock The {$\Gamma$}-limit for singularly perturbed functionals of
  {P}erona-{M}alik type in arbitrary dimension.
\newblock \emph{Math. Models Methods Appl. Sci.} \textbf{24} (2014), no.~6,
  1091--1113.

\bibitem{2008-TAMS-BF}
\textsc{G.~Bellettini}, \textsc{G.~Fusco}.
\newblock The {$\Gamma$}-limit and the related gradient flow for singular
  perturbation functionals of {P}erona-{M}alik type.
\newblock \emph{Trans. Amer. Math. Soc.} \textbf{360} (2008), no.~9,
  4929--4987.

\bibitem{2006-DCDS-BelFusGug}
\textsc{G.~Bellettini}, \textsc{G.~Fusco}, \textsc{N.~Guglielmi}.
\newblock A concept of solution and numerical experiments for forward-backward
  diffusion equations.
\newblock \emph{Discrete Contin. Dyn. Syst.} \textbf{16} (2006), no.~4,
  783--842.

\bibitem{2008-JDE-BNPT}
\textsc{G.~Bellettini}, \textsc{M.~Novaga}, \textsc{M.~Paolini},
  \textsc{C.~Tornese}.
\newblock Convergence of discrete schemes for the {P}erona-{M}alik equation.
\newblock \emph{J. Differential Equations} \textbf{245} (2008), no.~4,
  892--924.

\bibitem{2019-SIAM-BerGiaTes}
\textsc{M.~Bertsch}, \textsc{L.~Giacomelli}, \textsc{A.~Tesei}.
\newblock Measure-valued solutions to a nonlinear fourth-order regularization
  of forward-backward parabolic equations.
\newblock \emph{SIAM J. Math. Anal.} \textbf{51} (2019), no.~1, 374--402.

\bibitem{2020-JDE-BerSmaTes}
\textsc{M.~Bertsch}, \textsc{F.~Smarrazzo}, \textsc{A.~Tesei}.
\newblock On a class of forward-backward parabolic equations: formation of
  singularities.
\newblock \emph{J. Differential Equations} \textbf{269} (2020), no.~9,
  6656--6698.

\bibitem{2018-ActApplMath-BraVal}
\textsc{A.~Braides}, \textsc{V.~Vallocchia}.
\newblock Static, quasistatic and dynamic analysis for scaled {P}erona-{M}alik
  functionals.
\newblock \emph{Acta Appl. Math.} \textbf{156} (2018), 79--107.

\bibitem{1992-SIAM-Lions}
\textsc{F.~Catt\'{e}}, \textsc{P.-L. Lions}, \textsc{J.-M. Morel},
  \textsc{T.~Coll}.
\newblock Image selective smoothing and edge detection by nonlinear diffusion.
\newblock \emph{SIAM J. Numer. Anal.} \textbf{29} (1992), no.~1, 182--193.

\bibitem{2011-SIAM-SlowSD}
\textsc{M.~Colombo}, \textsc{M.~Gobbino}.
\newblock Slow time behavior of the semidiscrete {P}erona-{M}alik scheme in one
  dimension.
\newblock \emph{SIAM J. Math. Anal.} \textbf{43} (2011), no.~6, 2564--2600.

\bibitem{1996-Duke-DeGiorgi}
\textsc{E.~De~Giorgi}.
\newblock Conjectures concerning some evolution problems.
\newblock volume~81, pages 255--268. 1996.
\newblock \urlprefix\url{https://doi.org/10.1215/S0012-7094-96-08114-4}.
\newblock A celebration of John F. Nash, Jr.

\bibitem{2001-CPAM-Esedoglu}
\textsc{S.~Esedo\={g}lu}.
\newblock An analysis of the {P}erona-{M}alik scheme.
\newblock \emph{Comm. Pure Appl. Math.} \textbf{54} (2001), no.~12, 1442--1487.

\bibitem{2006-SIAM-Esedoglu}
\textsc{S.~Esedo\={g}lu}.
\newblock Stability properties of the {P}erona-{M}alik scheme.
\newblock \emph{SIAM J. Numer. Anal.} \textbf{44} (2006), no.~3, 1297--1313.

\bibitem{GG:grad-est}
\textsc{M.~Ghisi}, \textsc{M.~Gobbino}.
\newblock Gradient estimates for the {P}erona-{M}alik equation.
\newblock \emph{Math. Ann.} \textbf{337} (2007), no.~3, 557--590.

\bibitem{2009-JDE-Guidotti}
\textsc{P.~Guidotti}.
\newblock A new nonlocal nonlinear diffusion of image processing.
\newblock \emph{J. Differential Equations} \textbf{246} (2009), no.~12,
  4731--4742.

\bibitem{2012-JDE-Guidotti}
\textsc{P.~Guidotti}.
\newblock A backward-forward regularization of the {P}erona-{M}alik equation.
\newblock \emph{J. Differential Equations} \textbf{252} (2012), no.~4,
  3226--3244.

\bibitem{2009-JMImVis-GuiLam}
\textsc{P.~Guidotti}, \textsc{J.~V. Lambers}.
\newblock Two new nonlinear nonlocal diffusions for noise reduction.
\newblock \emph{J. Math. Imaging Vision} \textbf{33} (2009), no.~1, 25--37.

\bibitem{Kichenassami}
\textsc{S.~Kichenassamy}.
\newblock The {P}erona-{M}alik paradox.
\newblock \emph{SIAM J. Appl. Math.} \textbf{57} (1997), no.~5, 1328--1342.

\bibitem{2018-Poincare-KimYan}
\textsc{S.~Kim}, \textsc{B.~Yan}.
\newblock On {L}ipschitz solutions for some forward-backward parabolic
  equations.
\newblock \emph{Ann. Inst. H. Poincar\'{e} Anal. Non Lin\'{e}aire} \textbf{35}
  (2018), no.~1, 65--100.

\bibitem{2003-M3AS-MorNeg}
\textsc{M.~Morini}, \textsc{M.~Negri}.
\newblock Mumford-{S}hah functional as {$\Gamma$}-limit of discrete
  {P}erona-{M}alik energies.
\newblock \emph{Math. Models Methods Appl. Sci.} \textbf{13} (2003), no.~6,
  785--805.

\bibitem{1993-CalcVar-Muller}
\textsc{S.~M\"{u}ller}.
\newblock Singular perturbations as a selection criterion for periodic
  minimizing sequences.
\newblock \emph{Calc. Var. Partial Differential Equations} \textbf{1} (1993),
  no.~2, 169--204.

\bibitem{PeronaMalik}
\textsc{P.~Perona}, \textsc{J.~Malik}.
\newblock Scale-space and edge detection using anisotropic diffusion.
\newblock Technical report, EECS Department, University of California,
  Berkeley, Dec 1988.

\bibitem{ripoli:tesi}
\textsc{I.~Ripoli}.
\newblock \emph{Investigation on a Perona-Malik problem}.
\newblock Master thesis, University of Pisa, 2020.

\end{thebibliography}

\label{NumeroPagine}

\end{document}